
%
%
%

\documentclass{amsart}
\usepackage{a4wide}

\usepackage{latexsym,amsfonts,amssymb,exscale,enumerate,amscd}
\usepackage{amsmath,amsthm}
\usepackage{hyperref}
\usepackage{subfigure}


\usepackage{pstricks}

\psset{linewidth=0.3pt,dimen=middle}
\psset{xunit=.70cm,yunit=0.70cm}
\psset{arrowsize=1pt 5,arrowlength=0.6,arrowinset=0.7}

\usepackage[all]{xy}
\SelectTips{cm}{}

\usepackage{graphicx}


\newcommand{\Ucatt}{\cal{U}_{\chi}}


\newcommand{\symm}{{\rm \Lambda}}
\newcommand{\Mat}{{\rm Mat}}

\newcommand{\onen}{{\mathbf 1}_{n}}
\newcommand{\onenn}[1]{{\mathbf 1}_{#1}}


\newcommand{\BNC}{NH}

\newcommand{\ep}{\underline{\epsilon}}

\newcommand{\refequal}[1]{\xy {\ar@{=}^{#1}
(-1,0)*{};(1,0)*{}};
\endxy}

\newcommand{\UDnm}{ { _m\UcatD_n}}
\newcommand{\Unm}{ {1_m(\UA)1_n}}

\newcommand{\U}{\dot{{\bf U}}}
\newcommand{\UA}{{_{\cal{A}}\dot{{\bf U}}}}

\newcommand{\Ucat}{\cal{U}}

\newcommand{\UcatD}{\dot{\cal{U}}}
\newcommand{\B}{\dot{\mathbb{B}}}

\newcommand{\xsum}[2]{
  \vcenter{\xy
  (0,.4)*{\sum};
  (0,3.8)*{\scs #2};
  (0,-3.2)*{\scs #1};
  \endxy}
}

\newcommand{\Uup}{\xy {\ar (0,-3)*{};(0,3)*{} };(0,0)*{\bullet};(2,0)*{};(-2,0)*{};\endxy}

\newcommand{\Ucross}{\xy {\ar (2.5,-2.5)*{};(-2.5,2.5)*{}}; {\ar (-2.5,-2.5)*{};(2.5,2.5)*{} };
(4,0)*{};(-4,0)*{};\endxy}


\newcommand{\xUup}{
    \xy {\ar (0,-3)*{};(0,3)*{} };(1.5,0)*{};(-1.5,0)*{};\endxy}

\newcommand{\xUdown}{
    \xy {\ar (0,3)*{};(0,-3)*{} };(1.5,0)*{};(-1.5,0)*{};\endxy}

\newcommand{\xUupdot}{
   \xy {\ar (0,-3)*{};(0,3)*{} };(0,0)*{\bullet};(1.5,0)*{};(-1.5,0)*{};\endxy}

\newcommand{\xUdowndot}{
   \xy {\ar (0,3)*{};(0,-3)*{} };(0,0)*{\bullet};(1.5,0)*{};(-1.5,0)*{};\endxy}

\newcommand{\xUcupl}{\;\;
    \vcenter{\xy (2,3)*{}; (-2,3)*{} **\crv{(2,-1) & (-2,-1)}?(1)*\dir{>};
            (2,-3)*{};(-2,3)*{}; \endxy} \;\; }

\newcommand{\xUcapr}{\;\;
    \vcenter{\xy (-2,-3)*{}; (2,-3)*{} **\crv{(-2,1) & (2,1)}?(1)*\dir{>};
            (2,-3)*{};(-2,3)*{}; \endxy} \;\; }



\hfuzz=10pc

\newcommand{\cat}[1]{\ensuremath{\mbox{\bfseries {\upshape {#1}}}}}

\newcommand{\BOX}{\hbox {$\sqcap$ \kern -1em $\sqcup$}}

\newcommand{\To}{\Rightarrow}

\newcommand{\Hom}{{\rm Hom}}
\newcommand{\HOM}{{\rm HOM}}
\newcommand{\END}{{\rm END}}

\renewcommand{\to}{\rightarrow}
\newcommand{\maps}{\colon}

\newcommand{\End}{{\rm End}}

\newcommand{\im}{{\rm im\ }}

\newcommand{\la}{\langle}
\newcommand{\ra}{\rangle}
\newcommand{\sla}{\langle}
\newcommand{\sra}{\rangle}

\newcommand{\scs}{\scriptstyle}

\theoremstyle{definition}
\newtheorem{thm}{Theorem}[section]
\newtheorem{cor}[thm]{Corollary}

\newtheorem{rem}[thm]{Remark}
\newtheorem{prop}[thm]{Proposition}
\newtheorem{defn}[thm]{Definition}
\newtheorem{example}[thm]{Example}
\newtheorem{examples}[thm]{Examples}


\numberwithin{equation}{section}


%
%

%


\def\emph#1{{\sl #1\/}}

%
%



\let\phi=\varphi
\let\epsilon=\varepsilon


\usepackage{bbm}
\def\C{{\mathbbm C}}
\def\N{{\mathbbm N}}
\def\R{{\mathbbm R}}
\def\Z{{\mathbbm Z}}
\def\Q{{\mathbbm Q}}


\def\cal#1{\mathcal{#1}}%
\def\1{\mathbbm{1}}%
\def\nn{\notag}


\def\pmod{{\mathrm{-pmod}}}  
\def\gdim{{\mathrm{gdim}}}

\def\Id{\mathrm{Id}}

\def\mf{\mathfrak}
\def\shuffle{\,\raise 1pt\hbox{$\scriptscriptstyle\cup{\mskip
               -4mu}\cup$}\,}

%
%

%
%
%


\newgray{whitegray}{.9}

\newcommand{\chern}[1]{\begin{pspicture}(-0.5,-0.5)(0.5,0.5)
    \rput(0,0){\psframebox[framearc=.4,fillstyle=solid, linewidth=.8pt]{\small $\scriptstyle #1$}} \end{pspicture}
}




\newcommand{\lowrru}[1]{\xybox{%
  (-8,0)*{};
  (8,0)*{};
  (-6,-18)*{};(6,-9)*{} **\crv{(-6,-13) & (6,-15)} ?(1)*\dir{>};
  (6,-9)*{};(6,0)*{}  **\dir{-} ?(.3)*\dir{ }+(2,0)*{\scs {\bf j}};
}}

\newcommand{\lowllu}[1]{\xybox{%
  (-8,0)*{};
  (8,0)*{};
  (6,-18)*{};(-6,-9)*{} **\crv{(6,-13) & (-6,-15)} ?(1)*\dir{>};
  (-6,-9)*{};(-6,0)*{}  **\dir{-} ?(.3)*\dir{ }+(-2,0)*{\scs {\bf j}};
}}

\newcommand{\bbe}[1]{\xybox{%
  (-2,0)*{};
  (2,0)*{};
  (0,0);(0,-18) **\dir{-}; ?(.5)*\dir{<}+(2.3,0)*{\scriptstyle{#1}};
}}

\newcommand{\bbsid}{\xybox{%
  (-2,0)*{};
  (2,0)*{};
  (0,10);(0,4) **\dir{-};
}}
\newcommand{\bbpef}[1]{\xybox{%
  (-6,0)*{};
  (6,0)*{};
  (-4,0)*{}="t1";
  (4,0)*{}="t2";
  "t1";"t2" **\crv{(-4,-6) & (4,-6)}; ?(.15)*\dir{>} ?(.9)*\dir{>}
   ?(.5)*\dir{}+(0,-2)*{\scriptstyle{#1}};
}}
\newcommand{\bbpfe}[1]{\xybox{%
  (-6,0)*{};
  (6,0)*{};
  (-4,0)*{}="t1";
  (4,0)*{}="t2";
  "t2";"t1" **\crv{(4,-6) & (-4,-6)}; ?(.15)*\dir{>} ?(.9)*\dir{>}
  ?(.5)*\dir{}+(0,-2)*{\scriptstyle{#1}};
}}

\newcommand{\bbcfe}[1]{\xybox{%
  (-6,0)*{};
  (6,0)*{};
  (-4,0)*{}="t1";
  (4,0)*{}="t2";
  "t1";"t2" **\crv{(-4,6) & (4,6)}; ?(.15)*\dir{>} ?(.9)*\dir{>}
  ?(.5)*\dir{}+(0,2)*{\scriptstyle{#1}};
}}
\newcommand{\bbcef}[1]{\xybox{%
  (-6,0)*{};
  (6,0)*{};
  (-4,0)*{}="t1";
  (4,0)*{}="t2";
  "t2";"t1" **\crv{(4,6) & (-4,6)}; ?(.15)*\dir{>}
  ?(.9)*\dir{>} ?(.5)*\dir{}+(0,2)*{\scriptstyle{#1}};
}}

\newcommand{\ccbub}[1]{
\xybox{%
 (-6,0)*{};
  (6,0)*{};
  (-4,0)*{}="t1";
  (4,0)*{}="t2";
  "t2";"t1" **\crv{(4,6) & (-4,6)};
  ?(.05)*\dir{>} ?(1)*\dir{>};
  "t2";"t1" **\crv{(4,-6) & (-4,-6)};
   ?(.3)*\dir{}+(0,0)*{\bullet}+(0,-3)*{\scs {#1}};
}}
\newcommand{\cbub}[1]{
\xybox{%
 (-6,0)*{};
  (6,0)*{};
  (-4,0)*{}="t1";
  (4,0)*{}="t2";
  "t2";"t1" **\crv{(4,6) & (-4,6)};?
   ?(0)*\dir{<} ?(.95)*\dir{<};
  "t2";"t1" **\crv{(4,-6) & (-4,-6)};
   ?(.3)*\dir{}+(0,0)*{\bullet}+(0,-3)*{\scs {#1}};
}}

\newcommand{\ncbub}{
\xybox{%
 (-6,0)*{};
  (6,0)*{};
  (-4,0)*{}="t1";
  (4,0)*{}="t2";
  "t2";"t1" **\crv{(4,6) & (-4,6)}; ?(0)*\dir{<} ?(.95)*\dir{<};
  "t2";"t1" **\crv{(4,-6) & (-4,-6)}; ?(.3)*\dir{};
}}

\newcommand{\bbdl}[1]{\xybox{%
  (2,0);(0,-8) **\crv{(2,-2)&(0,-6)}; ?(.5)*\dir{>}
}}
\newcommand{\bbdlu}[1]{\xybox{%
  (2,0);(0,-8) **\crv{(2,-2)&(0,-6)}; ?(.5)*\dir{<}
}}
\newcommand{\bbdr}[1]{\xybox{%
  (-2,0);(0,-8) **\crv{(-2,-2)&(0,-6)}; ?(.5)*\dir{>}
}}
\newcommand{\bbdru}[1]{\xybox{%
  (-2,0);(0,-8) **\crv{(-2,-2)&(0,-6)}; ?(.5)*\dir{<}
}}

\newcommand{\sccbub}[1]{%
\xybox{%
 (-6,0)*{};
  (6,0)*{};
  (-4,0)*{}="t1";
  (4,0)*{}="t2";
  "t2";"t1" **\crv{(4,6) & (-4,6)}; ?(.05)*\dir{>} ?(1)*\dir{>};
  "t2";"t1" **\crv{(4,-6) & (-4,-6)}; ?(.3)*\dir{}+(2,-1)*{\scs #1};
}}



%
\begin{document}
%

\title[Diagrammatic algebra and categorified quantum $sl(2)$]{An introduction to diagrammatic algebra
and categorified quantum $\mathfrak{sl}_{2}$}

\author{Aaron D. Lauda}
\address{Department of Mathematics, University of Southern California, Los Angeles, CA 90089, USA}
\email{lauda@usc.edu}

\date{August 10, 2011}

\maketitle

\begin{abstract}
This expository article explains how planar diagrammatics naturally arise in the study of categorified quantum groups with a focus on the categorification of quantum $\mf{sl}_2$.  We derive the definition of categorified quantum $\mathfrak{sl}_2$ and highlight some of the new structure that arises in categorified quantum groups. The expert will find a discussion of rescaling isomorphisms for categorified quantum $\mathfrak{sl}_2$, a proof that cyclotomic quotients of the nilHecke algebra are isomorphic to matrix rings over the cohomology ring of Grassmannians, and an interpretation of `fake bubbles' using symmetric functions.
\end{abstract}

\setcounter{tocdepth}{2}
\tableofcontents

%
\section{Introduction} \label{sec_intro}
%

%
\subsection{Categorification}
%

What does it mean to categorify an algebraic object?  This is a very common question, and part of the confusion lies in the fact that there is no universal definition of categorification that applies in all contexts.  The term categorification originated in work of Crane and Frenkel~\cite{CF} on algebraic structures in topological quantum field theories, but  it has become increasingly clear that categorification is a broad mathematical phenomenon with applications extending far beyond these original motivations.

The general mantra of categorification is to replace sets by categories, functions by functors, and equations by natural isomorphisms of functors. While this is sufficiently vague to be broadly applicable, it is not clear that it conveys to non-experts in the field a sufficiently accurate picture of the goals and applicability of the ideas behind categorification.  It takes some thought to put the most interesting examples into this framework, and it may hide some of the more exciting features these examples possess.

Categorification can be thought of as the (not necessarily unique) process of enhancing an algebraic object to a more sophisticated one.  There is a precise but context dependent notion of ``decategorification" -- the process of reducing the categorified object back to the simpler original object.  A useful categorification should possess a richer ``higher level" structure not seen in the underlying object. This new structure provides new insights and gives rise to a deeper understanding of the original object.

A very simple example is categorifying a natural number $n \in \N$ by lifting it to an $n$-dimensional $\Bbbk$-vector space $V$ with $\dim V = n$.   Here decategorification is the well-defined process of taking the dimension of the vector space. Put another way, the {\em set} of natural numbers $\N$ is categorified by the {\em category} of finite dimensional $\Bbbk$-vector spaces $\cat{FinVect}_{\Bbbk}$. To decategorify the category  $\cat{FinVect}_{\Bbbk}$, we identify isomorphic objects and forget all additional structure except for the dimension. This decategorification recovers the natural numbers $\N$ since every finite dimensional vector space is isomorphic to $\Bbbk^n$ for some $n$.  What makes this example interesting is that both the additive and multiplicative structures on $\N$ are also categorified by the operations of direct sum and tensor product of vector spaces:
\begin{equation}
  \dim (V \oplus W) = \dim V + \dim W, \qquad \dim (V \otimes W) = \dim V \times \dim W. \nn
\end{equation}

Similar in spirit is the example of categorifying $\N[q,q^{-1}]$ using the category of $\Z$-graded vector spaces.  Given a graded vector space $V=\oplus_{n \in \Z}V_n$, we can decategorify $V$ by taking its graded dimension
\begin{equation}
  \gdim V = \sum_{n \in \Z} q^{n} \dim V_n. \nn
\end{equation}
Addition and multiplication in $\N[q,q^{-1}]$ naturally lift to direct sum and tensor product of graded vector spaces.

Many of the key ideas behind categorification can be understood by considering another well-known example.  The Euler characteristic of a finite CW-complex $X$ can be defined as the alternating sum
\begin{equation}
  \chi(X) = \sum_{i=0}^{\infty}(-1)^i k_i(X) \nn
\end{equation}
where the  $k_i(X)$ is the number of cells of dimension $i$ in the complex $X$.  While Euler characteristic is an invariant of the topological space $X$, it is somewhat unsatisfying since given a continuous map $f \maps X \to Y$ it is not obvious how to relate $\chi(X)$ with $\chi(Y)$.  However, the Euler characteristic has a well known categorification which carries information about continuous maps between spaces.  The Euler characteristic is just the shadow of a richer invariant associated to the topological space $X$, namely the homology groups of $X$.

For each $i$, the homology group $H_i(X)$ is an invariant of $X$. These homology groups are in general stronger invariants than the Euler characteristic.  The homology groups can be viewed as a categorification of the Euler characteristic in the sense that
\begin{equation}
  \chi(X) = \sum_{i=0}^{\infty} (-1)^i \dim H_i(X). \nn
\end{equation}
In this way, we have lifted the numerical invariant $\chi(X)$ to more sophisticated algebraic invariants $H_i(X)$.  This categorification possesses a higher structure not seen at the level of the Euler characteristic.  Indeed, homology groups are {\em functorial} so that a continuous map $f \maps X \to Y$ gives a homomorphism of abelian groups
\begin{equation}
  f \maps H_i(X) \to H_i(Y) \nn
\end{equation}
for each $i$.  For some other examples of categorification see~\cite{BD,CY}.

A more recent example that is similar in spirit to the one above is Khovanov's categorification of the Jones polynomial~\cite{Kh1,Kh2}.  The Jones polynomial $J(K)$ of a knot (or link) $K$ is a Laurent polynomial in $\Z[q,q^{-1}]$ that is an invariant of the knot $K$.  Khovanov categorified this link invariant by introducing a graded homology theory  giving rise to a collection of graded vector spaces $Kh^a(K)$ whose graded Euler characteristic
\begin{equation}
  \chi (Kh(K)) := \sum_{a\in \Z} (-1)^a \gdim Kh^a(K) \nn
\end{equation}
agrees with a suitably normalized Jones polynomial $J(K)$.

Khovanov homology is a strictly stronger knot invariant~\cite{BN}, but even more interestingly, it is a functorial knot invariant~\cite{Kh2,Jac1,CMW,Cap}.  A surface embedded in 4-dimensional space whose boundary consists of a pair of knots is a cobordism from one knot to the other~\cite{CRS,CS,BL}.  Functoriality of Khovanov homology means that knot cobordisms induce maps between Khovanov homologies. (There is a similar story for tangles.)  This was used by Rassmussen to give a purely combinatorial proof of the Milnor conjecture~\cite{Ras}, demonstrating the strength of this categorification.

Quantum knot invariants such as the Jones polynomial, its generalizations to the coloured Jones polynomial, and the HOMFLYPT polynomial can all be understood in a unified framework using the representation theory of an algebraic structure called a quantum group.  A quantum group is a Hopf algebra obtained by $q$-deforming the universal enveloping algebra $\mathbf{U}(\mf{g})$ of a Lie algebra $\mf{g}$.  One does not need to understand deformation theory, or a tremendous amount of Lie theory, to begin to study the quantum group $\mathbf{U}_q(\mf{g})$.  One can define these $\Q(q)$-algebras explicitly using generators and relations.

Associated to a quantum group is a family of knot invariants called Reshetikhin-Turaev invariants~\cite{RT}; the quantum knot invariants mentioned above are all special cases of these invariants.  They are determined by certain representations of a quantum group $\mathbf{U}_q(\mf{g})$ associated to highest weights for the Lie algebra $\mf{g}$. We will review these representations in the context of $\mf{sl}_2$ below, but it suffices to know that they are indexed by positive weights $\lambda$ in the weight lattice associated to $\mf{g}$.

Given a tangle diagram $T$, the Reshetikhin-Turaev invariant is defined by colouring the strands in a tangle diagram by highest weights $\lambda$.  To the endpoints of the tangle diagram we associate a tensor product of representations indexed by the highest weights labelling the strands.
\[
\xy
 (0,0)*{\includegraphics[width=1.5in]{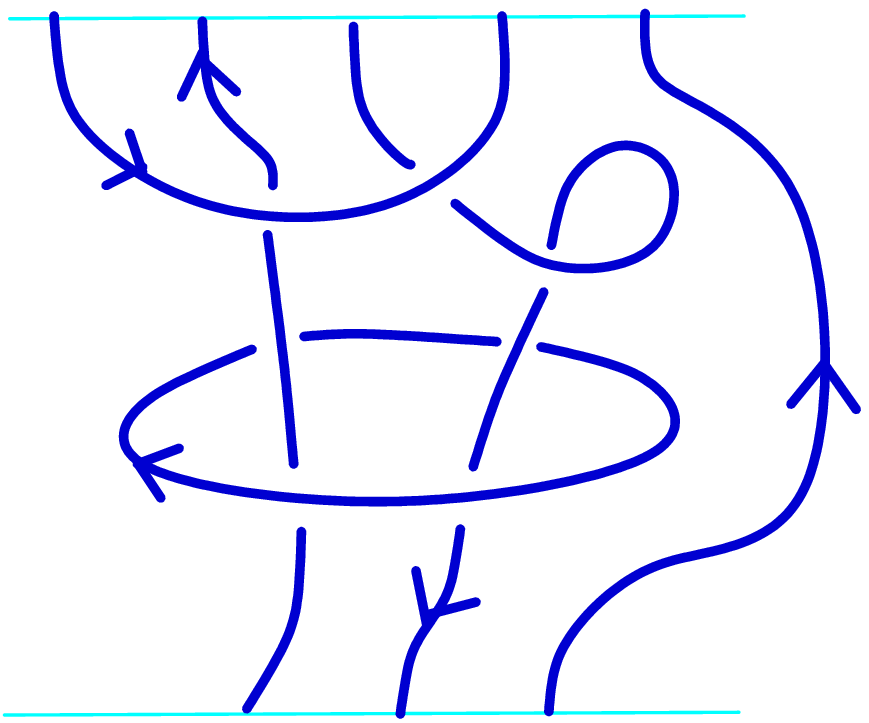}};
 (-2,-18)*{\lambda_1 \;\; \cdots \;\; \lambda_n};
  (-17,18)*{\mu_1};
   (0,18)*{\cdots};
    (10,18)*{ \mu_m};
\endxy
\qquad
\xy
 (0,16)*+{V_{\mu_1} \otimes \cdots \otimes V_{\mu_m}}="1";
 (0,-16)*+{V_{\lambda_1} \otimes \cdots \otimes V_{\lambda_n}}="2";
 (35,16)*{\textcolor[rgb]{0.00,0.50,0.00}{\text{$\mathbf{U}_q(\mathfrak{g})$-module}}}="3";
 (35,-16)*{\textcolor[rgb]{0.00,0.50,0.00}{\text{$\mathbf{U}_q(\mathfrak{g})$-module}}}="4";
 (5,0)*{\textcolor[rgb]{1.00,0.00,0.00}{f(T)}};
 (35,0)*{\textcolor[rgb]{1.00,0.00,0.00}{\text{RT-invariant}}};
\textcolor[rgb]{0.00,0.00,0.63}{ {\ar "2";"1"}}
\endxy
\]
The Reshetikhin-Turaev invariant $f(T)$ is a map between these representations that intertwines the action of $\mathbf{U}_q(\mf{g})$, i.e. it is a $\mathbf{U}_q(\mf{g})$-module homomorphism.

In the special case when our tangle diagram has no endpoints we associate the tensor product of zero copies of the highest weight representations, namely, the ground field $\Q(q)$:
\begin{equation}
 \xy
  (0,0)*{\includegraphics[width=1.5in]{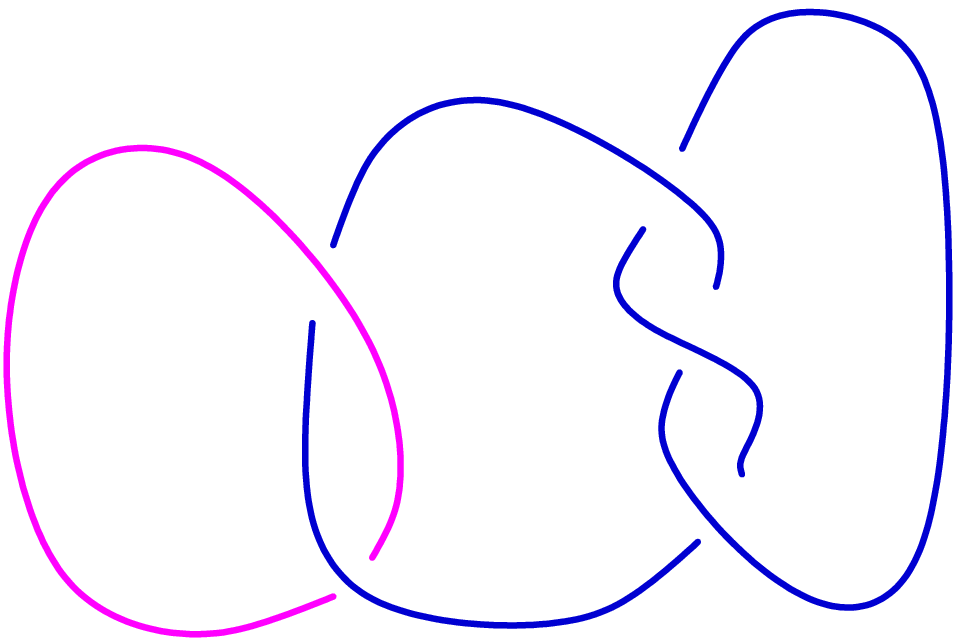}};
  (22,10)*{\textcolor[rgb]{0.00,0.00,1.00}{\lambda}};
  (-20,0)*{\textcolor[rgb]{1.00,0.00,0.50}{\mu}};
 \endxy
 \qquad
\xy
 (0,16)*+{\Q(q)}="1";
 (0,-16)*+{\Q(q)}="2";
 (35,16)*{\textcolor[rgb]{0.00,0.50,0.00}{\text{$\mathbf{U}_q(\mathfrak{g})$-module}}}="3";
 (35,-16)*{\textcolor[rgb]{0.00,0.50,0.00}{\text{$\mathbf{U}_q(\mathfrak{g})$-module}}}="4";
 (5,0)*{\textcolor[rgb]{1.00,0.00,0.00}{f(T)}};
 (35,0)*{\textcolor[rgb]{1.00,0.00,0.00}{\text{RT-invariant}}};
\textcolor[rgb]{0.00,0.00,0.63}{ {\ar "2";"1"}}
\endxy \nn
\end{equation}
But in this case, an intertwiner from $\Q(q)$ to itself is just multiplication by an element in $\Q(q)$.  One can show that these elements arising in the Reshetikhin-Turaev construction actually lie in $\Z[q,q^{-1}]$ so that we get a Laurent polynomial when evaluating the Reshetikhin-Turaev invariant on a link.

 \begin{example} \hfill
 \begin{itemize}
  \item For $\mf{g}=\mf{sl}_2$ and $V$ the defining two-dimensional representation described below, the Reshetikhin-Turaev invariant is the Jones polynomial.  When we take $V$ to be the $N+1$ dimensional representation of $\mf{g}$ we get an invariant called the coloured Jones polynomial.

 \item Taking $\mf{g}=\mf{sl}_n$ and $V$ the defining representation gives specializations of the HOMFLYPT polynomial.
 \end{itemize}
 \end{example}

One of the topological motivations for studying categorifications of quantum groups stems from the goal of trying to categorify all of these Reshetikhin-Turaev invariants for any quantum group $\mathbf{U}_q(\mf{g})$.  The existence of Khovanov homology and the subsequent categorifications of other quantum link invariants suggested that it should be possible to categorify these invariants.  These categorified invariants require as basic algebraic input categorifications of quantum groups, their highest weight representations, and their tensor products.

In this article we will focus on the simplest quantum group $\mathbf{U}_q(\mf{sl}_2)$ defined in the next section.  The categorification of this quantum group provides the representation theoretic explanation of Khovanov homology.   We hope that by carefully explaining the definition and motivation behind categorified quantum $\mf{sl}_2$  the reader will gain sufficient insight to understand the categorifications of all quantum groups, and their role in Webster's approach to categorification of all Reshetikhin-Turaev invariants~\cite{Web2}.

%
\subsection{Quantum $\mathfrak{sl}_2$ and its representations}
%

The first interesting Lie algebra is the Lie algebra $\mathfrak{sl}_2$ of traceless two-by-two complex matrices. This Lie algebra has a basis given by the matrices
\[
E=\left(
    \begin{array}{cc}
      0 & 1 \\
      0 & 0 \\
    \end{array}
  \right), \qquad
F = \left(
    \begin{array}{cc}
      0 & 0 \\
      1 & 0 \\
    \end{array}
  \right),
\qquad
H=\left(
    \begin{array}{cc}
      1 & 0 \\
      0 & -1 \\
    \end{array}
  \right),
\]
with the commutation relations
\[
 [E,F]=H, \qquad [H,E] = 2E, \qquad [H,F] = -2F.
\]
The universal enveloping algebra ${\bf U}(\mathfrak{sl}_2)$ of the Lie algebra $\mathfrak{sl}_2$ is the associative algebra with generators $E$, $F$, $H$ and relations
\begin{eqnarray*}
HE-EH =2E, \qquad   HF-FH =-2F, \qquad EF-FE =H.
\end{eqnarray*}

The quantum deformation ${\bf U}_q(\mathfrak{sl}_2)$ of ${\bf U}(\mathfrak{sl}_2)$ is an algebra over the ring $\Q(q)$ of rational functions in the indeterminant $q$ given by generators $E$, $F$, $K$, $K^{-1}$ and relations
\begin{eqnarray}
  KK^{-1}&=&K^{-1}K \; =1, \label{eq_UqI}\\
  KE &=& q^2EK, \label{eq_UqII}\\
  KF&=&q^{-2}FK, \label{eq_UqIII}\\
  EF-FE&=&\frac{K-K^{-1}}{q-q^{-1}}. \label{eq_UqIV}
\end{eqnarray}

Just as any finite-dimensional representation $V$ of ${\bf U}(\mathfrak{sl}_2)$ can be decomposed into eigenspaces $V_n$ for the action of $H$, with $v \in V_n$ if and only if
\[
 H v = n v,
\]
we can also decompose a finite-dimensional representation $V$ of  ${\bf U}_q(\mathfrak{sl}_2)$ into eigenspaces $V_n$ for the action of $K$
\[
 K v = q^n v , \qquad v \in V_{n}.
\]
One can show that these eigenspaces $V_n$ are indexed by integers $\Z$.  In this case $\Z$ is the weight lattice for $\mathfrak{sl}_2$. The vector space $V_n$ is called the $n$th weight space of $V$, and $v\neq 0$ in $V_n$ for some $n$ is called a weight vector.  Here we will only be interested weight representations, or representations that admit a decomposition
\[
 V = \bigoplus_{n \in \Z} V_n
\]
into weight spaces.

Given a weight vector $v \in V_n$ the weights of $Ev$ and $Fv$ are determined using the relations
\[
K(Ev) = q^2EKv = q^{n+2}(Ev), \qquad K(Fv) = q^{-2} F Kv = q^{n-2} (Fv),
\]
so that $E \maps V_n \to V_{n+2}$ and $F \maps V_n \to V_{n-2}$.
\[
 \xy
 {\ar@{-} (-30,0)*{} ; (30,0)*{}};
 (0,-1,5)*{};(0,1.5)*{} **\dir{-};
 (8,-1,5)*{};(8,1.5)*{} **\dir{-};
 (16,-1,5)*{};(16,1.5)*{} **\dir{-};
 (24,-1,5)*{};(24,1.5)*{} **\dir{-};
 (-8,-1,5)*{};(-8,1.5)*{} **\dir{-};
 (-16,-1,5)*{};(-16,1.5)*{} **\dir{-};
  (-24,-1,5)*{};(-24,1.5)*{} **\dir{-};
  (0,0)*{\bullet}+(0,-4)*{v};
  (16,0)*{\bullet}+(0,-4)*{Ev};
  (-16,0)*{\bullet}+(0,-4)*{Fv};
  {\ar@/^1pc/^E (1,2)*{}; (16,0)*++{}};
  {\ar@/_1pc/_F (-1,2)*{}; (-16,0)*++{}};
  (-34,-10)*{\text{\textcolor[rgb]{0.00,0.50,0.00}{Weight}}};
  (-16,-10)*{\textcolor[rgb]{0.00,0.50,0.00}{n-2}};
    (0,-10)*{\textcolor[rgb]{0.00,0.50,0.00}{n}};
      (16,-10)*{\textcolor[rgb]{0.00,0.50,0.00}{n+2}};
 \endxy
\]
Therefore, a weight representation of ${\bf U}_q(\mathfrak{sl}_2)$ can be thought of as a collection of vector spaces $V_n$ for $n\in \Z$ where $E$ maps the $n$th weight space to the $n+2$ weight space and $F$ maps the $n$th weight space to the $n-2$ weight space
\[
 \xy
 (-58,0)*{\cdots};
 (58,0)*{\cdots};
  (-50,0)*+{V_{-N}}="1";
  (-16,0)*+{V_{n-2}}="2";
  (0,0)*+{V_n}="3";
  (16,0)*+{V_{n+2}}="4";
  (50,0)*+{V_{N}}="5";
    {\ar@/^0.7pc/^E "1";(-34,0)*+{}};
    {\ar@/^0.7pc/^E "2";"3"};
    {\ar@/^0.7pc/^E "3";"4"};
    {\ar@/^0.7pc/^E (34,1)*+{}; "5"};
    {\ar@/^0.7pc/^F (-34,-1)*+{}; "1"};
    {\ar@/^0.7pc/^F "3";"2"};
    {\ar@/^0.7pc/^F "4";"3"};
    {\ar@/^0.7pc/^F "5";(34,-1)*+{}};
  (27,0)*{\cdots};
  (-27,0)*{\cdots};
 \endxy
\]
such that the main $\mathfrak{sl}_2$ relation $EF-FE=\frac{K-K^{-1}}{q-q^{-1}}$ holds.  Note that on a weight vector $v \in V_n$ this relation takes the form
\[
(EF-FE)v=\frac{K-K^{-1}}{q-q^{-1}}v = \frac{Kv - K^{-1}v}{q-q^{-1}} = \frac{q^n-q^{-n}}{q-q^{-1}}v = [n]v
\]
since $Kv =q^nv$. The rational function $[n]=\frac{q^n-q^{-n}}{q-q^{-1}}$ appearing in the expression above is called the quantum integer $n$. One can check that
\begin{equation}
  [n] := q^{n-1}+q^{n-3}+ \dots +  q^{1-n}. \nn
\end{equation}

Of particular importance are the irreducible $N+1$ dimensional representations $V^N$.  These representations are generated by a highest weight vector $v$, that is a vector $v \in V_N$ such that $Ev=0$.  They have a basis given by vectors $v_k = \frac{F^k}{[k]!}v$, for $0 \leq k \leq N$, where $[k]!$ denotes the quantum factorial $[k]!=[k][k-1]\dots[1]$.  Each nonzero weight space in $V^N$ is therefore 1-dimensional.

For applications to knot theory and low dimensional topology we are primarily interested in representations of ${\bf U}_q(\mathfrak{sl}_2)$ that admit such a decomposition into weight spaces.  Consider a modified version $\U$ of ${\bf U}_q(\mathfrak{sl}_2)$ where the unit element is substituted by a collection of mutually orthogonal idempotents $1_n$ for $n \in \Z$,
\begin{equation} \label{eq_orthog_idemp}
  1_n1_m=\delta_{n,m}1_n,
\end{equation}
that project onto the $n$th weight space. Since $K$ acts on the $n$th weight space by $q^n$ this implies that
\begin{equation}
K1_n = q^n 1_n.
\end{equation}
Furthermore, since $E$ must increase the weight by $2$ and $F$ must decrease the weight by $2$ equations \eqref{eq_UqII} and \eqref{eq_UqIII} take the form
\begin{equation}
 E1_n = 1_{n+2}E = 1_{n+2}E1_n, \qquad F1_n = 1_{n-2}F = 1_{n-2}F 1_n,
\end{equation}
and the main $\mathfrak{sl}_2$ relation \eqref{eq_UqIV} becomes
\begin{equation} \label{eq_Udot_relation}
 EF1_n-FE1_n = [n]1_n.
\end{equation}
The algebra $\U$ was introduced by Beilinson, Lusztig and MacPherson~\cite{BLM} for $\mathfrak{sl}_n$ and was generalized to an idempotented form $\U(\mathfrak{g})$ for arbitrary symmetrizable Kac-Moody algebra $\mathfrak{g}$ by Lusztig.

A key property of $\U$ is that the category of $\U$-modules is equivalent to the category of ${\bf U}$-modules that admit a weight decomposition.

%
\subsection{Categorical actions}
%
%
\subsubsection{Quantum groups acting on categories}
%

A representation theoretic reason to suspect that categorified quantum groups should exist is the existence of categorical quantum group actions.  For an ordinary representation of $\U$ we specify a vector space $V=\oplus_{n \in \Z} V_n$ together with linear maps
\begin{equation}
1_n \maps V_n \to V_n, \quad E1_n \maps V_n \to V_{n+2}, \quad F1_n \maps V_n \to V_{n-2}, \nn
\end{equation}
that satisfy the quantum $\mathfrak{sl}_2$ relation \eqref{eq_Udot_relation}.

Motivated by various geometric constructions it is natural to consider categorical $\U$-actions.  In a categorical $\U$-action the vector spaces $V_n$ are replaced by additive {\em categories} $\cal{V}_n$.  These categories are required to be graded or triangulated so that they are equipped with an auto-equivalence $\{1\} \maps \cal{V}_n \to \cal{V}_n$. We denote by $\{ s \}$ the auto-equivalence obtained by applying $\{ 1 \}$ $s$ times.   Linear maps are replaced by {\em functors}
\begin{equation}
\onen \maps \cal{V}_n \to \cal{V}_n, \quad \cal{E}\onen \maps \cal{V}_n \to \cal{V}_{n+2}, \quad \cal{F}\onen \maps \cal{V}_n \to \cal{V}_{n-2}, \nn
\end{equation}
that commute with the grading shift functor $\{1\}$. These functors must satisfy the main $\mathfrak{sl}_2$ relation up to natural isomorphisms of functors
\begin{eqnarray}
  \cal{E}\cal{F}\onen \cong \cal{F}\cal{E}\onen \oplus \onen^{\oplus_{[n]}}  & \qquad & \text{for $n \geq 0$}, \nn\\
  \cal{F}\cal{E}\onen  \cong \cal{E}\cal{F}\onen\oplus\onen^{\oplus_{[-n]}} & \qquad & \text{for $n \leq 0$,} \nn
\end{eqnarray}
where we write
\begin{eqnarray}
  \onen^{\oplus_{[n]}}\; :=\; \onen\{n-1\} \oplus \onen\{n-3\} \oplus \cdots \oplus
  \onen\{1-n\}. \nn
\end{eqnarray}

In a {\em categorified representation} of $\U$ the weight categories $\cal{V}_n$  categorify the $\Q(q)$-vector spaces $V_n$ of some representation $V = \oplus_{n \in \Z} V_n$ of $\U$.  The precise definition of categorification, or rather decategorification, can vary depending on the example.  Here we will be primarily interested in {\em additive} categorifications where we identify  additive categories $\cal{V}_n$ and `decategorification' means that
\begin{equation}
  K_0(\cal{V}_n) \otimes_{\Z[q,q^{-1}]} \Q(q) \cong V_n \nn
\end{equation}
where $K_0$ is the split Grothendieck group of the additive category $\cal{V}_n$.  We will remind the reader about split Grothendieck groups in Section~\ref{subsec_what-structure}, but for now it is enough to know that it is a procedure for turning additive categories into abelian groups, or $\Z[q,q^{-1}]$-modules when the categories $\cal{V}_n$ have the additional structure of a $\Z$-grading on objects.  The additive functors $\cal{E}\onen$ and $\cal{F}\onen$ on $\cal{V}$ are required to induce the action of $E1_n$ and $F1_n$ on the split Grothendieck group,
\begin{equation}
  [\cal{E}\onen], [\cal{F}\onen] \maps K_0(\cal{V}) \to K_0(\cal{V}), \nn
\end{equation}
so that all of the structure of the $\U$-module $V$ arises from the categorified representation $\cal{V}$.

There are many examples of categorified representations in the literature.  Many of these are based on geometric constructions~\cite{BLM,GL1,zheng}, while others have a more algebraic flavour~\cite{BFK,Strop,FKS,Sussan,FSS,FKS}.  A nice survey of various geometric categorifications is given in~\cite{Kam}, and a review of categorifications in the context of abelian categories is given in~\cite{KMS}, see also \cite{Maz}.  We will look at a simple combinatorial categorification  of the irreducible  representation $V^N$ of $\U$ in Section~\ref{subsec_flag}.

%
\subsubsection{Higher structure of categorical actions}
%

The existence of categorical quantum group actions hints at a new level of structure that could not be seen with traditional representations of quantum groups on vector spaces.  In particular, it now makes sense to ask what natural transformations can exist between composites of functors $\onen$, $\cal{E}\onen$, and $\cal{F}\onen$.  This higher level structure is a phenomenon that only exists for categorical representations.  It is natural to wonder what aspects of this higher structure of natural transformations, if any, is common among all examples of categorical quantum group actions.

Igor Frenkel conjectured that this higher level structure governing natural transformations in these categorical actions is governed by the existence of a categorification of the quantum group $\U$ itself.  This categorification of quantum groups should have an additional level of structure allowing for morphisms going between quantum group generators.  In Section~\ref{subsec_what-structure} we elaborate on what led Frenkel to conjecture that a categorification of $\U$ should exist.  One key idea is the existence of the canonical basis for $\U$.  Figure~\ref{fig:catrep} summarizes these ideas.
\begin{figure}[htc]
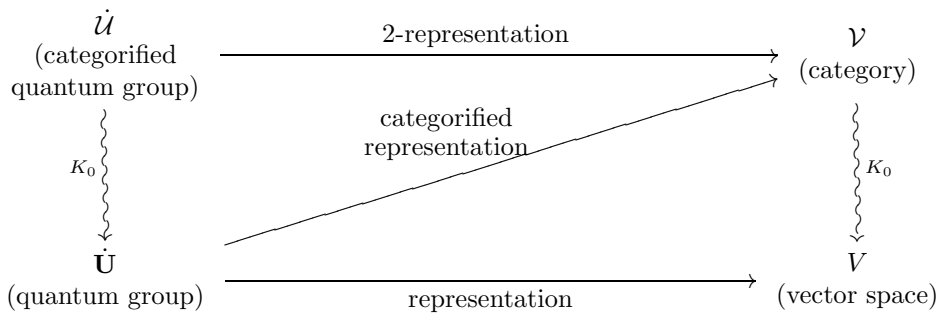

\[
 \xy
(0,18)*{};
 (-50,-15)*+{
 \begin{array}{c}  \U \\ \text{(quantum group)}
 \end{array}  }="b1";
 (50,-15)*+{
 \begin{array}{c}
   V \\
  \text{(vector space)}
 \end{array} }="b2";
 (50,15)*+{
 \begin{array}{c}
   \cal{V}\\
   \text{(category)}
 \end{array} }="t2";
(-50,15)*+{
\begin{array}{c}
  \UcatD \\
  \txt{(categorified \\quantum group)}
\end{array}
}="t1";
 {\ar_-{\txt{representation}} "b1";"b2" };
{\ar^{\txt{categorified \\representation}\;\;} "b1";"t2" };
{\ar^{\txt{2-representation}\;\;} "t1";"t2" };
{\ar@{~>}^{K_0} "t2";"b2"};
{\ar@{~>}_{K_0} "t1";"b1"};
 \endxy
\]
\caption{The above diagram schematically illustrates the various structures arising in categorified representation theory. The bottom arrow represents a traditional representation of $\U$ acting on a vector space by linear transformations.  The diagonal arrow indicates a categorical action of $\U$ on a category $\cal{V}$, with generators acting by functors.  The top arrow indicates that categorical actions can be understood as a 2-representation of a structure that categorifies the algebra $\U$.  The natural transformations between functors acting on $\cal{V}$ are governed by the structure of $\UcatD$.  } \label{fig:catrep}
\end{figure}

Chuang and Rouquier began the systematic study of natural transformations between functors arising in categorified representations of the non-quantum enveloping algebra $\mathbf{U}(\mathfrak{sl}_2)$~\cite{CR}.  They identified some structures common to a large class of examples and used this structure to prove an important result for modular representation theory of the symmetric group called the abelian defect conjecture.  While this work identified structures on natural transformations between functors in large class of examples of categorified representations, this work stopped short of categorifying the enveloping algebra $\mathbf{U}(\mathfrak{sl}_2)$ itself, and it was not clear how to extend their results to the quantum setting.

In 2006 the author began working with Mikhail Khovanov to categorify the algebra $\U$ directly and uncover the underlying structure governing categorical quantum group actions.  The author found a solution in the case of $\mathfrak{sl}_2$ in \cite{Lau1,Lau2}.  Together with Khovanov~\cite{KL,KL2,KL3}, this work was then extended to  $\U(\mathfrak{sl}_n)$ and conjecturally to all symmetrizable Kac-Moody algebras.  This conjectural extension appears in Webster~\cite{Web}.
The purpose of this article is to motivate the definition of the categorification of $\U(\mathfrak{sl}_2)$ and the techniques used in this approach to categorified representation theory.

%
\subsubsection{Diagrammatics to encode combinatorial complexity}
%

If a categorified quantum group is going to successfully describe the higher structure of all examples of categorical quantum group actions, including those coming from geometric constructions using perverse or coherent sheaves, as well as those involving category $\cal{O}$, we can expect this structure to be a combinatorially  complex object.  We will need a way of expressing this object in a meaningful way and this is where diagrammatic algebra enters the story.

From the discussion of categorical quantum group actions it would seem that a categorification of a quantum group should be a structure combining three levels of structure. Categorical actions have categories associated to weights for $\mathfrak{sl}_2$, functors between weight categories corresponding to quantum group generators $E1_n$ and $F1_n$, and the new level of structure taking the form of natural transformations between functors.

A 2-category is an algebraic structure that can be used to keep track of these three levels of structure, the combinatorial information of sources and targets for morphisms and morphisms between morphisms, and all the various composition operations.  In Section~\ref{subsec_2cat} we will define 2-categories and explain how they provide a natural framework for transforming algebra into planar diagrammatics. See \cite{BD} for a discussion on the relationship between higher categories and higher dimensional diagrams and \cite{Kh5} for a survey on the relationship between planar diagrammatics and categorification.

A family of algebras called KLR-algebras arose from the study of categorified quantum groups~\cite{KL,KL2,Rou2}.  These algebras categorify the positive half $\U^+$ of arbitrary symmetrizable Kac-Moody algebras.  We hope that this article helps to clarify the diagrammatic origins of these algebras.

\bigskip

In Section~\ref{sec_diagrammatics} we define 2-categories and explain how these algebraic structures provide a natural framework for studying planar diagrammatics. In Section~\ref{sec_catU} we describe the ideas leading to the 2-category $\UcatD$ categorifying the algebra $\U$. We essentially derive the definition using the structure of a semilinear form on $\U$ and a relationship between $\U$ and partial flag varieties.   The expert will find a discussion of rescaling the 2-category $\Ucat$ to allow degree zero bubbles to take arbitrary values (see Section~\ref{subsubsec_rescaling}) and an interpretation of fake bubbles using symmetric functions (see Section~\ref{subsubsec_fake_symm}).

In Section~\ref{sec_higher} we describe some of the consequences of the new higher structure present in the 2-category $\UcatD$.  Finally, in Section~\ref{sec_irreps} we look at a categorification of irreducible representations of $\U$ called cyclotomic quotients.  The reader who is new to categorified quantum groups may find this section more challenging.  This section contains a proof that cyclotomic quotients of nilHecke algebras are isomorphic to a matrix ring with coefficients in the cohomology ring of a Grassmannian.

\bigskip
\noindent {\bf Acknowledgments:}
The author is tremendously grateful to his collaborator and friend Mikhail Khovanov for sharing his insights and ideas over the years.  The author is also grateful to John Baez for introducing him to the diagrammatic algebra in the language of 2-categories while he was an undergraduate at UC Riverside and is also grateful to Sabin Cautis, Alex P. Ellis, Mustafa Hajij, You Qi, Joshua Sussan and Heather Russell for helpful comments on an earlier version of this article.

The author was partially supported by the NSF grants DMS-0739392 and DMS-0855713. The author would like to thank the mathematics department at LSU for their hospitality during his visit in the Spring of 2011.

%
\section{Diagrammatic algebra }  \label{sec_diagrammatics}
%

2-categories can be thought of as an algebraic world for problems that are inherently 2-dimensional.   In this world algebra and topology begin to merge as we see algebraic computations taking the form of planar diagrammatics.

%
\subsection{2-categories} \label{subsec_2cat}
%
A category is an algebraic structure combining two levels of combinatorial data:
objects, and morphisms between objects.   In a category there is an associative composition operation allowing morphisms to be composed when their sources and targets agree, and there is an identity morphism for each object.   A 2-category can  be thought of as a more sophisticated combinatorial structure combining three levels of structure: objects, morphisms between objects, and 2-morphisms going between morphisms.  Much of the definition of a 2-category can be guessed by drawing some simple pictures to keep track of the relationships between this combinatorial data.

\begin{defn}
 A {\em strict 2-category} $\cal{K}$ is given by the following data:
\begin{itemize}
 \item A collection of objects which we denote by $x, y, z \dots$
 \item For each pair of objects $x,y$ a category $\cal{K}(x,y)$.  We call the objects $\xymatrix@1{y & \ar[l]_{f} x}$ of  $\cal{K}(x,y)$ 1-morphisms of $\cal{K}$ with source $x$ and target $y$. A morphism $\alpha \maps f \To g$
\begin{equation} \label{eq_globular}
\xy (8,0)*+{\scs x}="4"; (-8,0)*+{\scs y}="6"; {\ar@/^1.65pc/^{f}
"4";"6"}; {\ar@/_1.65pc/_{g} "4";"6"}; {\ar@{=>}_<<<{\scriptstyle
\alpha} (-.5,-3)*{};(-.5,3)*{}} ;
\endxy
\end{equation}
 of the category $\cal{K}(x,y)$ is called a 2-morphism in $\cal{K}$.
\end{itemize}
\begin{itemize}
\item
There is an associative composition operation for 1-morphisms
\[
\xymatrix{ z & \ar[l]_{g}   y & \ar[l]_f  x} =
 \xymatrix{ z & \ar[l]_{gf}  x}
\]
so that given a triple of composable 1-morphisms
\[
\xymatrix{ w & \ar[l]_{h}  z & \ar[l]_{g}
   y & \ar[l]_f  x}
\]
we have $(hg)f = h(gf)$.
\end{itemize}
As suggested by the ``globular" representation \eqref{eq_globular} of 2-morphisms there are two ways to compose 2-morphisms.
\begin{itemize}
  \item 2-morphisms can be composed horizontally
  \[
\xy (16,0)*+{\scs x}="4"; (0,0)*+{\scs y}="6"; {\ar@/^1.65pc/^{f} "4";"6"};
{\ar@/_1.65pc/_{g} "4";"6"}; {\ar@{=>}_<<<{\scriptstyle \alpha}
(8,-3)*{};(8,3)*{}} ; (0,0)*+{\scs y}="4"; (-16,0)*+{\scs z}="6";
{\ar@/^1.65pc/^{f'} "4";"6"}; {\ar@/_1.65pc/_{g'} "4";"6"};
{\ar@{=>}_<<<{\scriptstyle \beta} (-8,-3)*{};(-8,3)*{}} ;
\endxy
\qquad = \qquad \xy (8,0)*+{\scs x}="4"; (-8,0)*+{\scs z}="6";
{\ar@/^1.65pc/^{f' f} "4";"6"}; {\ar@/_1.65pc/_{g'  g} "4";"6"};
{\ar@{=>}_<<<{\scriptstyle \beta \ast \alpha} (-.5,-3)*{};(-.5,3)*{}} ;
\endxy
\]
and this operation is associative in the sense that when
   \[
   \xy
   (-16,0)*+{\scs x}="4";
   (0,0)*+{\scs y}="6";
    {\ar@/^1.65pc/^{f_1} "6";"4"};
    {\ar@/_1.65pc/_{f_2} "6";"4"};
    {\ar@{=>}^<<<{\scriptstyle \gamma} (-8,-3)*{};(-8,3)*{}} ;
    (0,0)*+{\scs y}="4";
    (16,0)*+{\scs z}="6";
    {\ar@/^1.65pc/^{g_1} "6";"4"};
    {\ar@/_1.65pc/_{g_2} "6";"4"};
    {\ar@{=>}^<<<{\scriptstyle \beta} (8,-3)*{};(8,3)*{}} ;
    (16,0)*+{\scs z}="4";
    (32,0)*+{\scs w}="6";
    {\ar@/^1.65pc/^{g_1} "6";"4"};
    {\ar@/_1.65pc/_{g_2} "6";"4"};
    {\ar@{=>}^<<<{\scriptstyle \alpha} (24,-3)*{};(24,3)*{}} ;
    \endxy
   \qquad \text{ we have} \qquad
    (\gamma \ast \beta) \ast \alpha = \gamma \ast (\beta \ast
    \alpha) .
    \]
\item The composition operation in the category $\cal{K}(x,y)$ is called vertical composition in $\cal{K}$, and we depict it as follows:
    \[
\xy (8,0)*+{\scs x}="4"; (-8,0)*+{\scs y}="6"; {\ar^(.7){g} "4";"6"}; {\ar@/^1.75pc/^{f}
"4";"6"}; {\ar@/_1.75pc/_{h} "4";"6"}; {\ar@{=>}_<<{\scriptstyle \alpha}
(0,-6)*{};(0,-1)*{}} ; {\ar@{=>}_<<{\scriptstyle \beta} (0,1)*{};(0,6)*{}} ;
\endxy
\qquad = \qquad \xy (8,0)*+{\scs x}="4"; (-8,0)*+{\scs y}="6"; {\ar@/^1.65pc/^{f}
"4";"6"}; {\ar@/_1.65pc/_{h} "4";"6"}; {\ar@{=>}_<<<{\scriptstyle \beta
\alpha} (-.5,-3)*{};(-.5,3)*{}} ;
\endxy.
\]
Vertical composition is associative in the sense that for a composable triple of 2-morphisms
\[
     \xy
    (-9,0)*+{\scs y}="1";
    (9,0)*+{\scs x}="2";
    {\ar@/^1.1pc/^<<<<g "2";"1"};
    {\ar@/_1.1pc/_<<<<h "2";"1"};
    {\ar@/^2.5pc/^f "2";"1"};
    {\ar@/_2.5pc/_i "2";"1"};
    {\ar@{=>}^{\beta} (0,-1.5)*{};(0,1.5)*{}} ;
    {\ar@{=>}^{\alpha} (0,-9)*{};(0,-6)*{}} ;
    {\ar@{=>}^{\gamma} (0,6)*{};(0,9)*{}} ;
\endxy
   \qquad  \text{we have} \qquad
  (\gamma  \beta)  \alpha = \gamma  ( \beta
  \alpha).
   \]
\end{itemize}

Just as categories have identity axioms for composing a morphism with an identity morphism, a 2-category has identity axioms for {\em each} form of composition.
\begin{itemize}
  \item For every object $x$ there is a 1-morphism $\xymatrix{ x  & \ar[l]_-{1_x}x }$ such that
    \[
    \xymatrix{ y  & x \ar[l]_{f} & \ar[l]_{1_x}x} \quad =\quad
    \xymatrix{ y  & \ar[l]_{f}x }\quad =\quad
    \xymatrix{ y  & \ar[l]_{1_y}y  & \ar[l]_{f}x} .
    \]
  \item For every 1-morphism $\xymatrix{ y  & \ar[l]_{f}x }$ there exists a 2-morphism $\xy
    (6,0)*+{\scriptstyle x}="1"; (,8)*+{};
    (-6,0)*+{\scriptstyle y}="2"; {\ar@/^1.2pc/^f "1";"2"}; {\ar@/_1.2pc/_f "1";"2"};
    {\ar@{=>}_{1_f} (0,-2);(0,2)};
    \endxy$ such that
    \[
\xy (8,0)*+{\scs x}="4"; (-8,0)*+{\scs y}="6"; {\ar^<<<<g "4";"6"};
{\ar@/^1.75pc/^{f} "4";"6"}; {\ar@/_1.75pc/_{g} "4";"6"};
{\ar@{=>}_<<{\scriptstyle \alpha} (0,-6)*{};(0,-1)*{}} ;
{\ar@{=>}_<<{\scriptstyle 1_g} (0,1)*{};(0,6)*{}} ;
\endxy
\quad =\quad
\xy (8,0)*+{\scs x}="4"; (-8,0)*+{\scs y}="6"; {\ar@/^1.65pc/^{f}
"4";"6"}; {\ar@/_1.65pc/_{g} "4";"6"}; {\ar@{=>}_<<<{\scriptstyle \alpha}
(.5,-3)*{};(.5,3)*{}} ;
\endxy
\quad =\quad
 \xy (8,0)*+{\scs x}="4"; (-8,0)*+{\scs y}="6"; {\ar^<<<<f
"4";"6"}; {\ar@/^1.75pc/^{f} "4";"6"}; {\ar@/_1.75pc/_{g} "4";"6"};
{\ar@{=>}^<<{\scriptstyle 1_f} (0,-6)*{};(0,-1)*{}} ; {\ar@{=>}^<<{\scriptstyle
\alpha} (0,1)*{};(0,6)*{}} ;
\endxy
\]
and
\[
\xy (16,0)*+{\scs x}="4"; (0,0)*+{\scs y}="6"; {\ar@/^1.65pc/^{f} "4";"6"};
{\ar@/_1.65pc/_{g} "4";"6"}; {\ar@{=>}_<<<{\scriptstyle \alpha}
(8,-3)*{};(8,3)*{}} ; (0,0)*+{\scs y}="4"; (-16,0)*+{\scs y}="6";
{\ar@/^1.65pc/^{1_y} "4";"6"}; {\ar@/_1.65pc/_{1_y} "4";"6"};
{\ar@{=>}_<<<{\scriptstyle 1_{1_y}} (-8,-3)*{};(-8,3)*{}} ;
\endxy
\quad = \quad \xy (8,0)*+{\scs x}="4"; (-8,0)*+{\scs y}="6"; {\ar@/^1.65pc/^{f}
"4";"6"}; {\ar@/_1.65pc/_{g} "4";"6"}; {\ar@{=>}_<<<{\scriptstyle \alpha}
(.5,-3)*{};(.5,3)*{}} ;
\endxy
\quad = \quad
  \xy (16,0)*+{\scs x}="4"; (0,0)*+{\scs x}="6";
{\ar@/^1.65pc/^{1_x} "4";"6"}; {\ar@/_1.65pc/_{1_x} "4";"6"};
{\ar@{=>}_<<<{\scriptstyle 1_{1_x}} (8,-3)*{};(8,3)*{}} ; (0,0)*+{\scs x}="4";
(-16,0)*+{\scs y}="6"; {\ar@/^1.65pc/^{f} "4";"6"}; {\ar@/_1.65pc/_{g} "4";"6"};
{\ar@{=>}_<<<{\scriptstyle \alpha} (-8,-3)*{};(-8,3)*{}} ;
\endxy .
\]
\end{itemize}
The identity morphisms must be compatible with composition in the sense that
\[
 \xy
    (-8,0)*+{\scs x}="4"; (8,0)*+{\scs y}="6";
    {\ar@/^1.65pc/^{g} "6";"4"};
    {\ar@/_1.65pc/_{g} "6";"4"};
    {\ar@{=>}^<<<{\scriptstyle 1_g} (0,-3)*{};(0,3)*{}} ;
\endxy
 \ast
  \xy
    (-8,0)*+{\scs y}="4"; (8,0)*+{\scs z}="6";
    {\ar@/^1.65pc/^{f} "6";"4"};
    {\ar@/_1.65pc/_{f} "6";"4"};
    {\ar@{=>}^<<<{\scriptstyle 1_f} (0,-3)*{};(0,3)*{}} ;
\endxy
 =
  \xy
    (-8,0)*+{\scs x}="4"; (8,0)*+{\scs z}="6";
    {\ar@/^1.65pc/^{g  f} "6";"4"};
    {\ar@/_1.65pc/_{g  f} "6";"4"};
    {\ar@{=>}^<<<{\scriptstyle 1_{g  f}} (1,-3)*{};(1,3)*{}} ;
\endxy.
\]

The most interesting axiom in the definition of a 2-category is the interchange law:
\begin{equation} \label{eq_interchange}
  \xy
    (-16,0)*+{\scs z}="4";
    (0,0)*+{\scs y}="6";
    (16,0)*+{\scs x}="8";
    {\ar "6";"4"};
    {\ar "8";"6"};
    {\ar@/^1.75pc/^{f'} "6";"4"};
    {\ar@/_1.75pc/_{g'} "6";"4"};
    {\ar@/^1.75pc/^{f} "8";"6"};
    {\ar@/_1.75pc/_{g} "8";"6"};
    {\ar@{=>}^<<{\scriptstyle \gamma} (-8,-6)*{};(-8,-1)*{}} ;
    {\ar@{=>}^<<{\scriptstyle \delta} (-8,1)*{};(-8,6)*{}} ;
    {\ar@{=>}^<<{\scriptstyle \alpha} (8,-6)*{};(8,-1)*{}} ;
    {\ar@{=>}^<<{\scriptstyle \beta} (8,1)*{};(8,6)*{}} ;
\endxy  \qquad (\delta  \gamma) \ast (\beta  \alpha) \;\; = \;\;
 (\delta \ast \beta)  (\gamma \ast \alpha),
\end{equation}
which we may picture as
\[
  \xy 
    (-20,0)*+{\scs x}="4";
    (-4,0)*+{\scs y}="6";
      {\ar "6";"4"};
      {\ar@/^1.75pc/ "6";"4"};
      {\ar@/_1.75pc/ "6";"4"};
      {\ar@{=>}^{\scs \gamma} (-12,-5)*{};(-12,-2)*{}} ;
      {\ar@{=>}^{\scs \delta} (-12,2)*{};(-12,5)*{}} ;
    (4,0)*+{\scs y}="4";
    (20,0)*+{\scs z}="6";
      {\ar "6";"4"};
      {\ar@/^1.75pc/ "6";"4"};
      {\ar@/_1.75pc/ "6";"4"};
      {\ar@{=>}^{\scs \alpha}  (12,-5)*{};(12,-2)*{}} ;
      {\ar@{=>}^{\scs \beta} (12,2)*{};(12,5)*{}} ;
  (0,0)*{\ast};
 \endxy \qquad = \qquad
 \xy 
    (-16,5)*+{\scs x}="1";
    (0,5)*+{\scs y}="2";
      {\ar@/_1.75pc/ "2";"1"};
      {\ar "2";"1"}; (-0,5)*+{\scs y}="3";
    (16,5)*+{\scs z}="4";
      {\ar "4";"3"};
      {\ar@/_1.75pc/ "4";"3"};
      {\ar "4";"3"};
      {\ar@{=>}^{\scs \beta} (8,7)*{};(8,10)*{}} ;
      {\ar@{=>}^{\scs \delta} (-8,7)*{};(-8,10)*{}} ;
    (-16,-5)*+{\scs x}="1";
    (0,-5)*+{\scs y}="2";
      {\ar "2";"1"};
      {\ar@/^1.75pc/ "2";"1"};
      (-0,-5)*+{\scs y}="3";
    (16,-5)*+{\scs z}="4";
      {\ar "4";"3"}; {\ar@/^1.75pc/ "4";"3"};
       {\ar "4";"3"};
       {\ar@{=>}^{\scs \alpha} (8,-10)*{};(8,-7)*{}} ;
       {\ar@{=>}^{\scs \gamma} (-8,-10)*{};(-8,-7)*{}} ;
(0,0)*{\circ};
\endxy .
\]
\end{defn}

Many examples of 2-categories that appear in the literature are not strict, meaning that the composition of 1-morphisms and 1-morphism identity axioms only hold up to coherent isomorphism.  Such 2-categories are called {\em weak} 2-categories or {\em bicategories} (see for example~\cite{Bor}). By a coherence theorem for bicategories, every bicategory is equivalent to a strict one in a suitable sense.  So for the purposes of representing 2-categories using planar diagrammatics the difference between strict and weak 2-categories will not be important here.

\begin{examples} \hfill \label{examp-monoidal}
\begin{enumerate}
  \item $\cat{Cat}$ is the 2-category whose objects are (small) categories, morphisms are functors, and 2-morphisms are natural transformations between functors.

  \item $\Sigma(\cal{M})$: Let $\cal{M}$ be a strict monoidal category, that is, a category equipped with an associative tensor product functor allowing us to tensor objects $x \otimes y$ and tensor morphisms $f \otimes g$. We can think of $\cal{M}$ as a 2-category $\Sigma(\cal{M})$ by shifting our perspective.  Define $\Sigma(\cal{M})$ to be the 2-category that has just one object $*$. The 1-morphisms of $\Sigma(\cal{M})$ are given by the objects of $\cal{M}$.  Since our 2-category has just one object we should be able to compose any pair of one morphisms of $\Sigma(\cal{M})$.  This composition operation is just the tensor product in $\cal{M}$.  We then think of the morphisms in $\cal{M}$ as 2-morphisms in $\Sigma(\cal{M})$ with vertical composition coming from the composition in $\cal{M}$ and horizontal composition coming from the tensor product of morphisms in $\cal{M}$.  This idea is summarized in the table below:
  \[
\begin{tabular}{|l|l|}
  \hline
   The 2-category $\Sigma(\cal{M})$ &   \\ \hline
  objects & $*$ \\
  morphisms & objects of $\cal{M}$  \\
  composition of 1-morphisms & tensor product of objects of $\cal{M}$  \\
  identity 1-morphism & unit for tensor product in $\cal{M}$ \\
  2-morphisms & morphisms of $\cal{M}$  \\
  vertical composition & composition of morphisms in $\cal{M}$  \\
  horizontal composition  & tensor product of morphisms in $\cal{M}$  \\
  \hline
\end{tabular}
\]

Unfortunately, most of the interesting examples of categories with a tensor product, like the category of vector spaces with the usual tensor product $(\cat{Vect}_{\Bbbk}, \otimes_{\Bbbk})$, do not have a strictly associative tensor structures since for example, $(V\otimes_{\Bbbk} W) \otimes_{\Bbbk} Z$ is only isomorphic to $V\otimes_{\Bbbk} (W \otimes_{\Bbbk} Z)$.  However, we can still do the trick described above, but we get a weak 2-category rather than a strict one.

\item Another example of a 2-category that we cannot help but describe here is the 2-category $\cat{Bim}$.  The objects of $\cat{Bim}$ are $\Bbbk$-algebras.  A 1-morphism between a pair of $\Bbbk$-algebras $R$ and $S$ is an $(S,R)$-bimodule $_{S}M_{R}$.  Composition is performed using the tensor product of bimodules
\[
 \xymatrix{T &&\ar[ll]_-{_TN_{S}} S && \ar[ll]_-{_SM_{R}}R}  = \xymatrix{T && \ar[ll]_{_TN_{S} \otimes_S {}_SM_{R}} R}.
\]
Note that composition is not strictly associative since the tensor product is only associative up canonical isomorphism, so $\cat{Bim}$ is a weak 2-category.  A 2-morphism in $\cat{Bim}$ is a bimodule homomorphism.  Note that bimodule homomorphisms can be composed (vertical composition) and tensored (horizontal composition).
\end{enumerate}
\end{examples}

While the globular diagrams we have used above in the definition of a 2-category already have a 2-dimensional flavour stemming from the two types of composition, it turns out we will be primarily interested in a different flavour of diagrams for 2-categories called string diagrams.

%
\subsection{String diagrams} \label{sec_string}
%

String diagrams as an informal tool for computations existed for some time before the idea was formalized in the language of higher category theory~\cite{js2}. String diagrams are diagrammatic representations of 2-morphisms that are Poincar\'{e} dual to the more traditional globular diagrams used in the previous section.  Vertices in the globular diagrams labelled by objects of $\cal{K}$ become regions in the plane in a string diagram. The 1-morphisms in $\cal{K}$ are still represented by 1-dimensional edges in a string diagram, but these edges go in an orthogonal direction to the edges in the globular diagram. For 2-morphisms the dimensions are again flipped so that the 2-dimensional globular region between two edges is transformed into 0-dimensional vertex.
\[
\xy (8,0)*+{\scs x}="4"; (-8,0)*+{\scs y}="6"; {\ar@/^1.65pc/^{f}
"4";"6"}; {\ar@/_1.65pc/_{g} "4";"6"}; {\ar@{=>}_<<<{\scriptstyle
\alpha} (-.5,-3)*{};(-.5,3)*{}} ;
\endxy
\qquad \rightsquigarrow \quad
  \xy
  (-10,0)*{y};
  (8,0)*{x};
  (0,10);(0,-10) **\dir{-};
  (0,0)*{\bullet}+(2.5,1)*{ \alpha};
  (0,-12)*{\scs f};
  (0,12)*{\scs g};
  \endxy
\]
The diagram on the right is a string diagram representing a 2-morphism $\alpha \maps f \To g$ between 1-morphisms $f,g \maps x \to y$.  Our convention here is that string diagrams are read from bottom to top and from right to left.  Notice that all the combinatorial source and target information can be immediately read off from the string diagram using this convention.

More interesting 2-morphisms between composites of
1-morphisms, such as a 2-morphism $\alpha$ between the composite $\xymatrix@1{z
& \ar[l]_{f_2}y  &\ar[l]_{f_1} x }$ and the composite $\xymatrix@1{z
& \ar[l]_{g_2}w  & \ar[l]_{g_1}x }$, can be represented as
\[
  \xy
 (22,0)*+{x}="L";
(0,0)*+{y}="M";
(11,10)*+{w}="LT";
(11,-10)*+{z}="RB";
{\ar@/_.4pc/_{g_1} "L";"LT" };
{\ar@/^.4pc/^{f_1} "L";"RB" };
{\ar@/_.4pc/_{g_2} "LT";"M" };
{\ar@/^.4pc/^{f_2} "RB";"M" };
{\ar@{=>}^{\alpha} (11,-5)*{};(11,5)*{}};
\endxy
\qquad \rightsquigarrow \qquad
 \xy (0,12)*{};
  (0,0)*{\bullet}="X";
  (-4,10)*{}="TL"; (4,10)*{}="TR";(-4,-10)*{}="BL"; (4,-10)*{}="BR";
  "TL"; "X" **\crv{(-4,4)};
  "TR"; "X" **\crv{(4,4)} ;
  "X"; "BL" **\crv{(-4,-4)};
  "X"; "BR" **\crv{(4,-4)} ;
  (3,0)*{\alpha};
  (-10,0)*{y};
  (10,0)*{x};
  (0,6)*{w};
  (0,-6)*{z};
  (4,-12)*{\scs f_1};
  (-4,-12)*{\scs f_2};
  (4,12)*{\scs g_1};
  (-4,12)*{\scs g_2};
\endxy
\]
and between more general composites as
\begin{equation} \label{eq_general_nattrans}
  \xy
 {\ar@{=>}^{\alpha'}
 (0,-14)*+{\scs f_n\cdots f_2f_1};
 (0,14)*+{\scs g_m\cdots g_2g_1}};
 \endxy
 \quad \rightsquigarrow \qquad
  \xy
  (8,8)*{w_1};
  (0,8)*{w_2};
  (12,0)*{x};
  (5,-8)*{y_1};
  (-16,0)*{z};
  (0,0)*{\bullet}="x";
  (-8,6)*{\cdots};
  (-2,-6)*{\cdots};
  (14,14)*+{\scs g_1}; "x" **\crv{(12,4)};
  (5,14)*+{\scs g_2}; "x" **\crv{(4,4)};
  (-5,14)*+{\scs g_3}; "x" **\crv{(-4,4)};
  (-16,14)*+{\scs g_m}; "x" **\crv{(-14,4)};
  "x";(8,-14)*+{\scs f_1} **\crv{(12,-6)};
  "x";(2,-14)*+{\scs f_2} **\crv{};
  "x";(-12,-14)*+{\scs f_n} **\crv{(-12,-6)};
  \endxy
\end{equation}
where the composites $f_n\cdots f_2f_1$ and $g_m\cdots g_2g_1$ are composable
strings of 1-morphisms mapping $x$ to $z$.  In string notation we can think of a 1-morphism as being represented by a sequence of dots on a line corresponding to the endpoints of the edges in the string diagrams.  Composition of 1-morphisms is represented in string notation by placing these endpoints side by side. We could have drawn the string diagram above in the form:
\[  \xy
  (-8,-4)*{z};
  (8,-4)*{x};
  (0,10);(0,-10) **\dir{-};
  (0,0)*{\bullet}+(3.5,1)*{ \alpha'};
  (0,-12)*{\scs \scs f_n\cdots f_2f_1};
  (0,12)*{\scs g_m\cdots g_2g_1};
  \endxy
\]
but we will use the form in \eqref{eq_general_nattrans} because it makes the composition structure more transparent.

Identity 1-morphisms are not depicted in string diagrams.  For example, let $f,g \maps x \to y$ be 1-morphisms in $\cal{K}$ and consider a 2-morphism $\alpha \maps f \To g$. By the identity axioms for identity 1-morphisms,  this 2-morphism is equal to the obvious 2-morphisms $\alpha \maps 1_yf \To g$ and $\alpha \maps f1_x \To g$.  In string notation this implies an equality of string diagrams
\[
  \xy (0,12)*{};
  (0,0)*{\bullet}="X";
  (0,10)*{}="TL"; (-4,-10)*{}="BL"; (4,-10)*{}="BR";
  "TL"; "X" **\dir{-};
  "X"; "BL" **\crv{(-4,-4)};
  "X"; "BR" **\crv{(4,-4)} ;
  (3,0)*{\alpha};
  (-10,0)*{y};
  (10,0)*{x};
  (0,-6)*{y};
  (4,-12)*{\scs f};
  (-4,-12)*{\scs  1_y};
  (0,12)*{\scs g};
\endxy
\quad = \quad
  \xy (0,12)*{};
  (0,0)*{\bullet}="X";
  (0,10)*{}="TL";  (0,-10)*{}="BR";
  "TL"; "X" **\dir{-};
  "X"; "BR" **\dir{-} ;
  (3,0)*{\alpha};
  (-10,0)*{y};
  (10,0)*{x};
  (0,-12)*{\scs f};
  (0,12)*{\scs g};
\endxy
\quad = \quad
  \xy (0,12)*{};
  (0,0)*{\bullet}="X";
  (0,10)*{}="TL"; (-4,-10)*{}="BL"; (4,-10)*{}="BR";
  "TL"; "X" **\dir{-};
  "X"; "BL" **\crv{(-4,-4)};
  "X"; "BR" **\crv{(4,-4)} ;
  (3,0)*{\alpha};
  (-10,0)*{y};
  (10,0)*{x};
  (0,-6)*{x};
  (4,-12)*{\scs 1_x};
  (-4,-12)*{\scs f};
  (0,12)*{\scs  g};
\endxy
\]
But since we do not draw identity 1-morphisms in string notation we can think of the axiom for identity 1-morphisms as saying that
\[
  \xy (0,12)*{};
  (0,0)*{\bullet}="X";
  (0,10)*{}="TL"; (-4,-10)*{}="BL"; (4,-10)*{}="BR";
  "TL"; "X" **\dir{-};
  "X"; "BR" **\crv{(4,-4)} ;
  (3,0)*{\alpha};
  (-10,0)*{y};
  (10,0)*{x};
  (0,-6)*{y};
  (4,-12)*{\scs f};
  (0,12)*{\scs g};
\endxy
\quad = \quad
  \xy (0,12)*{};
  (0,0)*{\bullet}="X";
  (0,10)*{}="TL";  (0,-10)*{}="BR";
  "TL"; "X" **\dir{-};
  "X"; "BR" **\dir{-} ;
  (3,0)*{\alpha};
  (-10,0)*{y};
  (10,0)*{x};
  (0,-12)*{\scs f};
  (0,12)*{\scs g};
\endxy
\quad = \quad
  \xy (0,12)*{};
  (0,0)*{\bullet}="X";
  (0,10)*{}="TL"; (-4,-10)*{}="BL"; (4,-10)*{}="BR";
  "TL"; "X" **\dir{-};
  "X"; "BL" **\crv{(-4,-4)};
  (3,0)*{\alpha};
  (-10,0)*{y};
  (10,0)*{x};
  (0,-6)*{x};
  (-4,-12)*{\scs f};
  (0,12)*{\scs  g};
\endxy
\]
so that the relative horizontal position of the top and bottom endpoints in a string diagram is not relevant: any placement of the endpoints gives rise to a string diagram representing the same 2-morphism in $\cal{K}$.

\begin{examples} Below we give several examples of 2-morphisms between various composite 1-morphisms to help illustrate this convention for depicting identity 1-morphisms.
\begin{enumerate}[1)]
  \item $
  \xy (8,0)*+{\scs x}="4"; (-8,0)*+{\scs x}="6"; {\ar@/^1.65pc/^{1_x}
"4";"6"}; {\ar@/_1.65pc/_{g} "4";"6"}; {\ar@{=>}_<<<{\scriptstyle
\alpha} (-.5,-3)*{};(-.5,3)*{}} ;
\endxy \quad \rightsquigarrow \quad
  \vcenter{\xy
  (0,4)*{\bullet}="X";
   (0,10)*{}="BL";
  "X"; "BL" **\dir{-};
  (3,4)*{\alpha};
  (10,0)*{x};
  (0,12)*{\scs  g};
\endxy}
$
  \item
$\xy
 (8,0)*+{\scs x}="4";
 (-8,0)*+{\scs x}="6";
 (0,-6)*+{\scs y}="b";
 {\ar@/^.35pc/^{f} "4";"b"};
 {\ar@/^.35pc/^{g} "b";"6"};
 {\ar@/_1.65pc/_{1_x} "4";"6"};
 {\ar@{=>}_<<<{\scriptstyle
\alpha} (-.5,-3)*{};(-.5,3)*{}} ;
\endxy
\quad \rightsquigarrow \quad
  \xy
  (0,0)*{\bullet}="X";
   (-4,-10)*{}="BL"; (4,-10)*{}="BR";
  "X"; "BL" **\crv{(-4,-4)};
  "X"; "BR" **\crv{(4,-4)} ;
  (3,0)*{\alpha};
  (10,4)*{x};
  (0,-6)*{y};
  (4,-12)*{\scs f};
  (-4,-12)*{\scs  g};
\endxy$
\end{enumerate}
\end{examples}

Strings diagrams also give a natural way to depict the various composition operations for 2-morphisms.  Horizontal composition is achieved by placing string diagrams side by side:
  \[
\xy (16,0)*+{\scs x}="4"; (0,0)*+{\scs y}="6"; {\ar@/^1.65pc/^{f} "4";"6"};
{\ar@/_1.65pc/_{g} "4";"6"}; {\ar@{=>}_<<<{\scriptstyle \alpha}
(8,-3)*{};(8,3)*{}} ; (0,0)*+{\scs y}="4"; (-16,0)*+{\scs z}="6";
{\ar@/^1.65pc/^{f'} "4";"6"}; {\ar@/_1.65pc/_{g'} "4";"6"};
{\ar@{=>}_<<<{\scriptstyle \beta} (-8,-3)*{};(-8,3)*{}} ;
\endxy
\quad
\rightsquigarrow \quad
  \xy
  (-18,6)*{z};
  (-5,6)*{y};
  (8,6)*{x};
  (0,10);(0,-10) **\dir{-};
  (0,-12)*{\scs f};
  (0,12)*{\scs g};
  (-10,10);(-10,-10) **\dir{-};
  (-10,-12)*{\scs f'};
  (-10,12)*{\scs g'};
  (0,0)*{\bullet};
  (-10,0)*{\bullet};
  (2.5,0)*{\scs \alpha};
  (-7.5,0)*{\scs \beta};
  \endxy
\]
By the associativity of horizontal composition we get a well defined 2-morphism in $\cal{K}$ obtained from placing three string diagrams side by side.  The order in which we regard this horizontal composite does not matter.

Vertical composition of 2-morphisms is achieved by stacking diagrams on top of each other
\[
\xy (8,0)*+{\scs x}="4"; (-8,0)*+{\scs y}="6"; {\ar^(.7){g} "4";"6"}; {\ar@/^1.75pc/^{f}
"4";"6"}; {\ar@/_1.75pc/_{h} "4";"6"}; {\ar@{=>}_<<{\scriptstyle \alpha}
(0,-6)*{};(0,-1)*{}} ; {\ar@{=>}_<<{\scriptstyle \beta} (0,1)*{};(0,6)*{}} ;
\endxy
\quad \rightsquigarrow \quad
   \xy
  (-10,0)*{y};
  (8,0)*{x};
  (0,10);(0,-10) **\dir{-};
  (0,-12)*{\scs f};
  (-2,0)*{\scs g};
  (0,12)*{\scs h};
  (0,-5)*{\bullet};
  (3.5,-5)*{\scs \alpha};
  (0,5)*{\bullet};
  (3.5,5)*{\scs \beta};
  \endxy
\]

Because identity 2-morphisms can be removed from composites using the identity axioms in a 2-category we simplify the presentation of string diagrams by not drawing identity 2-morphisms either.  We write
\[
  \xy
  (-8,0)*{y};
  (8,0)*{x};
  (0,10);(0,-10) **\dir{-};
  (0,0)*{\bullet}+(3,1)*{\scs 1_f};
  (0,-12)*{\scs f};
  (0,12)*{\scs f};
  \endxy
  \qquad \quad  \text{as} \qquad \quad
   \xy
  (-8,0)*{y};
  (8,0)*{x};
  (0,10);(0,-10) **\dir{-};
  (0,-12)*{\scs f};
  (0,12)*{\scs f};
  \endxy
\]
for simplicity.

\begin{examples} Below we give several examples to illustrate compositions of 2-morphisms involving identity morphisms and 2-morphisms.
\begin{enumerate}[1)]
  \item $
 \xy
 (12,0)*+{\scs x}="4";
 (-12,0)*+{\scs x}="6";
 (0,0)*+{\scs y}="m";
 {\ar^{f} "4";"m"}; {\ar^{g} "m";"6"};
 {\ar@/^1.95pc/^{1_x} "4";"6"};
 {\ar@/_1.95pc/_{1_x} "4";"6"};
 {\ar@{=>}_<<{\scriptstyle \alpha}(0,-7)*{};(0,-2)*{}}; {\ar@{=>}_<<{\scriptstyle \beta} (0,2)*{};(0,7)*{}};
\endxy \quad \rightsquigarrow \quad
  \xy
  (0,5)*{\bullet}="X";
   (0,-5)*{\bullet}="B";
  "X"; "B" **\crv{(-7,0)};
  "X"; "B" **\crv{(7,0)} ;
  (1,-7)*{\scs \alpha}; (1,7)*{\scs \beta};
  (8,6)*{x};
  (0,0)*{y};
  (6,0)*{\scs f};
  (-6,0)*{\scs  g};
\endxy$
\item The identity 2-morphism of an identity 1-morphism $1_x \maps x \to x$ just appears as a region labelled by the object $x$
\[\xy (8,0)*+{\scs x}="4"; (-8,0)*+{\scs x}="6"; {\ar@/^1.65pc/^{1_x}
"4";"6"}; {\ar@/_1.65pc/_{1_x} "4";"6"}; {\ar@{=>}_<<<{ 1_{1_x}} (-.5,-3)*{};(-.5,3)*{}} ;
\endxy \quad \rightsquigarrow \quad
  \xy
  (-6,6)*{}="X"; (6,-6)*{}="X";
  (4,4)*{x};
\endxy\]
\item $
 \xy
 (0,0)*+{\scs y}="4";
 (-24,0)*+{\scs y}="6";
 (-12,0)*+{\scs z}="m";
 {\ar^{f} "4";"m"}; {\ar^{g} "m";"6"};
 {\ar@/^1.95pc/^{h} "4";"6"};
 {\ar@/_1.95pc/_{1_y} "4";"6"};
 {\ar@{=>}_<<{\scriptstyle \alpha}(-12,-7)*{};(-12,-2)*{}}; {\ar@{=>}_<<{\scriptstyle \beta} (-12,2)*{};(-12,7)*{}};
 (16,0)*+{\scs x}="r";
 {\ar_{k} "r";"4"};
\endxy
\quad = \quad
 \xy
 (0,0)*+{\scs y}="4";
 (-24,0)*+{\scs y}="6";
 (-12,0)*+{\scs z}="m";
 {\ar^{f} "4";"m"}; {\ar^{g} "m";"6"};
 {\ar@/^1.95pc/^{h} "4";"6"};
 {\ar@/_1.95pc/_{1_y} "4";"6"};
 {\ar@{=>}_<<{\scriptstyle \alpha}(-12,-7)*{};(-12,-2)*{}}; {\ar@{=>}_<<{\scriptstyle \beta} (-12,2)*{};(-12,7)*{}};
 (16,0)*+{\scs x}="r";
{\ar^(.3){k} "r";"4"};
{\ar@/^1.75pc/^{k} "r";"4"};
{\ar@/_1.75pc/_{k} "r";"4"}; {\ar@{=>}_<<{\scriptstyle 1_k}
(7,-6)*{};(7,-1)*{}} ; {\ar@{=>}_<<{\scriptstyle 1_k} (7,1)*{};(7,6)*{}} ;
\endxy \quad \rightsquigarrow \quad
  \xy
  (0,5)*{\bullet}="X";
  (0,-5)*{\bullet}="B";
  (0,-10)*{}="BB";
    "X"; "B" **\crv{(-7,0)};  "X"; "B" **\crv{(7,0)} ;
  "B";"BB" **\dir{-};
  (2.5,-7)*{\scs \alpha}; (1,7)*{\scs \beta};
  (8,6)*{y}; (20,6)*{x};
  (0,0)*{z};
  (6,0)*{\scs f}; (0,-12)*{\scs h};
  (-6,0)*{\scs  g};
  (14,10)*{}; (14,-10)*{}; **\dir{-};
  (14,-12)*{\scs  k};
\endxy$
\end{enumerate}
\end{examples}

To convert a string diagram $D$ back into a globular diagram we must be a bit more precise with our set up. Regard the string diagram $D$ as living in the infinite strip $\R \times [0,1]$. The boundary of the string diagram $D$ is mapped to the top and bottom of the strip with the source 1-morphisms mapping to $\R \times \{0\}$ and the target to $\R \times \{1\}$.

Each vertex of $D$ corresponding to a nonidentity 2-morphism occurs at some horizontal line $\R \times \{t\}$ with $0 < t < 1$. The string diagram $D$ then intersects the line $\R \times \{t\}$ at points $(n_1,t), (n_2,t), \dots, (n_a,t)$ with $n_1 < n_2 < \dots < n_a$.  For each point $(n_s,t)$, with $1 \leq  s \leq a$, in the intersection, we get a 2-morphism $\alpha_{(n_s,t)}$ depending on wether the intersection point $(n_s,t)$ is a vertex of $D$, or a point lying on a vertical line. If the intersection point is a vertex of $D$, then let $\alpha_{(n_s,t)}$ be the 2-morphism labelling this vertex, otherwise let  $\alpha_{(n_s,t)}=\Id_{f_{(n_s,t)}}$ where $f_{(n_s,t)}$ is the 1-morphism labelling the line intersecting $n_{s} \times \{t\}$.  To the intersection $D \cap (\R \times \{t\})$ we can then associate the horizontal composite $\alpha_t:= \alpha_{(n_1,t)} \ast \alpha_{(n_2,t)} \ast \dots \ast \alpha_{(n_a,t)}$.  If the all vertices of $D$ occur at heights $t_1$, $t_2$, \dots $t_b$ with $0<t_1 < t_2 < \dots < t_b$, then the 2-morphism corresponding to the string diagram $D$ is then the vertical composite $\alpha_{t_b} \dots \alpha_{t_2}\alpha_{t_1}$.

The height that we place vertices labelling a 2-morphism is not relevant in the sense that placing the label at any height gives rise to the same 2-morphism in $\cal{K}$.  To see this note that the axiom describing the behaviour of identity 2-morphisms under vertical composition implies
\begin{alignat}{3}
 \xy
  (-8,0)*{y};
  (8,0)*{x};
  (0,10);(0,-10) **\dir{-};
  (0,-12)*{\scs f};
  (0,12)*{\scs g};
  (0,-5)*{\bullet};
  (3.5,-5)*{ \alpha};
  (0,5)*{};
  (3.5,5)*{};
  \endxy
  &\quad =\quad&
  \xy
  (-8,0)*{y};
  (8,0)*{x};
  (0,10);(0,-10) **\dir{-};
  (0,0)*{\bullet}+(2.5,1)*{ \alpha};
  (0,-12)*{\scs f};
  (0,12)*{\scs g};
  \endxy
  &\quad = \quad&
     \xy
  (-8,0)*{y};
  (8,0)*{x};
  (0,10);(0,-10) **\dir{-};
  (0,-12)*{\scs f};
  (0,12)*{\scs g};
  (0,-5)*{};
  (3.5,-5)*{\scs };
  (0,5)*{\bullet};
  (3.5,5)*{ \alpha};
  \endxy \nn\\ \nn\\\nn
\xy (8,0)*+{\scs x}="4"; (-8,0)*+{\scs y}="6"; {\ar^<<<<g "4";"6"};
{\ar@/^1.75pc/^{f} "4";"6"}; {\ar@/_1.75pc/_{g} "4";"6"};
{\ar@{=>}_<<{\scriptstyle \alpha} (0,-6)*{};(0,-1)*{}} ;
{\ar@{=>}_<<{\scriptstyle 1_g} (0,1)*{};(0,6)*{}} ;
\endxy
&\quad=\quad&
\xy (8,0)*+{\scs x}="4"; (-8,0)*+{\scs y}="6"; {\ar@/^1.65pc/^{f}
"4";"6"}; {\ar@/_1.65pc/_{g} "4";"6"}; {\ar@{=>}_<<<{\scriptstyle \alpha}
(.5,-3)*{};(.5,3)*{}} ;
\endxy
 &\quad=\quad&
 \xy (8,0)*+{\scs x}="4"; (-8,0)*+{\scs y}="6"; {\ar^<<<<f
"4";"6"}; {\ar@/^1.75pc/^{f} "4";"6"}; {\ar@/_1.75pc/_{g} "4";"6"};
{\ar@{=>}^<<{\scriptstyle 1_f} (0,-6)*{};(0,-1)*{}} ; {\ar@{=>}^<<{\scriptstyle
\alpha} (0,1)*{};(0,6)*{}} ;
\endxy
\end{alignat}
and more generally, using the interchange law the relative positions of these vertices is also not relevant.
\begin{alignat}{3}
   \xy
  (-18,0)*{z};
  (-5,0)*{y};
  (8,0)*{x};
  (0,10);(0,-10) **\dir{-};
  (0,-12)*{\scs f};
  (0,12)*{\scs g};
  (-10,10);(-10,-10) **\dir{-};
  (-10,-12)*{\scs f'};
  (-10,12)*{\scs g'};
  (0,-5)*{\bullet};
  (-10,5)*{\bullet};
  (2.5,-5)*{\scs \alpha};
  (-7.5,5)*{\scs \beta};
  \endxy
 & \quad = \quad &
   \xy
  (-18,6)*{z};
  (-5,6)*{y};
  (8,6)*{x};
  (0,10);(0,-10) **\dir{-};
  (0,-12)*{\scs f};
  (0,12)*{\scs g};
  (-10,10);(-10,-10) **\dir{-};
  (-10,-12)*{\scs f'};
  (-10,12)*{\scs g'};
  (0,0)*{\bullet};
  (-10,0)*{\bullet};
  (2.5,0)*{\scs \alpha};
  (-7.5,0)*{\scs \beta};
  \endxy
 & \quad = \quad &
   \xy
  (-18,0)*{z};
  (-5,0)*{y};
  (8,0)*{x};
  (0,10);(0,-10) **\dir{-};
  (0,-12)*{\scs f};
  (0,12)*{\scs g};
  (-10,10);(-10,-10) **\dir{-};
  (-10,-12)*{\scs f'};
  (-10,12)*{\scs g'};
  (0,5)*{\bullet};
  (-10,-5)*{\bullet};
  (2.5,5)*{\scs \alpha};
  (-7.5,-5)*{\scs \beta};
  \endxy \nn\\
   \xy
    (-16,0)*+{\scs z}="4";
    (0,0)*+{\scs y}="6";
    (16,0)*+{\scs x}="8";
    {\ar "6";"4"};
    {\ar "8";"6"};
    {\ar@/^1.8pc/^{f'} "6";"4"};
    {\ar@/_1.8pc/_{g'} "6";"4"};
    {\ar@/^1.8pc/^{f} "8";"6"};
    {\ar@/_1.8pc/_{g} "8";"6"};
    {\ar@{=>}^<<{\scriptstyle 1_{f'}} (-8,-6)*{};(-8,-1)*{}} ;
    {\ar@{=>}^<<{\scriptstyle \beta} (-8,1)*{};(-8,6)*{}} ;
    {\ar@{=>}^<<{\scriptstyle \alpha} (8,-6)*{};(8,-1)*{}} ;
    {\ar@{=>}^<<{\scriptstyle 1_g} (8,1)*{};(8,6)*{}} ;
\endxy
 & \quad = \quad &
 \xy (16,0)*+{\scs x}="4"; (0,0)*+{\scs y}="6"; {\ar@/^1.65pc/^{f} "4";"6"};
{\ar@/_1.65pc/_{g} "4";"6"}; {\ar@{=>}_<<<{\scriptstyle \alpha}
(8,-3)*{};(8,3)*{}} ; (0,0)*+{\scs y}="4"; (-16,0)*+{\scs z}="6";
{\ar@/^1.65pc/^{f'} "4";"6"}; {\ar@/_1.65pc/_{g'} "4";"6"};
{\ar@{=>}_<<<{\scriptstyle \beta} (-8,-3)*{};(-8,3)*{}} ;
\endxy
 & \quad = \quad &
   \xy
    (-16,0)*+{\scs x}="4";
    (0,0)*+{\scs y}="6";
    (16,0)*+{\scs z}="8";
    {\ar "6";"4"};
    {\ar "8";"6"};
    {\ar@/^1.8pc/^{f'} "6";"4"};
    {\ar@/_1.8pc/_{g'} "6";"4"};
    {\ar@/^1.8pc/^{f} "8";"6"};
    {\ar@/_1.8pc/_{g} "8";"6"};
    {\ar@{=>}^<<{\scriptstyle \beta} (-8,-6)*{};(-8,-1)*{}} ;
    {\ar@{=>}^<<{\scriptstyle 1_{g'}} (-8,1)*{};(-8,6)*{}} ;
    {\ar@{=>}^<<{\scriptstyle 1_{f}} (8,-6)*{};(8,-1)*{}} ;
    {\ar@{=>}^<<{\scriptstyle \alpha} (8,1)*{};(8,6)*{}} ;
\endxy \nn
\end{alignat}
In this way, the respective heights of the labels for 2-morphisms can be regarded as corresponding to different composites with identity 2-morphisms.  However, the identity axioms together with the interchange ensure that all such choices give rise to string diagrams representing the same 2-morphism in $\cal{K}$.

%
\subsection{Graphical calculus for biadjoints} \label{subsec_biadjoint}
%

In the previous section we saw that the axioms of a 2-category give string diagrams a topological flavour, allowing us to exchange the heights of various morphisms and slide the endpoints right and left.  String diagrams take on a more dramatic  topological flavour when we consider adjoints in a 2-category.  An adjunction in a 2-category is a 2-categorical notion generalizing adjoint functors in the 2-category $\cat{Cat}$.   Recall that a 1-morphism $f \maps x \to y$ is a left adjoint to a 1-morphism $u\maps y \to x$ in a 2-category $\cal{K}$ if there exists 2-morphisms
\begin{equation} \label{eq_glob_unit-counit}
 \vcenter{\xy
 (0,0)*+{\scs y}="4";
 (-12,0)*+{\scs x}="m";
 (-24,0)*+{\scs y}="6";
 {\ar^{u} "4";"m"}; {\ar^{f} "m";"6"};
 {\ar@/_1.95pc/_{1_y} "4";"6"};
 {\ar@{=>}_<<{\scriptstyle \varepsilon} (-12,2)*{};(-12,7)*{}};
\endxy}
\qquad \qquad
 \vcenter{\xy
 (12,0)*+{\scs x}="r";
 (0,0)*+{\scs y}="4";
 (-12,0)*+{\scs x}="m";
 {\ar^{u} "4";"m"};
 {\ar@/^1.95pc/^{1_x} "r";"m"};
 {\ar@{=>}_<<{\scriptstyle \eta}(0,-7)*{};(0,-2)*{}};
 {\ar^{f} "r";"4"};
\endxy},
\end{equation}
called the counit and unit of the adjunction, such that the equalities
\begin{align} \nn
 \xy
 (12,0)*+{\scs x}="r";
 (0,0)*+{\scs y}="4";
 (-12,0)*+{\scs x}="m";
 (-24,0)*+{\scs y}="6";
 {\ar^{u} "4";"m"}; {\ar^{f} "m";"6"};  {\ar^{f} "r";"4"};
 {\ar@/^1.95pc/^{1_x} "r";"m"};
 {\ar@/_1.95pc/_{1_y} "4";"6"};
 {\ar@{=>}_<<{\scriptstyle \eta}(0,-7)*{};(0,-2)*{}};
 {\ar@{=>}_<<{\scriptstyle \varepsilon} (-12,2)*{};(-12,7)*{}};
\endxy
\quad &= \quad
\xy (8,0)*+{\scs x}="4"; (-8,0)*+{\scs y}="6"; {\ar@/^1.65pc/^{f}
"4";"6"}; {\ar@/_1.65pc/_{f} "4";"6"}; {\ar@{=>}_<<<{ 1_f} (-.5,-3)*{};(-.5,3)*{}} ;
\endxy
\\ \label{eq_glob-zigzag}
 \xy
 (12,0)*+{\scs y}="r";
 (0,0)*+{\scs x}="4";
 (-12,0)*+{\scs y}="m";
 (-24,0)*+{\scs x}="6";
 {\ar^{f} "4";"m"}; {\ar^{u} "m";"6"};  {\ar^{u} "r";"4"};
 {\ar@/^1.95pc/^{1_y} "4";"6"};
 {\ar@/_1.95pc/_{1_x} "r";"m"};
 {\ar@{=>}_<<{\scriptstyle \eta}(-12,-7)*{};(-12,-2)*{}};
 {\ar@{=>}_<<{\scriptstyle \varepsilon} (0,2)*{};(0,7)*{}};
\endxy
\quad &= \quad
\xy (8,0)*+{\scs y}="4"; (-8,0)*+{\scs x}="6";
 {\ar@/^1.65pc/^{u} "4";"6"};
 {\ar@/_1.65pc/_{u} "4";"6"};
 {\ar@{=>}_<<<{ 1_u} (-.5,-3)*{};(-.5,3)*{}} ;
\endxy
\end{align}
hold. See \cite{Bor} for the relationship of this definition to the usual definition of adjoint functors defined in terms of a natural bijection between Hom sets.

In string notation the counit and unit for the adjunction take the form:
\[
 \vcenter{\xy
 (0,0)*+{\scs y}="4";
 (-12,0)*+{\scs x}="m";
 (-24,0)*+{\scs y}="6";
 {\ar^{u} "4";"m"}; {\ar^{f} "m";"6"};
 {\ar@/_1.95pc/_{1_y} "4";"6"};
 {\ar@{=>}_<<{\scriptstyle \varepsilon} (-12,2)*{};(-12,7)*{}};
\endxy} \;\;\rightsquigarrow \;\;
\xy
  (0,0)*{\bullet}="X";
   (-4,-10)*{}="BL"; (4,-10)*{}="BR";
  "X"; "BL" **\crv{(-4,-4)};
  "X"; "BR" **\crv{(4,-4)} ;
  (3,0)*{\varepsilon};
  (10,4)*{y};
  (0,-6)*{x};
  (4,-12)*{\scs u};
  (-4,-12)*{\scs  f};
\endxy
\]
\[
 \vcenter{\xy
 (12,0)*+{\scs x}="r";
 (0,0)*+{\scs y}="4";
 (-12,0)*+{\scs x}="m";
 {\ar_{u} "4";"m"};
 {\ar@/^1.95pc/^{1_x} "r";"m"};
 {\ar@{=>}_<<{\scriptstyle \eta}(0,-7)*{};(0,-2)*{}};
 {\ar_{f} "r";"4"};
\endxy} \;\; \rightsquigarrow \;\;
\xy
  (0,0)*{\bullet}="X";
   (-4,10)*{}="BL"; (4,10)*{}="BR";
  "X"; "BL" **\crv{(-4,4)};
  "X"; "BR" **\crv{(4,4)} ;
  (3,0)*{\eta};
  (10,-4)*{x};
  (0,6)*{y};
  (4,12)*{\scs f};
  (-4,12)*{\scs  u};
\endxy
\]
It is common  to simplify the presentation of string diagrams for the unit and counit of an adjunction.  Adding an orientation to the diagram by writing
\[
  1_{f} \quad:= \quad
   \xy
  (-8,0)*{y};
  (8,0)*{x};
  (0,10);(0,-10) **\dir{-} ?(.5)*\dir{<};
  (0,-12)*{\scs f};
  \endxy
\qquad \qquad
1_{u}  \quad := \quad
   \xy
  (-8,0)*{x};
  (8,0)*{y};
  (0,10);(0,-10) **\dir{-} ?(.5)*\dir{>};
  (0,-12)*{\scs u};
  \endxy
\]
the unit and counit can be expressed
as
\begin{equation}
 \xy
 {\ar@{=>}^{\varepsilon} (0,-10)*+{\scs fu}; (0,6)*+{\scs 1_y }};
 \endxy
\quad \rightsquigarrow \quad
 \xy
  (-5,-10)*+{\scs f};(5,-10)*+{\scs u} **\crv{(-5,4)&(5,4)} ?(.2)*\dir{>} ?(.85)*\dir{>}; (0,0.5)*{\bullet}+(0,2.5)*{\scs \varepsilon};
  (5,6)*{y};(0,-4)*{x};
 \endxy
 \qquad
 \qquad
  \xy
 {\ar@{=>}^{\eta} (0,-10)*+{\scs 1_x}; (0,6)*+{\scs uf}};
 \endxy
 \quad \rightsquigarrow \quad
 \xy
  (-5,6)*+{\scs u};(5,6)*+{\scs f} **\crv{(-5,-8)&(5,-8)} ?(.17)*\dir{>} ?(.85)*\dir{>};;
  (0,-4.5)*{\bullet}+(0,-2.5)*{\scs \eta};
  (-5,-10)*{x};(0,0)*{y};
 \endxy
\end{equation}
For convenience we often omit the vertices from these diagrams and write these 2-morphisms in a simplified notation
\begin{equation}
 \xy
 {\ar@{=>}^{\varepsilon} (0,-10)*+{\scs fu}; (0,6)*+{\scs 1_y }};
 \endxy
\quad \rightsquigarrow \quad
 \xy
  (-5,-10)*+{\scs f};(5,-10)*+{\scs u} **\crv{(-5,4)&(5,4)} ?(.2)*\dir{>} ?(.85)*\dir{>};
  (0,6)*{y};(0,-4)*{x};
 \endxy
 \qquad
 \qquad
  \xy
 {\ar@{=>}^{\eta} (0,-10)*+{\scs 1_x}; (0,6)*+{\scs uf}};
 \endxy
 \quad \rightsquigarrow \quad
 \xy
  (-5,6)*+{\scs u};(5,6)*+{\scs f} **\crv{(-5,-8)&(5,-8)} ?(.17)*\dir{>} ?(.85)*\dir{>};;
  (0,-10)*{x};(0,0)*{y};
 \endxy
\end{equation}
With these orientations we have a less cluttered notation where it is no longer necessary to label the unit and counit.  We could even remove the labels for objects and morphisms from the diagrams since an upward oriented arrow corresponds to the 1-morphism $f$ and a downward oriented line corresponds to the 1-morphism $u$.

This simplification to the presentation of the unit and counit of an adjunction is best understood by considering the axioms of an adjunction in \eqref{eq_glob-zigzag}. In string notation, with the above simplifications, these equations give diagrammatic identities:
\begin{equation} \label{eq_string-zigzag1}
  \xy
    (-8,0)*{}="1";
    (0,0)*{}="2";
    (8,0)*{}="3";
    (-8,-10);"1" **\dir{-};
    "1";"2" **\crv{(-8,8) & (0,8)} ?(0)*\dir{>} ?(1)*\dir{>};
    "2";"3" **\crv{(0,-8) & (8,-8)}?(1)*\dir{>};
    "3"; (8,10) **\dir{-};
    (12,-9)*{x};
    (-6,9)*{y};
    \endxy
    \;\; =  \;\;
\xy
    (-8,0)*{}="1";
    (0,0)*{}="2";
    (8,0)*{}="3";
    (0,-10);(0,10)**\dir{-} ?(.5)*\dir{>};
    (5,8)*{x};
    (-9,8)*{y};
    \endxy
\qquad \qquad \xy
    (8,0)*{}="1";
    (0,0)*{}="2";
    (-8,0)*{}="3";
    (8,-10);"1" **\dir{-};
    "1";"2" **\crv{(8,8) & (0,8)} ?(0)*\dir{<} ?(1)*\dir{<};
    "2";"3" **\crv{(0,-8) & (-8,-8)}?(1)*\dir{<};
    "3"; (-8,10) **\dir{-};
    (12,9)*{y};
    (-6,-9)*{x};
    \endxy
    \;\; =  \;\;
\xy
    (8,0)*{}="1";
    (0,0)*{}="2";
    (-8,0)*{}="3";
    (0,-10);(0,10)**\dir{-} ?(.5)*\dir{<};
    (9,-8)*{y};
    (-6,-8)*{x};
    \endxy
\end{equation}
that we call the zig-zag identities, because they say that we can straighten out a zig-zag when it occurs in a string diagram.

When the 1-morphism $f \maps x \to y$ is both left and right adjoint to $u \maps y \to x$ we say that $f$ and $u$ are biadjoint 1-morphisms in $\cal{K}$.  Then there are cap and cup string diagrams with all possible orientations
\begin{equation}
 \xy
  (-5,-10)*+{\scs f};(5,-10)*+{\scs u} **\crv{(-5,4)&(5,4)} ?(.2)*\dir{>} ?(.85)*\dir{>};
  (0,6)*{y};(0,-4)*{x};
 \endxy
 \qquad \quad
 \xy
  (-5,6)*+{\scs u};(5,6)*+{\scs f} **\crv{(-5,-8)&(5,-8)} ?(.17)*\dir{>} ?(.85)*\dir{>};;
  (0,-10)*{x};(0,0)*{y};
 \endxy
 \qquad \quad
  \xy
  (-5,-10)*+{\scs u};(5,-10)*+{\scs f} **\crv{(-5,4)&(5,4)} ?(.15)*\dir{<} ?(.8)*\dir{<};
  (0,6)*{x};(0,-4)*{y};
 \endxy
 \qquad \quad
 \xy
  (-5,6)*+{\scs f};(5,6)*+{\scs u} **\crv{(-5,-8)&(5,-8)} ?(.17)*\dir{<} ?(.8)*\dir{<};;
  (0,-10)*{y};(0,0)*{x};
 \endxy
\end{equation}
satisfying the zig-zag law \eqref{eq_string-zigzag1} and the new zig-zag equations
\begin{equation}
 \xy
    (8,0)*{}="1";
    (0,0)*{}="2";
    (-8,0)*{}="3";
    (8,-10);"1" **\dir{-};
    "1";"2" **\crv{(8,8) & (0,8)} ?(0)*\dir{>} ?(1)*\dir{>};
    "2";"3" **\crv{(0,-8) & (-8,-8)}?(1)*\dir{>};
    "3"; (-8,10) **\dir{-};
    (12,9)*{x};
    (-5,-9)*{y};
    \endxy
    \; =
    \;
      \xy
    (8,0)*{}="1";
    (0,0)*{}="2";
    (-8,0)*{}="3";
    (0,-10);(0,10)**\dir{-} ?(.5)*\dir{>};
    (5,-8)*{x};
    (-9,-8)*{y};
    \endxy
\qquad \quad  \xy
    (-8,0)*{}="1";
    (0,0)*{}="2";
    (8,0)*{}="3";
    (-8,-10);"1" **\dir{-};
    "1";"2" **\crv{(-8,8) & (0,8)} ?(0)*\dir{<} ?(1)*\dir{<};
    "2";"3" **\crv{(0,-8) & (8,-8)}?(1)*\dir{<};
    "3"; (8,10) **\dir{-};
    (12,-9)*{y};
    (-6,9)*{x};
    \endxy
    \; =
    \;
\xy
    (-8,0)*{}="1";
    (0,0)*{}="2";
    (8,0)*{}="3";
    (0,-10);(0,10)**\dir{-} ?(.5)*\dir{<};
   (9,8)*{y};
    (-6,8)*{x};
    \endxy
\end{equation}
saying that $f$ is right adjoint to $u$.

In general between any two objects $x$ and $y$ there may be many 1-morphisms
between them with biadjoints.  However, the biadjoint $u$ of a given
1-morphism $f$ is unique up to isomorphism \footnote{It is a fun exercise for the reader to prove this fact using string diagrams.  }.  In the presence of biadjoints the string diagrams can get quite involved.   An example of a typical diagram
representing a 2-morphisms consisting of composites of units and counits for
various biadjoints is given below:
\[
    \xy
    (-8,-0)*{}="1"; (0,0)*{}="2"; (0,-10)*{}="2'"; (8,0)*{}="3";
    (-8,-10)*{}="0'"; (8,0)*{}="A"; (16,0)="B";
    "1";"0'" **\dir{-} ?(0)*\dir{<};
    "B";"1" **\crv{(16,26)& (-8,26)}?(0)*\dir{<};
        "A";"B" **\crv{(8,-6)&(16,-6)};
        "2";"A" **\crv{(0,6)&(8,6)};
        "2'";"2" **\dir{-} ?(.5)*\dir{<};
        (0,12);(6,12) **\crv{(0,18)&(6,18)}?(0)*\dir{<};
        (0,12);(6,12) **\crv{(0,6)&(6,6)} ;
        (22,24);(30,24) **\crv{(22,12)&(30,12)}?(.1)*\dir{>};
        (24,2);(30,2) **\crv{(24,8)&(30,8)}?(0)*\dir{<};
        (24,2);(30,2) **\crv{(24,-4)&(30,-4)};
        (20,2);(34,2) **\crv{(20,16)&(34,16)}?(1)*\dir{>};
        (20,2);(34,2) **\crv{(20,-14)&(34,-14)};
        (-16,-10);(-16,24) **\dir{-}?(.5)*\dir{>};
    (-4,-6)*{x};(6,-6)*{y};(3,12)*{z};
    (26,20)*{z};(27,2)*{x};(27,-6)*{w};
    (-10,20)*{y};(-20,20)*{w};
        (-16,26)*{\scs f_1};(-16,-12)*{\scs f_1};
        (0,-12)*{\scs u_2};(-8,-12)*{\scs f_2};
        (22,26)*{\scs u_3};(30,26)*{\scs f_3};
        (8,11)*{\scs f_4}; (21,-10)*{\scs f_5}; (31,-2)*{\scs f_6};
\endxy
\]
where we have only labelled the upward oriented lines in the bubble like diagrams. The downward oriented lines carry the label of the 1-morphism biadjoint to the 1-morphism labelling the upward oriented line.

The string diagrams above are beginning to look like diagrams drawn in knot theory and low-dimensional topology, and this is no coincidence.  In knot theory one is typically working with string diagrams for monoidal categories (like the monoidal category of representations of the Hopf algebra $\mathbf{U}_q(\mathfrak{g})$ obtained by $q$-deforming the universal enveloping algebra of a Lie algebra $\mathfrak{g}$).  In this context we are viewing the monoidal category as a 2-category with one object using the trick described in Example~\ref{examp-monoidal}.  Since there is only one object there is no need to label the regions of a string diagram because they all carry the label of the single object.  For more on string diagrams in a 2-category see~\cite{js2,Street,Street2} or the YouTube videos by the Catsters~\cite{Catsters}. Additional material on biadjoints can be found in~\cite{Bart,Kh2,Kh4,Lau0,Muger}.

%
\section{Categorifying Lusztig's $\U$} \label{sec_catU}
%

%
\subsection{What to expect from categorified quantum groups} \label{subsec_what-structure}
%
The $\Q(q)$-algebra $\U$ can be regarded as a category whose Hom spaces $1_m \U 1_n$ mapping $n$ to $m$ have the additional structure of a $\Q(q)$-module.  This observation applies to any nonunital ring with a collection of mutually orthogonal idempotents. When thinking of $\U$ as a category we have
\begin{itemize}
  \item objects: $n \in \Z$,
  \item morphisms $n \to m$: the $\Q(q)$-module $1_m \U 1_n$
  \begin{itemize}
    \item identities: $1_n$
    \item composition: $1_{m'} \U 1_m \otimes 1_{n'} \U 1_n \to \delta_{n',m} 1_{m'}\U1_n$
    given by multiplication.
  \end{itemize}
\end{itemize}
What would it mean to categorify $\U$?  Since $\U$ is already a category, we would expect its categorification to have the structure of a 2-category.

The additional structure on the Hom sets $1_m \U 1_n$ of $\U$ suggests that the Homs in our 2-category will have the additional structure of a grading allowing us to lift the action of $q$.  But this raises a problem as the Hom sets $1_m \U 1_n$ are modules over the ring of rational functions in $q$, while we can only lift a $\Z[q,q^{-1}]$-module structure using gradings.  Luckily, there is an integral version of $\U$, denoted $\UA$ for $\cal{A}=\Z[q,q^{-1}]$.  This $\Z[q,q^{-1}]$-subalgebra of $\U$ is spanned by products of divided powers
\begin{equation}
  E^{(a)}1_n := \frac{E^a}{[a]!}1_n, \qquad F^{(b)}1_n := \frac{F^b}{[b]!}1_n. \nn
\end{equation}
When thinking of the algebra $\UA$ as a category the Hom sets have the structure of a $\Z[q,q^{-1}]$-module, rather than a $\Q(q)$-module.  Hence, the $\Z[q,q^{-1}]$-algebra $\UA$ is the algebra that we should expect to have a categorification.

The first step is to realize the $\Z[q,q^{-1}]$-modules $1_m\UA1_n$ as the shadows of some higher categorical structure.   One such categorification is to identify $1_m\UA1_n$ with the split Grothendieck group $K_0(\UDnm)$ of an additive category $_{m}\UcatD_{n}$. The split Grothendieck group of $_{m}\UcatD_{n}$ is the  $\Z[q,q^{-1}]$-module is generated by symbols
$[x]$ for each isomorphism class of object $x$ in $\UDnm$ modulo the relations
\begin{equation}
  [x]=[x_1]+[x_2] \qquad  \text{if $x=x_1\oplus x_2$.} \nn
\end{equation}
We further require that the objects of the categories $\UDnm$  carry a $\Z$-grading allowing us to shift the grading $x\{t\}$ up by $t$ for each object $x$ and $t \in \Z$; we restrict to morphisms that are degree preserving  with respect to this grading.  In this way, we lift the $\Z[q,q^{-1}]$-module structure on $\Unm$ by requiring
\begin{equation}
 [x\{t\}]=q^t[x], \nn
\end{equation}
so that multiplication by $q$ lifts to the invertible functor $\{1\}$ of shifting the grading by $1$.

Next we need to piece the various categories $\UDnm$ together so that they induce the algebra structure on $\UA$.  This is done by gluing the categories $\UDnm$ into a 2-category $\UcatD=\UcatD(\mathfrak{sl}_2)$ whose objects are indexed by weights $n \in \Z$ and whose Hom categories from $n$ to $m$ are the additive categories $\UDnm$. For any $n,m\in \Z$ we call the objects of $\UDnm$ 1-morphisms in $\UcatD$ and the morphisms in $\UDnm$ 2-morphisms in $\UcatD$.  A 2-category $\UcatD$ whose Hom categories are additive categories and whose composition functor
\[
  {}_{n''}\UcatD_{n'} \times {}_{n'}\UcatD_n \to {}_{n''}\UcatD_n
\]
preserves this additive structure is called an {\em additive 2-category}.

Define the split Grothendieck group of the additive 2-category $\UcatD$ as follows:
\begin{equation}
  K_0(\UcatD) = \bigoplus_{n,m} K_0(\UDnm) \nn
\end{equation}
and require that
\begin{equation}
  [x] = [x_1][x_2] \qquad \text{if $x = x_1 \circ x_2$.} \nn
\end{equation}
Then the composition of 1-morphisms in the 2-category $\UcatD$ gives rise to the composition (or multiplication) in the category (algebra) $\UA$.

One might wonder why it is natural to consider additive categorifications $\UA=K_0(\UcatD)$ where we take the split Grothendieck ring.  The canonical basis $\B$ of $\UA$ is a big hint that a categorification of this form should be possible.  Recall that the canonical basis has the property that
\begin{equation}
  [b_x] [b_y] = \sum_{z} m_{x,y}^z [b_z] \qquad \text{for $[b_x]$, $[b_y]$, $[b_z] \in \B$,} \nn
\end{equation}
where the structure coefficients $m_{x,y}^z$ are elements of $\N[q,q^{-1}]$.  If the structure constants were in $\Z[q,q^{-1}]$ we would expect to work with a category of complexes over an additive category, or with more general triangulated categories where a more sophisticated notion of Grothendieck group is required to recover the negative integers.

The positivity and integrality of these structure coefficients suggests that it should be possible to define an additive 2-category $\UcatD$ whose indecomposable 1-morphisms correspond up to grading shift to elements in Lusztig's canonical basis $\B$. Recall that the isomorphism classes of indecomposable 1-morphisms in $\UcatD$, up to grading shift, give a basis in the split Grothendieck ring $K_0(\UcatD)$.  For indecomposable 1-morphisms $b_x , b_y$ in $\UcatD$, the structure coefficients $m_{x,y}^z$ for the product $[b_x][b_y]$ in $K_0(\UcatD)$ are obtained by decomposing the composite $b_x b_y$ into indecomposables
\[
\bigoplus_z  \left(\bigoplus_{m_{x,y}^z} b_z\right),
\]
where for any $f = \sum_{a} f_a q^a \in \N[q,q^{-1}]$ and a 1-morphism $x \in \UcatD$ we write $\bigoplus_f x$ for the direct sum over $a \in \Z$ of $f_a$ copies of $x\{a\}$. Then in $K_0(\UcatD)$ we have
\begin{equation} \nn
  [b_x][b_y] = \sum_{z} m_{x,y}^z [b_z].
\end{equation}
Because the structure coefficients are given by decomposing a graded 1-morphism into indecomposable graded 1-morphisms we must have that $m_{x,y}^z \in \N[q,q^{-1}]$.

Thus our goal is to define a 2-category $\UcatD$ whose indecomposable 1-morphisms correspond (up to grading shift) to Lusztig's canonical basis.  How do we control which 1-morphisms become indecomposable in our 2-category?  Decompositions of 1-morphisms into other 1-morphisms is governed by the 2-morphisms in $\UcatD$.   Furthermore, in a successful categorification these 2-morphisms will be responsible for giving rise to explicit isomorphisms of 1-morphisms lifting the defining relations in $\U$.  In particular, the relations for $\U$ should be consequences of the new higher structure present for 2-morphisms.  This higher structure of $\UcatD$ is what makes a categorification interesting.  This is where we expect to see new structure that could not be seen working with just quantum groups.  These observations are summarized in Figure~\ref{fig:idea}.
\begin{figure}[htp]
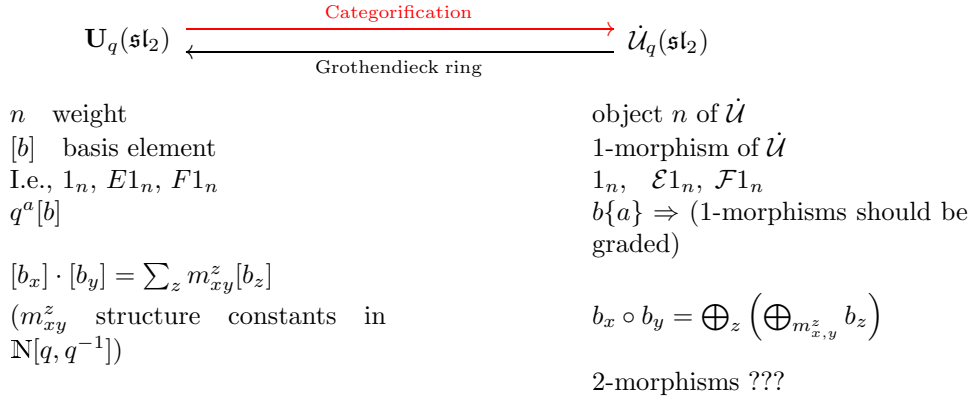

\[
   \;\;
\xy (-44,0)*++{\mathbf{U}_q(\mathfrak{sl}_2)}="1";
(28,0)*++{\dot{\cal{U}}_q(\mathfrak{sl}_2)}="2";
 (60,0)*{};
  {\ar@<1ex>^{\text{Grothendieck ring}} "2";"1"};
    \textcolor[rgb]{1.00,0.00,0.00}{{\ar@<1ex>^{\text{Categorification}}
  "1";"2"}}  \endxy  \;\;
\]
\begin{center}
 \begin{tabular}{p{5cm} l p{5cm} }
   $n \quad \text{weight}$ & \hspace{0.8in} & object $n$ of $\UcatD$ \\
   $[b] \quad \text{basis element}$ & \hspace{0.8in} & 1-morphism of $\UcatD$ \\
   I.e., $1_n$, $E1_n$, $F1_n$ & & $1_n$, \; $\cal{E}1_n, \; \cal{F}1_n$ \\
   $q^a [b]$ & & $b \{a\}$ $\To$ (1-morphisms should be graded) \\
   $[b_x] \cdot [b_y] = \sum_z m_{x y}^z [b_z]$ & & \\
  ($m_{x y}^z$ structure constants in $\N[q,q^{-1}]$) & & $b_x \circ b_y = \bigoplus_z  \left(\bigoplus_{m_{x,y}^z} b_z\right)$  \\ & &2-morphisms ???
 \end{tabular}
\end{center}
\caption{A schematic depiction of the lifting of structure under categorification.} \label{fig:idea}
\end{figure}

Since the 2-morphisms will play such a vital role in defining a categorification of $\U$, this raises the question of how one should go about defining them.  At this point it is useful to conjecture that a categorification $\UcatD$ of $\U$ exists and look for the remnants or shadows of this higher structure in the decategorified context.

In the theory of graded vector spaces the space of degree preserving linear maps $\Hom_{\Bbbk}(V,W)$ between graded vector spaces $V$, and $W$ forms a $\Bbbk$-vector space. However, there is also an {\em graded} Hom given by the {\em graded} vector space of degree homogeneous linear maps $\HOM_{\Bbbk}(V,W)$.
In the 2-category $\UcatD$ the space of 2-morphisms $\UcatD(x,y)$ between two 1-morphisms $x$ and $y$ should form some $\Bbbk$-vector space of degree preserving 2-morphisms.  But just as in the theory of graded vector spaces it is natural to consider the graded 2Hom  in $\UcatD$ given by taking all degree homogeneous 2-morphisms
\begin{equation} \nn
  \HOM_{\UcatD}(x,y):=\bigoplus_{t\in\Z}\UcatD(x\{t\},y).
\end{equation}
The graded 2Hom now associates a {\em graded} vector space $\HOM_{\UcatD}(x,y)$ to each pair of 1-morphisms $x$ and $y$ in $\UcatD$.  We sometimes write $\END_{\UcatD}(x)$ for $\HOM_{\UcatD}(x,x)$.

The graded 2Hom $\HOM_{\UcatD}(,)$ is a rigidly defined structure.  This is because the graded 2Hom on $\UcatD$ can be thought of as a map
\begin{equation}
 \xy
 (-30,10)*+{\HOM_{\UcatD}( , ) \maps {\rm 1morph}\left(\UcatD\right) \times
 {\rm 1morph}\left(\UcatD\right)}="1";
 (30,10)*+{\cat{GrVect}_{\Bbbk}}="2";
 (-25,4)*++{x \qquad \times \qquad y}="3";(30,4)*++{\HOM_{\UcatD}(x ,y)}="4";
 {\ar "1";"2"};{\ar@{|->} "3"+(26,0);"4"};
 \endxy \nn
\end{equation}
assigning the graded vector space of all 2-morphisms $x \To y$. If $\UcatD$ is a categorification of $\U$, so that the 1-morphisms in $\UcatD$ correspond to algebra elements in $\U$, then decategorifying the graded 2Hom  gives a pairing on $\U$:
\[
 \xy
 (-30,4)*+{\HOM_{\UcatD}( , ) \maps {\rm 1morph}\left(\UcatD\right) \times
 {\rm 1morph}\left(\UcatD\right)}="1";
 (30,4)*+{\cat{GrVect}_{\Bbbk}}="2";
 (-25,-20)*++{\U \qquad \times \qquad \U}="5";(30,-20)*++{\Z[[q,q^{-1}]]}="6";
 {\ar "1";"2"}; {\ar "5";"6"}; (-45,-20)*{\sla,\sra \maps};
 {\ar@{~>}^{K_0} (-38,0)*{}; (-38,-14)*{}};   {\ar@{~>}^{K_0} (-12,0)*{}; (-12,-14)*{}};
  {\ar@{~>}^{\gdim} (28,0)*{}; (28,-14)*{}}; (-75,-8)*{\textbf{Decategorification}};
 \endxy
\]
That is,
\[
\sla [x],[y] \sra := \gdim \HOM_{\UcatD}(x,y)=\sum_{t\in\Z}q^t \dim\Hom_{\UcatD}(x\{t\},y),
\]
where $\dim \Hom_{\UcatD}(x\{t\},y)$ is the usual dimension of the graded vector space $\UcatD(x\{t\},y)$ of degree zero 2-morphisms. Hence, any choice of 2-morphisms in $\HOM_{\UcatD}(x,y)$ gives rise to a pairing $\sla [x],[y]\sra$ on $\U$ given by taking the graded dimension $\gdim$ of the graded vector space $\HOM_{\UcatD}(x,y)$.

Notice the behaviour of this
pairing with respect to the shift functor in each variable. If a 1-morphism $f \maps x
\to y$ has degree $\alpha$, then the degree of the corresponding 1-morphism $f \maps
x\{t\} \to y$ will be $\alpha-t$. Similarly, the 1-morphism $f \maps x \to y\{t'\}$ will
have degree $\alpha+t'$, see Figure~\ref{fig:degree}.   Hence,
\begin{eqnarray*}
 \textcolor[rgb]{1.00,0.00,0.00}{\langle q^t x , y\rangle} = \gdim \left(\HOM_{\UcatD}(x\{t\},y)\right) = q^{-t} \gdim \left(\HOM_{\UcatD}(x,y)\right)
  = \textcolor[rgb]{1.00,0.00,0.00}{q^{-t}\langle x, y\rangle}\\   \\
  \textcolor[rgb]{1.00,0.00,0.00}{\langle x , q^{t'}y\rangle} = \gdim \left(\HOM_{\UcatD}(x,y\{t'\})\right) = q^{t'} \gdim \left(\HOM_{\UcatD}(x,y) \right) =
  \textcolor[rgb]{1.00,0.00,0.00}{q^{t'}\langle x, y\rangle}
\end{eqnarray*}
so that the pairing induced on $\U$ must be semilinear, i.e.
$\Z[q,q^{-1}]$-antilinear in the first slot, and $\Z[q,q^{-1}]$-linear in the
second.   Therefore we have found a clue hinting at the structure of $\UcatD$; the graded Hom on the 2-category $\UcatD$ must categorify a
semilinear form on $\U$.

\begin{figure}[h]
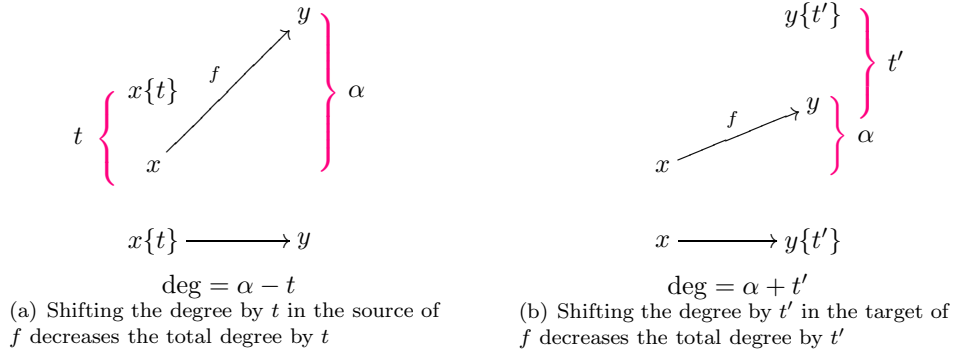

  \begin{center}
    \subfigure[Shifting the degree by $t$ in the source of $f$ decreases the total degree by $t$]{\label{fig:degree-a}
    \xy (-28,-14)*{}; (28,-14)*{};
 (-10,0)*+{x}="l"; (10,20)*+{y}="r1"; (-10,10)*+{x\{t\}}="l2";
 {\ar^f "l";"r1"};
 (13,10)*{\textcolor[rgb]{1.00,0.00,0.50}{\left. \xy (0,-10)*{};(0,11)*{};\endxy\right\}}};
 (17,10)*{\alpha};
 (-16,4)*{\textcolor[rgb]{1.00,0.00,0.50}{\left\{ \xy (0,-5)*{};(0,7)*{};\endxy\right.}};
 (-20,4)*{t};
 (-10,-10)*+{x\{t\}}="l"; (10,-10)*+{y}="r"; {\ar "l";"r"};
 (0,-16)*{\deg = \alpha - t};
\endxy
     }\qquad \quad
    \subfigure[Shifting the degree by $t'$ in the target of $f$ decreases the total degree by $t'$]{\label{fig:edge-b}
    \xy (-28,-14)*{}; (28,-14)*{};
 (-10,0)*+{x}="l"; (10,8)*+{y}="r1"; (10,20)*+{y\{t'\}}="r2";
 {\ar^f "l";"r1"};
 (13,4)*{\textcolor[rgb]{1.00,0.00,0.50}{\left. \xy (0,-5)*{};(0,5)*{};\endxy\right\}}};
 (17,4)*{\alpha};
 (17,14)*{\textcolor[rgb]{1.00,0.00,0.50}{\left. \xy (0,-5)*{};(0,8)*{};\endxy\right\}}};
 (21,14)*{t'};
 (-10,-10)*+{x}="l"; (10,-10)*+{y\{t'\}}="r"; {\ar "l";"r"};
 (0,-16)*{\deg = \alpha + t'};
\endxy}
  \end{center}
 \caption{This figure illustrates how shifting the source and target of a degree homogeneous map effects the total degree of the map. } \label{fig:degree}
\end{figure}

The prevalence of biadjointness in known examples of categorifications suggests that we should expect similar behaviours from a categorification $\UcatD$ of $\U$. This will have consequences for the semilinear form. Furthermore, we could  deduce properties of the form by arguing that this form should be related to the geometric constructions in \cite{BLM,GL1}, but it turns out this will not be necessary.  Such a form on $\U$ with all of these properties has already been defined on $\U$.  This form arises as the graded dimension of a certain Ext algebra between sheaves on Lusztig quiver varieties in Lusztig's geometric realization of $\U$.  For our purposes all that will be important is that the form $\sla ,\sra$ has the following defining properties:
\begin{enumerate}[(i)]
 \item $\sla ,\sra$ is semilinear,
 \item
$ \big\sla 1_{n_1}x1_{n_2}, 1_{n_1'}y1_{n_2'} \big\sra=0 \quad \text{for all $x,y
\in \U$ unless $n_1=n_1'$ and $n_2=n_2'$}$,
 \item $\sla ux,y\sra=\sla x,\tau(u)y\sra$ for $u\in {\bf U}$ and $x,y \in
 \U$,
 \item $
 \sla E^{(a)}1_n,E^{(a)}1_n\sra =
 \sla F^{(a)}1_n,F^{(a)}1_n\sra = \prod_{j=1}^a\frac{1}{ (1-q^{2j})},
$
\end{enumerate}
where $\tau$ is an antilinear antiautomorphism of $\U$ given by
\begin{equation}
\tau \maps
\begin{array}{ccc}
   q^s 1_m E^{(a)} F^{(b)}1_n
    &\mapsto&
 q^{-s-(a-b)(a-b+n)}1_{n}E^{(b)} F^{(a)}1_m, \\
   q^s1_m  F^{(b)}E^{(a)} 1_n
    &\mapsto &
 q^{-s-(a-b)(a-b+n)} 1_{n}  E^{(b)}F^{(a)}1_{m}.
\end{array}
\end{equation}

\begin{example} Some examples of the value of the semilinear form are given below.
\begin{itemize}
  \item $\langle E1_n, E1_n \rangle = \frac{1}{1-q^2} = 1+q^2+q^4+q^6+\dots$
      \medskip
  \item $\langle E^2 1_n, E^2 1_n \rangle = [2][2]\frac{1}{1-q^2}\frac{1}{1-q^4} = (1+q^{-2})(\frac{1}{1-q^2})^2$.
\end{itemize}
\end{example}

%
\subsection{The basic set up}
%

In this section we begin to construct the categorification $\UcatD$ of $\U$. We proceed in several steps.  First we define a 2-category $\Ucat$.  This 2-category arises from a choice of parameters $\chi$ defining a family of 2-categories $\Ucatt$ associated to the algebra $\U$.  In Section~\ref{subsubsec_rescaling} we show that all of these 2-categories are isomorphic to a single 2-category that we denote by $\Ucat$. Then in Section~\ref{subsec_karoubi} we add additional 1-morphisms to $\Ucat$ corresponding to divided powers $E^{(a)}1_n$ and $F^{(b)}1_n$. This is achieved by taking a certain completion of the 2-category $\Ucat$ called the Karoubi envelope, or idempotent completion.

Let $\ep=\epsilon_1\dots\epsilon_m$ with $\epsilon_1,\dots,\epsilon_m \in \{+,-\}$ and write $E_+=E$, $E_-=F$. Write
\begin{equation}
  1_{n'} E_{\ep} 1_n = 1_{n'} E_{\epsilon_1} E_{\epsilon_2} \dots E_{\epsilon_m} 1_n \nn
\end{equation}
with $n'-n=2\sum_{i=1}^m \epsilon_i 1$.

The category $\U$ has objects $n \in \Z$ indexed by weights of $\mathfrak{sl}_2$, so in our 2-category $\Ucat$ we will also have objects $n\in \Z$.  To lift the monomial $1_{n'}E_{\ep}1_n$ we add 1-morphisms $\onenn{n'}\cal{E}_{\ep}\onen = \cal{E}_{\epsilon_1}\dots \cal{E}_{\epsilon_m}\onen$ where $\cal{E}_{+}=\cal{E}$, $\cal{E_{-}}=\cal{F}$. In string notation we can think of the monomial $\onenn{n'}\cal{E}_{\ep}\onen$ as a sequence of labelled dots on a line
From the discussion in the previous section we allow grading shifts $\onenn{n'}\cal{E}_{\ep}\onen\{t\}$ and formal direct sums of 1-morphisms $\onenn{n'}\cal{E}_{\ep}\onen\{t\} \oplus \onenn{n'}\cal{E}_{\ep'}\onen\{t'\}$.

Since there are only two types of 1-morphisms $\cal{E}$ and $\cal{F}$, we  make a less cluttered notation by adding an orientation so that the identity 2-morphisms for $\cal{E}\onen\{t\}$ and $\cal{F}\onen\{t\}$ take the form
\[
\begin{array}{ccc}
  1_{\cal{E} \onen \{t\}} &\quad  & 1_{\cal{F} \onen\{t\}} \\ \\
    \xy
 (0,8);(0,-8); **\dir{-} ?(.5)*\dir{>}+(2.3,0)*{\scriptstyle{}};
 (6,2)*{ n};
 (-8,2)*{ n +2};
 (-10,0)*{};(10,0)*{};
 \endxy
 & &
 \;\;   \xy
 (0,8);(0,-8); **\dir{-} ?(.5)*\dir{<}+(2.3,0)*{\scriptstyle{}};
 (6,2)*{ n};
 (-8,2)*{ n -2};
 (-12,0)*{};(12,0)*{};
 \endxy
\end{array}
\]
in string notation. Notice that the grading shift is omitted from the sting diagram so that the same diagrams corresponds to the identity 2-morphisms of $\cal{E}\onen\{t\}$ and $\cal{F}\onen\{t\}$ for any shift $\{t\}$.  It turns out that this orientation will be consistent with the convention for biadjoints in Section~\ref{subsec_biadjoint}.

Regions in the plane correspond to objects $n \in \Z$ of $\Ucat$. Notice that passing from right to left through an upward oriented line increases the weight labelling a region by two, while passing from right to left through a downward oriented line decreases the weight by two.  In this way, it will only be necessary to label a single region of a diagram, with the labels of all other regions deduced from this rule.

%
\subsection{Generators and relations from the semilinear form} \label{subsec_generators}
%

The semilinear form $\sla , \sra \maps \U \times \U \to \Z[q,q^{-1}]$ holds the
key to understanding the generating 2-morphisms and the relations on these
2-morphisms.  Note that property (ii) of the semilinear form is consistent with the form arising from the graded 2Hom   of a 2-category $\Ucat$.  It ensures that there will always be compatible source and targets for 2-morphisms, so that the space $\HOM_{\Ucat}(\onenn{n_2}\cal{E}_{\ep}\onenn{n_1}, \onenn{n_2'}\cal{E}_{\ep'}\onenn{n_1'})$ is only nonzero when the source and targets of the respective 1-morphisms agree, i.e. $n_1=n_1'$ and $n_2=n_2'$.

Our guiding principle will be to construct the 2-morphisms in $\Ucat$ so that
\begin{equation}
 \gdim \HOM_{\Ucat}(\onenn{m}\cal{E}_{\ep}\onenn{n},\onenn{m}\cal{E}_{\ep'}\onenn{n}) = \sla 1_mE_{\ep}1_n, 1_mE_{\ep'}1_n \sra.
\end{equation}
This means that each term $a q^{t}$ appearing in $\sla 1_mE_{\ep}1_n, 1_mE_{\ep'}1_n \sra$ will be interpreted as the dimension of the $a$-dimensional homogeneous space of 2-morphisms in degree $t$.  This of course requires that the coefficients be natural numbers rather than integers.  If the coefficient $a$ is zero for a term $a q^{t}$ we expect to have no 2-morphisms in degree $t$.  When $a$ is nonzero we add new graded 2-morphisms to act as basis vectors for the space of 2-morphisms in that degree.

The 2-morphisms in $\Ucat$ will be specified using string notation described in Section~\ref{sec_diagrammatics}.  In general, a 2-morphism is a $\Bbbk$-linear combinations of string diagrams, giving the space of 2-morphisms between a pair 1-morphisms the structure of a $\Bbbk$-vector space.

%
\subsubsection{The graded vector space $\HOM_{\Ucat}(\cal{E}\onen,\cal{E}\onen)$}
%

Starting with 2-morphisms between the simplest monomials $\cal{E}_{\ep}\onen$ we can build up the 2-morphisms for more complicated monomials.  The first computation gives the graded dimension of the space of 2-morphisms $\HOM_{\Ucat}(\cal{E}\onen,\cal{E}\onen)$:
\begin{equation} \label{eq_EtoE}
\gdim\left(\HOM_{\Ucat}(\cal{E}\onen,\cal{E}\onen) \right) = \sla E1_n,E1_n \sra =
\frac{1}{1-q^2} = 1+q^2+q^4+q^6+\dots
\end{equation}
The coefficient of $q^{t}$ for $t<0$ is always zero implying that the 2Homs $\Hom_{\Ucat}(\cal{E}\onen\{t\}, \cal{E}\onen)$ are zero for $t<0$. Hence, $\cal{E}\onen$ has no negative degree endomorphisms.

The identity 2-morphisms of $\cal{E}\onen$ must be degree zero.  We interpret
the ``$1=q^0$" appearing in the above sum as the dimension of the 1-dimensional $\Bbbk$-vector space spanned by linear combinations of the identity 2-morphism on $\cal{E}1_n$
\begin{equation} \label{eq_Eident}
\deg\left(\;
    \xy   0;/r.18pc/:
    (-8,0)*{}="1";
    (0,0)*{}="2";
    (8,0)*{}="3";
    (0,-10);(0,10)**\dir{-} ?(.5)*\dir{>};
    (5,9)*{n};
    (-8,9)*{n+2};
    \endxy \right) = 0.
\end{equation}
Because the coefficient of $q^0$ is $1$ all degree zero endomorphisms of $\cal{E}\onen$ should be equal to multiple of the identity 2-morphism.

Our plan to use the semilinear form to guess the space of 2-morphisms would be hopeless if the space of 2-morphisms $\Ucat(\cal{E}\onen, \cal{E}\onen)$ in degree zero was empty.  Our 2-category $\Ucat$ must have identity 2-morphisms for every 1-morphism. It can be shown (see for example \cite[Theorem 2.7]{KL3}) that the coefficient of $q^0$ in $\sla 1_{n'}E_{\ep}1_n,1_{n'}E_{\ep}1_n \sra$ is always greater than or equal to 1 so that the semilinear form will always allow for identity 2-morphisms on 1-morphisms $\onenn{n'}\cal{E}_{\ep}\onen\{t\}$.

The $q^2$ in \eqref{eq_EtoE} suggests that there should be an additional
2-morphism from $\cal{E}\onen$ to itself
\begin{equation} \label{eq_new_dot}
\deg\left(\;
    \xy
 (0,7);(0,-7); **\dir{-} ?(.75)*\dir{>}+(2.3,0)*{\scriptstyle{}}
 ?(.1)*\dir{ }+(2,0)*{\scs };
 (0,-2)*{\txt\large{$\bullet$}};
 (6,4)*{n};
 (-8,4)*{n+2};
 (-10,0)*{};(10,0)*{};
 \endxy\right) \quad = 2.
\end{equation}
Here we are using a simplification of the string notation introduced in Section~\ref{sec_string}, representing this new 2-morphisms by a dot with no label.  If the space of 2-morphisms $\Hom(\cal{E}\onen\{2\}, \cal{E}\onen)$ is spanned by $\Bbbk$-linear combinations of this new 2-morphisms above, then $\Hom(\cal{E}\onen\{2\}, \cal{E}\onen)$ will contribute a factor of $q^2$ to the graded dimension.

It is tempting to think that we must add a new generator in all positive even degrees to account for the remaining terms in \eqref{eq_EtoE}. However, this would fail to fully utilize the 2-categorical structure at our disposal. Using vertical composition in the 2-category we can compose the new degree two 2-morphisms
with itself to get
\begin{equation}
\deg\left(\;
    \xy
 (0,7);(0,-7); **\dir{-} ?(.55)*\dir{>}+(2.3,0)*{\scriptstyle{}}
 ?(.1)*\dir{ }+(2,0)*{\scs };
 (0,-4)*{\txt\large{$\bullet$}};(0,4)*{\txt\large{$\bullet$}};
 (6,4)*{ n};
 (-8,4)*{ n+2};
 (-10,0)*{};(10,0)*{};
 \endxy\right) \quad = 4. \nn
\end{equation}
Iterated vertical composites of the 2-morphism
\eqref{eq_new_dot} can be written as
\begin{equation}
    \xy
 (0,7);(0,-7); **\dir{-} ?(.75)*\dir{>}+(2.3,0)*{\scriptstyle{}}
 ?(.1)*\dir{ }+(2,0)*{\scs };
 (0,-2)*{\txt\large{$\bullet$}}; (-3,-2)*{\scs \alpha};
 (6,4)*{n};
 (-8,4)*{n+2};
 (-10,0)*{};(10,0)*{};
 \endxy \;\; :=\;\;
 \left(
    \xy
 (0,7);(0,-7); **\dir{-} ?(.75)*\dir{>}+(2.3,0)*{\scriptstyle{}}
 ?(.1)*\dir{ }+(2,0)*{\scs };
 (0,-2)*{\txt\large{$\bullet$}};
 (6,4)*{n};
 (-8,4)*{n+2};
 (-10,0)*{};(10,0)*{};
 \endxy\right)^{\alpha} \nn
\end{equation}
for $\alpha \in \N$, so that
\begin{equation}
\deg\left(\;
    \xy
 (0,7);(0,-7); **\dir{-} ?(.75)*\dir{>}+(2.3,0)*{\scriptstyle{}}
 ?(.1)*\dir{ }+(2,0)*{\scs };
 (0,-2)*{\txt\large{$\bullet$}}; (-3,-2)*{\scs \alpha};
 (6,4)*{n};
 (-8,4)*{n+2};
 (-10,0)*{};(10,0)*{};
 \endxy\right)  \;\; = {2\alpha}. \nn
\end{equation}
Imposing no relations on these vertical composites, all terms in \eqref{eq_EtoE}
can be accounted for by the $\Bbbk$-span of these diagrams,
\begin{equation}
  \Hom(\cal{E}\onen\{2\alpha\},\cal{E}\onen) = \left\langle
   \xy
 (0,7);(0,-7); **\dir{-} ?(.75)*\dir{>}+(2.3,0)*{\scriptstyle{}}
 ?(.1)*\dir{ }+(2,0)*{\scs };
 (0,-2)*{\txt\large{$\bullet$}}; (-3,-2)*{\scs \alpha};
 (6,4)*{n};
 (-8,4)*{n+2};
 (-10,0)*{};(10,0)*{};
 \endxy
  \right\rangle_{\Bbbk}.
\end{equation}
Then the graded dimension is given by
\begin{alignat}{5}
\gdim\left(\HOM_{\Ucat}(\cal{E}\onen,\cal{E}\onen) \right)
 &= q^{\deg\left(\;
    \xy
 (0,7);(0,-7); **\dir{-} ?(.75)*\dir{>}+(2.3,0)*{\scriptstyle{}}
 ?(.1)*\dir{ }+(2,0)*{\scs };
 (6,4)*{n};
 (-4,0)*{};(10,0)*{};
 \endxy\right)}
 &+& q^{\deg\left(\;
     \xy
 (0,7);(0,-7); **\dir{-} ?(.75)*\dir{>}+(2.3,0)*{\scriptstyle{}}
 ?(.1)*\dir{ }+(2,0)*{\scs };
 (0,-2)*{\txt\large{$\bullet$}}; (-3,-2)*{\scs };
 (6,4)*{n};
 (-6,0)*{};(10,0)*{};
 \endxy\right)}
 &+& q^{\deg\left(\;
     \xy
 (0,7);(0,-7); **\dir{-} ?(.75)*\dir{>}+(2.3,0)*{\scriptstyle{}}
 ?(.1)*\dir{ }+(2,0)*{\scs };
 (0,-2)*{\txt\large{$\bullet$}}; (-3,-2)*{\scs 2};
 (6,4)*{n};
 (-6,0)*{};(10,0)*{};
 \endxy\right)}
 &+& \cdots \nn \\ \\
 &=  1 &+& q^2 &+& q^4 &+& \cdots \nn
\end{alignat}
The value of the semilinear form \eqref{eq_EtoE} is independent of $n$.  Thus, we
have the degree 2 map \eqref{eq_new_dot} for each $n$ which we denote by
$\Uup_{n}$.

Arguing similarly, we also have a degree two endomorphism $\cal{F}\onen$ which we represent as a dot on a downwards oriented line
\begin{equation}
\deg\left(\;
    \xy
 (0,7);(0,-7); **\dir{-} ?(.75)*\dir{<}+(2.3,0)*{\scriptstyle{}}
 ?(.1)*\dir{ }+(2,0)*{\scs };
 (0,-2)*{\txt\large{$\bullet$}};
 (6,4)*{n};
 (-8,4)*{n-2};
 (-10,0)*{};(10,0)*{};
 \endxy\right) \quad = 2. \nn
\end{equation}
The graded $\Bbbk$-vector space $\HOM_{\Ucat}(\cal{F}\onen,\cal{F}\onen)$ is spanned by the identity 2-morphism $1_{\cal{F}\onen}$ and vertical composites of this new 2-morphism.

We can think of each power of $\frac{1}{1-q^2} = 1+q^2+q^4+q^6+\dots$ that appears in the semilinear form as representing one strand on which we can place arbitrarily many dots.

%
\subsubsection{The graded vector space $\HOM_{\Ucat}(\cal{E}\cal{E}\onen,\cal{E}\cal{E}\onen)$}
%

Horizontally composing $\left(\Uup_{n}\right)^{\alpha_1}$ with
$\left(\Uup_{n+2}\right)^{\alpha_2}$ gives a 2-morphism of degree
$2\alpha_1+2\alpha_2$ from $\cal{E}\cal{E}\onen$ to itself.  If no relations are imposed on these composites we find
\begin{eqnarray}
  \sum_{0\leq \alpha_1, 0\leq \alpha_2}
  q^{\deg\left(\xy
 (3,7);(3,-7); **\dir{-} ?(.75)*\dir{>}+(2.3,0)*{\scriptstyle{}}
 ?(.1)*\dir{ }+(2,0)*{\scs };
 (3,-2)*{\txt\large{$\bullet$}}; (6,-2)*{\scs \alpha_1};
  (-3,7);(-3,-7); **\dir{-} ?(.75)*\dir{>}+(2.3,0)*{\scriptstyle{}}
 ?(.1)*\dir{ }+(2,0)*{\scs };
 (-3,-2)*{\txt\large{$\bullet$}}; (-6,-2)*{\scs \alpha_2};
 (9,4)*{n};(-12,4)*{n+4};
 (-14,0)*{};(12,0)*{};
 \endxy \right)} \quad &=& (1+q^2+q^4+q^6+\dots)(1+q^2+q^4+q^6+\dots) \nn \\
 &=& \left(\frac{1}{1-q^2}\right)^2. \nn
\end{eqnarray}
However, the semilinear form gives
\begin{equation}\label{eq_semi_EE}
\gdim \left(\HOM_{\Ucat}(\cal{E}\cal{E}\onen,\cal{E}\cal{E}\onen) \right) = \sla
EE1_n,EE1_n \sra = [2][2]\sla \cal{E}^{(2)}1_n,\cal{E}^{(2)}1_n \sra = (1+q^{-2})
\left(\frac{1}{1-q^2}\right)^2 \nn
\end{equation}
suggesting that there should be an additional generating 2-morphism of degree -2,
\begin{equation}
\deg\left( \;  \xy 0;/r.18pc/:(0,12)*{};
  (0,0)*{\bullet}="X";
  (-4,8)*{}="TL"; (4,8)*{}="TR";(-4,-8)*{}="BL"; (4,-8)*{}="BR";
  "TL"; "X" **\crv{(-4,3)}?(.25)*\dir{<};
  "TR"; "X" **\crv{(4,3)} ?(.25)*\dir{<};
  "X"; "BL" **\crv{(-4,-3)}?(.65)*\dir{<};
  "X"; "BR" **\crv{(4,-3)} ?(.65)*\dir{<};
  (-12,4)*{n+4};
  (10,4)*{n};
  (0,-6)*{};
\endxy\; \right) = -2. \nn
\end{equation}
To simplify notation write
\begin{equation}
   \xy
  (0,0)*{\xybox{
    (-4,-4)*{};(4,4)*{} **\crv{(-4,-1) & (4,1)}?(1)*\dir{>} ;
    (4,-4)*{};(-4,4)*{} **\crv{(4,-1) & (-4,1)}?(1)*\dir{>};
    (-5,-3)*{\scs };
     (5.1,-3)*{\scs };
     (8,1)*{n};
     (-12,0)*{};(12,0)*{};
     }};
  \endxy \quad  := \quad   \xy 0;/r.18pc/:(0,12)*{};
  (0,0)*{\bullet}="X";
  (-4,8)*{}="TL"; (4,8)*{}="TR";(-4,-8)*{}="BL"; (4,-8)*{}="BR";
  "TL"; "X" **\crv{(-4,3)}?(.25)*\dir{<};
  "TR"; "X" **\crv{(4,3)} ?(.25)*\dir{<};
  "X"; "BL" **\crv{(-4,-3)}?(.65)*\dir{<};
  "X"; "BR" **\crv{(4,-3)} ?(.65)*\dir{<};
  (-12,4)*{n+4};
  (10,4)*{n};
  (0,-6)*{};
\endxy \nn
\end{equation}
and use the shorthand $\Ucross_{n}$ to denote this 2-morphism.

The value of the semilinear form in \eqref{eq_semi_EE} suggests that we can account for all the terms that appear using dots on the identity 2-morphism $1_{\cal{E}\cal{E}\onen}$ and the 2-morphism $\Ucross_{n}$.  However, it is clear that matching the graded dimension for the space $\HOM_{\Ucat}(\cal{E}\cal{E}\onen,\cal{E}\cal{E}\onen)$ with the semilinear form will require us to introduce relations on some of the possible composites.
For example, the vertical composite of $\Ucross_{n}$ with itself has degree $-4$. Since
$q^{-4}$ does not appear in the expansion $(1+q^{-2})
\left(\frac{1}{1-q^2}\right)^2$ we see that the semilinear form requires us to impose the relation
\begin{equation} \label{eq_semiEEm4}
 \vcenter{\xy 0;/r.18pc/:
    (-4,-4)*{};(4,4)*{} **\crv{(-4,-1) & (4,1)}?(1)*\dir{>};
    (4,-4)*{};(-4,4)*{} **\crv{(4,-1) & (-4,1)}?(1)*\dir{>};
    (-4,4)*{};(4,12)*{} **\crv{(-4,7) & (4,9)}?(1)*\dir{>};
    (4,4)*{};(-4,12)*{} **\crv{(4,7) & (-4,9)}?(1)*\dir{>};
  (8,8)*{n};(-5,-3)*{\scs };
     (5.1,-3)*{\scs };
 \endxy}
 =0
\end{equation}
if we want the graded dimension of the 2Homs in $\Ucat$ to match the semilinear form.

The 2-morphism $\Ucross_{n}$  has four upward oriented strands providing four
possible places to add dots\footnote{In case the reader finds string notation confusing, remember that $\xy 0;/r.15pc/:
  (0,0)*{\xybox{
    (-4,-4)*{};(4,4)*{} **\crv{(-4,-1) & (4,1)}?(1)*\dir{>};
    (4,-4)*{};(-4,4)*{} **\crv{(4,-1) & (-4,1)}?(1)*\dir{>}?(.25)*{\bullet};
     (8,1)*{ n};
     (-10,0)*{};(10,0)*{};
     }};
  \endxy$ represents the composite
\[
 \xymatrix{
 \cal{E}\cal{E}\onen
 \ar[rr]^-{\xy 0;/r.15pc/:
 (3,4);(3,-7); **\dir{-} ?(.75)*\dir{>}+(2.3,0)*{\scriptstyle{}}
 ?(.1)*\dir{ }+(2,0)*{\scs };
 (3,-2)*{\bullet};
  (-3,4);(-3,-7); **\dir{-} ?(.75)*\dir{>};
 (9,1)*{n};
 (-14,0)*{};(12,0)*{};
 \endxy}
  & & \cal{E}\cal{E}\onen
   \ar[rr]^-{\xy 0;/r.15pc/:
  (0,0)*{\xybox{
    (-4,-4)*{};(4,4)*{} **\crv{(-4,-1) & (4,1)}?(1)*\dir{>};
    (4,-4)*{};(-4,4)*{} **\crv{(4,-1) & (-4,1)}?(1)*\dir{>};
     (8,1)*{ n};
     (-10,0)*{};(10,0)*{};
     }};
  \endxy
  } & & \cal{E}\cal{E}\onen}
\]
where the dot is represents a degree 2-morphism $\cal{E}\onen \to \cal{E}\onen$ and the crossing is a string diagram representing a 2-morphism of degree $-2$. }
\begin{equation}
\xy
  (0,0)*{\xybox{
    (-4,-4)*{};(4,4)*{} **\crv{(-4,-1) & (4,1)}?(1)*\dir{>}?(.25)*{\bullet};
    (4,-4)*{};(-4,4)*{} **\crv{(4,-1) & (-4,1)}?(1)*\dir{>};
     (8,1)*{ n};
     (-10,0)*{};(10,0)*{};
     }};
  \endxy
\qquad
 \xy
  (0,0)*{\xybox{
    (-4,-4)*{};(4,4)*{} **\crv{(-4,-1) & (4,1)}?(1)*\dir{>}?(.75)*{\bullet};
    (4,-4)*{};(-4,4)*{} **\crv{(4,-1) & (-4,1)}?(1)*\dir{>};
     (8,1)*{ n};
     (-10,0)*{};(10,0)*{};
     }};
  \endxy
\qquad
\xy
  (0,0)*{\xybox{
    (-4,-4)*{};(4,4)*{} **\crv{(-4,-1) & (4,1)}?(1)*\dir{>};
    (4,-4)*{};(-4,4)*{} **\crv{(4,-1) & (-4,1)}?(1)*\dir{>}?(.75)*{\bullet};
     (8,1)*{ n};
     (-10,0)*{};(10,0)*{};
     }};
  \endxy
\qquad
  \xy
  (0,0)*{\xybox{
    (-4,-4)*{};(4,4)*{} **\crv{(-4,-1) & (4,1)}?(1)*\dir{>} ;
    (4,-4)*{};(-4,4)*{} **\crv{(4,-1) & (-4,1)}?(1)*\dir{>}?(.25)*{\bullet};
     (8,1)*{ n};
     (-10,0)*{};(10,0)*{};
     }};
  \endxy
\end{equation}
A quick degree check shows that each of these maps have degree $2-2=0$.  Together with the identity 2-morphism $1_{\cal{E}\cal{E}\onen}$, this gives five different 2-morphisms in degree zero.  However, the coefficient of $q^0$ in \eqref{eq_semi_EE} is only $3$.  Therefore, all five of the 2-morphisms can not be linearly independent if the dimension of our 2Homs is going to match the semilinear form.  We will address how to find the precise form of these relations in Section~\ref{subsec_flag}.

Similar computations for $\HOM_{\Ucat}(\cal{F}\cal{F}\onen, \cal{F}\cal{F}\onen)$ suggest we add a new generating 2-morphism
\begin{equation}
    \xy
  (0,0)*{\xybox{
    (-4,-4)*{};(4,4)*{} **\crv{(-4,-1) & (4,1)}?(0)*\dir{<} ;
    (4,-4)*{};(-4,4)*{} **\crv{(4,-1) & (-4,1)}?(0)*\dir{<};
     (8,1)*{ n};
     (-10,0)*{};(10,0)*{};
     }};
  \endxy \nn
\end{equation}
in degree $-2$ with the same restrictions on the placement of dots.

%
\subsubsection{Generators for biadjunctions}
%

Other generating 2-morphisms are suggested by the computations:
\begin{eqnarray}
  \gdim\left(\HOM_{\Ucat}(\cal{F}\cal{E}\onen,\onen) \right) = \sla FE1_n, 1_n \sra
  = \sla E 1_n , \tau(F)1_n \sra  =  q^{1+n} \sla E1_n , E1_n \sra  =
  \frac{q^{1+n}}{1-q^2},\nn \\
  \gdim\left(\HOM_{\Ucat}(\cal{E}\cal{F}\onen,\onen) \right) = \sla EF1_n, 1_n \sra
  = \sla F 1_n , \tau(E)1_n \sra  =  q^{1-n} \sla F1_n , F1_n \sra  = \frac{q^{1-n}}{1-q^2},\nn \\ \label{eq_semi_EF}
\end{eqnarray}
suggesting just a single generator for each of these 2Homs together with one strand on which we can place arbitrary many dots.  Again, we simplify notation (following our convention for units and counits of an adjunction) and write these generators
in the form
\[
\begin{tabular}{|l|c|c|}
  \hline
  generator &     \xy
    (0,0)*{\bbcef{ }};
    (8,5)*{ n};
    (-12,8)*{};(12,0)*{};
    \endxy &       \xy
    (0,0)*{\bbcfe{ }};
    (8,5)*{n};
    (-12,8)*{};(12,0)*{};
 \endxy \\ & &\\ \hline
  degree & 1+n & 1-n \\
  \hline
\end{tabular} \]
Notice that there are two natural places to add dots on these new generating 2-morphisms using either a dot on the upward oriented strand or a dot on the downward oriented strand.  But only one factor of $\frac{1}{1-q^2}$ appears in the semilinear form \eqref{eq_semi_EF}, so these cannot be linearly independent.  We must have relations
\begin{equation}
 \lambda_1 \;   \xy
    (8,5)*{}="1";
    (0,5)*{}="2";
    (0,-5)*{}="2'";
    (8,-5);"1" **\dir{-}?(.2)*\dir{<};
    "2";"2'" **\dir{-} ?(.6)*\dir{<};
    "1";"2" **\crv{(8,12) & (0,12)} ;
    (15,9)*{n};
    (0,4)*{\bullet};
    \endxy
 \;\; + \;\; \lambda_2 \;\;
     \xy
    (8,5)*{}="1";
    (0,5)*{}="2";
    (0,-5)*{}="2'";
    (8,-5);"1" **\dir{-}?(.2)*\dir{<};
    "2";"2'" **\dir{-} ?(.6)*\dir{<};
    "1";"2" **\crv{(8,12) & (0,12)} ;
    (15,9)*{n};
    (8,4)*{\bullet};
    \endxy
    \;\; = 0 \nn
\end{equation}
\begin{equation}
 \lambda_3 \;   \xy
    (8,5)*{}="1";
    (0,5)*{}="2";
    (0,-5)*{}="2'";
    (8,-5);"1" **\dir{-}?(.3)*\dir{>};
    "2";"2'" **\dir{-} ?(.7)*\dir{>};
    "1";"2" **\crv{(8,12) & (0,12)} ;
    (15,9)*{n};
    (0,4)*{\bullet};
    \endxy
 \;\; + \;\; \lambda_4 \;\;
     \xy
    (8,5)*{}="1";
    (0,5)*{}="2";
    (0,-5)*{}="2'";
    (8,-5);"1" **\dir{-}?(.3)*\dir{>};
    "2";"2'" **\dir{-} ?(.7)*\dir{>};
    "1";"2" **\crv{(8,12) & (0,12)} ;
    (15,9)*{n};
    (8,4)*{\bullet};
    \endxy
    \;\; = 0 \nn
\end{equation}
for some $\lambda_i \in \Bbbk$.

By matching graded dimensions of various 2Homs with values of the semilinear form, one quickly finds that it is most natural to impose the relations
\begin{equation}
 \xy
    (8,5)*{}="1";
    (0,5)*{}="2";
    (0,-5)*{}="2'";
    (8,-5);"1" **\dir{-}?(.2)*\dir{<};
    "2";"2'" **\dir{-} ?(.6)*\dir{<};
    "1";"2" **\crv{(8,12) & (0,12)} ;
    (15,9)*{n};
    (0,4)*{\bullet};
    \endxy
 \;\;= \qquad
     \xy
    (8,5)*{}="1";
    (0,5)*{}="2";
    (0,-5)*{}="2'";
    (8,-5);"1" **\dir{-}?(.2)*\dir{<};
    "2";"2'" **\dir{-} ?(.6)*\dir{<};
    "1";"2" **\crv{(8,12) & (0,12)} ;
    (15,9)*{n};
    (8,4)*{\bullet};
    \endxy \qquad \quad
    \xy
    (8,5)*{}="1";
    (0,5)*{}="2";
    (0,-5)*{}="2'";
    (8,-5);"1" **\dir{-}?(.3)*\dir{>};
    "2";"2'" **\dir{-} ?(.7)*\dir{>};
    "1";"2" **\crv{(8,12) & (0,12)} ;
    (15,9)*{n};
    (0,4)*{\bullet};
    \endxy
 \;\; = \qquad
     \xy
    (8,5)*{}="1";
    (0,5)*{}="2";
    (0,-5)*{}="2'";
    (8,-5);"1" **\dir{-}?(.3)*\dir{>};
    "2";"2'" **\dir{-} ?(.7)*\dir{>};
    "1";"2" **\crv{(8,12) & (0,12)} ;
    (15,9)*{n};
    (8,4)*{\bullet};
    \endxy \nn
\end{equation}
so that we can draw the dot anywhere on the curve without ambiguity.  Then the 2Homs have graded dimension
\begin{eqnarray}
 \sum_{\alpha=0}^{\infty} q^{\deg\left(  \xy
    (0,-3)*{\xy
        (4,0)*{}="t1";(-4,0)*{}="t2";
        "t1";"t2" **\crv{(4,6) & (-4,6)}; ?(.15)*\dir{>} ?(.9)*\dir{>}
   ?(.5)*\dir{}+(0,0)*{\bullet}+(0,2.5)*{\scs \alpha};\endxy };
    (8,5)*{ n};
    (-8,0)*{};(10,0)*{};
    \endxy  \right)} = q^{1+n}(1+q^2+q^4+ \dots) = \frac{q^{1+n}}{1-q^2},
    \nn \\
    \sum_{\alpha=0}^{\infty} q^{\deg\left(  \xy
    (0,-3)*{\xy
        (-4,0)*{}="t1";(4,0)*{}="t2";
        "t1";"t2" **\crv{(-4,6) & (4,6)}; ?(.15)*\dir{>} ?(.9)*\dir{>}
   ?(.5)*\dir{}+(0,0)*{\bullet}+(0,2.5)*{\scs \alpha};\endxy };
    (8,5)*{ n};
    (-8,0)*{};(10,0)*{};
    \endxy  \right)} = q^{1-n}(1+q^2+q^4+ \dots) = \frac{q^{1-n}}{1-q^2}, \nn \\
    \nn
\end{eqnarray}
so that the space of graded 2Homs again agrees with the semilinear form.

Similar calculations for $\HOM_{\Ucat}(\onen,\cal{F}\cal{E}\onen)$ and
$\HOM_{\Ucat}(\onen,\cal{E}\cal{F}\onen)$ suggest generators
\[
\begin{tabular}{|l|c|c|}
  \hline
  generator &  \xy
    (0,-3)*{\bbpef{ }};
    (8,-5)*{ n};
    (-12,0)*{};(12,0)*{};
    \endxy &    \xy
    (0,-3)*{\bbpfe{ }};
    (8,-5)*{ n};
    (-12,0)*{};(12,0)*{};
    \endxy \\ & &\\ \hline
  degree & 1+n & 1-n \\
  \hline
\end{tabular} \]
of degrees $1+n$ and $1-n$, respectively.

We are defining the 2-category $\Ucat$ by generators and relations.  This means that we can take arbitrary horizontal and vertical composites of the generators and produce new 2-morphisms.  We must ensure that with these new generators we have not added unwanted 2-morphisms to the spaces $\HOM_{\Ucat}(\cal{E}\onen,\cal{E}\onen)$ and $\HOM_{\Ucat}(\cal{E}\cal{E}\onen,\cal{E}\cal{E}\onen)$.  Taking the horizontal composite of a cap with a vertical line and vertically
composing this with the horizontal composite of a cup and a vertical line
produces 2-morphisms $\cal{E}\onen \To \cal{E}\onen$ of the form
\begin{equation}
  \xy   0;/r.18pc/:
    (-8,0)*{}="1";
    (0,0)*{}="2";
    (8,0)*{}="3";
    (-8,-10);"1" **\dir{-};
    "1";"2" **\crv{(-8,8) & (0,8)} ?(0)*\dir{>} ?(1)*\dir{>};
    "2";"3" **\crv{(0,-8) & (8,-8)}?(1)*\dir{>};
    "3"; (8,10) **\dir{-};
    (12,-9)*{n};
    (-6,9)*{n+2};
    \endxy
 \qquad  \qquad \xy   0;/r.18pc/:
    (8,0)*{}="1";
    (0,0)*{}="2";
    (-8,0)*{}="3";
    (8,-10);"1" **\dir{-};
    "1";"2" **\crv{(8,8) & (0,8)} ?(0)*\dir{>} ?(1)*\dir{>};
    "2";"3" **\crv{(0,-8) & (-8,-8)}?(1)*\dir{>};
    "3"; (-8,10) **\dir{-};
    (12,9)*{n};
    (-5,-9)*{n+2};
    \endxy \nn
\end{equation}
But
\begin{align}
    \deg
\left(\;\;
  \text{ $\xy   0;/r.18pc/:
    (-8,0)*{}="1";
    (0,0)*{}="2";
    (8,0)*{}="3";
    (-8,-10);"1" **\dir{-};
    "1";"2" **\crv{(-8,8) & (0,8)} ?(0)*\dir{>} ?(1)*\dir{>};
    "2";"3" **\crv{(0,-8) & (8,-8)}?(1)*\dir{>};
    "3"; (8,10) **\dir{-};
    (12,-9)*{n};
    (-6,9)*{n+2};\endxy$}
    \;\; \right)
    \;\; &= \;\;
    \deg \left(\;\;
  \vcenter{ \xy  0;/r.18pc/:
    (-8,0)*{}="1";
    (0,0)*{}="2";
    (8,0)*{}="3";
    (-8,-10);"1" **\dir{-};
    "2";"3" **\crv{(0,-8) & (8,-8)}?(1)*\dir{>};
    (12,-9)*{n};
    \endxy}
    \;\; \right) \;\; + \;\;
        \deg \left(\;\;
  \vcenter{ \xy   0;/r.18pc/:
    (-8,0)*{}="1";
    (0,0)*{}="2";
    (8,0)*{}="3";
    "1";"2" **\crv{(-8,8) & (0,8)}  ?(1)*\dir{>};
    "3"; (8,10) **\dir{-};
    (-6,9)*{n+2};
    \endxy}
    \;\; \right)
\nn \\
 &= 1+n  \;\; + 1-(n+2) \;\; = \;\; 0, \nn
\end{align}
and similarly
\[
\deg \left(\;\;\xy   0;/r.18pc/:
    (8,0)*{}="1";
    (0,0)*{}="2";
    (-8,0)*{}="3";
    (8,-10);"1" **\dir{-};
    "1";"2" **\crv{(8,8) & (0,8)} ?(0)*\dir{>} ?(1)*\dir{>};
    "2";"3" **\crv{(0,-8) & (-8,-8)}?(1)*\dir{>};
    "3"; (-8,10) **\dir{-};
    (12,9)*{n};
    (-5,-9)*{n+2};
    \endxy \;\; \right) \quad = \quad 0.
\]

If we impose no relations on these 2-morphisms they will contribute additional terms to the degree zero term of the graded dimension of $\HOM_{\Ucat}(\cal{E}\onen,\cal{E}\onen)$.  Recall that we already accounted for the degree zero term of this 2Hom   using $\Bbbk$-multiples of the identity 2-morphism $1_{\cal{E}\onen}$.  Hence, we must impose relations relating these new zig-zag composites to the identity 2-morphism.

A similar argument applies to maps $\cal{F}\onen \To \cal{F}\onen$.  For convenience and consistency of the graphical calculus we take all of the multiples to be 1 and impose relations:
\begin{equation} \label{eq_biadjointness1}
 \xy   0;/r.18pc/:
    (8,0)*{}="1";
    (0,0)*{}="2";
    (-8,0)*{}="3";
    (8,-10);"1" **\dir{-};
    "1";"2" **\crv{(8,8) & (0,8)} ?(0)*\dir{>} ?(1)*\dir{>};
    "2";"3" **\crv{(0,-8) & (-8,-8)}?(1)*\dir{>};
    "3"; (-8,10) **\dir{-};
    (12,9)*{n};
    (-5,-9)*{n+2};
    \endxy
    \; =
    \;
      \xy 0;/r.18pc/:
    (8,0)*{}="1";
    (0,0)*{}="2";
    (-8,0)*{}="3";
    (0,-10);(0,10)**\dir{-} ?(.5)*\dir{>};
    (5,-8)*{n};
    (-9,-8)*{n+2};
    \endxy
\qquad \quad \xy  0;/r.18pc/:
    (8,0)*{}="1";
    (0,0)*{}="2";
    (-8,0)*{}="3";
    (8,-10);"1" **\dir{-};
    "1";"2" **\crv{(8,8) & (0,8)} ?(0)*\dir{<} ?(1)*\dir{<};
    "2";"3" **\crv{(0,-8) & (-8,-8)}?(1)*\dir{<};
    "3"; (-8,10) **\dir{-};
    (12,9)*{n+2};
    (-6,-9)*{n};
    \endxy
    \; =
    \;
\xy  0;/r.18pc/:
    (8,0)*{}="1";
    (0,0)*{}="2";
    (-8,0)*{}="3";
    (0,-10);(0,10)**\dir{-} ?(.5)*\dir{<};
    (9,-8)*{n+2};
    (-6,-8)*{n};
    \endxy
\end{equation}
\begin{equation} \label{eq_biadjointness2}
  \xy   0;/r.18pc/:
    (-8,0)*{}="1";
    (0,0)*{}="2";
    (8,0)*{}="3";
    (-8,-10);"1" **\dir{-};
    "1";"2" **\crv{(-8,8) & (0,8)} ?(0)*\dir{>} ?(1)*\dir{>};
    "2";"3" **\crv{(0,-8) & (8,-8)}?(1)*\dir{>};
    "3"; (8,10) **\dir{-};
    (12,-9)*{n};
    (-6,9)*{n+2};
    \endxy
    \; =
    \;
\xy   0;/r.18pc/:
    (-8,0)*{}="1";
    (0,0)*{}="2";
    (8,0)*{}="3";
    (0,-10);(0,10)**\dir{-} ?(.5)*\dir{>};
    (5,8)*{n};
    (-9,8)*{n+2};
    \endxy
\qquad \quad  \xy   0;/r.18pc/:
    (-8,0)*{}="1";
    (0,0)*{}="2";
    (8,0)*{}="3";
    (-8,-10);"1" **\dir{-};
    "1";"2" **\crv{(-8,8) & (0,8)} ?(0)*\dir{<} ?(1)*\dir{<};
    "2";"3" **\crv{(0,-8) & (8,-8)}?(1)*\dir{<};
    "3"; (8,10) **\dir{-};
    (12,-9)*{n+2};
    (-6,9)*{n};
    \endxy
    \; =
    \;
\xy   0;/r.18pc/:
    (-8,0)*{}="1";
    (0,0)*{}="2";
    (8,0)*{}="3";
    (0,-10);(0,10)**\dir{-} ?(.5)*\dir{<};
   (9,8)*{n+2};
    (-6,8)*{n};
    \endxy
\end{equation}
so that biadjointness arises naturally from consideration of the semilinear form!

%
\subsubsection{The graded vector spaces $\HOM_{\Ucat}(\cal{E}\cal{F}\onen, \cal{F}\cal{E}\onen)$ and $\HOM_{\Ucat}(\cal{F}\cal{E}\onen, \cal{E}\cal{F}\onen)$}
%

The graded dimension of the space of 2Homs $\HOM_{\Ucat}(\cal{E}\cal{F}\onen, \cal{F}\cal{E}\onen)$
is determined from the semilinear form as follows:
\begin{eqnarray}
 \sla EF1_n,FE1_n\sra &=& \sla F1_n,\tau(E)FE1_n \sra =
 \sla F1_n,q^{1-n}FFE1_n \sra =
  q^{1-n}\sla F1_n,FFE1_n \sra  \nn
\end{eqnarray}
but $F^2E1_n = EF^21_n - ([n]+[n-2])F1_n$ so that
\begin{eqnarray}
 \sla EF1_n,FE1_n\sra
&=& q^{1-n}\left(\sla F1_n,EFF1_n \sra -([n]+[n-2])\sla F1_n, F1_n\sra \right) \nn \\
&=&
 q^{1-n}\sla \tau^{-1}(E1_{n-4})F1_n,FF1_n\sra -([n]+[n-2]) q^{1-n}\frac{q^{1-n}}{1-q^2}   \nn
 \\
 &=&
 q^{1-n}\sla q^{-3+n}FF1_n,FF1_n\sra -([n]+[n-2]) q^{1-n}\frac{q^{1-n}}{1-q^2}   \nn
\end{eqnarray}
which after simplifying becomes
\begin{equation} \label{eq_semi_EFFE}
  \gdim\left(
\HOM_{\Ucat}(\cal{E}\cal{F}\onen, \cal{F}\cal{E}\onen)\right) = (EF1_n,FE1_n) =
(1+q^2)\left( \frac{1}{1-q^2}\right)^2.
\end{equation}

In this case we don't have an identity 2-morphism to account for the degree zero term in \eqref{eq_semi_EFFE}.  However, it is not necessary to add an additional generating 2-morphism since one can check that
\begin{eqnarray}
  \deg\left( \;\xy 0;/r.19pc/:
  (0,0)*{\xybox{
    (-4,-4)*{};(4,4)*{} **\crv{(-4,-1) & (4,1)}?(1)*\dir{>};
    (4,-4)*{};(-4,4)*{} **\crv{(4,-1) & (-4,1)};
     (4,4);(4,12) **\dir{-};
     (12,-4);(12,12) **\dir{-};
     (-4,-4);(-4,-12) **\dir{-};(-12,4);(-12,-12) **\dir{-};
     (-16,1)*{n};
     (10,0)*{};(-10,0)*{};
     (4,-4)*{};(12,-4)*{} **\crv{(4,-10) & (12,-10)}?(1)*\dir{<}?(0)*\dir{<};
      (-4,4)*{};(-12,4)*{} **\crv{(-4,10) & (-12,10)}?(1)*\dir{>}?(0)*\dir{>};
     }};
  \endxy  \; \right)
  \quad = \quad
  \deg\left( \; \xy 0;/r.19pc/:
  (0,0)*{\xybox{
    (4,-4)*{};(-4,4)*{} **\crv{(4,-1) & (-4,1)}?(1)*\dir{<};
    (-4,-4)*{};(4,4)*{} **\crv{(-4,-1) & (4,1)};
     (-4,4);(-4,12) **\dir{-};
     (-12,-4);(-12,12) **\dir{-};
     (4,-4);(4,-12) **\dir{-};(12,4);(12,-12) **\dir{-};
     (-16,1)*{n};
     (-10,0)*{};(10,0)*{};
     (-4,-4)*{};(-12,-4)*{} **\crv{(-4,-10) & (-12,-10)}?(1)*\dir{>}?(0)*\dir{>};
      (4,4)*{};(12,4)*{} **\crv{(4,10) & (12,10)}?(1)*\dir{<}?(0)*\dir{<};
     }};
  \endxy  \; \right) \quad = 0. \nn
\end{eqnarray}
Since there are two different 2-morphisms in degree zero, these two diagrams must be linearly dependent.  Setting them equal to each other, we
begin to see planar isotopy invariance of the graphical calculus emerge.

A similar calculation for $\UcatD(\cal{F}\cal{E}\onen, \cal{E}\cal{F}\onen)$
suggests setting the diagrams below equal to each other:
 \begin{eqnarray}
 \deg\left( \; \xy 0;/r.19pc/:
  (0,0)*{\xybox{
    (4,-4)*{};(-4,4)*{} **\crv{(4,-1) & (-4,1)}?(1)*\dir{>};
    (-4,-4)*{};(4,4)*{} **\crv{(-4,-1) & (4,1)};
     (-4,4);(-4,12) **\dir{-};
     (-12,-4);(-12,12) **\dir{-};
     (4,-4);(4,-12) **\dir{-};(12,4);(12,-12) **\dir{-};
     (16,1)*{n};
     (-10,0)*{};(10,0)*{};
     (-4,-4)*{};(-12,-4)*{} **\crv{(-4,-10) & (-12,-10)}?(1)*\dir{<}?(0)*\dir{<};
      (4,4)*{};(12,4)*{} **\crv{(4,10) & (12,10)}?(1)*\dir{>}?(0)*\dir{>};
     }};
  \endxy \right)
  \quad \;= \quad
 \deg\left( \;  \xy 0;/r.19pc/:
  (0,0)*{\xybox{
    (-4,-4)*{};(4,4)*{} **\crv{(-4,-1) & (4,1)}?(1)*\dir{<};
    (4,-4)*{};(-4,4)*{} **\crv{(4,-1) & (-4,1)};
     (4,4);(4,12) **\dir{-};
     (12,-4);(12,12) **\dir{-};
     (-4,-4);(-4,-12) **\dir{-};(-12,4);(-12,-12) **\dir{-};
     (16,1)*{n};
     (10,0)*{};(-10,0)*{};
     (4,-4)*{};(12,-4)*{} **\crv{(4,-10) & (12,-10)}?(1)*\dir{>}?(0)*\dir{>};
      (-4,4)*{};(-12,4)*{} **\crv{(-4,10) & (-12,10)}?(1)*\dir{<}?(0)*\dir{<};
     }};
  \endxy \; \right) \quad = 0.
 \nn \end{eqnarray}

It will be convenient to introduce a special notation for these degree zero 2-morphisms:
\begin{equation} \label{eq_crossl-gen}
  \xy
  (0,0)*{\xybox{
    (-4,-4)*{};(4,4)*{} **\crv{(-4,-1) & (4,1)}?(1)*\dir{>} ;
    (4,-4)*{};(-4,4)*{} **\crv{(4,-1) & (-4,1)}?(0)*\dir{<};
     (8,2)*{ n};
     (-12,0)*{};(12,0)*{};
     }};
  \endxy
:=
 \xy 0;/r.19pc/:
  (0,0)*{\xybox{
    (4,-4)*{};(-4,4)*{} **\crv{(4,-1) & (-4,1)}?(1)*\dir{>};
    (-4,-4)*{};(4,4)*{} **\crv{(-4,-1) & (4,1)};
     (-4,4);(-4,12) **\dir{-};
     (-12,-4);(-12,12) **\dir{-};
     (4,-4);(4,-12) **\dir{-};(12,4);(12,-12) **\dir{-};
     (16,1)*{n};
     (-10,0)*{};(10,0)*{};
     (-4,-4)*{};(-12,-4)*{} **\crv{(-4,-10) & (-12,-10)}?(1)*\dir{<}?(0)*\dir{<};
      (4,4)*{};(12,4)*{} **\crv{(4,10) & (12,10)}?(1)*\dir{>}?(0)*\dir{>};
     }};
  \endxy
  \quad = \quad
  \xy 0;/r.19pc/:
  (0,0)*{\xybox{
    (-4,-4)*{};(4,4)*{} **\crv{(-4,-1) & (4,1)}?(1)*\dir{<};
    (4,-4)*{};(-4,4)*{} **\crv{(4,-1) & (-4,1)};
     (4,4);(4,12) **\dir{-};
     (12,-4);(12,12) **\dir{-};
     (-4,-4);(-4,-12) **\dir{-};(-12,4);(-12,-12) **\dir{-};
     (16,1)*{n};
     (10,0)*{};(-10,0)*{};
     (4,-4)*{};(12,-4)*{} **\crv{(4,-10) & (12,-10)}?(1)*\dir{>}?(0)*\dir{>};
      (-4,4)*{};(-12,4)*{} **\crv{(-4,10) & (-12,10)}?(1)*\dir{<}?(0)*\dir{<};
     }};
  \endxy
\end{equation}
\begin{equation} \label{eq_crossr-gen}
  \xy
  (0,0)*{\xybox{
    (-4,-4)*{};(4,4)*{} **\crv{(-4,-1) & (4,1)}?(0)*\dir{<} ;
    (4,-4)*{};(-4,4)*{} **\crv{(4,-1) & (-4,1)}?(1)*\dir{>};
     (-8,2)*{ n};
     (-12,0)*{};(12,0)*{};
     }};
  \endxy
:=
 \xy 0;/r.19pc/:
  (0,0)*{\xybox{
    (-4,-4)*{};(4,4)*{} **\crv{(-4,-1) & (4,1)}?(1)*\dir{>};
    (4,-4)*{};(-4,4)*{} **\crv{(4,-1) & (-4,1)};
     (4,4);(4,12) **\dir{-};
     (12,-4);(12,12) **\dir{-};
     (-4,-4);(-4,-12) **\dir{-};(-12,4);(-12,-12) **\dir{-};
     (-16,1)*{n};
     (10,0)*{};(-10,0)*{};
     (4,-4)*{};(12,-4)*{} **\crv{(4,-10) & (12,-10)}?(1)*\dir{<}?(0)*\dir{<};
      (-4,4)*{};(-12,4)*{} **\crv{(-4,10) & (-12,10)}?(1)*\dir{>}?(0)*\dir{>};
     }};
  \endxy
  \quad = \quad
  \xy 0;/r.19pc/:
  (0,0)*{\xybox{
    (4,-4)*{};(-4,4)*{} **\crv{(4,-1) & (-4,1)}?(1)*\dir{<};
    (-4,-4)*{};(4,4)*{} **\crv{(-4,-1) & (4,1)};
     (-4,4);(-4,12) **\dir{-};
     (-12,-4);(-12,12) **\dir{-};
     (4,-4);(4,-12) **\dir{-};(12,4);(12,-12) **\dir{-};
     (-16,1)*{n};
     (-10,0)*{};(10,0)*{};
     (-4,-4)*{};(-12,-4)*{} **\crv{(-4,-10) & (-12,-10)}?(1)*\dir{>}?(0)*\dir{>};
      (4,4)*{};(12,4)*{} **\crv{(4,10) & (12,10)}?(1)*\dir{<}?(0)*\dir{<};
     }};
  \endxy
\end{equation}

%
\subsubsection{The graded vector spaces $\HOM_{\Ucat}(\cal{E}\cal{F}\onen, \cal{E}\cal{F}\onen)$ and $\HOM_{\Ucat}(\cal{F}\cal{E}\onen, \cal{F}\cal{E}\onen)$}
%

From the generating 2-morphisms defined above, we can define two obvious
generators for the space of 2Homs from $\cal{E}\cal{F}\onen$ to itself.  Namely,
 the diagrams
\begin{equation}
\xy
 (3,7);(3,-7); **\dir{-} ?(.75)*\dir{<}+(2.3,0)*{\scriptstyle{}}
 ?(.1)*\dir{ }+(2,0)*{\scs };
  (-3,7);(-3,-7); **\dir{-} ?(.75)*\dir{>}+(2.3,0)*{\scriptstyle{}}
 ?(.1)*\dir{ }+(2,0)*{\scs };
 (9,4)*{n};(-9,4)*{n};
 (-14,0)*{};(12,0)*{};
 \endxy
\qquad \quad  \text{and} \qquad \quad\xy
    (-3,7)*{}="1";
    (3,7)*{}="2";
    "1";"2" **\crv{(-3,1) & (3,1)} ?(.1)*\dir{<} ?(.9)*\dir{<};
    (-3,-7)*{}="1";
    (3,-7)*{}="2";
    "1";"2" **\crv{(-3,-1) & (3,-1)} ?(.1)*\dir{>} ?(.9)*\dir{>};
    (9,4)*{n};
    \endxy
\end{equation}
together with arbitrarily many dots on each strand.  These 2-morphisms contribute
\begin{equation}
  \sum_{0\leq \alpha_1, 0\leq \alpha_2}
  q^{\deg\left(\xy
 (3,7);(3,-7); **\dir{-} ?(.75)*\dir{<}+(2.3,0)*{\scriptstyle{}}
 ?(.1)*\dir{ }+(2,0)*{\scs };
 (3,-2)*{\txt\large{$\bullet$}}; (6,-2)*{\scs \alpha_1};
  (-3,7);(-3,-7); **\dir{-} ?(.75)*\dir{>}+(2.3,0)*{\scriptstyle{}}
 ?(.1)*\dir{ }+(2,0)*{\scs };
 (-3,-2)*{\txt\large{$\bullet$}}; (-6,-2)*{\scs \alpha_2};
 (9,4)*{n};
 (-9,0)*{};(12,0)*{};
 \endxy \right)}
\;+\; \sum_{0\leq \beta_1, 0\leq \beta_2}
  q^{\deg\left(\xy
    (-3,7)*{}="1";
    (3,7)*{}="2";
    "1";"2" **\crv{(-3,1) & (3,1)} ?(.1)*\dir{<} ?(.9)*\dir{<};
    (0.3,2.4)*{\txt\large{$\bullet$}}; (-2,1)*{\scs \beta_1};
    (0.3,-2.6)*{\txt\large{$\bullet$}}; (3,-1)*{\scs \beta_2};
    (-3,-7)*{}="1";
    (3,-7)*{}="2";
    "1";"2" **\crv{(-3,-1) & (3,-1)} ?(.1)*\dir{>} ?(.9)*\dir{>};
    (9,4)*{n}; (-9,0)*{};
    \endxy \right)} = (1+q^{2(1-n)})\left(\frac{1}{1-q^{2}}\right)^2.
\end{equation}
to the graded dimension.  Comparing with the semilinear form, we see that there is perfect agreement $\sla
EF1_n,EF1_n \sra = (1+q^{2(1-n)})\left(\frac{1}{1-q^{2}}\right)^2$.  Hence, there
is no need to add any additional generators.  However,
\begin{equation}
\deg \left(  \;
 \vcenter{   \xy 0;/r.18pc/:
    (-4,-4)*{};(4,4)*{} **\crv{(-4,-1) & (4,1)}?(1)*\dir{>};
    (4,-4)*{};(-4,4)*{} **\crv{(4,-1) & (-4,1)}?(1)*\dir{<};?(0)*\dir{<};
    (-4,4)*{};(4,12)*{} **\crv{(-4,7) & (4,9)};
    (4,4)*{};(-4,12)*{} **\crv{(4,7) & (-4,9)}?(1)*\dir{>};
  (8,8)*{n};
 \endxy} \;
\right) = 0,
\end{equation}
so we will need an additional relation relating this diagram to those in the basis above (see Definition~\ref{defn-Ucatt}).

%
\subsubsection{Other graded Homs}
%

Continuing in this way it becomes clear that we can account for all the terms appearing in the semilinear form $\sla E_{\ep}1_n, E_{\ep'}1_n\sra$ using the generating 2-morphisms we have already added to the 2-category $\Ucat$.  However, we will still need to introduce some additional relations.

For example, one can check that
\begin{eqnarray}
 \sla E^31_n,E^31_n \sra &=& (1+2q^{-2}+2q^{-4}+q^{-6})\left(\frac{1}{1-q^2} \right)^3
  \nn\\
 &=&\left( q^{\deg\left(\;\;\vcenter{
 \xy 0;/r.15pc/:
    (-3,-4)*{};(-3,4)*{} **\dir{-}?(1)*\dir{>};
    (3,-4)*{};(3,4)*{} **\dir{-}?(1)*\dir{>};
    (9,-4)*{}; (9,4) **\dir{-}?(1)*\dir{>};
\endxy}\;\;\right)} \;\; +\;\;
  q^{\deg\left(\;\;\vcenter{
 \xy 0;/r.15pc/:
    (-4,-4)*{};(4,4)*{} **\crv{(-4,-1) & (4,1)}?(1)*\dir{>};
    (4,-4)*{};(-4,4)*{} **\crv{(4,-1) & (-4,1)}?(1)*\dir{>};
    (12,-4)*{}; (12,4) **\dir{-}?(1)*\dir{>};
\endxy}\;\;\right)} \;\; +\;\;
  q^{\deg\left(\;\;\vcenter{
 \xy 0;/r.15pc/:
    (-4,12)*{};(4,20)*{} **\crv{(-4,15) & (4,17)}?(1)*\dir{>};
    (4,12)*{};(-4,20)*{} **\crv{(4,15) & (-4,17)}?(1)*\dir{>};
    (12,12)*{}; (12,20) **\dir{-}?(1)*\dir{>};
\endxy}\;\;\right)}
 \right.\nn \\
 & &   \left.\;\; +\;\;
  q^{\deg\left(\;\;\vcenter{
 \xy 0;/r.15pc/:
    (4,4)*{};(12,12)*{} **\crv{(4,7) & (12,9)}?(1)*\dir{>};
    (12,4)*{};(4,12)*{} **\crv{(12,7) & (4,9)}?(1)*\dir{>};
    (-4,12)*{};(4,20)*{} **\crv{(-4,15) & (4,17)}?(1)*\dir{>};
    (4,12)*{};(-4,20)*{} **\crv{(4,15) & (-4,17)}?(1)*\dir{>};
    (-4,4)*{}; (-4,12) **\dir{-};
    (12,12)*{}; (12,20) **\dir{-};
\endxy}\;\;\right)}  \;\; +\;\;
  q^{\deg\left(\;\;\vcenter{
 \xy 0;/r.15pc/:
    (-4,-4)*{};(4,4)*{} **\crv{(-4,-1) & (4,1)}?(1)*\dir{>};
    (4,-4)*{};(-4,4)*{} **\crv{(4,-1) & (-4,1)}?(1)*\dir{>};
    (4,4)*{};(12,12)*{} **\crv{(4,7) & (12,9)}?(1)*\dir{>};
    (12,4)*{};(4,12)*{} **\crv{(12,7) & (4,9)}?(1)*\dir{>};
    (-4,4)*{}; (-4,12) **\dir{-};
    (12,-4)*{}; (12,4) **\dir{-};
\endxy}\;\;\right)} \;\; +\;\;
  q^{\deg\left(\;\;\vcenter{
 \xy 0;/r.15pc/:
    (-4,-4)*{};(4,4)*{} **\crv{(-4,-1) & (4,1)}?(1)*\dir{>};
    (4,-4)*{};(-4,4)*{} **\crv{(4,-1) & (-4,1)}?(1)*\dir{>};
    (4,4)*{};(12,12)*{} **\crv{(4,7) & (12,9)}?(1)*\dir{>};
    (12,4)*{};(4,12)*{} **\crv{(12,7) & (4,9)}?(1)*\dir{>};
    (-4,12)*{};(4,20)*{} **\crv{(-4,15) & (4,17)}?(1)*\dir{>};
    (4,12)*{};(-4,20)*{} **\crv{(4,15) & (-4,17)}?(1)*\dir{>};
    (-4,4)*{}; (-4,12) **\dir{-};
    (12,-4)*{}; (12,4) **\dir{-};
    (12,12)*{}; (12,20) **\dir{-};
\endxy}\;\;\right)} \right) \left(\frac{1}{1-q^2}\right)^3\nn
\end{eqnarray}
so that all generators can be accounted for by crossings and dots on three strands. But we must impose a relation of the form
\begin{equation}
  \vcenter{
 \xy 0;/r.18pc/:
    (-4,-4)*{};(4,4)*{} **\crv{(-4,-1) & (4,1)}?(1)*\dir{>};
    (4,-4)*{};(-4,4)*{} **\crv{(4,-1) & (-4,1)}?(1)*\dir{>};
    (4,4)*{};(12,12)*{} **\crv{(4,7) & (12,9)}?(1)*\dir{>};
    (12,4)*{};(4,12)*{} **\crv{(12,7) & (4,9)}?(1)*\dir{>};
    (-4,12)*{};(4,20)*{} **\crv{(-4,15) & (4,17)}?(1)*\dir{>};
    (4,12)*{};(-4,20)*{} **\crv{(4,15) & (-4,17)}?(1)*\dir{>};
    (-4,4)*{}; (-4,12) **\dir{-};
    (12,-4)*{}; (12,4) **\dir{-};
    (12,12)*{}; (12,20) **\dir{-};
  (18,8)*{n};
\endxy}
 \;\; =\;\;
 \vcenter{
 \xy 0;/r.18pc/:
    (4,-4)*{};(-4,4)*{} **\crv{(4,-1) & (-4,1)}?(1)*\dir{>};
    (-4,-4)*{};(4,4)*{} **\crv{(-4,-1) & (4,1)}?(1)*\dir{>};
    (-4,4)*{};(-12,12)*{} **\crv{(-4,7) & (-12,9)}?(1)*\dir{>};
    (-12,4)*{};(-4,12)*{} **\crv{(-12,7) & (-4,9)}?(1)*\dir{>};
    (4,12)*{};(-4,20)*{} **\crv{(4,15) & (-4,17)}?(1)*\dir{>};
    (-4,12)*{};(4,20)*{} **\crv{(-4,15) & (4,17)}?(1)*\dir{>};
    (4,4)*{}; (4,12) **\dir{-};
    (-12,-4)*{}; (-12,4) **\dir{-};
    (-12,12)*{}; (-12,20) **\dir{-};
  (10,8)*{n};
\endxy}
\end{equation}
where again the exact form of this relation was determined by consistency of the graphical calculus and the action on the cohomology rings of partial flag varieties defined in the next section.

To see that we do not need any additional generating 2-morphisms we could either show that with the appropriate relations the indecomposable 1-morphisms of our 2-category bijectively correspond up to a shift with Lusztig canonical basis elements as was done in \cite{Lau1}, or we could give a purely diagrammatic interpretation of the semilinear form as in \cite[Section 2.2]{KL3} and  argue that our generators can account for all the diagrams appearing in this formula.  In either case, the result is that we will not require any more generating 2-morphisms in the 2-category $\Ucat$.

What remains is to find the exact form of the relations in $\Ucat$ and show that we can lift the $\mathfrak{sl}_2$ relations to explicit isomorphisms in the 2-category $\Ucat$.

\begin{rem} \label{rem_other_inv}
It is natural to wonder how general this approach to categorification is and whether or not it can be applied to categorify other algebras of interest.  The use of a semilinear form for categorification seems to be most useful in situations where an algebra already has a known geometric interpretation where a semilinear form arises as the graded dimension of some graded Ext algebras of sheaves. Such methods were used in \cite{EK,VV2,SVV,Web}.

This is not to say that categorification is impossible without the use of a semilinear forms.  There has been a great deal of success categorifying other algebras by different methods. The positive half of the quantum super algebra $\mathfrak{gl}(1\mid 2)$ was categorified by Khovanov~\cite{Kh3} using inspiration from Heegaard-Floer theory and the Lipshitz-Ozsv\'{a}th-Thurston dg algebra~\cite{LOT2,LOT}.
Khovanov also defined a conjectural categorification of the Heisenberg algebra by guessing diagrammatic relations directly from a presentation of the algebra by generators and relations~\cite{Kh4}, see also a related construction~\cite{LS}.  The categorification of a different version of the Heisenberg algebra in \cite{CL} uses a hybrid of geometric methods and diagrammatic lifting of defining relations. A diagrammatic approach to categorification of the polynomial ring was given in \cite{KR} and the q-Schur algebra in~\cite{MSV}.
\end{rem}

%
\subsection{An actions on partial flag varieties} \label{subsec_flag}
%

To find the precise form of the relations in $\Ucat$ we can look at examples of categorical $\U$ actions and look for natural transformations of functors corresponding to our generating 2-morphisms.  The relations that hold in these examples give insights into the relations that should hold in $\Ucat$.

%
\subsubsection{A model example of a categorical $\U$-action}
%

There is a very nice example of a categorification of the irreducible  representation $V^N$ of $\U$ that utilizes some geometry, while at the same time it is completely algebraic and computable. This example was first considered in unpublished work of Khovanov and has since appeared several places in the literature~\cite{CR,FKS}. This categorification of the irreducible $N+1$-dimensional representation $V^N$ of $\U$ is constructed using  categories of graded modules over cohomology rings of Grassmannians.  For $0 \leq k \leq N$, let $Gr(k,N)$ denote the variety of complex k-planes in $\C^N$. The cohomology ring of $Gr(k,N)$ has a natural structure of a graded $\Q$-algebra,
\[
 H^*(Gr(k,N),\Q) = \oplus_{0 \leq i \leq k(N-k)}H^i(G_k,\Q) ~.
\]
For simplicity we sometimes write $H_k:=H^*(Gr(k,N),\Q)$ omitting the explicit $N$ dependence.

An explicit description of $H_k$ can be given using Chern classes.  This description is reviewed in~\cite[Section 6]{Lau1} and \cite{Hiller}.  In particular, the graded ring $H_k$ is given by a quotient
\begin{equation}
H_k = \Q[c_1,\ldots,c_k,\bar{c}_1,\ldots,\bar{c}_{N-k}]/I_{k,N}
\end{equation}
where $\deg c_{i} = 2i$, $\deg{\bar{c}_j}=2j$, and $I_{k,N}$ is the ideal, referred to here as the Grassmannian ideal, generated by equating the terms in the equation
\begin{equation} \label{eq_chern_rel}
 \left(1 +c_1t + c_2 t^2\cdots +c_k t^k\right)
  \left(1+\bar{c}_1 t +\bar{c}_2 t^2 +\cdots + \bar{c}_{N-k}t^{N-k}\right) =1
\end{equation}
that are homogeneous in $t$.

\begin{example} \label{ex_cohomologyring}
Let $N=5$ and consider the ring $H_2=H^*(Gr(2,5))$.  The ring $H_2$ is the quotient of the polynomial ring $\Q[c_1,c_2,\bar{c}_1,\bar{c}_2,\bar{c}_3]$ by the relations given by the homogeneous terms in
\begin{equation}
  (1+c_1t + c_2 t^2)(1+\bar{c}_1 t + \bar{c}_2 t^2+ \bar{c}_3 t^3) =1.
\end{equation}
In particular, the equations
\begin{align}
  c_1+\bar{c}_1=0, \quad
  c_2 + c_1 \bar{c}_1 + \bar{c}_2 = 0, \quad
  c_2\bar{c}_1+c_1\bar{c}_2+\bar{c}_3 =0, \quad
  c_2\bar{c}_2+c_1\bar{c}_3=0, \quad
  c_2 \bar{c}_3 =0,
\end{align}
generate the ideal $I_{2,5}$.
Solving these equations we find that $\bar{c}_1=-c_1$, $\bar{c}_2=c_1^2-c_2$ and $\bar{c}_3=2c_1c_2-c_1^3$. The remaining relations on $c_1$ and $c_2$ are given by solving the remaining equations so that
\begin{equation}
 H_3=\Q[c_1,c_2]/(c_1^4-3c_1^2c_2+c_2^2,\; 2c_1c_2^2-c_1^3c_2).
\end{equation}
\end{example}

In general, using the relations coming from \eqref{eq_chern_rel} we can solve for the variables $\bar{c}_j$ in terms of the variables $c_i$ with $1\leq i \leq k$ when $k\leq N-k$, and solve for the variables $c_i$ in terms of the variables $\bar{c}_j$ with $1\leq j\leq N-k$ when
$N-k \geq k$.  Without loss of generality assume $k\leq N-k$ and write all the
$\bar{c}_{j}$ in terms of $c_{i}$ using the relations from \eqref{eq_chern_rel}. Using elementary linear algebra it can be shown that the remaining $k$ relations on these generators of $H_k$ are given by the first column of the matrix product
\begin{equation} \label{eq_grass_rels}
  \left(\begin{array}{ccccc}
    c_{1} & 1 & 0 & 0 & { 0} \\
    -c_{2} & 0 & 1 & \ddots & 0 \\
    c_{3} & 0 & \ddots & \ddots & 0 \\
    \vdots & \vdots & \ddots&  & 1 \\
   (-1)^{k+1}c_{k} & 0 &  &  &  0\\
  \end{array}
\right)^{N-k+1}
\end{equation}
(see for example \cite[page 107]{Hiller}).

\begin{example}
Taking $N=5$ and $k=2$, the ring $H_2=H^*(Gr(2,5))$ is the quotient of the polynomial ring $\Q[c_1,c_2]$ by the ideal generated by the terms in the first column of the matrix product
\begin{equation}
  \left(
    \begin{array}{ccc}
      c_1 & 1  \\
      -c_2 & 0  \\
    \end{array}
  \right)^{4} =
\left(
    \begin{array}{ccc}
      c_1^4-3c_1^2c_2+c_2^2 & c_1^3-2c_1c_2  \\
      -c_1^3c_2+2c_1c_2^2 & -c_1^2c_2+c_2^2  \\
    \end{array} \right),
\end{equation}
which agrees with Example~\ref{ex_cohomologyring}.
\end{example}

To categorify finite dimensional irreducible representations $V^N$ of $\U$ we need to identify some graded additive categories $\cal{V}_n^N$ whose split Grothendieck groups are 1-dimensional for $0 \leq k \leq N$ and $n=2k-N$.  Let $H_k\pmod$ denote the category of graded finitely generated projective $H_k$-modules.  Set $\cal{V}_n^N = H_k\pmod$.  The  rings $H_k$ being graded local rings implies that their split Grothendieck group is are free $\Z[q,q^{-1}]$-modules generated by a unique indecomposable projective module.  Hence,
\begin{equation}
  K_0(\cal{V}_n^N) \otimes_{\Z[q,q^{-1}]} \Q(q) = \Q(q)
\end{equation}
so that the category $\cal{V}^N=\bigoplus_{k=0}^N \cal{V}_{n}^{N}$ (with $n=2k-N$) categorifies the irreducible representation $V^N$ in the sense that
\begin{equation}
 K_0(\cal{V}^N) := \bigoplus_{k=0}^N K_0(\cal{V}_{n}^{N}) \otimes_{\Z[q,q^{-1}]} \Q(q) \cong V^N
\end{equation}
as $\Q(q)$-vector spaces.

The action of $E1_n$ and $F1_n$ in $\U$ can also be interpreted in this geometric setting.  Consider the one step flag variety
\[
 Fl(k,k+1,N) = \left\{ (W_k,W_{k+1}) |
 \dim_{\C} W_k = k, \; \dim_{\C} W_{k+1} =(k+1), \; 0
 \subset W_k \subset W_{k+1} \subset \C^N  \right\}.
\]
We write $H_{k,k+1}:=H^*(Fl(k,k+1,N)$ for the cohomology ring of this variety. Again, this ring is simple to describe explicitly, see \eqref{eq_Hkkp1} below.   This variety has natural forgetful maps
\[
 \xy
  (0,20)*++{Fl(k,k+1,N)}="m"; (0,15)*{\scs\{ 0 \subset \C^k \subset \C^{k+1} \subset \C^N\}};
  (-35,14)*+{Gr(k,N)}="l";(35,14)*{Gr(k+1,N)}="r";
  (-35,8)*{\scs\{ 0 \subset \C^k  \subset \C^N\}};
  (35,8)*{\scs\{ 0  \subset \C^{k+1} \subset \C^N\}};
   {\ar@<-1ex> "m";"l"};{\ar@<1ex> "m";"r"};
 \endxy
\]
which give rise to inclusions
\[
\xy
 (0,0)*++{H_{k,k+1}}="m";(-35,-10)*+{H_k:=H^*(Gr(k,N))}="l";
 (35,-10)*+{H_{k+1}:=H^*(Gr(k+1,N))}="r";
  {\ar"l";"m"};{\ar "r";"m"};
\endxy
\]
on cohomology.   These inclusions make $H_{k,k+1}$ an $(H_{k+1},H_k)$-bimodule.  Since these rings are commutative we can also think of $H_{k,k+1}$ as an $(H_{k},H_{k+1})$-bimodule which we will denote by $H_{k+1,k}$. We get functors between categories of modules by tensoring with a bimodule.  We compose these functors by tensoring the corresponding bimodules.

The action of $E1_n$ and $1_nF$ for $n=2k-N$ is given by tensoring with $H_{k+1,k}$ and $H_{k,k+1}$, respectively. More precisely, we have functors
\begin{eqnarray}
 \onen &:=& H_k \otimes_{H_k} (-) \maps H_{k} {\rm -pmod} \to H_{k}{\rm -pmod} \nn \\
 \cal{E}\onen &:=& H_{k+1,k} \otimes_{H_k} (-) \{1-N+k\} \maps
 H_{k}{\rm -pmod} \to H_{k+1}{\rm -pmod}  \nn\\
 \cal{F}\onenn{n+2} &:=& H_{k,k+1} \otimes_{H_{k+1}} (-) \{-k\}\maps
H_{k+1} {\rm -pmod} \to H_{k} {\rm -pmod}. \nn
\end{eqnarray}
The grading shifts in the definition of $\cal{E}\onen$ and $\cal{F}\onen$ are necessary to ensure that these functors satisfy the quantum $\mathfrak{sl}_2$-relations
\begin{eqnarray}
  \cal{E}\cal{F}\onen \cong \cal{F}\cal{E}\onen \oplus \onen^{\oplus_{[n]}}  & \qquad & \text{for $n \geq 0$}, \nn\\
  \cal{F}\cal{E}\onen  \cong \cal{E}\cal{F}\onen\oplus\onen^{\oplus_{[-n]}} & \qquad & \text{for $n \leq 0$,} \nn
\end{eqnarray}
where we recall that
\begin{eqnarray}
  \onen^{\oplus_{[n]}}\; :=\; \onen\{n-1\} \oplus \onen\{n-3\} \oplus \cdots \oplus
  \onen\{1-n\}. \nn
\end{eqnarray}

Figure~\ref{fig:irreps} summarizes this categorification.
One can show that these functors have both left and right adjoints and commute with the grading shift functor on graded modules. This implies that they induce maps on the split Grothendieck rings giving the actions of $1_n$, $E1_n$, and $F1_n$.

\begin{figure}[h]
\[
 \xy
  (-55,0)*+{\cal{V}^N_{-N}}="1";
  (-34,0)*++{\;}="12";
  (-20,0)*+{\cal{V}^N_{n-2}}="2";
  (34,0)*++{\;}="23";
  (0,0)*+{\cal{V}^N_{n}}="3";
  (20,0)*+{\cal{V}^N_{n+2}}="4";
  (55,0)*+{\cal{V}^N_{N}}="5";
    {\ar@/^0.7pc/^{\cal{E}\onenn{-N}} "1";"12"};
    {\ar@/^0.7pc/^{\cal{E}\onenn{n-2}} "2";"3"};
    {\ar@/^0.7pc/^{\cal{E}\onenn{n}} "3";"4"};
    {\ar@/^0.7pc/^{\cal{E}\onenn{N}} "23"; "5"};
    {\ar@/^0.7pc/^{\cal{F}\onenn{N}} "12"; "1"};
    {\ar@/^0.7pc/^{\cal{F}\onenn{n}} "3";"2"};
    {\ar@/^0.7pc/^{\cal{F}\onenn{n+2}} "4";"3"};
    {\ar@/^0.7pc/^{\cal{F}\onenn{N}} "5";"23"};
  (27,0)*{\cdots};
  (-27,0)*{\cdots};
 \endxy
\]
\[
 \xy
  (-55,0)*+{H_0}="1";
  (-34,0)*++{\;}="12";
  (-20,0)*+{H_{k-1}}="2";
  (34,0)*++{\;}="23";
  (0,0)*+{H_k}="3";
  (20,0)*+{H_{k+1}}="4";
  (55,0)*+{H_N}="5";
    {\ar@/^0.7pc/^{H_{1,0}} "1";"12"};
    {\ar@/^0.7pc/^{H_{k,k-1}} "2";"3"};
    {\ar@/^0.7pc/^{H_{k+1,k}} "3";"4"};
    {\ar@/^0.7pc/^{H_{N,N-1}} "23"; "5"};
    {\ar@/^0.7pc/^{H_{0,1}} "12"; "1"};
    {\ar@/^0.7pc/^{H_{k-1,k}} "3";"2"};
    {\ar@/^0.7pc/^{H_{k,k+1}} "4";"3"};
    {\ar@/^0.7pc/^{H_{N-1,N}} "5";"23"};
  (27,0)*{\cdots};
  (-27,0)*{\cdots};
 \endxy
\]
\caption{The top diagram illustrates the weight space decomposition of categories and functors appearing in the categorification of $V^N$.  The lower diagram illustrates the graded rings whose module categories give weight space categories and the bimodules giving rise to functors on the module categories.} \label{fig:irreps}
\end{figure}

%
\subsubsection{Natural transformations}
%

The rings $H_{k,k+1}$ can also be explicitly defined as quotients of a graded polynomial ring:
\begin{equation} \label{eq_Hkkp1}
  H_{k,k+1}:= \Q[c_1,c_2, \ldots c_k;\xi;\bar{c}_1,\bar{c}_2,\ldots,
 \bar{c}_{N-k-1}]/ I_{k,k+1,N},
\end{equation}
where $I_{k,k+1,N}$ is the ideal generated by equating the homogeneous terms in the equation
\[
 \left(1+c_{1} +c_{2}t^2+ \ldots +c_{k}t^k \right)\left(1+\xi t
 \right)
  \left(1+ \bar{c}_{1}t+\bar{c}_{2}t^2+\ldots+
 \bar{c}_{N-k-1}t^{N-k-1}\right) \quad = \quad 1.
\]
Here the generator $\xi$ has degree 2 and corresponds to the Chern class of the natural line bundle over $Fl(k,k+1,N)$ whose fibre over a point $0 \subset W_k \subset W_{k+1} \subset \C^N$ in $Fl(k,k+1,N)$ is the line $W_{k+1}/W_k$.

This presentation of $H_{k,k+1}$ makes it easy to explicitly construct bimodule homomorphisms between the various cohomology rings and determine relations between them. In particular, there is a natural degree two bimodule map
\begin{equation}
  \Uup_{n} \maps H_{k+1,k} \otimes_{H_k} - \To H_{k+1,k} \otimes_{H_{k}} -
\end{equation}
given by multiplication by the generator $\xi$ corresponding to the first Chern class of the line bundle $W^{k+1}/W^k$.

The crossing 2-morphism $\Ucross_{n}$ also turns out to have a natural geometric interpretation.  Note that the bimodule corresponding to the action of $\cal{E}\cal{E}\onen$ is $H_{k+2,k+1} \otimes_{H_{k+1}} H_{k+1,k}$ which can be shown to be isomorphic to the cohomology ring
\begin{equation}
  H_{k,k+1,k+2} := H^*( Fl(k,k+1,k+2,N)) \nn
\end{equation}
where $ Fl(k,k+1,k+2,N)$ is the iterated partial flag variety
\[
 Fl(k,k+1,k+2,N) = \left\{ (W_k,W_{k+1},W_{k+2}) |
 \dim_{\C} W_j = j, \; \; 0
 \subset W_k \subset W_{k+1} \subset W_{k+2} \subset \C^N  \right\}.
\]
The natural forgetful map from $Fl(k,k+1,k+2,N)$ to
\[
 Fl(k,k+2,N) = \left\{ (W_k,W_{k+2}) |
 \dim_{\C} W_j = j, \; \; 0
 \subset W_k  \subset W_{k+2} \subset \C^N  \right\},
\]
given by forgetting the subspace $W_{k+1}$ of dimension $k+1$ is a proper map, so that it gives rise to a degree -2 map
\begin{equation}
  \Ucross_{n} \maps H_{k,k+1,k+2}\otimes_{H_k}- \To H_{k,k+1,k+2}\otimes_{H_k}-
\end{equation}
given by pushing forward a class to $H_{k,k+2}$ and pulling back to $H_{k,k+1,k+2}$. But this action can be expressed more explicitly on generators of the ring $H_{k,k+1,k+2}$.   In particular,
\begin{equation}
  H_{k,k+1,k+2} = \Z[c_1,\dots,c_k, \xi_1,\xi_2, \bar{c}_1,\dots,\bar{c}_{N-k-2}]/I_{k,k+1,k+2} \nn
\end{equation}
where $I_{k,k+1,k+2}$ is the ideal generated by the homogeneous terms in the equation
\begin{equation}
  (1+x_1 t +x_2 t^2 + \dots + x_k t^k) (1+\xi_1 t) (1+\xi_2) (1+ y_1 t + \dots + y_{N-k-2}t^{N-k-2}). \nn
\end{equation}
The degree two generators $\xi_1$ and $\xi_2$ arise from the Chern classes of the line bundles $W_{k+1}/W_k$ and $W_{k+2}/W_{k+1}$, respectively.

The action of  $\Ucross_{n}$ is given by fixing generators $c_i$ and $\bar{c}_j$ for $0 \leq i \leq k$ and $0 \leq j \leq N-k-2$ and acting by the divided difference operator
\[
 \partial_1 (f) :=  \frac{f-s_1f}{\xi_1-\xi_{2}}
\]
for any polynomial $f \in \Z[\xi_1,\xi_2]$ with the action of the symmetric group generator $s_1 \in S_2$ given by permuting the variables $\xi_1$ and $\xi_2$.

More generally, the composite $\cal{E}^a\onen$ acts on the category $H_{k}\pmod$ by the functor of tensoring with the $(H_{k+a},H_k)$-bimodule
\begin{equation}
H_{k+a,k+a-1, \dots, k+1,k} := H^*(Fl(k,k+1,\dots,k+a,N)) \nn
\end{equation}
where $Fl(k,k+1,\dots,k+a,N)$ is the $a$-step iterated partial flag variety. One can show that
\[
 H_{k+a,k+a-1, \dots, k+1,k} \cong \Z[x_1,\dots, x_k,\xi_1,\dots,\xi_a,y_1,\dots, y_{N-k-a}] /I_{k,k+1,\dots,k+a}
\]
with $I_{k,k+1,\dots,k+a}$ defined analogously as $I_{k,k+1,k+2}$.   There are divided difference operators $\partial_i$ for $1 \leq i \leq a-1$ acting on $f\in \Z[\xi_1, \dots, \xi_a]$ given by
\begin{equation}
 \partial_i := \frac{1-s_i}{\xi_i-\xi_{i+1}}.
\end{equation}
From this definition one can show that both the image and kernel of
the operator $\partial_i$ consists of polynomials which
are symmetric in $\xi_i$ and $\xi_{i+1}$.  Hence, these operators satisfy the relations $\partial_i^2=0$.

The divided difference operators are a natural choice for the action of $\Ucross_n$ since they are degree $-2$ and satisfy the relation $\partial_i^2=0$, see \eqref{eq_semiEEm4}.  The algebra of endomorphisms of $\Z[\xi_1,\dots,\xi_a]$ generated by multiplication by variables $\xi_i$ and the action of divided difference operators $\partial_i$ is called the nilHecke algebra $\BNC_a$. The nilHecke algebra has a well documented connection to cohomology rings of flag varieties~\cite{BGG,Dem} and is further related to the theory
of Schubert varieties, see ~\cite{KK,BilLak,Kum,Man}.

This makes the axioms for the nilHecke algebra a natural choice for resolving the relations on upward pointing strands.  The algebra $\BNC_a$ has generators $\xi_j$ for $1 \leq j \leq a$ and $\partial_i$ with $1 \leq j \leq a-1$ with relations
\[
 \begin{array}{ll}
 \xi_i \xi_j =   \xi_j \xi_i , &  \\
   \partial_i \xi_j = \xi_j\partial_i \quad \text{if $|i-j|>1$}, &
   \partial_i\partial_j = \partial_j\partial_i \quad \text{if $|i-j|>1$}, \\
  \partial_i^2 = 0,  &
   \partial_i\partial_{i+1}\partial_i = \partial_{i+1}\partial_i\partial_{i+1},  \\
   \xi_i \partial_i - \partial_i \xi_{i+1}=1,  &   \partial_i \xi_i - \xi_{i+1} \partial_i =1.
  \end{array}
\]
If we reinterpret these relations in the graphic calculus for $\Ucat$, the first two lines amount to isotopies of diagrams
\begin{equation}
    \dots \;\; \xy
    (0,-8)*{};(0,8)*{} **\dir{-}?(1)*\dir{>}?(.25)*{\bullet};
  \endxy \;\; \dots \;\;
  \xy
    (0,-8)*{};(0,8)*{} **\dir{-}?(1)*\dir{>}?(.7)*{\bullet};
  \endxy \;\; \dots =
      \dots \;\; \xy
    (0,-8)*{};(0,8)*{} **\dir{-}?(1)*\dir{>}?(.7)*{\bullet};
  \endxy \;\; \dots \;\;
  \xy
    (0,-8)*{};(0,8)*{} **\dir{-}?(1)*\dir{>}?(.25)*{\bullet};
  \endxy \;\; \dots
\end{equation}
\begin{equation}
      \dots \;\; \xy
    (-4,-8)*{};(4,0)*{} **\crv{(-4,-5) & (4,-3)};
    (4,-8)*{};(-4,0)*{} **\crv{(4,-5) & (-4,-3)};
    (-4,0)*{}; (-4,8)*{} **\dir{-}?(1)*\dir{>};
    (4,0)*{}; (4,8)*{} **\dir{-}?(1)*\dir{>};
  \endxy \;\; \dots \;\;
  \xy
    (0,-8)*{};(0,8)*{} **\dir{-}?(1)*\dir{>}?(.7)*{\bullet};
  \endxy \;\; \dots =
      \dots \;\; \xy
    (-4,0)*{};(4,8)*{} **\crv{(-4,3) & (4,5)}?(1)*\dir{>};
    (4,0)*{};(-4,8)*{} **\crv{(4,3) & (-4,5)}?(1)*\dir{>};
    (-4,-8)*{}; (-4,0)*{} **\dir{-};
    (4,-8)*{}; (4,0)*{} **\dir{-};
  \endxy \;\; \dots \;\;
  \xy
    (0,-8)*{};(0,8)*{} **\dir{-}?(1)*\dir{>}?(.25)*{\bullet};
  \endxy \;\; \dots
\end{equation}
while the second two lines imply
\begin{equation}
  \vcenter{\xy 0;/r.19pc/:
    (-4,-4)*{};(4,4)*{} **\crv{(-4,-1) & (4,1)}?(1)*\dir{>};
    (4,-4)*{};(-4,4)*{} **\crv{(4,-1) & (-4,1)}?(1)*\dir{>};
    (-4,4)*{};(4,12)*{} **\crv{(-4,7) & (4,9)}?(1)*\dir{>};
    (4,4)*{};(-4,12)*{} **\crv{(4,7) & (-4,9)}?(1)*\dir{>};
 \endxy}
 \;\;=\;\;0, \qquad \quad
 \vcenter{
 \xy 0;/r.18pc/:
    (-4,-4)*{};(4,4)*{} **\crv{(-4,-1) & (4,1)}?(1)*\dir{>};
    (4,-4)*{};(-4,4)*{} **\crv{(4,-1) & (-4,1)}?(1)*\dir{>};
    (4,4)*{};(12,12)*{} **\crv{(4,7) & (12,9)}?(1)*\dir{>};
    (12,4)*{};(4,12)*{} **\crv{(12,7) & (4,9)}?(1)*\dir{>};
    (-4,12)*{};(4,20)*{} **\crv{(-4,15) & (4,17)}?(1)*\dir{>};
    (4,12)*{};(-4,20)*{} **\crv{(4,15) & (-4,17)}?(1)*\dir{>};
    (-4,4)*{}; (-4,12) **\dir{-};
    (12,-4)*{}; (12,4) **\dir{-};
    (12,12)*{}; (12,20) **\dir{-};
\endxy}
 \;\; =\;\;
 \vcenter{
 \xy 0;/r.18pc/:
    (4,-4)*{};(-4,4)*{} **\crv{(4,-1) & (-4,1)}?(1)*\dir{>};
    (-4,-4)*{};(4,4)*{} **\crv{(-4,-1) & (4,1)}?(1)*\dir{>};
    (-4,4)*{};(-12,12)*{} **\crv{(-4,7) & (-12,9)}?(1)*\dir{>};
    (-12,4)*{};(-4,12)*{} **\crv{(-12,7) & (-4,9)}?(1)*\dir{>};
    (4,12)*{};(-4,20)*{} **\crv{(4,15) & (-4,17)}?(1)*\dir{>};
    (-4,12)*{};(4,20)*{} **\crv{(-4,15) & (4,17)}?(1)*\dir{>};
    (4,4)*{}; (4,12) **\dir{-};
    (-12,-4)*{}; (-12,4) **\dir{-};
    (-12,12)*{}; (-12,20) **\dir{-};
\endxy} \label{eq_nil_rels_grass}
  \end{equation}
\begin{eqnarray}
  \xy
  (4,4);(4,-4) **\dir{-}?(0)*\dir{<}+(2.3,0)*{};
  (-4,4);(-4,-4) **\dir{-}?(0)*\dir{<}+(2.3,0)*{};
  (9,2)*{n};
 \endxy
 \quad =
\xy
  (0,0)*{\xybox{
    (-4,-4)*{};(4,4)*{} **\crv{(-4,-1) & (4,1)}?(1)*\dir{>}?(.25)*{\bullet};
    (4,-4)*{};(-4,4)*{} **\crv{(4,-1) & (-4,1)}?(1)*\dir{>};
     (8,1)*{ n};
     (-10,0)*{};(10,0)*{};
     }};
  \endxy
 \;\; -
 \xy
  (0,0)*{\xybox{
    (-4,-4)*{};(4,4)*{} **\crv{(-4,-1) & (4,1)}?(1)*\dir{>}?(.75)*{\bullet};
    (4,-4)*{};(-4,4)*{} **\crv{(4,-1) & (-4,1)}?(1)*\dir{>};
     (8,1)*{ n};
     (-10,0)*{};(10,0)*{};
     }};
  \endxy
 \;\; =
\xy
  (0,0)*{\xybox{
    (-4,-4)*{};(4,4)*{} **\crv{(-4,-1) & (4,1)}?(1)*\dir{>};
    (4,-4)*{};(-4,4)*{} **\crv{(4,-1) & (-4,1)}?(1)*\dir{>}?(.75)*{\bullet};
     (8,1)*{ n};
     (-10,0)*{};(10,0)*{};
     }};
  \endxy
 \;\; -
  \xy
  (0,0)*{\xybox{
    (-4,-4)*{};(4,4)*{} **\crv{(-4,-1) & (4,1)}?(1)*\dir{>} ;
    (4,-4)*{};(-4,4)*{} **\crv{(4,-1) & (-4,1)}?(1)*\dir{>}?(.25)*{\bullet};
     (8,1)*{ n};
     (-10,0)*{};(10,0)*{};
     }};
  \endxy \nn \\ \label{eq_nil_dotslide_grass}
\end{eqnarray}
By repeatedly applying \eqref{eq_nil_dotslide_grass} one can show that the equation
\begin{equation} \label{eq_ind_dotslide}
\xy
  (0,0)*{\xybox{
    (-4,-4)*{};(4,4)*{} **\crv{(-4,-1) & (4,1)}?(1)*\dir{>}?(.25)*{\bullet}+(-2.5,1)*{\alpha};
    (4,-4)*{};(-4,4)*{} **\crv{(4,-1) & (-4,1)}?(1)*\dir{>};
     (8,-4)*{n};
     (-10,0)*{};(10,0)*{};
     }};
  \endxy
 \;\; -
 \xy
  (0,0)*{\xybox{
    (-4,-4)*{};(4,4)*{} **\crv{(-4,-1) & (4,1)}?(1)*\dir{>}?(.75)*{\bullet}+(2.5,-1)*{\alpha};
    (4,-4)*{};(-4,4)*{} **\crv{(4,-1) & (-4,1)}?(1)*\dir{>};
     (8,-4)*{n};
     (-10,0)*{};(10,0)*{};
     }};
  \endxy
 \;\; =
\xy
  (0,0)*{\xybox{
    (-4,-4)*{};(4,4)*{} **\crv{(-4,-1) & (4,1)}?(1)*\dir{>};
    (4,-4)*{};(-4,4)*{} **\crv{(4,-1) & (-4,1)}?(1)*\dir{>}?(.75)*{\bullet}+(-2.5,-1)*{\alpha};
     (8,3)*{ n};
     (-10,0)*{};(10,0)*{};
     }};
  \endxy
 \;\; -
  \xy
  (0,0)*{\xybox{
    (-4,-4)*{};(4,4)*{} **\crv{(-4,-1) & (4,1)}?(1)*\dir{>} ;
    (4,-4)*{};(-4,4)*{} **\crv{(4,-1) & (-4,1)}?(1)*\dir{>}?(.25)*{\bullet}+(2.5,1)*{\alpha};
     (8,3)*{n};
     (-10,0)*{};(10,0)*{};
     }};
  \endxy
  \;\; = \;\;
  \sum_{f_1 + f_2 = \alpha-1}
  \xy
  (3,4);(3,-4) **\dir{-}?(0)*\dir{<} ?(.5)*\dir{}+(0,0)*{\bullet}+(2.5,1)*{\scs f_2};
  (-3,4);(-3,-4) **\dir{-}?(0)*\dir{<}?(.5)*\dir{}+(0,0)*{\bullet}+(-2.5,1)*{\scs f_1};;
  (9,-4)*{n};
 \endxy
\end{equation}
holds.

In this way we see that requiring an action of $\Ucat$ on the cohomology rings of iterated partial flag varieties clarifies the precise form of the relations that should hold on upward oriented strands in $\Ucat$. Using the adjoint structure we get similar relations on downward oriented strands. Bimodule homomorphisms of the appropriate degree can also be found for the cap and cup 2-morphisms in $\Ucat$~\cite[Section 7]{Lau1}.  These maps turn out to be unique up to a scalar.

%
\subsubsection{What about bubbles?} \label{subsubsec_bubbles}
%

We have carefully avoided the simplest of all graded 2Homs, namely $\Hom_{\Ucat}(\onen, \onen)$.  Using the cap and cup generators we can construct closed diagrams that define 2-morphisms from $\onen$ to itself. The simplest of these diagrams are the dotted bubbles:
\begin{equation}
      \xy
      (0,0)*{\cbub{\alpha}{}};
      (8,8)*{n}; (-22,8)*{}; (22,-10)*{}; \endxy\qquad
    \xy  (0,0)*{\ccbub{\beta}{}}; (8,8)*{n};  (-22,8)*{}; (22,-10)*{}; \endxy \nn
\end{equation}
formed using a cap and a cup together with some number of dots.  The relations in $\Ucat$ ensure that where we place the dots on a dotted bubble is irrelevant, any placement gives rise to the same 2-morphism.

The degrees of these dotted bubbles are easily computed
\begin{eqnarray}
  \deg\left( \;\;\xy
      (0,0)*{\cbub{\alpha}{}};
      (8,8)*{n};  \endxy \;\;\right)
      = 2(-n+1+\alpha ), \qquad
  \deg\left( \;\;\xy  (0,0)*{\ccbub{\beta}{}}; (8,8)*{n};  \endxy \;\;\right)
      = 2(n+1+\beta ). \nn
\end{eqnarray}
However, the semilinear form indicates $\sla 1_n,1_n\sra=1$. If we interpret this as saying that all of the 2Homs $\Hom_{\Ucat}(\onen\{t\},\onen)$ are zero for $t \neq 0$, then this implies that all dotted bubbles of nonzero degree are zero.

This is the first place where we must deviate from a naive interpretation of the semilinear form.  Computing the action of the positive degree bubbles on weight spaces of the 2-representation $\cal{V}^N$ we see that for large $N$ these bubbles {\em do not} act by zero.  Furthermore, one can show that combining the assumption that positive degree bubbles are zero with other relations that arise when lifting the $\mathfrak{sl}_2$-relations to $\Ucat$ gives rise to an inconsistent calculus, see Remark~\ref{rem_otherinv} below.  Therefore, we do not impose the condition that dotted bubbles of nonzero degree are always zero.

We do however impose the condition that negative degree bubbles are zero.  This is to ensure that the space of 2Homs in each degree is finite dimensional.  This is further justified by the fact that negative degree bubbles act by zero in the action of $\Ucat$ on cohomology rings of flag varieties.

The relations in $\Ucat$ that result from our analysis will show that any complicated closed diagram can always be reduced to a product of non-nested dotted bubbles that all have the same orientation, and that any dotted bubble appearing in a diagram in $\HOM_{\Ucat}(\cal{E}_{\ep}\onen, \cal{E}_{\ep'}\onen)$ can be rewritten as a linear combination of diagrams where all dotted bubbles are confined to a single region of the diagram.  For concreteness, we can rewrite all 2-morphisms as linear combinations of diagrams where all dotted bubbles appear at the far right of the diagrams.  Hence our graded dimensions can be renormalized to account for the contributions coming from these dotted bubbles and all graded dimensions are interpreted modulo the contribution from $\HOM_{\Ucat}(\onen, \onen)$.

It will be convenient in what follows to write the dotted bubbles in the form
\begin{equation}
      \xy
      (0,0)*{\cbub{n-1+\alpha}{}};
      (8,8)*{n}; (-22,8)*{}; (22,-10)*{}; \endxy\qquad
    \xy  (0,0)*{\ccbub{-n-1+\beta}{}}; (8,8)*{n};  (-22,8)*{}; (22,-10)*{}; \endxy
\end{equation}
so that their degree is easy to read off from the diagram
\begin{eqnarray}
  \deg\left( \;\;\xy
      (0,0)*{\cbub{n-1+\alpha}{}};
      (8,8)*{n};  \endxy \;\;\right)
      = 2\alpha, \qquad
  \deg\left( \;\;\xy  (0,0)*{\ccbub{-n-1+\beta}{}}; (8,8)*{n};  \endxy \;\;\right)
      = 2\beta. \nn
\end{eqnarray}
Notice that the number of dots on these bubbles depends on $n$.  One potential drawback of this notation is that if we are not careful we could inadvertently  label the bubbles by a negative number of dots.  For example, if $\alpha \geq 0$ and $n<1-\alpha$, then the number of dots in the diagram
\[
\xy
      (0,0)*{\cbub{n-1+\alpha}{}};
      (8,8)*{n}; (-22,8)*{}; (22,-10)*{}; \endxy
\]
is negative.   But the vertical composite of a 2-morphism with itself a negative number of times does not make sense. In Section~\ref{subsec_fake} we explain that it is helpful to treat these negative labels as formal symbols used to represent more complicated diagrams.

\begin{rem} \label{rem_bub_linear_ind}
By explicitly computing the $(H_k,H_k)$-bimodule homomorphisms corresponding to the degree $\alpha$ counter-clockwise oriented dotted bubbles in a region $n$ one finds that this bimodule homomorphisms acts on $H_k$ by multiplication by the element
\begin{equation}
  \sum_{\ell=0}^{\min(\alpha,k)}c_{\ell}c_{\alpha-\ell}
\end{equation}
of $H_k$.  But by \eqref{eq_grass_rels} there are no relations on the variables $c_k$ below degree $N-k+1$.  This implies that there are no relations on products of non-nested clockwise-oriented dotted bubbles below degree $N-k+1$.  Since one can choose $N$ arbitrarily, this implies that products of non-nested dotted bubbles with a fixed orientation should be linearly independent.
\end{rem}

%
\subsubsection{Symmetric functions} \label{subsec_symm}
%

The linear independence of products of non-nested dotted bubbles derived from requiring an action of $\Ucat$ on the cohomology rings of Grassmannians suggests a relationship between closed diagrams in $\Ucat$ and the algebra of symmetric functions. Recall the ring of symmetric functions $\Lambda(\underline{x})$ in variables $\underline{x}=x_1,x_2,\dots, x_n$ is algebraically generated by
\begin{itemize}
  \item elementary symmetric functions: $e_r(x_1,\dots,x_n) = \sum_{j_1<j_2<\dots<j_r} x_{j_1}x_{j_2}\dots x_{j_rb}$, or
  \item complete symmetric functions: $h_r(x_1,\dots,x_n) = \sum_{m_1+m_2+\dots+m_n=r} x_{1}^{m_1}x_{2}^{m_2}\dots x_{n}^{m_n}$.
\end{itemize}

\begin{example}[n=3]
\begin{align*}
  e_1 & =x_1 + x_2 + x_3  & h_1 &=x_1 + x_2 + x_3 \\
  e_2 &= x_1x_2+ x_1x_3 +x_2x_3 & h_2 &= x_1^2 + x_2^2 +x_3^2+ x_1x_2+ x_1x_3 +x_2x_3 \\
  e_3 &= x_1x_2x_3
  & h_3 &=
\begin{array}{l}
  x_1^3 +x_2^3+x_3^3+x_1^2x_2 +x_1^2x_3+x_1x_2^2\\
  +x_2^2x_3 + x_1x_3^2+  x_2 x_3^2   + x_1x_2x_3 .
\end{array}
\end{align*}
\end{example}
Elementary and complete symmetric functions can also be defined for $\underline{x}$ an infinite set of variables. These two bases are related by the family of equations
\begin{equation} \label{eq_eh_rel}
  \sum_{\ell \geq 0} (-1)^{\ell}e_{\ell} h_{\alpha-\ell}=\delta_{\alpha,0}
\end{equation}
for each $\alpha\geq 0$, where we define $h_j=e_j=0$ for $j<0$ and $e_1=h_1=1$.

The ring $\symm(\underline{x})$ can be identified with the polynomial ring $\Z[e_1,e_2,\dots]$ and the polynomial ring $\Z[h_1,h_2,\dots]$. As a graded vector space
\begin{equation}
   \symm(\underline{x}) = \bigoplus_{\ell=0}^{\infty} \symm(\underline{x})^{\ell} \nn
\end{equation}
where $\symm(\underline{x})^{\ell}$ denote the homogeneous symmetric functions of degree $\ell$.  Given a partition $\lambda = (\lambda_1,\lambda_2, \dots, \lambda_m)$ let $e_{\lambda} = e_{\lambda_1}e_{\lambda_2} \dots e_{\lambda_m}$, so that $\symm(\underline{x})^{\ell}$ is spanned by those $e_{\lambda}$ with $|\lambda|=\sum_{j=1}^m\lambda_j=\ell$.

For $n \neq 0$ define a grading preserving map
\begin{eqnarray} \label{eq_symm_iso}
 \phi^n \maps \symm(\underline{x}) &
 \longrightarrow& \END_{\Ucat}(\onen) \\ \nn
  e_{\lambda}=e_{\lambda_1}\dots e_{\lambda_m} &\mapsto&
  \left\{
  \begin{array}{ccc}
    \xy 0;/r.18pc/:
 (-12,0)*{\cbub{n-1+\lambda_1}};
  (8,0)*{\cbub{n-1+\lambda_2}};
  (36,0)*{\cbub{n-1+\lambda_m}};
  (20,0)*{\cdots};
 (-8,8)*{n};
 \endxy & \quad & \text{for $n > 0$} \\ & &\\
    \xy 0;/r.18pc/:
 (-12,0)*{\ccbub{-n-1+\lambda_1}};
  (8,0)*{\ccbub{-n-1+\lambda_2}};
  (36,0)*{\ccbub{-n-1+\lambda_m}};
  (20,0)*{\cdots};
 (-8,8)*{n};
 \endxy & \quad & \text{if $n<0$.}
  \end{array}
  \right.
\end{eqnarray}
and write
\begin{equation} \label{eq_def_eln}
  e_{\lambda,n} := \phi^n(e_{\lambda}).
\end{equation}
When $\lambda$ is the empty partition we
write $e_{\lambda,n}=e_{\emptyset}=1$.

By Remark~\ref{rem_bub_linear_ind} it follows that this map is injective.

%
\subsection{Lifting equations to isomorphisms}
%

Using the semilinear form it was possible to determine the generating 2-morphisms and some relations for the 2-category $\Ucat$.  Requiring an action of the 2-category $\Ucat$ on the cohomology of iterated flag varieties clarified the precise form of some of these relations.  It remains to be shown that the 2-category lifts the defining relations in $\U$ to explicit isomorphisms:
\begin{eqnarray}
  \cal{E}\cal{F}\onen \cong \cal{F}\cal{E}\onen \oplus \onen^{\oplus_{[n]}}  & \qquad & \text{for $n \geq 0$}, \nn\\
  \cal{F}\cal{E}\onen  \cong \cal{E}\cal{F}\onen\oplus\onen^{\oplus_{[-n]}} & \qquad & \text{for $n \leq 0$.} \nn
\end{eqnarray}

For $n \geq 0$ there is a natural map $\zeta_+$ from the direct sum $\cal{F}\cal{E}\onen \oplus\onen^{\oplus_{[n]}}$ to the 1-morphism $ \cal{E}\cal{F}\onen$ given by the direct sum of maps
\begin{equation}
\xy
 (0,25)*+{\cal{E}\cal{F}\onen}="T";
 (-65,0)*+{\cal{F}\cal{E}\onen}="m1";
 (-50,0)*{\oplus};
 (-35,0)*+{\onen\{n-1\}}="m2";
 (-12,0)*{\oplus \;\; \cdots \;\; \oplus};
 (38,0)*{\oplus\;\; \cdots\;\; \oplus};
 (10,0)*+{\onen\{n-1-2\ell\}}="m3";
 (65,0)*+{ \onen\{1-n\}}="m4";
    {\ar@/^1.4pc/^{  \xy 0;/r.15pc/:
  (0,0)*{\xybox{
    (-4,-4)*{};(4,4)*{} **\crv{(-4,-1) & (4,1)}?(0)*\dir{<} ;
    (4,-4)*{};(-4,4)*{} **\crv{(4,-1) & (-4,1)}?(1)*\dir{>};
    (-7,-3)*{\scs };     (6,-3)*{\scs };     (10,2)*{n};
     }};
  \endxy} "m1";"T"};
    {\ar^{   \vcenter{\xy 0;/r.15pc/:
            (-8,0)*{};
           (-4,2)*{}="t1";
            (4,2)*{}="t2";
            "t2";"t1" **\crv{(4,-5) & (-4,-5)}; ?(.1)*\dir{>} ?(.95)*\dir{>}
            ?(.2)*\dir{}+(0,-.1)*{\bullet}+(4,-2)*{\scs \quad n-1};;
        \endxy}} "m2";"T"};
    {\ar^{ \vcenter{\xy 0;/r.15pc/:
            (-8,0)*{};
           (-4,2)*{}="t1";
            (4,2)*{}="t2";
            "t2";"t1" **\crv{(4,-5) & (-4,-5)}; ?(.1)*\dir{>} ?(.95)*\dir{>}
            ?(.2)*\dir{}+(0,-.1)*{\bullet}+(4,-2)*{\scs \quad n-1-\ell};;
        \endxy} } "m3";"T"};
    {\ar@/_1.4pc/_{ \vcenter{\xy 0;/r.15pc/:
            (-8,0)*{};
            (-4,2)*{}="t1";
            (4,2)*{}="t2";
            "t2";"t1" **\crv{(4,-5) & (-4,-5)}; ?(.1)*\dir{>} ?(.95)*\dir{>};
        \endxy} } "m4";"T"};
 \endxy
\end{equation}
Likewise, for $n \leq 0$ there is a natural map $\zeta_-$
\begin{equation} \label{eq_FEiso}
\xy
 (0,25)*+{\cal{F}\cal{E}\onen}="T";
 (-60,0)*+{\cal{E}\cal{F}\onen}="m1";
  (-50,0)*+{\oplus};
 (-36,0)*+{\onen\{-n-1\}}="m2";
 (-15,0)*{\oplus \;\; \cdots \;\; \oplus};
 (36,0)*{\oplus\;\; \cdots\;\; \oplus};
 (10,0)*+{\onen\{-n-1-2\ell\}}="m3";
 (60,0)*+{\oplus \;\;\onen\{1+n\}}="m4";
    {\ar@/^1.4pc/^{  \xy 0;/r.15pc/:
        (0,0)*{\xybox{
        (-4,-4)*{};(4,4)*{} **\crv{(-4,-1) & (4,1)}?(1)*\dir{>} ;
        (4,-4)*{};(-4,4)*{} **\crv{(4,-1) & (-4,1)}?(0)*\dir{<};
        (-5,-3)*{\scs };
        (7,-3)*{\scs };
        (14,2)*{n};
        }};
    \endxy} "m1";"T"};
    {\ar^{\vcenter{\xy 0;/r.15pc/:
            (-8,0)*{};
           (-4,2)*{}="t1";
            (4,2)*{}="t2";
            "t2";"t1" **\crv{(4,-5) & (-4,-5)}; ?(.05)*\dir{<} ?(.88)*\dir{<}
            ?(.3)*\dir{}+(0,-.1)*{\bullet}+(4,-2)*{\scs \quad -n-1};;
        \endxy}} "m2";"T"};
    {\ar^{\vcenter{\xy 0;/r.15pc/:
            (-8,0)*{};
           (-4,2)*{}="t1";
            (4,2)*{}="t2";
            "t2";"t1" **\crv{(4,-5) & (-4,-5)}; ?(.05)*\dir{<} ?(.88)*\dir{<}
            ?(.3)*\dir{}+(0,-.1)*{\bullet}+(4,-2)*{\scs \quad -n-1-\ell};;
        \endxy}} "m3";"T"};
    {\ar@/_1.4pc/_{\vcenter{\xy 0;/r.15pc/:
            (-8,0)*{};
           (-4,2)*{}="t1";
            (4,2)*{}="t2";
            "t2";"t1" **\crv{(4,-5) & (-4,-5)}; ?(.05)*\dir{<} ?(.88)*\dir{<};
        \endxy}} "m4";"T"};
 \endxy
\end{equation}

We look for the most general inverse of the maps $\zeta_+$ and $\zeta_-$ in $\Ucat$ making the following assumptions.
\begin{enumerate}
  \item All 1-morphisms have specified cyclic biadjoints, see\eqref{eq_biadjointness1} and \eqref{eq_biadjointness2}.

 \item The nilHecke algebra axioms \eqref{eq_nil_rels_grass} and \eqref{eq_nil_dotslide_grass}.

 \item There are no negative degree endomorphisms in $\End_{\Ucat}(\onen)$.
\item  The graded 2Homs in $\Ucat$ categorify the semilinear form on $\U$ modulo the contribution from dotted bubbles. That is,
 \begin{equation}
  \gdim \HOM_{\Ucat}(\cal{E}_{\ep}\onen, \cal{E}_{\ep'}\onen)/ \gdim \END_{\Ucat}(\onen)  =
  \langle E_{\ep} 1_n , E_{\ep'} 1_n \rangle.
 \end{equation}
\end{enumerate}

%
\subsubsection{A general form of the $\mathfrak{sl}_2$-isomorphisms}
%

Because $\zeta_{+}$ is a map from a direct sum, its inverse $\overline{\zeta_{+}}$ will have a component for each summand:
\begin{equation} \label{eq_phi-minus-invL}
\xy
 (0,-25)*+{\cal{E}\cal{F}\onen}="B";
 (-65,0)*+{\cal{F}\cal{E}\onen}="m1";
 (-50,0)*{\oplus};
 (-35,0)*+{\onen\{n-1\}}="m2";
 (-12,0)*{\oplus \;\; \cdots \;\; \oplus};
 (38,0)*{\oplus\;\; \cdots\;\; \oplus};
 (10,0)*+{\onen\{n-1-2\ell\}}="m3";
 (65,0)*+{ \onen\{1-n\}}="m4";
  {\ar@/^1.6pc/^(.7){\xy 0;/r.15pc/:
    (-4,-8)*{};(4,8)*{} **\crv{(-4,-1) & (4,1)}?(.95)*\dir{>} ?(.1)*\dir{>};
    (4,-8)*{};(-4,8)*{} **\crv{(4,-1) & (-4,1)}?(.05)*\dir{<} ?(.9)*\dir{<};
    (0,0)*{\chern{\overline{\zeta_{+}}^{n}}}; (-12,2)*{n};
  \endxy\;\;} "B";"m1"};
  {\ar@/^0.6pc/^(.7){
  \xy 0;/r.15pc/:
     (-4,-2)*{}="t1";  (4,-2)*{}="t2";
     "t2";"t1" **\crv{(4,5) & (-4,5)};  ?(.9)*\dir{<} ?(.05)*\dir{<};
      (0,6)*{\chern{\overline{\zeta_{+}}^{0}}};  (-10,10)*{n}; \endxy } "B";"m2"};
  {\ar_(.5){ \xy 0;/r.15pc/:
     (-4,-2)*{}="t1";  (4,-2)*{}="t2";
     "t2";"t1" **\crv{(4,5) & (-4,5)};  ?(.9)*\dir{<} ?(.05)*\dir{<};
      (0,6)*{\chern{\overline{\zeta_{+}}^{\ell}}}; (10,10)*{n};\endxy } "B";"m3"};
    {\ar@/_1.6pc/_(.8){ \xy 0;/r.15pc/:
     (-4,-2)*{}="t1";  (4,-2)*{}="t2";
     "t2";"t1" **\crv{(4,5) & (-4,5)};  ?(.93)*\dir{<} ?(.03)*\dir{<};
      (0,6)*{\chern{\overline{\zeta_{+}}^{(n-1)}}};  (6,16)*{n};\endxy } "B";"m4"};
 \endxy
\end{equation}
Likewise,  the inverse $\overline{\zeta}_-$ of $\zeta_{-}$ can be written
\begin{equation} \label{eq_phi-plus-invL}
\xy
 (0,-25)*+{\cal{F}\cal{E}\onen}="B";
(-60,0)*+{\cal{E}\cal{F}\onen}="m1";
  (-50,0)*+{\oplus};
 (-36,0)*+{\onen\{-n-1\}}="m2";
 (-15,0)*{\oplus \;\; \cdots \;\; \oplus};
 (36,0)*{\oplus\;\; \cdots\;\; \oplus};
 (10,0)*+{\onen\{-n-1-2\ell\}}="m3";
 (60,0)*+{\oplus \;\;\onen\{1+n\}}="m4";
  {\ar@/^1.6pc/^(.7){\xy 0;/r.15pc/:
    (-4,-8)*{};(4,8)*{} **\crv{(-4,-1) & (4,1)}?(.9)*\dir{<} ?(.05)*\dir{<};
    (4,-8)*{};(-4,8)*{} **\crv{(4,-1) & (-4,1)}?(.1)*\dir{>} ?(.95)*\dir{>};
    (0,0)*{\chern{\overline{\zeta_{-}}^{-n}}}; (-12,2)*{n};
  \endxy\;\;} "B";"m1"};
  {\ar^(.7){
  \xy 0;/r.15pc/:
     (-4,-2)*{}="t1";  (4,-2)*{}="t2";
     "t2";"t1" **\crv{(4,5) & (-4,5)};  ?(.98)*\dir{>} ?(.1)*\dir{>};
      (0,6)*{\chern{\overline{\zeta_{-}}^{0}}};  (-10,10)*{n}; \endxy } "B";"m2"};
  {\ar_(.5){ \xy 0;/r.15pc/:
     (-4,-2)*{}="t1";  (4,-2)*{}="t2";
     "t2";"t1" **\crv{(4,5) & (-4,5)}; ?(.98)*\dir{>} ?(.1)*\dir{>};
      (0,6)*{\chern{\overline{\zeta_{-}}^{\ell}}}; (10,10)*{n};\endxy } "B";"m3"};
    {\ar@/_1.6pc/_(.8){ \xy 0;/r.15pc/:
     (-4,-2)*{}="t1";  (4,-2)*{}="t2";
     "t2";"t1" **\crv{(4,5) & (-4,5)};  ?(.98)*\dir{>} ?(.1)*\dir{>};
      (0,6)*{\chern{\overline{\zeta_{-}}^{(-n-1)}}};  (6,16)*{n};\endxy } "B";"m4"};
 \endxy
\end{equation}

From the assumption that the graded 2Homs categorify the semilinear form, it can be shown that the summands of any inverse built from $\Bbbk$-linear combinations of composites of generating 2-morphisms of $\Ucat$ must have the form
\begin{equation} \label{eq_phi-inverses-U}
\vcenter{\xy 0;/r.15pc/:
     (-4,-2)*{}="t1";  (4,-2)*{}="t2";
     "t2";"t1" **\crv{(4,5) & (-4,5)};  ?(.9)*\dir{<} ?(.05)*\dir{<};
     (0,6)*{\chern{\overline{\zeta_{+}}^{\ell}}};
      (10,10)*{n};\endxy}
\;\; = \;\;
\sum_{|\lambda|+j=\ell} \;\; \alpha_{\lambda}^{\ell}(n)e_{\lambda,n}\;
\vcenter{\xy 0;/r.15pc/:
     (-4,-2)*{}="t1";  (4,-2)*{}="t2";
     "t2";"t1" **\crv{(4,5) & (-4,5)};  ?(.9)*\dir{<} ?(.05)*\dir{<}
     ?(.3)*\dir{}+(0,0)*{\bullet}+(3,1)*{\scs j};
    (12,10)*{n};\endxy}
\qquad  \qquad
\vcenter{\xy 0;/r.15pc/:
     (4,-2)*{}="t1";  (-4,-2)*{}="t2";
     "t2";"t1" **\crv{(-4,5) & (4,5)};  ?(.9)*\dir{<} ?(.05)*\dir{<};
      (0,6)*{\chern{\overline{\zeta_{-}}^{\ell}}}; (10,10)*{n};\endxy}
\;\; = \;\;
\sum_{|\lambda|+j=\ell} \;\; \alpha_{\lambda}^{\ell}(n)e_{\lambda,n} \;
\vcenter{\xy 0;/r.15pc/:
     (4,-2)*{}="t1";  (-4,-2)*{}="t2";
     "t2";"t1" **\crv{(-4,5) & (4,5)};  ?(.9)*\dir{<} ?(.05)*\dir{<}
     ?(.7)*\dir{}+(0,0)*{\bullet}+(3,1)*{\scs j};
      (12,10)*{n};\endxy}
\end{equation}
for some coefficients $\alpha_{i}^{\ell}(n) \in \Bbbk$ and $e_{\lambda,n}$ defined in \eqref{eq_def_eln}.
Furthermore,
\begin{equation} \label{eq_EFcrossing}
  \vcenter{\xy 0;/r.15pc/:
    (-4,-8)*{};(4,8)*{} **\crv{(-4,-1) & (4,1)}?(.95)*\dir{>} ?(.1)*\dir{>};
    (4,-8)*{};(-4,8)*{} **\crv{(4,-1) & (-4,1)}?(.05)*\dir{<} ?(.9)*\dir{<};
    (0,0)*{\chern{\overline{\zeta_{+}}^{n}}}; (-12,2)*{n};
  \endxy} \;\;=\;\; \beta_n
  \;
  \vcenter{\xy 0;/r.15pc/:
    (-4,-8)*{};(4,8)*{} **\crv{(-4,-1) & (4,1)}?(.95)*\dir{>} ?(.1)*\dir{>};
    (4,-8)*{};(-4,8)*{} **\crv{(4,-1) & (-4,1)}?(.05)*\dir{<} ?(.9)*\dir{<};
    (12,2)*{n};
  \endxy\;\;}
  \qquad \qquad
\vcenter{\xy 0;/r.15pc/:
    (-4,-8)*{};(4,8)*{} **\crv{(-4,-1) & (4,1)}?(.9)*\dir{<} ?(.05)*\dir{<};
    (4,-8)*{};(-4,8)*{} **\crv{(4,-1) & (-4,1)}?(.1)*\dir{>} ?(.95)*\dir{>};
    (0,0)*{\chern{\overline{\zeta_{-}}^{-n}}}; (-12,2)*{n};
  \endxy}
 \;\; = \;\; \beta_n
\vcenter{\xy 0;/r.15pc/:
    (-4,-8)*{};(4,8)*{} **\crv{(-4,-1) & (4,1)}?(.9)*\dir{<} ?(.05)*\dir{<};
    (4,-8)*{};(-4,8)*{} **\crv{(4,-1) & (-4,1)}?(.1)*\dir{>} ?(.95)*\dir{>};
    (12,2)*{n};
  \endxy}
\end{equation}
for some coefficients $\beta_n \in \Bbbk$.

For notational simplicity we sometimes suppressed the $n$ dependence
of the coefficients $\alpha_{\lambda}^{\ell}(n)$ and write
$\alpha_{\lambda}^{\ell}$. We write $\alpha_{\lambda}^{\ell}(n)$ for all
$\ell$ and $\lambda$ and $n$, but set $\alpha_{\lambda}^{\ell}(n)=0$
unless $|\lambda|\leq \ell \leq n-1$.

%
\subsubsection{Relations from invertibility}
%

Requiring that the maps $\overline{\zeta_{+}}$ and $\overline{\zeta_{-}}$ defined in \eqref{eq_phi-minus-invL} and \eqref{eq_phi-plus-invL} are inverses of the maps $\zeta_{+}$ and $\zeta_{-}$, respectively, gives rise to relations in the 2-category $\Ucat$.

\begin{center}
\begin{tabular}{|l|c|}
\hline \multicolumn{2}{|c|}{\bf Relations for $n \geq 0$} \\ \hline & \\
{\bf (A1)} &
$   \delta_{b,0} = \xsum{\lambda: |\lambda| \leq b}{}\; \alpha_{\lambda}^{\ell}(n)\; e_{\lambda,n}
\xy 0;/r.16pc/:
 (0,0)*{\cbub{n-1+b-|\lambda|}};
  (4,8)*{n};
 \endxy
$
   \\  & \\
   \hline
   & \\
{\bf (A2)} &
$ \vcenter{\xy 0;/r.16pc/:
  (-8,0)*{};(-6,-8)*{\scs };(6,-8)*{\scs };
  (8,0)*{};
  (-4,10)*{}="t1";
  (4,10)*{}="t2";
  (-4,-10)*{}="b1";
  (4,-10)*{}="b2";
  "t1";"b1" **\dir{-} ?(.5)*\dir{>};
  "t2";"b2" **\dir{-} ?(.5)*\dir{<};
  (10,2)*{n};
  (-10,2)*{n};
  \endxy}
\;\; = \;\;
 \beta_{n}\;\;
   \vcenter{\xy 0;/r.16pc/:
    (-4,-6)*{};(4,4)*{} **\crv{(-4,-1) & (4,1)}?(1)*\dir{<};?(0)*\dir{<};
    (4,-6)*{};(-4,4)*{} **\crv{(4,-1) & (-4,1)}?(1)*\dir{>};
    (-4,4)*{};(4,14)*{} **\crv{(-4,7) & (4,9)}?(1)*\dir{>};
    (4,4)*{};(-4,14)*{} **\crv{(4,7) & (-4,9)};
  (8,8)*{n};(-6,-3)*{\scs };  (6,-3)*{\scs };
 \endxy}$
   \\  & \\
   \hline
   & \\
{\bf (A3)} &
$ \beta_{n} \;\; \vcenter{\xy 0;/r.16pc/:
    (-4,6)*{};(4,-4)*{} **\crv{(-4,1) & (4,-1)}?(1)*\dir{>};
    (4,6)*{};(-4,-4)*{} **\crv{(4,1) & (-4,-1)}?(0)*\dir{<};
    (-4,-4)*{};(4,-4)*{} **\crv{(-4,-8) & (4,-8)}?(.5)*\dir{}+(0,0)*{\bullet}+(-1,-3)*{\scs n-1-\ell};
  (12,-2)*{n};(-6,-3)*{\scs };  (6,-3)*{\scs };
 \endxy} \;\; = \;\; 0$
     \\  & \\
   \hline
   & \\
{\bf (A4)} &
$
    \sum_{\lambda} \;\alpha_{\lambda}^{\ell}(n) \;e_{\lambda,n} \vcenter{\xy 0;/r.16pc/:
    (-4,-6)*{};(4,4)*{} **\crv{(-4,-1) & (4,1)}?(1)*\dir{};?(0)*\dir{<};
    (4,-6)*{};(-4,4)*{} **\crv{(4,-1) & (-4,1)}?(1)*\dir{>};
    (-4,4)*{};(4,4)*{} **\crv{(-4,8) & (4,8)}?(.5)*\dir{}+(0,0)*{\bullet}+(-1,3)*{\scs n-1-\ell};
  (12,2)*{n};(-6,-3)*{\scs };  (6,-3)*{\scs };
 \endxy} \;\; = \;\; 0$
    \\  & \\
   \hline
   & \\
{\bf (A5)} &
$ \vcenter{\xy 0;/r.16pc/:
  (-8,0)*{};
  (8,0)*{};
  (-4,10)*{}="t1";
  (4,10)*{}="t2";
  (-4,-10)*{}="b1";
  (4,-10)*{}="b2";(-6,-8)*{\scs };(6,-8)*{\scs };
  "t1";"b1" **\dir{-} ?(.5)*\dir{<};
  "t2";"b2" **\dir{-} ?(.5)*\dir{>};
  (10,2)*{n};
  (-10,2)*{n};
  \endxy}
\;\; = \;\;
 \beta_{n}\;\;
 \vcenter{   \xy 0;/r.16pc/:
    (-4,-6)*{};(4,4)*{} **\crv{(-4,-1) & (4,1)}?(1)*\dir{>};
    (4,-6)*{};(-4,4)*{} **\crv{(4,-1) & (-4,1)}?(1)*\dir{<};?(0)*\dir{<};
    (-4,4)*{};(4,14)*{} **\crv{(-4,7) & (4,9)};
    (4,4)*{};(-4,14)*{} **\crv{(4,7) & (-4,9)}?(1)*\dir{>};
  (8,8)*{n};(-6,-3)*{\scs };
     (6.5,-3)*{\scs };
 \endxy}
  \;\; + \;\;
   \xsum{ \xy  (0,8)*{}; (0,3)*{\scs f_1+f_2+|\lambda|}; (0,0)*{\scs =n-1};\endxy}{}\;
    \alpha_{\lambda}^{|\lambda|+f_2}(n) e_{\lambda,n}
    \vcenter{\xy 0;/r.16pc/:
    (-10,10)*{n};
    (-8,0)*{};
  (8,0)*{};
  (-4,-10)*{}="b1";
  (4,-10)*{}="b2";
  "b2";"b1" **\crv{(5,-3) & (-5,-3)}; ?(.05)*\dir{<} ?(.93)*\dir{<}
  ?(.8)*\dir{}+(0,-.1)*{\bullet}+(-3,2)*{\scs f_2};
  (-4,10)*{}="t1";
  (4,10)*{}="t2";
  "t2";"t1" **\crv{(5,3) & (-5,3)}; ?(.15)*\dir{>} ?(.95)*\dir{>}
  ?(.4)*\dir{}+(0,-.2)*{\bullet}+(3,-2)*{\scs \; f_1};
  \endxy}$
   \\ & \\
  \hline
\end{tabular}
\end{center}
\medskip
Note that relations {\bf A1}, {\bf A3}, and {\bf A4} are only valid for $n>0$.

\begin{center}
\begin{tabular}{|l|c|}
\hline \multicolumn{2}{|c|}{\bf Relations for $n \geq 0$} \\ \hline & \\
{\bf (B1)} &
$  \delta_{b,0} = \xsum{\lambda: |\lambda| \leq b}{}\; \alpha_{\lambda}^{\ell}(n)\;
  e_{\lambda,n}
  \xy 0;/r.16pc/:
 (0,0)*{\ccbub{-n-1+b-|\lambda|}};
  (4,8)*{n};
 \endxy
$
   \\  & \\
   \hline
   & \\
{\bf (B2)} &
$ \vcenter{\xy 0;/r.16pc/:
  (-8,0)*{};
  (8,0)*{};
  (-4,10)*{}="t1";
  (4,10)*{}="t2";
  (-4,-10)*{}="b1";
  (4,-10)*{}="b2";(-6,-8)*{\scs };(6,-8)*{\scs };
  "t1";"b1" **\dir{-} ?(.5)*\dir{<};
  "t2";"b2" **\dir{-} ?(.5)*\dir{>};
  (10,2)*{n};
  (-10,2)*{n};
  \endxy}
\;\; = \;\;
 \beta_{n}\;\;
 \vcenter{   \xy 0;/r.16pc/:
    (-4,-6)*{};(4,4)*{} **\crv{(-4,-1) & (4,1)}?(1)*\dir{>};
    (4,-6)*{};(-4,4)*{} **\crv{(4,-1) & (-4,1)}?(1)*\dir{<};?(0)*\dir{<};
    (-4,4)*{};(4,14)*{} **\crv{(-4,7) & (4,9)};
    (4,4)*{};(-4,14)*{} **\crv{(4,7) & (-4,9)}?(1)*\dir{>};
  (8,8)*{n};(-6,-3)*{\scs };
     (6.5,-3)*{\scs };
 \endxy}$
   \\  & \\
   \hline
   & \\
{\bf (B3)} &
$\beta_{n}\;\; \vcenter{\xy 0;/r.16pc/:
    (-4,6)*{};(4,-4)*{} **\crv{(-4,1) & (4,-1)}?(1)*\dir{};?(0)*\dir{<};
    (4,6)*{};(-4,-4)*{} **\crv{(4,1) & (-4,-1)}?(1)*\dir{>};
    (-4,-4)*{};(4,-4)*{} **\crv{(-4,-8) & (4,-8)}?(.5)*\dir{}+(0,0)*{\bullet}+(-1,-3)*{\scs -n-1-\ell};
  (12,-2)*{n};(-6,-3)*{\scs };  (6,-3)*{\scs };
 \endxy} \;\; = \;\; 0$
     \\  & \\
   \hline
   & \\
{\bf (B4)} &
$\xsum{\lambda}{}\;\alpha_{\lambda}^{\ell}(n)\; e_{\lambda,n}
    \vcenter{\xy 0;/r.16pc/:
    (-4,-6)*{};(4,4)*{} **\crv{(-4,-1) & (4,1)}?(1)*\dir{>};
    (4,-6)*{};(-4,4)*{} **\crv{(4,-1) & (-4,1)}?(0)*\dir{<};
    (-4,4)*{};(4,4)*{} **\crv{(-4,8) & (4,8)}?(.5)*\dir{}+(0,0)*{\bullet}+(-1,3)*{\scs -n-1-\ell};
  (12,2)*{n};(-6,-3)*{\scs };  (6,-3)*{\scs };
 \endxy} \;\; = \;\; 0$
    \\  & \\
   \hline
   & \\
{\bf (B5)} &
$ \vcenter{\xy 0;/r.16pc/:
  (-8,0)*{};(-6,-8)*{\scs };(6,-8)*{\scs };
  (8,0)*{};
  (-4,10)*{}="t1";
  (4,10)*{}="t2";
  (-4,-10)*{}="b1";
  (4,-10)*{}="b2";
  "t1";"b1" **\dir{-} ?(.5)*\dir{>};
  "t2";"b2" **\dir{-} ?(.5)*\dir{<};
  (10,2)*{n};
  (-10,2)*{n};
  \endxy}
\;\; = \;\;
 \beta_{n}\;\;
   \vcenter{\xy 0;/r.16pc/:
    (-4,-4)*{};(4,4)*{} **\crv{(-4,-1) & (4,1)}?(1)*\dir{<};?(0)*\dir{<};
    (4,-4)*{};(-4,4)*{} **\crv{(4,-1) & (-4,1)}?(1)*\dir{>};
    (-4,4)*{};(4,12)*{} **\crv{(-4,7) & (4,9)}?(1)*\dir{>};
    (4,4)*{};(-4,12)*{} **\crv{(4,7) & (-4,9)};
  (8,8)*{n};(-6,-3)*{\scs };  (6,-3)*{\scs };
 \endxy}
  \;\; + \;\;
    \xsum{ \xy  (0,8)*{}; (0,3)*{\scs g_1+g_2+|\lambda|}; (0,0)*{\scs =-n-1};\endxy}{} \; \alpha_{\lambda}^{|\lambda|+g_2}(n) \; e_{\lambda,n}
    \vcenter{\xy 0;/r.16pc/:
    (-8,0)*{};
  (8,0)*{};
  (-4,-10)*{}="b1";
  (4,-10)*{}="b2";
  "b2";"b1" **\crv{(5,-3) & (-5,-3)}; ?(.1)*\dir{>} ?(.95)*\dir{>}
  ?(.8)*\dir{}+(0,-.1)*{\bullet}+(-3,2)*{\scs g_2};
  (-4,10)*{}="t1";
  (4,10)*{}="t2";
  "t2";"t1" **\crv{(5,3) & (-5,3)}; ?(.15)*\dir{<} ?(.9)*\dir{<}
  ?(.4)*\dir{}+(0,-.2)*{\bullet}+(3,-2)*{\scs g_1};
  (-10,10)*{n};
  \endxy}$
   \\ & \\
  \hline
\end{tabular}
\end{center}
Note that relations {\bf B1}, {\bf B3}, and {\bf B4} are only valid for $n<0$.

With a little work, one can show that these relations suffice to reduce any closed diagram to a linear combination of products of non-nested dotted bubbles with the same orientation (see Section~\ref{subsubsec_infinitegrass} for a discussion on changing the orientation of dotted bubbles).  Furthermore, it follows from {\bf (A1)} and {\bf (B1)} that the degree zero dotted bubble must be a multiple of the identity.  We use this fact extensively in what follows.

%
\subsubsection{Reducing coefficients}
%

All dotted bubbles of negative degree in $\Ucat$ are zero by the assumption about $\END_{\Ucat}(\onen)$. (Note that when $n=0$ it is impossible for a dotted bubble to have negative degree.) Thus we can write,
\begin{equation}
 \xy 0;/r.18pc/:
 (-12,0)*{\cbub{\alpha}};
 (-8,8)*{n};
 \endxy
  = 0
 \qquad
  \text{if $\alpha<n-1$,} \qquad
 \xy 0;/r.18pc/:
 (-12,0)*{\ccbub{\alpha}};
 (-8,8)*{n};
 \endxy = 0\quad
  \text{if $\alpha< -n-1$} \nn
\end{equation}
for all $\alpha \in \Z_+$. Since the space of endomorphisms $\End(\onen)$ in degree zero one dimensional, a dotted bubble of degree zero must be a
multiple of the identity map. We fix these constants as follows
\begin{equation}\label{eq_cpm}
\xy 0;/r.18pc/:
 (0,0)*{\cbub{n-1}};
  (4,8)*{n};
 \endxy
  = c_{n}^+ \cdot \Id_{\onen} \quad \text{for $n \geq  1$,}
  \qquad \quad
  \xy 0;/r.18pc/:
 (0,0)*{\ccbub{-n-1}};
  (4,8)*{n};
 \endxy
  = c_{n}^- \cdot \Id_{\onen} \quad \text{for $n \leq -1$.}
\end{equation}
For the maps $\zeta_+$ and $\zeta_-$ to be invertible the coefficients $c_n^+$ and $c_n^-$ must be nonzero invertible scalars.

Using the fact that the map $\phi^n$ from Equation~\eqref{eq_symm_iso} is injective the coefficients $\alpha_{\lambda}^{\ell}(n)$ are completely
determined for all $n \in \Z$ from Conditions (A1) and (B1), which can be written as
\begin{equation} \label{eq_A1B1}
\delta_{b,0} = \sum_{\lambda: |\lambda| \leq b}
\alpha_{\lambda}^{\ell}(n)e_{\lambda,n} e_{b-|\lambda|,n}.
\end{equation}
Recall that we have set $e_{\emptyset,n}=1$, but degree zero bubbles $e_{0,n}$ are multiplication by nonzero free parameters $c_n^{\pm}$, see \eqref{eq_cpm}.

Write $\overline{\alpha_{\lambda}^{\ell}(n)}
:=(\alpha_{\lambda}^{\ell}(n))|_{c_n^{\pm}=1}$ to denote the coefficients $\alpha_{\lambda}^{\ell}(n)$ corresponding to setting the degree zero bubbles equal to $1$.  These coefficients
$\overline{\alpha_{\lambda}^{\ell}(n)}$ are well known.  They are
related to the expression for complete symmetric functions in terms
of elementary symmetric functions.  In particular, if for any $0\leq
s\leq \ell$ we define
\begin{equation}
  h_{s,n} :=
  \begin{array}{ccc}
      (-1)^s\xsum{\lambda : |\lambda|=s}{}\overline{\alpha_{\lambda}^{\ell}(n)}e_{\lambda,n} & \quad &
 \text{for $n \in \Z$},
  \end{array}
\end{equation}
then \eqref{eq_A1B1} becomes the statement that
\begin{equation}
  \sum_{s+r=b}(-1)^s h_{s,n} e_{r,n} = \delta_{b,0}.
\end{equation}
Hence these coefficients are the same as those arising when expressing a complete symmetric function in the basis of elementary symmetric functions, see for example \cite{Mac}.

It is straightforward to check that if $\lambda=(\lambda_1,\dots,
\lambda_m)$, then
\begin{eqnarray} \label{eq_alpha}
  \alpha_{\lambda}^{\ell}(n) &=& \left(\frac{1}{c_{n}^{\pm}}\right)^{m+1} \overline{\alpha_{\lambda}^{\ell}(n)} .
\end{eqnarray}
In particular, $\alpha_{\emptyset}^{\ell}(n)=\frac{1}{c_n^{\pm}}$ for all $n$
and $0 \leq \ell \leq |n|-1$.

Furthermore since the $\alpha_{\lambda}^{\ell}(n)$ for appropriate
$\ell$ are given by the expansion of complete symmetric functions
in terms of elementary symmetric functions, whenever $|\lambda|\leq
\ell,\ell' \leq |n|-1$ it follows that
\begin{equation}
  \alpha_{\lambda}^{|\lambda|}(n) = \alpha_{\lambda}^{\ell}(n) = \alpha_{\lambda}^{\ell'}(n). \nn
\end{equation}

%
\subsection{Fake bubbles} \label{subsec_fake}
%

In this section we augment the graphical calculus by adding formal symbols called {\em fake bubbles}.  Fake bubbles are dotted bubbles of positive degree that carry a label of a negative number of dots, see Figure~\ref{fig_fakebub}.
\begin{figure}[htp]
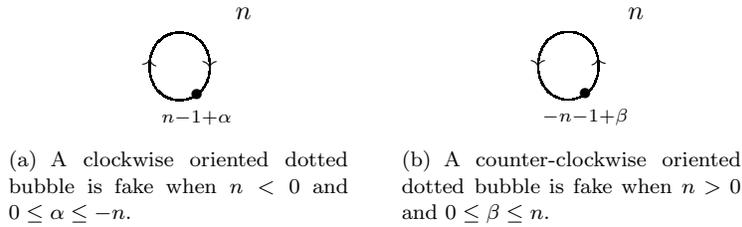

  \begin{center}
    \subfigure[A clockwise oriented dotted bubble is fake when $n<0$ and $0\leq \alpha \leq -n$.]{
      \xy
      (0,0)*{\cbub{n-1+\alpha}{}}; (8,8)*{n}; (-22,8)*{}; (22,-10)*{}; \endxy}\qquad
    \subfigure[A counter-clockwise oriented dotted bubble is fake when $n>0$ and $0 \leq \beta \leq n$.]{
    \xy  (0,0)*{\ccbub{-n-1+\beta}{}}; (8,8)*{n};  (-22,8)*{}; (22,-10)*{}; \endxy}
  \end{center}
 \caption{Fake bubbles. }
  \label{fig_fakebub}
\end{figure}
A dot with a negative label is not defined in the graphical calculus we have studied so far and we do not want to add an inverse to the dot 2-morphisms.  This would cause a number of serious problems including a deviation from the semilinear form.  Instead, these fake bubbles are interpreted as formal symbols that are defined in terms of linear combinations of products of real dotted bubbles.

Since fake bubbles are perhaps the most confusing aspect of the graphical calculus,  we take a moment to explain their motivation and uses.

%
\subsubsection{A motivating example}
%
%
As a motivating example, consider the diagram
\begin{equation}
\vcenter{\xy 0;/r.18pc/:
    (-4,6)*{};(4,-4)*{} **\crv{(-4,1) & (4,-1)}?(1)*\dir{>};
    (4,6)*{};(-4,-4)*{} **\crv{(4,1) & (-4,-1)}?(0)*\dir{<};
    (-4,-4)*{};(4,-4)*{} **\crv{(-4,-8) & (4,-8)}?(.5)*\dir{}+(0,0)*{\bullet}+(-1,-3)*{\scs j};
  (12,-2)*{n};(-6,-3)*{\scs };  (6,-3)*{\scs };
 \endxy}
\end{equation}
in $\Ucat$. If possible we would like to simplify this diagram for all $n$ and
$j \geq 0$.  Relation (A3) says that when $n> 0$ this diagram is
zero unless $j\geq n$.  In fact, we can simplify the above diagram
for $n>0$  and $j \geq n$ using the nilHecke relation to slide the
dots outside of the curl,
\begin{equation}
\vcenter{\xy 0;/r.18pc/:
    (-4,6)*{};(4,-4)*{} **\crv{(-4,1) & (4,-1)}?(1)*\dir{>};
    (4,6)*{};(-4,-4)*{} **\crv{(4,1) & (-4,-1)}?(0)*\dir{<};
    (-4,-4)*{};(4,-4)*{} **\crv{(-4,-9) & (4,-9)}?(.5)*\dir{}+(0,0)*{\bullet}+(-1,-3)*{\scs j};
  (12,-2)*{n};(-6,-3)*{\scs };  (6,-3)*{\scs };
 \endxy} \;\; \refequal{\eqref{eq_ind_dotslide}} \;\;  \vcenter{\xy 0;/r.18pc/:
    (-4,6)*{};(4,-4)*{} **\crv{(-4,1) & (4,-1)}?(1)*\dir{>};
    (4,6)*{};(-4,-4)*{} **\crv{(4,1) & (-4,-1)}?(0)*\dir{<} ?(.25)*\dir{}
     +(0,0)*{\bullet}+(3,0)*{\scs j};
    (-4,-4)*{};(4,-4)*{} **\crv{(-4,-9) & (4,-9)};
  (12,-2)*{n};(-6,-3)*{\scs };  (6,-3)*{\scs };
 \endxy} \; -\sum_{f_1+f'_2=j}
  \vcenter{\xy 0;/r.18pc/:
    (-4,6)*{};(4,6)*{} **\crv{(-4,-2) & (4,-2)}?(1)*\dir{>} ?(.25)*\dir{}+(0,0)*{\bullet}+(-3,0)*{\scs f_1};
    (2,-9)*{\cbub{f'_2}{}};
  (12,-2)*{n};(-6,-3)*{\scs };  (6,-3)*{\scs };
 \endxy} = \;\; -\sum_{f_1+f_2=j-n}
  \vcenter{\xy 0;/r.18pc/:
    (-4,6)*{};(4,6)*{} **\crv{(-4,-2) & (4,-2)}?(1)*\dir{>} ?(.25)*\dir{}+(0,0)*{\bullet}+(-3,0)*{\scs f_1};
    (2,-9)*{\cbub{n-1+f_2}{}};
  (12,-2)*{n};(-6,-3)*{\scs };  (6,-3)*{\scs };
 \endxy} \nn
\end{equation}
where we used that negative degree bubbles are zero for the last
equality.
To reduce this right twist curl when $n<0$ we can cap the bottom of
equation (B5) with $-n$ dots to get
\begin{equation}
  \vcenter{\xy 0;/r.18pc/:
 (-4,6)*{};(4,6)*{} **\crv{(-4,-2) & (4,-2)}?(1)*\dir{>} ?(.25)*\dir{}+(0,0)*{\bullet}+(-5,-1)*{\scs -n};
  (10,2)*{n};
  \endxy}
\quad = \quad
 \beta_{n}\;\;
   \vcenter{\xy 0;/r.18pc/:
    (-4,-4)*{};(4,4)*{} **\crv{(-4,-1) & (4,1)}?(1)*\dir{<};?(0)*\dir{<};
    (4,-4)*{};(-4,4)*{} **\crv{(4,-1) & (-4,1)}?(1)*\dir{>};
    (-4,4)*{};(4,12)*{} **\crv{(-4,7) & (4,9)}?(1)*\dir{>};
    (4,4)*{};(-4,12)*{} **\crv{(4,7) & (-4,9)};
    (4,-4)*{};(-4,-4)*{} **\crv{(4,-8) & (-4,-8)} ?(.5)*\dir{}+(0,0)*{\bullet}+(1,-3)*{\scs -n};
  (8,8)*{n};(-6,-3)*{\scs };  (6,-3)*{\scs };
 \endxy}
  \quad + \quad
    \sum_{ \xy  (0,3)*{\scs g_1+g_2+|\lambda|};
    (0,0)*{\scs =-n-1};\endxy}
    \alpha_{\lambda}^{|\lambda|+g_2} e_{\lambda,n}
    \vcenter{\xy 0;/r.18pc/:
    (-4,6)*{};(4,6)*{} **\crv{(-4,-2) & (4,-2)}?(1)*\dir{>} ?(.25)*\dir{}+(0,0)*{\bullet}+(-3,0)*{\scs g_1};
    (2,-9)*{\ccbub{-n-1+g_2+1}{}};
  (12,-2)*{n};(-6,-3)*{\scs };  (6,-3)*{\scs };
 \endxy} \nn
  \end{equation}
But
\begin{equation}
   \vcenter{\xy 0;/r.18pc/:
    (-4,-4)*{};(4,4)*{} **\crv{(-4,-1) & (4,1)}?(1)*\dir{<};?(0)*\dir{<};
    (4,-4)*{};(-4,4)*{} **\crv{(4,-1) & (-4,1)}?(1)*\dir{>};
    (-4,4)*{};(4,12)*{} **\crv{(-4,7) & (4,9)}?(1)*\dir{>};
    (4,4)*{};(-4,12)*{} **\crv{(4,7) & (-4,9)};
    (4,-4)*{};(-4,-4)*{} **\crv{(4,-8) & (-4,-8)} ?(.5)*\dir{}+(0,0)*{\bullet}+(1,-3)*{\scs -n};
  (8,8)*{n};(-6,-3)*{\scs };  (6,-3)*{\scs };
 \endxy}
\;\;=\;\;
   \vcenter{\xy 0;/r.18pc/:
    (-4,-4)*{};(4,4)*{} **\crv{(-4,-1) & (4,1)}?(1)*\dir{<};?(0)*\dir{<};
    (4,-4)*{};(-4,4)*{} **\crv{(4,-1) & (-4,1)}?(1)*\dir{>};
    (-4,4)*{};(4,12)*{} **\crv{(-4,7) & (4,9)}?(1)*\dir{>};
    (4,4)*{};(-4,12)*{} **\crv{(4,7) & (-4,9)};
    (4,-4)*{};(-4,-4)*{} **\crv{(4,-8) & (-4,-8)} ?(.5)*\dir{}+(0,0)*{\bullet}+(1,-3)*{\scs -n-1};
    (3,2)*{\bullet};
  (8,8)*{n};(-6,-3)*{\scs };  (6,-3)*{\scs };
 \endxy}
\;\;+\;\; \;\; \vcenter{\xy 0;/r.18pc/:
    (-4,6)*{};(4,-4)*{} **\crv{(-4,1) & (4,-1)}?(1)*\dir{>};
    (4,6)*{};(-4,-4)*{} **\crv{(4,1) & (-4,-1)}?(0)*\dir{<};
    (-4,-4)*{};(4,-4)*{} **\crv{(-4,-9) & (4,-9)};
       (2,-16)*{\ccbub{-n-1}{}};
  (12,-2)*{n};(-6,-3)*{\scs };  (6,-3)*{\scs };
 \endxy}
\;\; \refequal{\scs {\bf (B3)}}\;\; c_n^-
 \vcenter{\xy 0;/r.18pc/:
    (-4,6)*{};(4,-4)*{} **\crv{(-4,1) & (4,-1)}?(1)*\dir{>};
    (4,6)*{};(-4,-4)*{} **\crv{(4,1) & (-4,-1)}?(0)*\dir{<};
    (-4,-4)*{};(4,-4)*{} **\crv{(-4,-9) & (4,-9)};
  (12,-2)*{n};(-6,-3)*{\scs };  (6,-3)*{\scs };
 \endxy} \nn
  \end{equation}
and
\begin{eqnarray}
 \sum_{ \xy  (0,3)*{\scs g_1+g_2+|\lambda|};
    (0,0)*{\scs =-n-1};\endxy}
    \alpha_{\lambda}^{|\lambda|+g_2} e_{\lambda,n}
    \vcenter{\xy 0;/r.18pc/:
    (-4,6)*{};(4,6)*{} **\crv{(-4,-2) & (4,-2)}?(1)*\dir{>} ?(.25)*\dir{}+(0,0)*{\bullet}+(-3,0)*{\scs g_1};
    (2,-9)*{\ccbub{-n-1+g_2+1}{}};
  (12,-2)*{n};(-6,-3)*{\scs };  (6,-3)*{\scs };
 \endxy} &=&
 \sum_{g=1}^{-n}\sum_{\lambda:|\lambda|\leq g-1}\alpha_{\lambda}^{g-1}e_{\lambda,n}e_{g-|\lambda|,n}
     \vcenter{\xy 0;/r.18pc/:
    (-4,6)*{};(4,6)*{} **\crv{(-4,-2) & (4,-2)}?(1)*\dir{>} ?(.5)*\dir{}+(0,0)*{\bullet}+(1,-3)*{\scs -n-g};
  (12,-2)*{n};(-6,-3)*{\scs };  (6,-3)*{\scs };
 \endxy} \\
 & \refequal{(B1)} & -
  \sum_{g=1}^{-n}\sum_{\lambda:|\lambda|=g}
  \alpha_{\lambda}^{g}e_{\lambda,n}c_n^-
     \vcenter{\xy 0;/r.18pc/:
    (-4,6)*{};(4,6)*{} **\crv{(-4,-2) & (4,-2)}?(1)*\dir{>} ?(.5)*\dir{}+(0,0)*{\bullet}+(1,-3)*{\scs -n-g};
  (12,-2)*{n};(-6,-3)*{\scs };  (6,-3)*{\scs };
 \endxy} \nn
\end{eqnarray}
Therefore for $n<0$ we have
\begin{equation}
   \vcenter{\xy 0;/r.18pc/:
    (-4,6)*{};(4,-4)*{} **\crv{(-4,1) & (4,-1)}?(1)*\dir{>};
    (4,6)*{};(-4,-4)*{} **\crv{(4,1) & (-4,-1)}?(0)*\dir{<};
    (-4,-4)*{};(4,-4)*{} **\crv{(-4,-9) & (4,-9)};
  (12,-5)*{n};(-6,-3)*{\scs };  (6,-3)*{\scs };
 \endxy}
 =
  \frac{1}{c_n^-\beta_{n}}     \vcenter{\xy 0;/r.18pc/:
    (-4,6)*{};(4,6)*{} **\crv{(-4,-2) & (4,-2)}?(1)*\dir{>} ?(.5)*\dir{}+(0,0)*{\bullet}+(1,-3)*{\scs -n-g};
  (12,-2)*{n};(-6,-3)*{\scs };  (6,-3)*{\scs };
 \endxy}
 \;\; + \;\; \frac{1}{\beta_{n}}
   \sum_{g=1}^{-n}\sum_{\lambda:|\lambda|=g}
  \alpha_{\lambda}^{g-1}e_{\lambda,n}
     \vcenter{\xy 0;/r.18pc/:
    (-4,6)*{};(4,6)*{} **\crv{(-4,-2) & (4,-2)}?(1)*\dir{>} ?(.5)*\dir{}+(0,0)*{\bullet}+(1,-3)*{\scs -n-g};
  (12,-2)*{n};(-6,-3)*{\scs };  (6,-3)*{\scs };
 \endxy} \nn
\end{equation}
using this fact, together with repeated application of the nilHecke
dot slide formula, we have
\begin{equation}
  \vcenter{\xy 0;/r.18pc/:
    (-4,6)*{};(4,-4)*{} **\crv{(-4,1) & (4,-1)}?(1)*\dir{>};
    (4,6)*{};(-4,-4)*{} **\crv{(4,1) & (-4,-1)}?(0)*\dir{<};
    (-4,-4)*{};(4,-4)*{} **\crv{(-4,-8) & (4,-8)}?(.5)*\dir{}+(0,0)*{\bullet}+(-1,-3)*{\scs j};
  (12,-2)*{n};(-6,-3)*{\scs };  (6,-3)*{\scs };
 \endxy}
 =
   \vcenter{\xy 0;/r.18pc/:
    (-4,6)*{};(4,-4)*{} **\crv{(-4,1) & (4,-1)}?(1)*\dir{>};
    (4,6)*{};(-4,-4)*{} **\crv{(4,1) & (-4,-1)}?(0)*\dir{<} ?(.25)*\dir{}+(0,0)*{\bullet}+(3,-1)*{\scs j};
    (-4,-4)*{};(4,-4)*{} **\crv{(-4,-8) & (4,-8)};
  (12,-2)*{n};(-6,-3)*{\scs };  (6,-3)*{\scs };
 \endxy} \;\; - \;\;
 \sum_{g_1+g_2=j}\vcenter{\xy 0;/r.18pc/:
    (-4,6)*{};(4,6)*{} **\crv{(-4,-2) & (4,-2)}?(1)*\dir{>} ?(.25)*\dir{}+(0,0)*{\bullet}+(-3,0)*{\scs g_1};
    (2,-9)*{\cbub{g_2}{}};
  (12,-2)*{n};(-6,-3)*{\scs };  (6,-3)*{\scs };
 \endxy} \nn
\end{equation}
If we rewrite the last summation to emphasize the degree of the
bubble we can summarize our findings by the equation
\begin{equation} \label{eq_ugly_curldot}
  \vcenter{\xy 0;/r.18pc/:
    (-4,6)*{};(4,-4)*{} **\crv{(-4,1) & (4,-1)}?(1)*\dir{>};
    (4,6)*{};(-4,-4)*{} **\crv{(4,1) & (-4,-1)}?(0)*\dir{<};
    (-4,-4)*{};(4,-4)*{} **\crv{(-4,-8) & (4,-8)}?(.5)*\dir{}+(0,0)*{\bullet}+(-1,-3)*{\scs j};
  (8,-8)*{n};(-6,-3)*{\scs };  (6,-3)*{\scs };
 \endxy}
 \; = \;
\left\{ \begin{array}{ccl}
  0 & \quad & \text{for $n>0$ and $j<n$} \\ & &\\
  -  \xy
  (0,.4)*{\sum};
  (0,-3.3)*{\scs f_1+f_2=j-n};
  \endxy
  \vcenter{\xy 0;/r.18pc/:
    (-4,6)*{};(4,6)*{} **\crv{(-4,-2) & (4,-2)}?(1)*\dir{>} ?(.25)*\dir{}+(0,0)*{\bullet}+(-3,0)*{\scs f_1};
    (2,-9)*{\cbub{n-1+f_2}{}};
  (12,-2)*{n};(-6,-3)*{\scs };  (6,-3)*{\scs };
 \endxy}  & \quad & \text{for $n > 0$ and $j\geq n$}
 \\ & &\\
   \frac{1}{c_n^-\beta_{n}}
  \vcenter{\xy 0;/r.18pc/:
    (-4,6)*{};(4,6)*{} **\crv{(-4,-2) & (4,-2)}?(1)*\dir{>} ?(.5)*\dir{}+(0,0)*{\bullet}+(1,-3)*{\scs j-n-g};
  (12,-2)*{n};(-6,-3)*{\scs };  (6,-3)*{\scs };
 \endxy}
 \;\; + \;\; \frac{1}{\beta_{n}}
      \xy
  (0,.4)*{\sum};
  (0,-3.3)*{\scs g=1};(0,3.7)*{\scs -n};
  \endxy\;
   \xy
  (0,.4)*{\sum};
  (2,-3.3)*{\scs \lambda:|\lambda|=g};
  \endxy
  \alpha_{\lambda}^{g}e_{\lambda,n}
     \vcenter{\xy 0;/r.18pc/:
    (-4,6)*{};(4,6)*{} **\crv{(-4,-2) & (4,-2)}?(1)*\dir{>} ?(.5)*\dir{}+(0,0)*{\bullet}+(1,-3)*{\scs j-n-g};
  (12,-2)*{n};(-6,-3)*{\scs };  (6,-3)*{\scs };
 \endxy}
 & \quad &  \\ & &\text{for $n<0$}\\
  -
    \xy
  (0,.4)*{\sum};
  (2,-3.3)*{\scs g=-n+1}; (0,3.8)*{\scs j-n};
  \endxy
 \vcenter{\xy 0;/r.18pc/:
    (-4,6)*{};(4,6)*{} **\crv{(-4,-2) & (4,-2)}?(1)*\dir{>} ?(.5)*\dir{}+(0,0)*{\bullet}+(1,-3)*{\scs j-n-g};
    (2,-11)*{\cbub{n-1+g}{}};
  (12,-2)*{n};(-6,-3)*{\scs };  (6,-3)*{\scs };
 \endxy} & &
 \end{array}\right.
\end{equation}
The case when $n=0$ must be treated separately as this case is not covered by the equations we have seen so far.

These formulas seem rather chaotic with an intricate dependence on
$n$ and $j$.  However, they can be presented in a uniform way by defining
formal symbols as follows:

For $n=0$ we have that
\begin{equation}
  \deg\left(  \text{$\xy 0;/r.18pc/:
  (14,8)*{n};
  (-3,-8)*{};(3,8)*{} **\crv{(-3,-1) & (3,1)}?(1)*\dir{>};?(0)*\dir{>};
    (3,-8)*{};(-3,8)*{} **\crv{(3,-1) & (-3,1)}?(1)*\dir{>};
  (-3,-12)*{\bbsid};  (-3,8)*{\bbsid};
  (3,8)*{}="t1";  (9,8)*{}="t2";
  (3,-8)*{}="t1'";  (9,-8)*{}="t2'";
   "t1";"t2" **\crv{(3,14) & (9, 14)};
   "t1'";"t2'" **\crv{(3,-14) & (9, -14)};
   "t2'";"t2" **\dir{-} ?(.5)*\dir{<};
   (9,0)*{}; (-6,-8)*{\scs };
 \endxy$} \right) = 0, \qquad \qquad
   \deg\left(\text{$ \xy 0;/r.18pc/:
  (-12,8)*{n};
   (-3,-8)*{};(3,8)*{} **\crv{(-3,-1) & (3,1)}?(1)*\dir{>};?(0)*\dir{>};
    (3,-8)*{};(-3,8)*{} **\crv{(3,-1) & (-3,1)}?(1)*\dir{>};
  (3,-12)*{\bbsid};
  (3,8)*{\bbsid}; (6,-8)*{\scs };
  (-9,8)*{}="t1";
  (-3,8)*{}="t2";
  (-9,-8)*{}="t1'";
  (-3,-8)*{}="t2'";
   "t1";"t2" **\crv{(-9,14) & (-3, 14)};
   "t1'";"t2'" **\crv{(-9,-14) & (-3, -14)};
  "t1'";"t1" **\dir{-} ?(.5)*\dir{<};
 \endxy$}\right) = 0. \nn
\end{equation}
Since the space of degree zero endomorphisms $\cal{E}\onen$ is one
dimensional in $\Ucat$, we must have that both of the above morphisms are
multiples of the identity 2-morphism $\Id_{\cal{E}\onen}$.  We fix
the coefficients as follows:
\begin{equation} \label{eq_def_czeropm}
    \text{$\xy 0;/r.18pc/:
  (14,8)*{n};
  (-3,-8)*{};(3,8)*{} **\crv{(-3,-1) & (3,1)}?(1)*\dir{>};?(0)*\dir{>};
    (3,-8)*{};(-3,8)*{} **\crv{(3,-1) & (-3,1)}?(1)*\dir{>};
  (-3,-12)*{\bbsid};  (-3,8)*{\bbsid};
  (3,8)*{}="t1";  (9,8)*{}="t2";
  (3,-8)*{}="t1'";  (9,-8)*{}="t2'";
   "t1";"t2" **\crv{(3,14) & (9, 14)};
   "t1'";"t2'" **\crv{(3,-14) & (9, -14)};
   "t2'";"t2" **\dir{-} ?(.5)*\dir{<};
   (9,0)*{}; (-6,-8)*{\scs };
 \endxy$}  = - c_0^+\;
    \xy
  (5,4)*{n};
  (0,0)*{\bbe{}};(-2,-8)*{\scs };
 \endxy, \qquad \qquad
   \text{$ \xy 0;/r.18pc/:
  (-12,8)*{n};
   (-3,-8)*{};(3,8)*{} **\crv{(-3,-1) & (3,1)}?(1)*\dir{>};?(0)*\dir{>};
    (3,-8)*{};(-3,8)*{} **\crv{(3,-1) & (-3,1)}?(1)*\dir{>};
  (3,-12)*{\bbsid};
  (3,8)*{\bbsid}; (6,-8)*{\scs };
  (-9,8)*{}="t1";
  (-3,8)*{}="t2";
  (-9,-8)*{}="t1'";
  (-3,-8)*{}="t2'";
   "t1";"t2" **\crv{(-9,14) & (-3, 14)};
   "t1'";"t2'" **\crv{(-9,-14) & (-3, -14)};
  "t1'";"t1" **\dir{-} ?(.5)*\dir{<};
 \endxy$} =   c_0^- \xy
  (5,4)*{n};
  (0,0)*{\bbe{}};(-2,-8)*{\scs };
 \endxy
\end{equation}

\begin{defn}[Fake Bubbles] \hfill \label{def_fake}
Define the fake bubbles as
\begin{itemize}
 \item For $n=0$ set
  \begin{equation}
\vcenter{\xy 0;/r.18pc/:
    (2,-11)*{\cbub{-1}{}};
  (12,-2)*{n};
 \endxy} \;\; = c_0^+ \Id_{\mathbf{1}_0}, \qquad \quad
 \vcenter{\xy 0;/r.18pc/:
    (2,-11)*{\ccbub{-1}{}};
  (12,-2)*{n};
 \endxy} \;\; = c_0^- \Id_{\mathbf{1}_0},
 \end{equation}
  \item For $n<0$  define
  \begin{equation} \label{eq_fake_nleqz}
  \vcenter{\xy 0;/r.18pc/:
    (2,-11)*{\cbub{n-1+j}{}};
  (12,-2)*{n};
 \endxy} \;\; = \;\;
\left\{
 \begin{array}{ccl}
   - \frac{1}{\beta_{n}}\xsum{\lambda: |\lambda|=j}{}
 \alpha_{\lambda}^{j}(n)   \vcenter{\xy 0;/r.18pc/:
    (2,-3)*{\ccbub{-n-1+\lambda_1}{}};
    (13,-2)*{\cdots};
    (28,-3)*{\ccbub{-n-1+\lambda_m}{}};
  (12,8)*{n};
 \endxy}  & \quad & \text{if $0 \leq j < -n+1$}
 \\ \\
   0 & \quad & \text{$j<0$.}
 \end{array}
 \right.
 \end{equation}

  \item For $n>0$ define
  \begin{equation}
  \vcenter{\xy 0;/r.18pc/:
    (2,-11)*{\ccbub{-n-1+j}{}};
  (12,-2)*{n};
 \endxy} \;\; = \;\;
\left\{
 \begin{array}{ccl}
  - \frac{1}{\beta_{n}}\xsum{\lambda: |\lambda|=j}{}
 \alpha_{\lambda}^{j}(n)   \vcenter{\xy 0;/r.18pc/:
    (2,-3)*{\cbub{n-1+\lambda_1}{}};
    (13,-2)*{\cdots};
    (28,-3)*{\cbub{n-1+\lambda_m}{}};
  (12,8)*{n};
 \endxy} & \quad & \text{if $0 \leq j < n+1$}
 \\ \\
   0 & \quad & \text{$j<0$.}
 \end{array}
 \right.
 \end{equation}
\end{itemize}
\end{defn}

Although the labels are negative for fake bubbles, one can check that their
degree are still positive if we take the degree of a negative labelled dot to be $2$ times the label.  Also note the clockwise oriented degree zero
fake bubbles, with $j=0$ is equal to
$-\frac{\alpha_{\emptyset}^0}{\beta_{n}}= -\frac{1}{c_n^-\beta_{n}}$ and the
degree zero counter-clockwise oriented bubble is equal to
$-\frac{\alpha_{\emptyset}^0}{\beta_{n}}= -\frac{1}{c_n^+\beta_{n}}$.

With these definitions \eqref{eq_ugly_curldot} can be written in a
compact way as
\begin{equation}
  \vcenter{\xy 0;/r.18pc/:
    (-4,6)*{};(4,-4)*{} **\crv{(-4,1) & (4,-1)}?(1)*\dir{>};
    (4,6)*{};(-4,-4)*{} **\crv{(4,1) & (-4,-1)}?(0)*\dir{<};
    (-4,-4)*{};(4,-4)*{} **\crv{(-4,-8) & (4,-8)}?(.5)*\dir{}+(0,0)*{\bullet}+(-1,-3)*{\scs j};
  (8,-8)*{n};(-6,-3)*{\scs };  (6,-3)*{\scs };
 \endxy}
 \; = \;
   -  \xy
  (0,.4)*{\sum};
  (0,-3.3)*{\scs f_1+f_2=j-n};
  \endxy
  \vcenter{\xy 0;/r.18pc/:
    (-4,6)*{};(4,6)*{} **\crv{(-4,-2) & (4,-2)}?(1)*\dir{>} ?(.25)*\dir{}+(0,0)*{\bullet}+(-3,0)*{\scs f_1};
    (2,-9)*{\cbub{n-1+f_2}{}};
  (12,-2)*{n};(-6,-3)*{\scs };  (6,-3)*{\scs };
 \endxy} \nn
\end{equation}
where the above formula is now valid for all $n \in \Z$ and $j \geq
0$.  Our convention is that the summation is zero when $j-n<0$.

%
\subsubsection{Another interpretation of fake bubbles} \label{subsubsec_infinitegrass}
%

One can check that (for $n \neq 0$) the definition of fake bubbles in Definition~\ref{def_fake} is equivalent to requiring that the fake bubbles are inductively defined by the equations resulting from comparing the homogeneous terms of degree less than or equal to $|n|$ on both sides of equation
\begin{center}
\begin{eqnarray}
 \makebox[0pt]{ $
\left( \xy 0;/r.15pc/:
 (0,0)*{\ccbub{-n-1}{}};
  (4,8)*{n};
 \endxy
 +
 \xy 0;/r.15pc/:
 (0,0)*{\ccbub{-n-1+1}{}};
  (4,8)*{n};
 \endxy t
 + \cdots +
 \xy 0;/r.15pc/:
 (0,0)*{\ccbub{-n-1+\beta}{}};
  (4,8)*{n};
 \endxy t^{\beta}
 + \cdots
\right)
%
\left( \xy 0;/r.15pc/:
 (0,0)*{\cbub{n-1}{}};
  (4,8)*{n};
 \endxy
 + \xy 0;/r.15pc/:
 (0,0)*{\cbub{n-1+1}{}};
  (4,8)*{n};
 \endxy t
 +\cdots +
 \xy 0;/r.15pc/:
 (0,0)*{\cbub{n-1+\alpha}{}};
 (4,8)*{n};
 \endxy t^{\alpha}
 + \cdots
\right) =-\frac{1}{\beta_n} .$ } \nn \\ \label{eq_infinite_Grass}
\end{eqnarray}
\end{center}
If $\beta_n=-1$, then Equation~\eqref{eq_infinite_Grass} can be thought of as a limit of equation \eqref{eq_chern_rel} defining the cohomology ring of $Gr(k,N)$. We refer to the above equation the {\em infinite Grassmannian equation}.

We will show in Section~\ref{subsubsec_rescaling} that it is always possible to choose the coefficients $c_n^{\pm}=1$ and $\beta_n=-1$.  In that case, the infinite Grassmannian equation takes on a simplified form.

\begin{example} For $n>0$, $c_n^+=1$, and $\beta_n=-1$, the first few fake bubbles are collected below
\begin{enumerate}[a)]
  \item $ \xy 0;/r.18pc/:
 (-12,0)*{\ccbub{-n-1+0}{}};
 (-8,8)*{n};
 \endxy :=1$

 \item
 $
  \xy 0;/r.18pc/:
 (-12,0)*{\ccbub{-n-1+1}{}};
 (-8,8)*{n};(-8,13)*{};
 \endxy := -
 \xy 0;/r.19pc/:
 (0,0)*{\cbub{n-1+1}{}};
 (8,8)*{n}; (-8,13)*{};
 \endxy
 $

 \item
 $\xy 0;/r.18pc/:
 (-12,0)*{\ccbub{-n-1+2}{}};
 (-8,8)*{n};(0,10)*{}; (-8,13)*{};
 \endxy := \;\;
-
 \xy 0;/r.19pc/:
 (0,0)*{\cbub{n-1+2}{}};
 (8,8)*{n};(0,10)*{}; (-8,13)*{};
 \endxy
 -
 \xy 0;/r.19pc/:
 (0,0)*{\cbub{n-1+1}{}};
 (20,0)*{\ccbub{-n-1+1}{}};
 (8,8)*{n};(0,10)*{}; (-8,13)*{};
 \endxy
 \;\; = \;\;
-
 \xy 0;/r.19pc/:
 (0,0)*{\cbub{n-1+2}{}};
 (8,8)*{n};(0,10)*{}; (-8,13)*{};
 \endxy
 +
 \xy 0;/r.19pc/:
 (0,0)*{\cbub{n-1+1}{}};
 (20,0)*{\cbub{n-1+1}{}};
 (8,8)*{n};(0,10)*{}; (-8,13)*{};
 \endxy  $
\end{enumerate}
Note that in general, the fake bubbles for $n> 0$ are inductively defined as
\begin{equation} \label{eq_homo_inf_grass}
\xy 0;/r.18pc/:
 (-12,0)*{\ccbub{-n-1+j}{}};
 (-8,8)*{n};(0,10)*{};
 \endxy
 \;\; = \;\;
-\sum_{
 \xy (0,3)*{\scs \ell_1+\ell_2=j}; (0,-1)*{\scs  \ell_1 \geq 1}; \endxy}
 \xy 0;/r.19pc/:
 (0,0)*{\cbub{n-1+\ell_1}{}};
 (20,0)*{\ccbub{-n-1+\ell_2}{}};(0,10)*{};
 (8,8)*{n};
 \endxy
\end{equation}
The clockwise oriented fake bubbles for $n<0$ are defined similarly.
\end{example}

While we have used the equations resulting from the homogeneous terms with degree less than or equal to $|n|$ in the infinite Grassmannian equation to define fake bubbles, one can show that for all $\alpha \geq 0$ the equation
\begin{equation} \label{eq_homogeneous_term}
\sum_{\ell_1+\ell_2=\alpha}
 \xy 0;/r.19pc/:
 (0,0)*{\cbub{n-1+\ell_1}{}};
 (20,0)*{\ccbub{-n-1+\ell_2}{}};(0,10)*{};
 (8,8)*{n};
 \endxy \quad = \quad 0.
\end{equation}
must also hold in the 2-category $\Ucat$. To see this, consider the diagram
\begin{equation}
   \vcenter{   \xy 0;/r.18pc/:
    (-4,-4)*{};(4,4)*{} **\crv{(-3,-1) & (3,1)}?(1)*\dir{>};
    (4,-4)*{};(-4,4)*{} **\crv{(3,-1) & (-3,1)}?(1)*\dir{>};;
    (-4,4)*{};(-12,4)*{} **\crv{(-4,8) & (-12,8)};
    (-4,-4)*{};(-12,-4)*{} **\crv{(-4,-8) & (-12,-8)};
    (4,4)*{};(12,4)*{} **\crv{(4,8) & (12,8)};
    (4,-4)*{};(12,-4)*{} **\crv{(4,-8) & (12,-8)};
    (12,-4)*{};(12,4)*{} **\dir{-};
    (-12,-4)*{};(-12,4)*{} **\dir{-};
    (-12,0)*{\bullet}+(-9,1)*{\scs -n-1+\alpha_1};
    (12,0)*{\bullet}+(9,1)*{\scs  n-1+\alpha_2};
  (2,9)*{n};
 \endxy}
\end{equation}
where $\min(0,n+1) \leq \alpha_1$ and $\min(0,-n+1) \leq \alpha_2$.  Simplifying this diagram in two possible ways using equation \eqref{eq_ugly_curldot}, and a similar equation derived for the other dotted curl, implies \eqref{eq_homogeneous_term} with $\alpha=\alpha_1+\alpha_2-1$.  Thus, the definition of fake bubbles, and the relations in $\Ucat$ imply that all homogeneous terms in the infinite Grassmannian relation hold in $\Ucat$.

%
\subsubsection{Fake bubbles and symmetric functions} \label{subsubsec_fake_symm}
%

The infinite Grassmannian equation described above suggests a natural interpretation of fake bubbles.  To simplify the exposition we continue to assume that $c_n^{\pm}=1$ and $\beta_n=-1$.

The map $\phi^n$ from equation \eqref{eq_symm_iso} provides an  identification between elementary symmetric functions and dotted bubbles of a given orientation.  For example, when $n>0$ we have a bijection between the elementary symmetric function of degree $r$ and the clockwise oriented dotted bubble of degree $r$.  It is natural to wonder if there are any diagrams corresponding to complete symmetric functions.
The image of the Equation~\eqref{eq_eh_rel} relating the elementary and complete symmetric functions is given by
\[\sum_{\ell=0}^{\alpha} (-1)^r e_r h_{\alpha-r} = \delta_{\alpha,0}
  \quad \xymatrix{
 \ar[r]^{\phi_n} &
 } \quad
 \sum_{\ell=0}^{\alpha}
\xy 0;/r.15pc/:
 (-10,1)*{\cbub{(n-1)+\alpha-\ell}{}};
  (14,-1)*{\ccbub{(-n-1)+\ell}{}};
 (3,10)*{n};
 \endxy
 = \delta_{\alpha,0}.
\]
In particular, $\phi^n$ maps Equation~\eqref{eq_eh_rel} to the corresponding equation given by equating homogeneous terms of degree $\alpha$ in the infinite Grassmannian equation
\[
\left( \xy 0;/r.14pc/:
 (0,0)*{\ccbub{-n-1}{}};
  (4,8)*{n};
 \endxy
 + \cdots +
 \xy 0;/r.15pc/:
 (0,0)*{\ccbub{-n-1+\alpha}{}};
  (4,8)*{n};
 \endxy t^{\alpha}
 + \cdots
\right)
\left( \xy 0;/r.15pc/:
 (0,0)*{\cbub{n-1}{}};
  (4,8)*{n};
 \endxy
 + \cdots +
 \xy 0;/r.14pc/:
 (0,0)*{\cbub{n-1+\alpha}{}};
 (4,8)*{n};
 \endxy t^{\alpha}
 + \cdots
\right) =1.\]
Therefore it is natural to conclude that for $n>0$
\begin{equation}
  \phi^n(h_r)\;\; = \;\;  (-1)^r
    \xy 0;/r.18pc/:
 (-12,0)*{\ccbub{(-n-1)+r}{}};
 (-8,8)*{n};(0,10)*{};
 \endxy
\end{equation}
For $0 \leq r \leq n$ we can take this as the definition of fake bubbles.  A similar interpretation applies to fake bubbles for $n<0$.
\begin{quote}
{\em Up to a sign, the coefficients expressing a fake bubble in terms of real bubbles are the same as the coefficients expressing a complete symmetric function in terms of elementary symmetric functions.}
\end{quote}

%
\subsection{The 2-categories $\Ucatt$}
%
Let $\chi$ be a set of invertible elements
\begin{equation}
  \beta_n \quad \text{for $n\in \Z$}, \qquad c_n^+ \quad \text{for $n \geq 0$},
  \qquad c_n^- \quad \text{for $n \leq 0$}, \nn
\end{equation}
in $\Bbbk$.  Introduce coefficients $\alpha_{\lambda}^{\ell}(n)$ for
$|\lambda|\leq \ell \leq |n|-1$ satisfying
\begin{equation}
\delta_{b,0} = \sum_{\lambda: |\lambda| \leq b}
\alpha_{\lambda}^{\ell}(n)e_{\lambda,n} e_{b-|\lambda|,n}. \nn
\end{equation}
We write $\alpha_{\lambda}^{\ell}(n)$ for all $\ell$ and $\lambda$ and $n$, but set $\alpha_{\lambda}^{\ell}(n)=0$ unless $|\lambda|\leq \ell \leq n-1$.

Thus far, from our study of invertibility of the maps $\zeta_+$ and $\zeta_-$ we have defined a family of categories $\Ucatt$. Here we use fake bubbles to compactly define this 2-category using a small set of relations that imply relations {\bf (A1)}--{\bf (A5)} and {\bf (B1)}--{\bf (B5)}.

\begin{defn} \label{defn-Ucatt}
$\Ucatt$ is the additive 2-category consisting
of
\begin{itemize}
  \item objects: $n$ for $n \in \Z$.
\end{itemize}
The Hom $\Ucatt(n,n')$ between two objects $n$, $n'$ is an additive $\Bbbk$-linear category:
\begin{itemize}
  \item objects of $\Ucatt(n,n')$: for a signed sequence $\ep = (\epsilon_1,\epsilon_2, \dots, \epsilon_m)$, where $\epsilon_1, \dots, \epsilon_m \in \{ +,-\}$, define
 $$\cal{E}_{\ep} := \cal{E}_{\epsilon_1} \cal{E}_{\epsilon_2}\dots \cal{E}_{\epsilon_m}$$
where $\cal{E}_{+}:= \cal{E}$ and $\cal{E}_{-}:= \cal{F}$.  An object of $\Ucatt(n,n')$, called a 1-morphism
in $\Ucatt$, is a formal finite direct sum of 1-morphisms
  \[
 \cal{E}_{\ep} \onen\{t\} =\onenn{n'} \cal{E}_{\ep} \onen\{t\}
  \]
for any $t\in \Z$ and signed sequence $\ep$ such that $n'=n+\sum_{j=1}^m \epsilon_j2 $.

  \item morphisms of $\Ucatt(n,n')$: given objects $\cal{E}_{\ep} \onen\{t\}
  ,\cal{E}_{\ep'} \onen\{t'\} \in \Ucatt(n,n')$, the Hom sets
$$\Hom_{\Ucatt}(\cal{E}_{\ep} \onen\{t\},\cal{E}_{\ep'} \onen\{t'\})$$ of $\Ucatt(n,n')$ are $\Bbbk$-vector spaces given by linear combinations of diagrams with degree $t-t'$, modulo certain relations, built from composites of identity 2-morphisms and for each $n\in \Z$ generating 2-morphisms\footnote{See example~\ref{ex_generators} for additional examples of the generating 2-morphisms and their gradings. }
\begin{align}
  \xy 0;/r.17pc/:
 (0,7);(0,-7); **\dir{-} ?(.75)*\dir{>}+(2.3,0)*{\scriptstyle{}};
 (0,0)*{\bullet};
 (7,-3)*{ \scs n};
 (-9,-3)*{\scs  n+2};
 (-10,0)*{};(10,0)*{};
 \endxy &\maps \cal{E}\onen\{t+2\} \to \cal{E}\onen\{t\}  & \quad
 &
    \xy 0;/r.17pc/:
 (0,7);(0,-7); **\dir{-} ?(.75)*\dir{<}+(2.3,0)*{\scriptstyle{}};
 (0,0)*{\bullet};
 (7,-3)*{ \scs n};
 (-9,-3)*{\scs  n-2};
 (-10,0)*{};(10,0)*{};
 \endxy\maps \cal{F}\onen\{t+2\} \to \cal{F}\onen\{t\}  \nn \\
   & & & \nn \\
 \xy 0;/r.17pc/:
  (0,0)*{\xybox{
    (-4,-4)*{};(4,4)*{} **\crv{(-4,-1) & (4,1)}?(1)*\dir{>} ;
    (4,-4)*{};(-4,4)*{} **\crv{(4,-1) & (-4,1)}?(1)*\dir{>};
     (8,1)*{\scs n};
     (-12,0)*{};(12,0)*{};
     }};
  \endxy &\maps \cal{E}\cal{E}\onen \{t-2\} \to \cal{E}\cal{E}\onen\{t\}  &
  &
 \xy 0;/r.17pc/:
  (0,0)*{\xybox{
    (-4,4)*{};(4,-4)*{} **\crv{(-4,1) & (4,-1)}?(1)*\dir{>} ;
    (4,4)*{};(-4,-4)*{} **\crv{(4,1) & (-4,-1)}?(1)*\dir{>};
     (8,1)*{\scs n};
     (-12,0)*{};(12,0)*{};
     }};
  \endxy \maps \cal{F}\cal{F}\onen \{t-2\} \to \cal{F}\cal{F}\onen\{t\}  \nn \\
  & & & \nn \\
     \xy 0;/r.17pc/:
    (0,-3)*{\bbpef{}};
    (8,-5)*{\scs  n};
    (-12,0)*{};(12,0)*{};
    \endxy &\maps \onen\{t+1+n\} \to \cal{F}\cal{E}\onen\{t\}  &
    &
   \xy 0;/r.17pc/:
    (0,-3)*{\bbpfe{}};
    (8,-5)*{\scs n};
    (-12,0)*{};(12,0)*{};
    \endxy \maps \onen\{t+1-n\} \to\cal{E}\cal{F}\onen\{t\}  \nn \\
      & & & \nn \\
  \xy 0;/r.17pc/:
    (0,0)*{\bbcef{}};
    (8,4)*{\scs  n};
    (-12,0)*{};(12,0)*{};
    \endxy & \maps \cal{F}\cal{E}\onen\{t+1+n\} \to\onen\{t\}  &
    &
 \xy 0;/r.17pc/:
    (0,0)*{\bbcfe{}};
    (8,4)*{\scs  n};
    (-12,0)*{};(12,0)*{};
    \endxy \maps\cal{E}\cal{F}\onen\{t+1-n\} \to\onen\{t\} \nn
\end{align}
\end{itemize}
such that the following identities hold:
\begin{enumerate}
\item  cups and caps are biadjointness morphisms up to grading shifts. That is, equations \eqref{eq_biadjointness1} and \eqref{eq_biadjointness2} hold.

  \item All 2-morphisms are cyclic with respect to the above biadjoint structure.  This is ensured by imposing the relations:
\begin{equation} \label{eq_cyclic_dot}
    \xy 0;/r.17pc/:
    (-8,5)*{}="1";
    (0,5)*{}="2";
    (0,-5)*{}="2'";
    (8,-5)*{}="3";
    (-8,-10);"1" **\dir{-};
    "2";"2'" **\dir{-} ?(.5)*\dir{<};
    "1";"2" **\crv{(-8,12) & (0,12)} ?(0)*\dir{<};
    "2'";"3" **\crv{(0,-12) & (8,-12)}?(1)*\dir{<};
    "3"; (8,10) **\dir{-};
    (15,-9)*{ n+2};
    (-12,9)*{n};
    (0,4)*{\txt\large{$\bullet$}};
    (10,8)*{\scs };
    (-10,-8)*{\scs };
    \endxy
    \quad = \quad
      \xy 0;/r.17pc/:
 (0,10);(0,-10); **\dir{-} ?(.75)*\dir{<}+(2.3,0)*{\scriptstyle{}}
 ?(.1)*\dir{ }+(2,0)*{\scs };
 (0,0)*{\txt\large{$\bullet$}};
 (-6,5)*{ n};
 (8,5)*{ n +2};
 (-10,0)*{};(10,0)*{};(-2,-8)*{\scs };
 \endxy
    \quad = \quad
    \xy 0;/r.17pc/:
    (8,5)*{}="1";
    (0,5)*{}="2";
    (0,-5)*{}="2'";
    (-8,-5)*{}="3";
    (8,-10);"1" **\dir{-};
    "2";"2'" **\dir{-} ?(.5)*\dir{<};
    "1";"2" **\crv{(8,12) & (0,12)} ?(0)*\dir{<};
    "2'";"3" **\crv{(0,-12) & (-8,-12)}?(1)*\dir{<};
    "3"; (-8,10) **\dir{-};
    (15,9)*{n+2};
    (-12,-9)*{n};
    (0,4)*{\txt\large{$\bullet$}};
    (-10,8)*{\scs };
    (10,-8)*{\scs };
    \endxy
\end{equation}
\begin{equation} \label{eq_cyclic_cross-gen}
\xy 0;/r.19pc/:
  (0,0)*{\xybox{
    (-4,-4)*{};(4,4)*{} **\crv{(-4,-1) & (4,1)}?(1)*\dir{>};
    (4,-4)*{};(-4,4)*{} **\crv{(4,-1) & (-4,1)};
     (4,4)*{};(-18,4)*{} **\crv{(4,16) & (-18,16)} ?(1)*\dir{>};
     (-4,-4)*{};(18,-4)*{} **\crv{(-4,-16) & (18,-16)} ?(1)*\dir{<}?(0)*\dir{<};
     (18,-4);(18,12) **\dir{-};(12,-4);(12,12) **\dir{-};
     (-18,4);(-18,-12) **\dir{-};(-12,4);(-12,-12) **\dir{-};
     (8,1)*{ n};
     (-10,0)*{};(10,0)*{};
      (4,-4)*{};(12,-4)*{} **\crv{(4,-10) & (12,-10)}?(1)*\dir{<}?(0)*\dir{<};
      (-4,4)*{};(-12,4)*{} **\crv{(-4,10) & (-12,10)}?(1)*\dir{>}?(0)*\dir{>};
     }};
  \endxy
\quad =  \quad \xy
  (0,0)*{\xybox{
    (-4,-4)*{};(4,4)*{} **\crv{(-4,-1) & (4,1)}?(0)*\dir{<} ;
    (4,-4)*{};(-4,4)*{} **\crv{(4,-1) & (-4,1)}?(0)*\dir{<};
     (-8,0)*{ n};
     (-12,0)*{};(12,0)*{};
     }};
  \endxy \quad =  \quad
 \xy 0;/r.19pc/:
  (0,0)*{\xybox{
    (4,-4)*{};(-4,4)*{} **\crv{(4,-1) & (-4,1)}?(1)*\dir{>};
    (-4,-4)*{};(4,4)*{} **\crv{(-4,-1) & (4,1)};
     (-4,4)*{};(18,4)*{} **\crv{(-4,16) & (18,16)} ?(1)*\dir{>};
     (4,-4)*{};(-18,-4)*{} **\crv{(4,-16) & (-18,-16)} ?(1)*\dir{<}?(0)*\dir{<};
     (-18,-4);(-18,12) **\dir{-};(-12,-4);(-12,12) **\dir{-};
     (18,4);(18,-12) **\dir{-};(12,4);(12,-12) **\dir{-};
     (8,1)*{ n};
     (-10,0)*{};(10,0)*{};
     (-4,-4)*{};(-12,-4)*{} **\crv{(-4,-10) & (-12,-10)}?(1)*\dir{<}?(0)*\dir{<};
      (4,4)*{};(12,4)*{} **\crv{(4,10) & (12,10)}?(1)*\dir{>}?(0)*\dir{>};
     }};
  \endxy
\end{equation}
This condition ensures that isotopic diagrams represent
the same 2-morphism in $\Ucatt$.

\item The nilHecke relations hold:
 \begin{equation} \label{eq_nilHecke1}
  \vcenter{\xy 0;/r.18pc/:
    (-4,-4)*{};(4,4)*{} **\crv{(-4,-1) & (4,1)}?(1)*\dir{>};
    (4,-4)*{};(-4,4)*{} **\crv{(4,-1) & (-4,1)}?(1)*\dir{>};
    (-4,4)*{};(4,12)*{} **\crv{(-4,7) & (4,9)}?(1)*\dir{>};
    (4,4)*{};(-4,12)*{} **\crv{(4,7) & (-4,9)}?(1)*\dir{>};
 \endxy}
 =0, \qquad \quad
 \vcenter{
 \xy 0;/r.18pc/:
    (-4,-4)*{};(4,4)*{} **\crv{(-4,-1) & (4,1)}?(1)*\dir{>};
    (4,-4)*{};(-4,4)*{} **\crv{(4,-1) & (-4,1)}?(1)*\dir{>};
    (4,4)*{};(12,12)*{} **\crv{(4,7) & (12,9)}?(1)*\dir{>};
    (12,4)*{};(4,12)*{} **\crv{(12,7) & (4,9)}?(1)*\dir{>};
    (-4,12)*{};(4,20)*{} **\crv{(-4,15) & (4,17)}?(1)*\dir{>};
    (4,12)*{};(-4,20)*{} **\crv{(4,15) & (-4,17)}?(1)*\dir{>};
    (-4,4)*{}; (-4,12) **\dir{-};
    (12,-4)*{}; (12,4) **\dir{-};
    (12,12)*{}; (12,20) **\dir{-};
  (18,8)*{n};
\endxy}
 \;\; =\;\;
 \vcenter{
 \xy 0;/r.18pc/:
    (4,-4)*{};(-4,4)*{} **\crv{(4,-1) & (-4,1)}?(1)*\dir{>};
    (-4,-4)*{};(4,4)*{} **\crv{(-4,-1) & (4,1)}?(1)*\dir{>};
    (-4,4)*{};(-12,12)*{} **\crv{(-4,7) & (-12,9)}?(1)*\dir{>};
    (-12,4)*{};(-4,12)*{} **\crv{(-12,7) & (-4,9)}?(1)*\dir{>};
    (4,12)*{};(-4,20)*{} **\crv{(4,15) & (-4,17)}?(1)*\dir{>};
    (-4,12)*{};(4,20)*{} **\crv{(-4,15) & (4,17)}?(1)*\dir{>};
    (4,4)*{}; (4,12) **\dir{-};
    (-12,-4)*{}; (-12,4) **\dir{-};
    (-12,12)*{}; (-12,20) **\dir{-};
  (10,8)*{n};
\endxy}
  \end{equation}
\begin{eqnarray} \label{eq_nilHecke2}
  \xy
  (3,4);(3,-4) **\dir{-}?(0)*\dir{<}+(2.3,0)*{};
  (-3,4);(-3,-4) **\dir{-}?(0)*\dir{<}+(2.3,0)*{};
  (8,2)*{n};
 \endxy
 \quad =
\xy
  (0,0)*{\xybox{
    (-4,-4)*{};(4,4)*{} **\crv{(-4,-1) & (4,1)}?(1)*\dir{>}?(.25)*{\bullet};
    (4,-4)*{};(-4,4)*{} **\crv{(4,-1) & (-4,1)}?(1)*\dir{>};
     (8,1)*{ n};
     (-10,0)*{};(10,0)*{};
     }};
  \endxy
 \;\; -
 \xy
  (0,0)*{\xybox{
    (-4,-4)*{};(4,4)*{} **\crv{(-4,-1) & (4,1)}?(1)*\dir{>}?(.75)*{\bullet};
    (4,-4)*{};(-4,4)*{} **\crv{(4,-1) & (-4,1)}?(1)*\dir{>};
     (8,1)*{ n};
     (-10,0)*{};(10,0)*{};
     }};
  \endxy
 \;\; =
\xy
  (0,0)*{\xybox{
    (-4,-4)*{};(4,4)*{} **\crv{(-4,-1) & (4,1)}?(1)*\dir{>};
    (4,-4)*{};(-4,4)*{} **\crv{(4,-1) & (-4,1)}?(1)*\dir{>}?(.75)*{\bullet};
     (8,1)*{ n};
     (-10,0)*{};(10,0)*{};
     }};
  \endxy
 \;\; -
  \xy
  (0,0)*{\xybox{
    (-4,-4)*{};(4,4)*{} **\crv{(-4,-1) & (4,1)}?(1)*\dir{>} ;
    (4,-4)*{};(-4,4)*{} **\crv{(4,-1) & (-4,1)}?(1)*\dir{>}?(.25)*{\bullet};
     (8,1)*{ n};
     (-10,0)*{};(10,0)*{};
     }};
  \endxy \nn \\
\end{eqnarray}

\item  Negative degree bubbles are zero, but a dotted bubble of degree zero must be a multiple of the identity map.  By (A1) and (B1) above this multiple must be nonzero.  We fix these constants as follows
\[
\xy 0;/r.18pc/:
 (0,0)*{\cbub{n-1}};
  (4,8)*{n};
 \endxy
  = c_{n}^+ \cdot \Id_{\onen} \quad \text{for $n \geq 1$,}
  \qquad \quad
  \xy 0;/r.18pc/:
 (0,0)*{\ccbub{-n-1}};
  (4,8)*{n};
 \endxy
  = c_{n}^- \cdot \Id_{\onen} \quad \text{for $n \leq -1$.}
\]

\item For the fake bubbles defined as in Definition~\ref{def_fake}
the following relations hold
\begin{equation} \label{eq_reductionT}
  \xy 0;/r.16pc/:
  (14,8)*{n};
  (-3,-8)*{};(3,8)*{} **\crv{(-3,-1) & (3,1)}?(1)*\dir{>};?(0)*\dir{>};
    (3,-8)*{};(-3,8)*{} **\crv{(3,-1) & (-3,1)}?(1)*\dir{>};
  (-3,-12)*{\bbsid};  (-3,8)*{\bbsid};
  (3,8)*{}="t1";  (9,8)*{}="t2";
  (3,-8)*{}="t1'";  (9,-8)*{}="t2'";
   "t1";"t2" **\crv{(3,14) & (9, 14)};
   "t1'";"t2'" **\crv{(3,-14) & (9, -14)};
   "t2'";"t2" **\dir{-} ?(.5)*\dir{<};
   (9,0)*{}; (-6,-8)*{\scs };
 \endxy \;\; = \;\; -\sum_{f_1+f_2=-n}
   \xy 0;/r.18pc/:
  (19,4)*{n};
  (0,0)*{\bbe{}};(-2,-8)*{\scs };
  (12,-2)*{\cbub{(n-1)+f_2}};
  (0,6)*{\bullet}+(3,-1)*{\scs f_1};
 \endxy
\qquad \qquad
\xy 0;/r.16pc/:
  (-12,8)*{n};
   (-3,-8)*{};(3,8)*{} **\crv{(-3,-1) & (3,1)}?(1)*\dir{>};?(0)*\dir{>};
    (3,-8)*{};(-3,8)*{} **\crv{(3,-1) & (-3,1)}?(1)*\dir{>};
  (3,-12)*{\bbsid};
  (3,8)*{\bbsid}; (6,-8)*{\scs };
  (-9,8)*{}="t1";
  (-3,8)*{}="t2";
  (-9,-8)*{}="t1'";
  (-3,-8)*{}="t2'";
   "t1";"t2" **\crv{(-9,14) & (-3, 14)};
   "t1'";"t2'" **\crv{(-9,-14) & (-3, -14)};
  "t1'";"t1" **\dir{-} ?(.5)*\dir{<};
 \endxy \;\; = \;\;
 \sum_{g_1+g_2=n}^{}
   \xy 0;/r.18pc/:
  (-12,8)*{n};
  (0,0)*{\bbe{}};(2,-8)*{\scs};
  (-12,-2)*{\ccbub{(-n-1)+g_2}};
  (0,6)*{\bullet}+(3,-1)*{\scs g_1};
 \endxy
\end{equation}
\begin{eqnarray}
 \vcenter{\xy 0;/r.18pc/:
  (-8,0)*{};
  (8,0)*{};
  (-4,10)*{}="t1";
  (4,10)*{}="t2";
  (-4,-10)*{}="b1";
  (4,-10)*{}="b2";(-6,-8)*{\scs };(6,-8)*{\scs };
  "t1";"b1" **\dir{-} ?(.5)*\dir{<};
  "t2";"b2" **\dir{-} ?(.5)*\dir{>};
  (10,2)*{n};
  (-10,2)*{n};
  \endxy}
&\quad = \quad&
 \beta_{n}\;\;
 \vcenter{   \xy 0;/r.18pc/:
    (-4,-4)*{};(4,4)*{} **\crv{(-4,-1) & (4,1)}?(1)*\dir{>};
    (4,-4)*{};(-4,4)*{} **\crv{(4,-1) & (-4,1)}?(1)*\dir{<};?(0)*\dir{<};
    (-4,4)*{};(4,12)*{} **\crv{(-4,7) & (4,9)};
    (4,4)*{};(-4,12)*{} **\crv{(4,7) & (-4,9)}?(1)*\dir{>};
  (8,8)*{n};(-6,-3)*{\scs };
     (6.5,-3)*{\scs };
 \endxy}
  \quad - \beta_{n}\quad
   \sum_{ \xy  (0,3)*{\scs f_1+f_2+f_3}; (0,0)*{\scs =n-1};\endxy}
    \vcenter{\xy 0;/r.18pc/:
    (-10,10)*{n};
    (-8,0)*{};
  (8,0)*{};
  (-4,-15)*{}="b1";
  (4,-15)*{}="b2";
  "b2";"b1" **\crv{(5,-8) & (-5,-8)}; ?(.05)*\dir{<} ?(.93)*\dir{<}
  ?(.8)*\dir{}+(0,-.1)*{\bullet}+(-3,2)*{\scs f_3};
  (-4,15)*{}="t1";
  (4,15)*{}="t2";
  "t2";"t1" **\crv{(5,8) & (-5,8)}; ?(.15)*\dir{>} ?(.95)*\dir{>}
  ?(.4)*\dir{}+(0,-.2)*{\bullet}+(3,-2)*{\scs \; f_1};
  (0,0)*{\ccbub{\scs \quad -n-1+f_2}};
  \endxy} \nn
 \\  \; \nn \\
 \vcenter{\xy 0;/r.18pc/:
  (-8,0)*{};(-6,-8)*{\scs };(6,-8)*{\scs };
  (8,0)*{};
  (-4,10)*{}="t1";
  (4,10)*{}="t2";
  (-4,-10)*{}="b1";
  (4,-10)*{}="b2";
  "t1";"b1" **\dir{-} ?(.5)*\dir{>};
  "t2";"b2" **\dir{-} ?(.5)*\dir{<};
  (10,2)*{n};
  (-10,2)*{n};
  \endxy}
&\quad = \quad&
 \beta_{n}\;\;
   \vcenter{\xy 0;/r.18pc/:
    (-4,-4)*{};(4,4)*{} **\crv{(-4,-1) & (4,1)}?(1)*\dir{<};?(0)*\dir{<};
    (4,-4)*{};(-4,4)*{} **\crv{(4,-1) & (-4,1)}?(1)*\dir{>};
    (-4,4)*{};(4,12)*{} **\crv{(-4,7) & (4,9)}?(1)*\dir{>};
    (4,4)*{};(-4,12)*{} **\crv{(4,7) & (-4,9)};
  (8,8)*{n};(-6,-3)*{\scs };  (6,-3)*{\scs };
 \endxy}
  \quad - \beta_{n} \quad
    \sum_{ \xy  (0,3)*{\scs g_1+g_2+g_3}; (0,0)*{\scs =-n-1};\endxy}
    \vcenter{\xy 0;/r.18pc/:
    (-8,0)*{};
  (8,0)*{};
  (-4,-15)*{}="b1";
  (4,-15)*{}="b2";
  "b2";"b1" **\crv{(5,-8) & (-5,-8)}; ?(.1)*\dir{>} ?(.95)*\dir{>}
  ?(.8)*\dir{}+(0,-.1)*{\bullet}+(-3,2)*{\scs g_3};
  (-4,15)*{}="t1";
  (4,15)*{}="t2";
  "t2";"t1" **\crv{(5,8) & (-5,8)}; ?(.15)*\dir{<} ?(.9)*\dir{<}
  ?(.4)*\dir{}+(0,-.2)*{\bullet}+(3,-2)*{\scs g_1};
  (0,0)*{\cbub{\scs \quad\; n-1 + g_2}};
  (-10,10)*{n};
  \endxy} \label{eq_ident_decompT}
\end{eqnarray}
for all $n\in \Z$.  In equations \eqref{eq_reductionT} and
\eqref{eq_ident_decompT} whenever the summations are nonzero they
utilize fake bubbles.
\end{enumerate}

The additive monoidal composition functor $\Ucat(n,n')
 \times  \Ucat(n',n'')\to\Ucat(n,n'') $ is given on
 1-morphisms of $\Ucat$ by
\begin{equation}
  \cal{E}_{\ep'}\mathbf{1}_{n'}\{t'\} \times \cal{E}_{\ep}\onen\{t\} \mapsto
  \cal{E}_{\ep'\ep}\onen\{t+t'\} \nn
\end{equation}
for $n'=n+\sum_{j=1}^m \epsilon_i 2$, and on 2-morphisms of $\Ucat$ by juxtaposition of diagrams
\[
\left(\;\;\vcenter{\xy 0;/r.16pc/:
 (-4,-15)*{}; (-20,25) **\crv{(-3,-6) & (-20,4)}?(0)*\dir{<}?(.6)*\dir{}+(0,0)*{\bullet};
 (-12,-15)*{}; (-4,25) **\crv{(-12,-6) & (-4,0)}?(0)*\dir{<}?(.6)*\dir{}+(.2,0)*{\bullet};
 ?(0)*\dir{<}?(.75)*\dir{}+(.2,0)*{\bullet};?(0)*\dir{<}?(.9)*\dir{}+(0,0)*{\bullet};
 (-28,25)*{}; (-12,25) **\crv{(-28,10) & (-12,10)}?(0)*\dir{<};
  ?(.2)*\dir{}+(0,0)*{\bullet}?(.35)*\dir{}+(0,0)*{\bullet};
 (-36,-15)*{}; (-36,25) **\crv{(-34,-6) & (-35,4)}?(1)*\dir{>};
 (-28,-15)*{}; (-42,25) **\crv{(-28,-6) & (-42,4)}?(1)*\dir{>};
 (-42,-15)*{}; (-20,-15) **\crv{(-42,-5) & (-20,-5)}?(1)*\dir{>};
 (6,10)*{\cbub{}{}};
 (-23,0)*{\cbub{}{}};
 (8,-4)*{n'};(-44,-4)*{n''};
 \endxy}\;\;\right) \;\; \times \;\;
\left(\;\;\vcenter{ \xy 0;/r.18pc/: (-14,8)*{\xybox{
 (0,-10)*{}; (-16,10)*{} **\crv{(0,-6) & (-16,6)}?(.5)*\dir{};
 (-16,-10)*{}; (-8,10)*{} **\crv{(-16,-6) & (-8,6)}?(1)*\dir{}+(.1,0)*{\bullet};
  (-8,-10)*{}; (0,10)*{} **\crv{(-8,-6) & (-0,6)}?(.6)*\dir{}+(.2,0)*{\bullet}?
  (1)*\dir{}+(.1,0)*{\bullet};
  (0,10)*{}; (-16,30)*{} **\crv{(0,14) & (-16,26)}?(1)*\dir{>};
 (-16,10)*{}; (-8,30)*{} **\crv{(-16,14) & (-8,26)}?(1)*\dir{>};
  (-8,10)*{}; (0,30)*{} **\crv{(-8,14) & (-0,26)}?(1)*\dir{>}?(.6)*\dir{}+(.25,0)*{\bullet};
   }};
 (-2,-4)*{n}; (-26,-4)*{n'};
 \endxy} \;\;\right)
 \;\;\mapsto \;\;
\vcenter{\xy 0;/r.16pc/:
 (-4,-15)*{}; (-20,25) **\crv{(-3,-6) & (-20,4)}?(0)*\dir{<}?(.6)*\dir{}+(0,0)*{\bullet};
 (-12,-15)*{}; (-4,25) **\crv{(-12,-6) & (-4,0)}?(0)*\dir{<}?(.6)*\dir{}+(.2,0)*{\bullet};
 ?(0)*\dir{<}?(.75)*\dir{}+(.2,0)*{\bullet};?(0)*\dir{<}?(.9)*\dir{}+(0,0)*{\bullet};
 (-28,25)*{}; (-12,25) **\crv{(-28,10) & (-12,10)}?(0)*\dir{<};
  ?(.2)*\dir{}+(0,0)*{\bullet}?(.35)*\dir{}+(0,0)*{\bullet};
 (-36,-15)*{}; (-36,25) **\crv{(-34,-6) & (-35,4)}?(1)*\dir{>};
 (-28,-15)*{}; (-42,25) **\crv{(-28,-6) & (-42,4)}?(1)*\dir{>};
 (-42,-15)*{}; (-20,-15) **\crv{(-42,-5) & (-20,-5)}?(1)*\dir{>};
 (6,10)*{\cbub{}};
 (-23,0)*{\cbub{}};
 \endxy}
 \vcenter{ \xy 0;/r.16pc/: (-14,8)*{\xybox{
 (0,-10)*{}; (-16,10)*{} **\crv{(0,-6) & (-16,6)}?(.5)*\dir{};
 (-16,-10)*{}; (-8,10)*{} **\crv{(-16,-6) & (-8,6)}?(1)*\dir{}+(.1,0)*{\bullet};
  (-8,-10)*{}; (0,10)*{} **\crv{(-8,-6) & (-0,6)}?(.6)*\dir{}+(.2,0)*{\bullet}?
  (1)*\dir{}+(.1,0)*{\bullet};
  (0,10)*{}; (-16,30)*{} **\crv{(0,14) & (-16,26)}?(1)*\dir{>};
 (-16,10)*{}; (-8,30)*{} **\crv{(-16,14) & (-8,26)}?(1)*\dir{>};
  (-8,10)*{}; (0,30)*{} **\crv{(-8,14) & (-0,26)}?(1)*\dir{>}?(.6)*\dir{}+(.25,0)*{\bullet};
   }};
 (0,-5)*{n};
 \endxy}
\]
\end{defn}

Several examples of the above relations are given in Section~\ref{subsec_Ucat}.

\begin{example}\label{ex_generators} Below we collect the generators of $\Ucatt$ and their corresponding degree:
\[
\begin{tabular}{|l|c|c|c|c|}
 \hline
 {\bf 2-morphism:} &   \xy 0;/r.17pc/:
 (0,7);(0,-7); **\dir{-} ?(.75)*\dir{>};
 (0,-2)*{\txt\large{$\bullet$}};
 (6,4)*{n}; (-8,4)*{n +2}; (-10,0)*{};(10,0)*{};
 \endxy
 &
     \xy 0;/r.17pc/:
 (0,7);(0,-7); **\dir{-} ?(.75)*\dir{<};
 (0,-2)*{\txt\large{$\bullet$}};
 (-6,4)*{n}; (8,4)*{n+2}; (-10,0)*{};(10,9)*{};
 \endxy
 &
   \xy 0;/r.17pc/:
  (0,0)*{\xybox{
    (-4,-4)*{};(4,4)*{} **\crv{(-4,-1) & (4,1)}?(1)*\dir{>} ;
    (4,-4)*{};(-4,4)*{} **\crv{(4,-1) & (-4,1)}?(1)*\dir{>};
     (8,1)*{n};     (-12,0)*{};(12,0)*{};     }};
  \endxy
 &
   \xy 0;/r.17pc/:
  (0,0)*{\xybox{
    (-4,4)*{};(4,-4)*{} **\crv{(-4,1) & (4,-1)}?(1)*\dir{>} ;
    (4,4)*{};(-4,-4)*{} **\crv{(4,1) & (-4,-1)}?(1)*\dir{>};
     (8,1)*{ n};     (-12,0)*{};(12,0)*{};     }};
  \endxy
\\ & & & &\\
\hline
 {\bf Degree:} & \;\;\text{  2 }\;\;
 &\;\;\text{  2}\;\;& \;\;\text{ -2}\;\;
 & \;\;\text{  -2}\;\; \\
 \hline
\end{tabular}
\]
\[
\begin{tabular}{|l|c|c|c|c|}
 \hline
  {\bf 2-morphism:} &  \xy 0;/r.17pc/:
    (0,-3)*{\bbpef{}};
    (8,-5)*{n};    (-12,0)*{};(12,0)*{};
    \endxy
  & \xy 0;/r.17pc/:
    (0,-3)*{\bbpfe{}};
    (8,-5)*{n};    (-12,0)*{};(12,0)*{};
    \endxy
  & \xy 0;/r.17pc/:
    (0,-2)*{\bbcef{}};
    (8,0)*{n};     (-12,0)*{};(12,0)*{};
    \endxy
  & \xy 0;/r.17pc/:
    (0,-2)*{\bbcfe{}};
    (8,0)*{n};    (-12,0)*{};(12,0)*{};
    \endxy\\& & &  &\\ \hline
 {\bf Degree:} & \;\;\text{  $1+n$}\;\;
 & \;\;\text{ $1-n$}\;\;
 & \;\;\text{ $1+n$}\;\;
 & \;\;\text{  $1-n$}\;\;
 \\
 \hline
\end{tabular}
\]
The diagrammatic relation
\[
\vcenter{
 \xy 0;/r.18pc/:
    (-4,-4)*{};(4,4)*{} **\crv{(-4,-1) & (4,1)}?(1)*\dir{>};
    (4,-4)*{};(-4,4)*{} **\crv{(4,-1) & (-4,1)}?(1)*\dir{>};
    (4,4)*{};(12,12)*{} **\crv{(4,7) & (12,9)}?(1)*\dir{>};
    (12,4)*{};(4,12)*{} **\crv{(12,7) & (4,9)}?(1)*\dir{>};
    (-4,12)*{};(4,20)*{} **\crv{(-4,15) & (4,17)}?(1)*\dir{>};
    (4,12)*{};(-4,20)*{} **\crv{(4,15) & (-4,17)}?(1)*\dir{>};
    (-4,4)*{}; (-4,12) **\dir{-};
    (12,-4)*{}; (12,4) **\dir{-};
    (12,12)*{}; (12,20) **\dir{-};
  (18,8)*{n};
\endxy}
 \;\; =\;\;
 \vcenter{
 \xy 0;/r.18pc/:
    (4,-4)*{};(-4,4)*{} **\crv{(4,-1) & (-4,1)}?(1)*\dir{>};
    (-4,-4)*{};(4,4)*{} **\crv{(-4,-1) & (4,1)}?(1)*\dir{>};
    (-4,4)*{};(-12,12)*{} **\crv{(-4,7) & (-12,9)}?(1)*\dir{>};
    (-12,4)*{};(-4,12)*{} **\crv{(-12,7) & (-4,9)}?(1)*\dir{>};
    (4,12)*{};(-4,20)*{} **\crv{(4,15) & (-4,17)}?(1)*\dir{>};
    (-4,12)*{};(4,20)*{} **\crv{(-4,15) & (4,17)}?(1)*\dir{>};
    (4,4)*{}; (4,12) **\dir{-};
    (-12,-4)*{}; (-12,4) **\dir{-};
    (-12,12)*{}; (-12,20) **\dir{-};
  (10,8)*{n};
\endxy}
\]
gives rise to relations in
$\Ucat\big(\cal{E}_{+++}\onen\{t\},\cal{E}_{+++}\onen\{t+6\}\big)=
\Ucat\big(\cal{E}^3\onen\{t\},\cal{E}^3\onen\{t+6\}\big)$ for
all $t\in \Z$.
\end{example}

\begin{example}
One can check that
\[
\deg\left( \;\vcenter{\xy 0;/r.15pc/:
 (-4,-15)*{}; (-20,25) **\crv{(-3,-6) & (-20,4)}?(0)*\dir{<}?(.6)*\dir{}+(0,0)*{\bullet};
 (-12,-15)*{}; (-4,25) **\crv{(-12,-6) & (-4,0)}?(0)*\dir{<}?(.6)*\dir{}+(.2,0)*{\bullet};
 ?(0)*\dir{<}?(.75)*\dir{}+(.2,0)*{\bullet};?(0)*\dir{<}?(.9)*\dir{}+(0,0)*{\bullet};
 (-28,25)*{}; (-12,25) **\crv{(-28,10) & (-12,10)}?(0)*\dir{<};
  ?(.2)*\dir{}+(0,0)*{\bullet}?(.35)*\dir{}+(0,0)*{\bullet};
 (-36,-15)*{}; (-36,25) **\crv{(-34,-6) & (-35,4)}?(1)*\dir{>};
 (-28,-15)*{}; (-42,25) **\crv{(-28,-6) & (-42,4)}?(1)*\dir{>};
 (-42,-15)*{}; (-20,-15) **\crv{(-42,-5) & (-20,-5)}?(1)*\dir{>};
 (6,10)*{\cbub{}{}};
 (-23,0)*{\cbub{}{}};
 (8,-4)*{n};(-44,-4)*{n};
 \endxy} \; \right) = 28-6n,
\]
so that this diagram represents a 2-morphism $\cal{E}_{+++---}\{t\} \to \cal{E}_{+++---}\{t-28+6n\}$, where $\cal{E}_{+++---}=\cal{E}^3\cal{F}^3\onen$.
\end{example}

One can check that \eqref{eq_ind_dotslide} together with the relations above imply:
\begin{equation} \label{eq_reduction-dots}
   \xy 0;/r.16pc/:
  (14,8)*{n};
  (-3,-8)*{};(3,8)*{} **\crv{(-3,-1) & (3,1)}?(1)*\dir{>};?(0)*\dir{>};
    (3,-8)*{};(-3,8)*{} **\crv{(3,-1) & (-3,1)}?(1)*\dir{>};
  (-3,-12)*{\bbsid};  (-3,8)*{\bbsid};
  (3,8)*{}="t1";  (9,8)*{}="t2";
  (3,-8)*{}="t1'";  (9,-8)*{}="t2'";
   "t1";"t2" **\crv{(3,14) & (9, 14)};
   "t1'";"t2'" **\crv{(3,-14) & (9, -14)};
   "t2'";"t2" **\dir{-} ?(.5)*\dir{<};
   (9,0)*{}; (-6,-8)*{\scs };
   (9,-4)*{\bullet}+(3,-1)*{\scs x};
 \endxy\;\; = \;\; -\sum_{f_1+f_2=x-n}
   \xy 0;/r.18pc/:
  (19,4)*{n};
  (0,0)*{\bbe{}};(-2,-8)*{\scs };
  (12,-2)*{\cbub{(n-1)+f_2}};
  (0,6)*{\bullet}+(3,-1)*{\scs f_1};
 \endxy
\qquad \qquad
 \xy 0;/r.16pc/:
  (-12,8)*{n};
   (-3,-8)*{};(3,8)*{} **\crv{(-3,-1) & (3,1)}?(1)*\dir{>};?(0)*\dir{>};
    (3,-8)*{};(-3,8)*{} **\crv{(3,-1) & (-3,1)}?(1)*\dir{>};
  (3,-12)*{\bbsid};
  (3,8)*{\bbsid}; (6,-8)*{\scs };
  (-9,8)*{}="t1";
  (-3,8)*{}="t2";
  (-9,-8)*{}="t1'";(-9,-4)*{\bullet}+(-3,-1)*{\scs x};
  (-3,-8)*{}="t2'";
   "t1";"t2" **\crv{(-9,14) & (-3, 14)};
   "t1'";"t2'" **\crv{(-9,-14) & (-3, -14)};
  "t1'";"t1" **\dir{-} ?(.5)*\dir{<};
 \endxy \;\; = \;\;
 \sum_{g_1+g_2=x+n}^{}
   \xy 0;/r.18pc/:
  (-12,8)*{n};
  (0,0)*{\bbe{}};(2,-8)*{\scs};
  (-12,-2)*{\ccbub{(-n-1)+g_2}};
  (0,6)*{\bullet}+(3,-1)*{\scs g_1};
 \endxy
\end{equation}

\begin{rem} \label{rem_otherinv}
It is natural to wonder what goes wrong if we chose the simplest possible maps for the inverse to $\zeta_+$, namely,
\begin{equation}
 (\overline{\zeta_{+}})^n \;\; := \;\; \beta_n\;\xy 0;/r.15pc/:
    (-4,-4)*{};(4,4)*{} **\crv{(-4,-1) & (4,1)}?(1)*\dir{>} ;
    (4,-4)*{};(-4,4)*{} **\crv{(4,-1) & (-4,1)}?(0)*\dir{<};
  \endxy
  \qquad \quad
 (\overline{\zeta_{+}})^{\ell}  \;\; := \;\;
\vcenter{\xy 0;/r.15pc/:
  (-4,-2)*{}="t1";  (4,-2)*{}="t2";
  "t2";"t1" **\crv{(4,5) & (-4,5)};  ?(.85)*\dir{<} 
   ?(.2)*\dir{}+(0,-.1)*{\bullet}+(5,1)*{\scs \ell};
 \endxy}
\end{equation}
If this map were the correct form of the inverse of $\zeta_+$, then the implications of the relations $\overline{\zeta_{+}}\zeta_+=1_{\cal{F}\cal{E}\onen \oplus\onen^{\oplus_{[n]}}}$ and $\zeta_+\overline{\zeta_{+}}=1_{\cal{E}\cal{F}\onen}$ would include the requirement that
\begin{equation} \label{eq_contra_bubbles}
  \xy
      (0,0)*{\cbub{n-1+\ell}{}};
      (8,8)*{n}; \endxy =\delta_{\ell,0} \qquad \text{for $-n+1 \leq \ell \leq n-1$.}
\end{equation}
The relation {\bf (A2)} would still hold with this form of the inverse $\overline{\zeta_{+}}$.  Closing off the right strand of relation {\bf (A2)} with $n+2$ dots results in the equation
\begin{equation}  \label{eq_contradiction}
 \vcenter{\xy 0;/r.17pc/:
  (-8,0)*{};(-6,-8)*{ };(6,-8)*{ };
  (8,0)*{};
  (-4,10)*{}="t1";
  (4,6)*{}="t2";
  (-4,-10)*{}="b1";
  (4,-6)*{}="b2";
  "t1";"b1" **\dir{-} ?(.5)*\dir{>};
  "t2";"b2" **\dir{-} ?(.5)*\dir{<};
    (4,6)*{};(12,6)*{} **\crv{(4,10) & (12,10)};
    (4,-6)*{};(12,-6)*{} **\crv{(4,-10) & (12,-10)};
    (12,-6)*{}; (12,6)*{} **\dir{-}
    ?(.5)*\dir{}+(0,-.1)*{\bullet}+(6,1)*{\scs n+2};
  (24,9)*{n+2};
  \endxy}
\;\; = \;\;
 \beta_n\;\;
   \vcenter{\xy 0;/r.19pc/:
    (-4,-6)*{};(4,4)*{} **\crv{(-4,-1) & (4,1)}?(1)*\dir{<};?(0)*\dir{<};
    (4,-4)*{};(-4,4)*{} **\crv{(4,-1) & (-4,1)}?(1)*\dir{>};
    (-4,4)*{};(4,12)*{} **\crv{(-4,7) & (4,9)}?(1)*\dir{>};
    (4,4)*{};(-4,14)*{} **\crv{(4,7) & (-4,9)};
    (4,12)*{};(12,12)*{} **\crv{(4,16) & (12,16)};
    (4,-4)*{};(12,-4)*{} **\crv{(4,-8) & (12,-8)};
    (12,-4)*{}; (12,12)*{} **\dir{-}
    ?(.5)*\dir{}+(0,-.1)*{\bullet}+(5,1)*{\scs n+2};
  (8,8)*{n};(-6,-3)*{ };
     (6,-3)*{ };
 \endxy}\;\; =\;\;  \beta_n\;\;
   \vcenter{\xy 0;/r.19pc/:
    (-4,-6)*{};(4,4)*{} **\crv{(-4,-1) & (4,1)}?(1)*\dir{<};?(0)*\dir{<};
    (4,-4)*{};(-4,4)*{} **\crv{(4,-1) & (-4,1)}?(1)*\dir{>};
    (-4,4)*{};(4,12)*{} **\crv{(-4,7) & (4,9)} ;
    (4,4)*{};(-4,14)*{} **\crv{(4,7) & (-4,9)};
    (4,12)*{};(12,12)*{} **\crv{(4,16) & (12,16)}; ?(.05)*\dir{}+(0,-.1)*{\bullet}+(-2,4)*{\scs n+2} ;
    (4,-4)*{};(12,-4)*{} **\crv{(4,-8) & (12,-8)};
    (12,-4)*{}; (12,12)*{} **\dir{-} ?(.5)*\dir{<};
  (8,8)*{n};(-6,-3)*{ };
     (6,-3)*{ };
 \endxy}
\end{equation}
so that sliding the $n+2$ dots implies
\begin{equation} \label{eq_XY}
\beta_n\;\;
   \vcenter{\xy 0;/r.19pc/:
    (-4,-6)*{};(4,4)*{} **\crv{(-4,-1) & (4,1)}?(1)*\dir{<};?(0)*\dir{<};
    (4,-4)*{};(-4,4)*{} **\crv{(4,-1) & (-4,1)}?(1)*\dir{>};
    (-4,4)*{};(4,12)*{} **\crv{(-4,7) & (4,9)} ;
    (4,4)*{};(-4,14)*{} **\crv{(4,7) & (-4,9)};
    (4,12)*{};(12,12)*{} **\crv{(4,16) & (12,16)}; ?(.05)*\dir{}+(0,-.1)*{\bullet}+(-2,4)*{\scs n+2} ;
    (4,-4)*{};(12,-4)*{} **\crv{(4,-8) & (12,-8)};
    (12,-4)*{}; (12,12)*{} **\dir{-} ?(.5)*\dir{<};
  (8,8)*{n};(-6,-3)*{ };
     (6,-3)*{ };
 \endxy} \;\; = \;\; \beta_n\;\;
   \vcenter{\xy 0;/r.19pc/:
    (-4,-6)*{};(4,4)*{} **\crv{(-4,-1) & (4,1)}?(1)*\dir{<};?(0)*\dir{<};
    (4,-4)*{};(-4,4)*{} **\crv{(4,-1) & (-4,1)} ?(1)*\dir{}+(0,-.1)*{\bullet}+(-2,4)*{\scs n+2} ;
    (-4,4)*{};(4,12)*{} **\crv{(-4,7) & (4,9)}?(1)*\dir{>};
    (4,4)*{};(-4,14)*{} **\crv{(4,7) & (-4,9)};
    (4,12)*{};(12,12)*{} **\crv{(4,16) & (12,16)};
    (4,-4)*{};(12,-4)*{} **\crv{(4,-8) & (12,-8)};
    (12,-4)*{}; (12,12)*{} **\dir{-}?(.5)*\dir{<};
  (8,8)*{n};(-6,-3)*{ };
     (6,-3)*{ };
 \endxy}
 \;\; +\;\; \beta_n \;\sum_{f_1+f_2=n+1}
   \vcenter{\xy 0;/r.19pc/:
    (-4,-6)*{};(4,4)*{} **\crv{(-4,-1) & (4,1)}?(1)*\dir{<};?(0)*\dir{<};
    (4,-4)*{};(-4,4)*{} **\crv{(4,-1) & (-4,1)}?(1)*\dir{>};
    (-4,4)*{};(4,4)*{} **\crv{(-4,8) & (4,8)}
    ?(.15)*\dir{}+(0,-.1)*{\bullet}+(-3,0)*{\scs f_1};
    (4,12)*{};(-4,16)*{} **\crv{(3,9) & (-5,9)}
    ?(.82)*\dir{}+(0,-.1)*{\bullet}+(-3,0)*{\scs f_2} ;
    (4,12)*{};(12,12)*{} **\crv{(4,16) & (12,16)};
    (4,-4)*{};(12,-4)*{} **\crv{(4,-8) & (12,-8)};
    (12,-4)*{}; (12,12)*{} **\dir{-}?(.5)*\dir{<};;
  (8,8)*{n};(-6,-3)*{ };
     (6,-3)*{ };
 \endxy}
\end{equation}
The first diagram is zero since we can write\footnote{Here we are assuming that the calculus is isotopy invariant which is not strictly necessary for this argument to hold. By the arguments in Section~\ref{subsec_generators}, diagrams must be isotopy invariant up to an invertible scalar.  Including this scalar one still derives the same contradiction.}
\begin{equation}
\vcenter{\xy 0;/r.19pc/:
    (-4,-6)*{};(4,4)*{} **\crv{(-4,-1) & (4,1)}?(1)*\dir{<};?(0)*\dir{<};
    (4,-4)*{};(-4,4)*{} **\crv{(4,-1) & (-4,1)} ?(1)*\dir{}+(0,-.1)*{\bullet}+(-2,4)*{\scs n+2} ;
    (-4,4)*{};(4,12)*{} **\crv{(-4,7) & (4,9)};
    (4,4)*{};(-4,14)*{} **\crv{(4,7) & (-4,9)};
    (4,12)*{};(12,12)*{} **\crv{(4,16) & (12,16)};
    (4,-4)*{};(12,-4)*{} **\crv{(4,-8) & (12,-8)};
    (12,-4)*{}; (12,12)*{} **\dir{-}?(.5)*\dir{<};
  (8,8)*{n};(-6,-3)*{ };
     (6,-3)*{ };
 \endxy} \;\; = \;\;
 \vcenter{\xy 0;/r.19pc/:
    (4,-6)*{};(-4,4)*{} **\crv{(4,-1) & (-4,1)}?(0)*\dir{<};
    (-4,-4)*{};(4,4)*{} **\crv{(-4,-1) & (4,1)}?(0)*\dir{<};
    (4,4)*{};(-4,12)*{} **\crv{(4,7) & (-4,9)};
    (-4,4)*{};(4,14)*{} **\crv{(-4,7) & (4,9)};
    (-4,12)*{};(-12,12)*{} **\crv{(-4,16) & (-12,16)};
    (-4,-4)*{};(-12,-4)*{} **\crv{(-4,-8) & (-12,-8)};
    (-12,-4)*{}; (-12,12)*{} **\dir{-}?(.5)*\dir{>} ?(.75)*\dir{}+(0,-.1)*{\bullet}+(-5,1)*{\scs n+2};
  (8,8)*{n};
 \endxy} = 0
\end{equation}
by the nilHecke axiom \eqref{eq_nilHecke1}.  Therefore if we remove the summands in the second term of \eqref{eq_XY} that are zero by {\bf (A4)} (or the adjoint of {\bf (A3)}) we have
\begin{equation}
\beta_n\;\;
   \vcenter{\xy 0;/r.19pc/:
    (-4,-6)*{};(4,4)*{} **\crv{(-4,-1) & (4,1)}?(1)*\dir{<};?(0)*\dir{<};
    (4,-4)*{};(-4,4)*{} **\crv{(4,-1) & (-4,1)}?(1)*\dir{>};
    (-4,4)*{};(4,12)*{} **\crv{(-4,7) & (4,9)} ?(1)*\dir{>};
    (4,4)*{};(-4,14)*{} **\crv{(4,7) & (-4,9)};
    (4,12)*{};(12,12)*{} **\crv{(4,16) & (12,16)}; ?(.05)*\dir{}+(0,-.1)*{\bullet}+(-2,4)*{\scs n+2} ;
    (4,-4)*{};(12,-4)*{} **\crv{(4,-8) & (12,-8)};
    (12,-4)*{}; (12,12)*{} **\dir{-} ?(.5)*\dir{<};
  (8,8)*{n};(-6,-3)*{ };
     (6,-3)*{ };
 \endxy} \;\; =
 \;\; \beta_n\;\;
   \vcenter{\xy 0;/r.19pc/:
    (-4,-6)*{};(4,4)*{} **\crv{(-4,-1) & (4,1)}?(1)*\dir{<};?(0)*\dir{<};
    (4,-4)*{};(-4,4)*{} **\crv{(4,-1) & (-4,1)}?(1)*\dir{>};
    (-4,4)*{};(4,4)*{} **\crv{(-4,8) & (4,8)}
    ?(.15)*\dir{}+(0,-.1)*{\bullet}+(-3,0)*{\scs n};
    (4,12)*{};(-4,16)*{} **\crv{(3,9) & (-5,9)}
    ?(.82)*\dir{}+(0,-.1)*{\bullet}+(-3,0)*{\scs } ;
    (4,12)*{};(12,12)*{} **\crv{(4,16) & (12,16)};
    (4,-4)*{};(12,-4)*{} **\crv{(4,-8) & (12,-8)};
    (12,-4)*{}; (12,12)*{} **\dir{-}?(.5)*\dir{<};;
  (8,8)*{n};(-6,-3)*{ };
     (6,-3)*{ };
 \endxy}
  \;\; + \;\;\beta_n \;
   \vcenter{\xy 0;/r.19pc/:
    (-4,-6)*{};(4,4)*{} **\crv{(-4,-1) & (4,1)}?(1)*\dir{<};?(0)*\dir{<};
    (4,-4)*{};(-4,4)*{} **\crv{(4,-1) & (-4,1)}?(1)*\dir{>};
    (-4,4)*{};(4,4)*{} **\crv{(-4,8) & (4,8)}
    ?(.15)*\dir{}+(0,-.1)*{\bullet}+(-5,1)*{\scs n+1};
    (4,12)*{};(-4,16)*{} **\crv{(3,9) & (-5,9)} ;
    (4,12)*{};(12,12)*{} **\crv{(4,16) & (12,16)};
    (4,-4)*{};(12,-4)*{} **\crv{(4,-8) & (12,-8)};
    (12,-4)*{}; (12,12)*{} **\dir{-}?(.5)*\dir{<};;
  (8,8)*{n};(-6,-3)*{ };
     (6,-3)*{ };
 \endxy} \;\; = \;\;
 2\beta_n  \;\; \vcenter{\xy 0;/r.19pc/:
    (-4,-6)*{};(4,-4)*{} **\crv{(-5,-1) & (3,-1)}?(0)*\dir{<};
    (-8,5)*{\cbub{n-1}{}};
    (4,12)*{};(-4,16)*{} **\crv{(3,9) & (-5,9)} ;
    (4,12)*{};(12,12)*{} **\crv{(4,16) & (12,16)};
    (4,-4)*{};(12,-4)*{} **\crv{(4,-8) & (12,-8)};
    (12,-4)*{}; (12,12)*{} **\dir{-}?(.5)*\dir{<} ?(.25)*\dir{}+(0,-.1)*{\bullet};
  (8,8)*{n};(-6,-3)*{ };
     (6,-3)*{ };
 \endxy} \;\; = \;\; 2\beta_n c_n^+\;\; \vcenter{\xy 0;/r.19pc/:
    (12,-6)*{}; (12,16)*{} **\dir{-}?(.5)*\dir{<} ?(.25)*\dir{}+(0,-.1)*{\bullet};
  (8,8)*{n};
     (6,-3)*{ };
 \endxy}
\end{equation}
where we used the nilHecke dot slide relation \eqref{eq_ind_dotslide}, the adjoint of {\bf (A3)}, and equation \eqref{eq_contra_bubbles} to simplify the dotted curls in the second to last equality.  However, the left hand side in \eqref{eq_contradiction} is zero by \eqref{eq_contra_bubbles}. Since $\beta_n$ is nonzero by assumption, and  \eqref{eq_contra_bubbles} implies $c_n^+=1$,  consistency of the graphical calculus implies that a dot on a downward line is zero!  This is a disaster from the point of view of matching our graded Homs with the semilinear form and this explains why a more complicated form of the inverse $\overline{\zeta_{+}}$ should be expected.
\end{rem}

%
\subsubsection{Free parameters}
%

In this section we show that the coefficients are completely determined by the coefficients $\beta_n$ for $n \in \Z$, $c_1^+$ and $c_0^+$.  Alternatively, one can choose the value of the degree zero bubbles $c_n^{\pm}$ and the coefficients $\beta_n$ are determined.

Recall that we have a set $\chi$ of elements
\begin{equation}
  \beta_n \quad \text{for $n\in \Z$}, \qquad c_n^+ \quad \text{for $n \geq 0$},
  \qquad c_n^- \quad \text{for $n \leq 0$},
\end{equation}
in $\Bbbk$.  The elements $c_n^{\pm}$ and $\beta_n$ must be invertible.

\begin{prop} \label{prop_coeff}
Consistency of the diagrammatic calculus for the 2-category
$\Ucatt$ requires the following relationship among the
invertible elements of $\chi$.
\begin{itemize}
  \item $-c_0^+c_0^-\beta_0 =1$
  \item $c_{-1}^-= \frac{1}{c_1^+}$
    \item  $c_{n-2}^- = - \beta_nc_n^- \qquad \text{for all $n \leq
  0$}$
  \item $c_{n+2}^+ = - \beta_nc_n^+ \qquad \text{for all $n \geq
  0$}$
\end{itemize}
\end{prop}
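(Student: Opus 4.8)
The plan is to extract these four identities by applying the relations \textbf{(A1)}--\textbf{(A5)} and \textbf{(B1)}--\textbf{(B5)} (and their reformulations in Definition~\ref{defn-Ucatt}) to suitably chosen closed diagrams, and reading off the resulting scalar equations using the fact that degree zero endomorphisms of $\onen$ are multiples of the identity (so that any closed diagram of degree zero evaluates to a well-defined scalar). Throughout I use that the degree zero bubbles are the scalars $c_n^{\pm}$ fixed in \eqref{eq_cpm} and \eqref{eq_def_czeropm}, that negative degree bubbles vanish, and that $\zeta_+$, $\zeta_-$ being invertible forces $\beta_n$ and the $c_n^{\pm}$ to be invertible.

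First I would treat the three ``recursive'' identities, which all come from relations \textbf{(B2)} and \textbf{(B5)} (or \textbf{(A2)}, \textbf{(A5)}) by closing off strands. For $c_{n+2}^+ = -\beta_n c_n^+$ with $n\geq 0$: start from the appropriate bubble-producing relation built into \eqref{eq_ident_decompT} (equivalently \textbf{(A5)}) relating the ``double-crossing'' $\cal{E}\cal{F}\onen \To \cal{E}\cal{F}\onen$ diagram to a sum of a crossing term plus bubble terms, then cap off to obtain a closed diagram whose degree is zero. On one side the crossing term dies by the nilHecke relation $\partial_i^2=0$ (cf.\ \eqref{eq_nilHecke1}), or by relation \textbf{(A3)}, leaving a single real bubble of degree zero in region $n+2$, which evaluates to $c_{n+2}^+\cdot\Id$; on the other side the bubble terms collapse (using that negative degree bubbles are zero and \textbf{(A1)} to kill off all but the leading term) to $\beta_n$ times a degree zero bubble in region $n$, i.e.\ $\beta_n c_n^+\cdot \Id$, with a sign from the structure of \eqref{eq_ident_decompT}. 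The identity $c_{n-2}^- = -\beta_n c_n^-$ for $n\leq 0$ is the mirror computation with \textbf{(B5)}/\eqref{eq_reductionT} and counter-clockwise bubbles. I expect these to be essentially bookkeeping once the right diagram is chosen.

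Next, the base cases. For $-c_0^+c_0^-\beta_0=1$: work in region $n=0$, where there are no nonzero nonconstant bubbles in either orientation and both $c_0^+$, $c_0^-$ are genuinely free parameters fixed in \eqref{eq_def_czeropm}. The relation \eqref{eq_reductionT} at $n=0$ relates a closed curl diagram to $c_0^{\pm}$ times an identity, and feeding this (together with the zig-zag/biadjointness relations \eqref{eq_biadjointness1}--\eqref{eq_biadjointness2} and the cyclicity relations \eqref{eq_cyclic_cross-gen}) into relation \textbf{(A2)} or \textbf{(B2)} at $n=0$ produces a closed degree zero diagram that must simultaneously equal $\beta_0 c_0^+c_0^-\cdot\Id$ and $-\Id$; comparing gives the stated relation. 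This is exactly the $n=0$ shadow of the fake bubble degree zero identities recorded just after Definition~\ref{def_fake} (the degree zero fake bubble equals $-1/(c_n^{\mp}\beta_n)$), specialized using $\eqref{eq_def_czeropm}$. For $c_{-1}^- = 1/c_1^+$: this is the statement that the ``infinite Grassmannian'' relation \eqref{eq_infinite_Grass} has constant term $-1/\beta_n$ compatible across the two orientations at the boundary weights $n=\pm 1$; concretely, in region $n=1$ the only clockwise bubble of nonpositive ``corrected'' degree is the degree zero one ($=c_1^+$) and the counter-clockwise degree zero fake bubble is $-1/(c_1^+\beta_1)$, while the same fake bubble viewed from region $n=-1$ via \eqref{eq_fake_nleqz} with $j=0$ is $-1/\beta_{-1}\cdot\alpha_{\emptyset}^0(-1) = -1/(c_{-1}^-\beta_{-1})$; combined with $c_3^+=-\beta_1 c_1^+$ and its mirror $c_{-3}^-=-\beta_{-1}c_{-1}^-$ (special cases of the recursive identities already established) plus $\beta_1=\beta_{-1}$ forced by symmetry of the form, one gets $c_{-1}^-c_1^+=1$. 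Alternatively, and perhaps more cleanly, derive it directly from relation \textbf{(A1)} at $n=1$ (which reads $\delta_{b,0} = \sum_{\lambda}\alpha_\lambda^\ell(1)e_{\lambda,1}\,(\text{bubble of degree }2(b-|\lambda|))$): taking $b=0$ gives $\alpha_\emptyset^0(1)c_1^+ = 1$, i.e.\ $c_1^+ = c_{-1}^-$ after identifying $\alpha_\emptyset^0(1) = 1/c_1^+$ from \eqref{eq_alpha} and $1/c_{-1}^-$ from the $n=-1$ version.

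The main obstacle will be the base case $-c_0^+c_0^-\beta_0 = 1$, since weight $n=0$ is the degenerate region excluded from relations \textbf{(A1)}, \textbf{(A3)}, \textbf{(A4)}, \textbf{(B1)}, \textbf{(B3)}, \textbf{(B4)}, and it is exactly the case ``treated separately'' in the discussion around \eqref{eq_ugly_curldot} and \eqref{eq_def_czeropm}. So the argument there cannot reuse the generic bubble-slide machinery and must instead be run by hand from \textbf{(A2)}/\textbf{(B2)} at $n=0$ together with the $n=0$ curl relations \eqref{eq_def_czeropm}; getting the signs and the direction of all caps/cups right is where care is needed. The other three identities are, by contrast, direct consequences of closing strands in the defining relations and should go through with only routine diagram chasing.
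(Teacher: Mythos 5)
Your overall strategy for the first, third, and fourth bullets is the paper's: fix a closed (or curl) diagram, evaluate it two ways using the defining relations together with the normalizations \eqref{eq_cpm} and \eqref{eq_def_czeropm}, and compare scalars. The paper's actual computation for the two recursions is slightly different in detail from what you sketch --- rather than capping off \eqref{eq_ident_decompT}, it places a degree zero bubble next to a single strand, rewrites it as a dotted curl via \eqref{eq_reductionT}/\eqref{eq_reduction-dots}, and resolves the curl from the other side of the strand (see \eqref{eq_coeffreduction}); this is cleaner because your capped-off diagram is not of degree zero without a judicious placement of dots that you do not specify. But the mechanism is the same, and the $n=0$ identity is obtained exactly as you say, by capping off \eqref{eq_ident_decompT} at $n=0$.

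The genuine gap is in your treatment of $c_{-1}^- = 1/c_1^+$; neither of your two routes works. Route (b) fails because {\bf (A1)} at $n=1$, $b=0$ gives $\alpha_{\emptyset}^0(1) = 1/c_1^+$ while {\bf (B1)} at $n=-1$, $b=0$ gives $\alpha_{\emptyset}^0(-1) = 1/c_{-1}^-$, and these are coefficients of two \emph{different} inverse maps ($\overline{\zeta_+}$ at weight $1$ and $\overline{\zeta_-}$ at weight $-1$); nothing in \eqref{eq_A1B1} identifies them, and even if it did you would conclude $c_1^+ = c_{-1}^-$, which is not the claimed $c_1^+ c_{-1}^- = 1$. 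Route (a) rests on ``$\beta_1 = \beta_{-1}$ forced by symmetry of the form,'' which is not a constraint imposed anywhere and is false in general: given the four listed relations, $\beta_{-1}$ remains a free invertible parameter independent of $\beta_1$. What is missing is the one idea that actually couples the two weights: a single diagram supported on an $\cal{E}$-strand running from weight $-1$ to weight $+1$, so that the \emph{same} 2-morphism can be expanded using the relations and bubble normalizations of \emph{both} adjacent regions. The paper takes a curl on such a strand, observes it vanishes by the nilHecke relation \eqref{eq_nilHecke1}, and then expands it via \eqref{eq_ident_decompT} and the fake bubble definitions once from the region labelled $+1$ and once from the region labelled $-1$; equating the two expansions is what yields $c_{-1}^- c_1^+ = 1$. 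Without some such two-sided evaluation, the identity cannot be extracted from {\bf (A1)}/{\bf (B1)} alone.
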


\begin{proof}
The first equality follows by capping off \eqref{eq_ident_decompT} for $n=0$
and simplifying the resulting diagram.  The third equality follow
from the calculation below:
\begin{equation} \label{eq_coeffreduction}
\frac{1}{\beta_n}
   \xy 0;/r.18pc/:
  (14,8)*{n-2};
  (0,0)*{\bbe{}};(-2,-8)*{\scs };
  (12,-2)*{\ccbub{-(n-2)-1}};
 \endxy
\;\; =\;\; \vcenter{\xy 0;/r.18pc/:
    (-4,-10)*{};(0,0)*{} **\crv{(-4,-5) & (0,-5)};
    (0,0)*{};(-4,10)*{} **\crv{(0,5) & (-4,5)}?(1)*\dir{>};
  (8,8)*{n-2};(-5,-3)*{\scs {\bf }};
  (0,-1)*{\sccbub{}}; (3,-4)*{\bullet}+(3,-2)*{\scs -n+1};
 \endxy}
 \;\; = \;\;
  \vcenter{\xy 0;/r.18pc/:
    (4,-10)*{};(0,0)*{} **\crv{(4,-5) & (0,-5)};
    (0,0)*{};(4,10)*{} **\crv{(0,5) & (4,5)}?(1)*\dir{>};
  (-6,8)*{n};(-5,-3)*{\scs {\bf }};
  (1,-1)*{\sccbub{}}; (4,-4)*{\bullet}+(3,-2)*{\scs -n+1};
 \endxy}
 \;\; =\;\;
 -\sum_{\ell_1+\ell_2=-n}
 \xy 0;/r.18pc/:
  (-12,8)*{n};
   (-3,-4)*{};(3,4)*{} **\crv{(-3,-1) & (3,1)};
    (3,-4)*{};(-3,4)*{} **\crv{(3,-1) & (-3,1)};
    (3,4);(3,10) **\dir{-} ?(1)*\dir{>};
    (3,-10);(3,-4) **\dir{-};
  (-9,4)*{}="t1";
  (-3,4)*{}="t2";
  (-9,-4)*{}="t1'";
  (-3,-4)*{}="t2'";
   "t1";"t2" **\crv{(-9,9) & (-3, 9)};
   "t1'";"t2'" **\crv{(-9,-9) & (-3, -9)};
  "t1'";"t1" **\dir{-} ?(.5)*\dir{<};
  (3,5)*{\bullet}+(4,1)*{\scs \ell_2};
  (-3,5)*{\bullet}+(2,3)*{\scs \ell_1};
 \endxy
\end{equation}
where $n\leq 0$.  The left-hand side is just
$\frac{c_{n-2}^-}{\beta_n}$ while the right-hand side is $-c_n^-$
by \eqref{eq_def_czeropm} and \eqref{eq_reduction-dots}. The last
equality follows from a similar calculation using a clockwise
oriented bubble with $n+1$ dots.

The second equality follows from the equalities
\begin{equation}
0
 \;\; \refequal{\eqref{eq_nilHecke1}}\;\;
  \vcenter{\xy 0;/r.18pc/:
    (4,-10)*{};(0,0)*{} **\crv{(4,-5) & (0,-5)};
    (0,0)*{};(4,10)*{} **\crv{(0,5) & (4,5)}?(1)*\dir{>};
  (-6,8)*{+1};
  (1,-1)*{\sccbub{}};
 \endxy}
 \;\; = \;\;
\vcenter{\xy 0;/r.18pc/:
    (-4,-10)*{};(0,0)*{} **\crv{(-4,-5) & (0,-5)};
    (0,0)*{};(-4,10)*{} **\crv{(0,5) & (-4,5)}?(1)*\dir{>};
  (8,8)*{-1};(-5,-3)*{\scs {\bf }};
  (0,-1)*{\sccbub{}};
 \endxy}
\end{equation}
by applying \eqref{eq_ident_decompT} to the rightmost term and using the definition of fake bubbles.
\end{proof}

%
\subsection{The 2-category $\Ucat$} \label{subsec_Ucat}
%

We make a preferred choice of coefficients for the 2-category $\Ucatt$.
\begin{defn}
Let $\Ucat$ denote the 2-category $\Ucatt$ with coefficients chosen so that $c_n^+=c_n^-=1$ and $\beta_n=-1$ for all $n \in \Z$.
\end{defn}

Since the 2-category $\Ucat$ is a key to our construction of the 2-category $\UcatD$ we pause to summarize the structure of this 2-category.  In  $\Ucat$ the inverse of the map $\zeta_+$ is given by
\begin{equation}
\xy
 (0,-25)*+{\cal{E}\cal{F}\onen}="B";
 (-65,0)*+{\cal{F}\cal{E}\onen}="m1";
  (-50,0)*{\oplus};
 (-35,0)*+{\onen\{n-1\}}="m2";
 (-12,0)*{\oplus \;\; \cdots \;\; \oplus};
 (38,0)*{\oplus\;\; \cdots\;\; \oplus};
 (10,0)*+{\onen\{n-1-2\ell\}}="m3";
 (65,0)*+{ \onen\{1-n\}}="m4";
    {\ar@/^1.6pc/^{-\;\xy 0;/r.15pc/:
    (-4,-4)*{};(4,4)*{} **\crv{(-4,-1) & (4,1)}?(1)*\dir{>} ;
    (4,-4)*{};(-4,4)*{} **\crv{(4,-1) & (-4,1)}?(0)*\dir{<};
  \endxy\;\;} "B";"m1"};
    {\ar@/^0.8pc/^{\vcenter{\xy 0;/r.15pc/:
  (-4,-2)*{}="t1";  (4,-2)*{}="t2";
  "t2";"t1" **\crv{(4,5) & (-4,5)};  ?(.85)*\dir{<}; \endxy} } "B";"m2"};
  {\ar_(.7){  \xsum{j_1+j_2=\ell}{}
    \vcenter{\xy 0;/r.15pc/:
    (-4,-2)*{}="t1";  (4,-2)*{}="t2";
     "t2";"t1" **\crv{(4,5) & (-4,5)};  ?(.85)*\dir{<} 
     ?(.2)*\dir{}+(0,-.1)*{\bullet}+(5,1)*{\scs j_1};
 (2,13)*{\ccbub{\;\;-n-1+j_2}{}};  \endxy} } "B";"m3"};
    {\ar@/_1.6pc/_(.8){  \xsum{j_1+j_2=n-1}{}
    \vcenter{\xy 0;/r.15pc/:
    (-4,-2)*{}="t1";  (4,-2)*{}="t2";
     "t2";"t1" **\crv{(4,5) & (-4,5)};  ?(.85)*\dir{<} 
     ?(.2)*\dir{}+(0,-.1)*{\bullet}+(5,1)*{\scs j_1};
 (2,13)*{\ccbub{\;\;-n-1+j_2}{}};  \endxy} } "B";"m4"};
 \endxy
\end{equation}
and the inverse of the map $\zeta_{-}$ is given by
\begin{equation}
\xy
 (0,-25)*+{\cal{F}\cal{E}\onen}="B";
(-60,0)*+{\cal{E}\cal{F}\onen}="m1";
  (-50,0)*+{\oplus};
 (-36,0)*+{\onen\{-n-1\}}="m2";
 (-15,0)*{\oplus \;\; \cdots \;\; \oplus};
 (36,0)*{\oplus\;\; \cdots\;\; \oplus};
 (10,0)*+{\onen\{-n-1-2\ell\}}="m3";
 (60,0)*+{\oplus \;\;\onen\{1+n\}}="m4";
    {\ar@/^1.6pc/^{-\;\xy 0;/r.15pc/:
  (0,0)*{\xybox{
    (-4,-4)*{};(4,4)*{} **\crv{(-4,-1) & (4,1)}?(0)*\dir{<} ;
    (4,-4)*{};(-4,4)*{} **\crv{(4,-1) & (-4,1)}?(1)*\dir{>};
    (-7,-3)*{\scs };     (6,-3)*{\scs };     (10,2)*{n};
     }};
  \endxy\;\;} "B";"m1"};
{\ar@/^0.8pc/^{\vcenter{\xy 0;/r.15pc/:
     (4,-2)*{}="t1";  (-4,-2)*{}="t2";  "t2";"t1" **\crv{(-4,5) & (4,5)};  ?(.85)*\dir{<}; \endxy} } "B";"m2"};
{\ar_(.7){  \xsum{j_1+j_2=\ell}{}
    \vcenter{\xy 0;/r.15pc/:
    (-4,-2)*{}="t1";  (4,-2)*{}="t2";
     "t2";"t1" **\crv{(4,5) & (-4,5)};  ?(.9)*\dir{>} 
     ?(.2)*\dir{}+(0,-.1)*{\bullet}+(5,1)*{\scs j_1};
     (2,13)*{\cbub{\;\;n-1+j_2}{}};  \endxy} } "B";"m3"};
{\ar@/_1.6pc/_(.8){  \xsum{j_1+j_2=-n-1}{}
    \vcenter{\xy 0;/r.15pc/:
    (-4,-2)*{}="t1";  (4,-2)*{}="t2";
     "t2";"t1" **\crv{(4,5) & (-4,5)};  ?(.9)*\dir{<} 
     ?(.2)*\dir{}+(0,-.1)*{\bullet}+(5,1)*{\scs j_1};
     (2,13)*{\cbub{\;\;n-1+j_2}{}};  \endxy} } "B";"m4"};
 \endxy
\end{equation}
Note that all bubbles that appear in $\overline{\zeta_{+}}$ and $\overline{\zeta_{-}}$ above are fake bubbles.  Specifying these inverse maps in a compact form was one of the main motivations for introducing fake bubbles in Section~\ref{subsec_fake}.

The requirement that the above maps are inverses of $\zeta_{+}$ and $\zeta_{-}$ in the 2-category $\Ucat$ is equivalent to adding the following more succinct relations.
\begin{enumerate}
\item  All dotted bubbles of negative degree are zero. That is,
\begin{equation} 
 \xy 0;/r.18pc/:
 (-12,0)*{\cbub{\alpha}};
 (-8,8)*{n};
 \endxy
  = 0
 \qquad
  \text{if $\alpha<n-1$,} \qquad
 \xy 0;/r.18pc/:
 (-12,0)*{\ccbub{\alpha}};
 (-8,8)*{n};
 \endxy = 0\quad
  \text{if $\alpha< -n-1$}
\end{equation}
for all $\alpha \in \Z_+$.  A dotted bubble of degree zero equals 1:
\[
\xy 0;/r.18pc/:
 (0,0)*{\cbub{n-1}};
  (4,8)*{n};
 \endxy
  = 1 \quad \text{for $n \geq 1$,}
  \qquad \quad
  \xy 0;/r.18pc/:
 (0,0)*{\ccbub{-n-1}};
  (4,8)*{n};
 \endxy
  = 1 \quad \text{for $n \leq -1$.}
\]

  \item For the following relations we employ the convention that all summations
are increasing, so that $\sum_{f=0}^{\alpha}$ is zero if $\alpha < 0$.
\begin{equation} \label{eq_reduction}
  \xy 0;/r.16pc/:
  (14,8)*{n};
  (-3,-8)*{};(3,8)*{} **\crv{(-3,-1) & (3,1)}?(1)*\dir{>};?(0)*\dir{>};
    (3,-8)*{};(-3,8)*{} **\crv{(3,-1) & (-3,1)}?(1)*\dir{>};
  (-3,-12)*{\bbsid};  (-3,8)*{\bbsid};
  (3,8)*{}="t1";  (9,8)*{}="t2";
  (3,-8)*{}="t1'";  (9,-8)*{}="t2'";
   "t1";"t2" **\crv{(3,14) & (9, 14)};
   "t1'";"t2'" **\crv{(3,-14) & (9, -14)};
   "t2'";"t2" **\dir{-} ?(.5)*\dir{<};
   (9,0)*{}; (-6,-8)*{\scs };
 \endxy \;\; = \;\; -\sum_{f_1+f_2=-n}
   \xy 0;/r.18pc/:
  (19,4)*{n};
  (0,0)*{\bbe{}};(-2,-8)*{\scs };
  (12,-2)*{\cbub{(n-1)+f_2}};
  (0,6)*{\bullet}+(3,-1)*{\scs f_1};
 \endxy
\qquad \qquad
\xy 0;/r.16pc/:
  (-12,8)*{n};
   (-3,-8)*{};(3,8)*{} **\crv{(-3,-1) & (3,1)}?(1)*\dir{>};?(0)*\dir{>};
    (3,-8)*{};(-3,8)*{} **\crv{(3,-1) & (-3,1)}?(1)*\dir{>};
  (3,-12)*{\bbsid};
  (3,8)*{\bbsid}; (6,-8)*{\scs };
  (-9,8)*{}="t1";
  (-3,8)*{}="t2";
  (-9,-8)*{}="t1'";
  (-3,-8)*{}="t2'";
   "t1";"t2" **\crv{(-9,14) & (-3, 14)};
   "t1'";"t2'" **\crv{(-9,-14) & (-3, -14)};
  "t1'";"t1" **\dir{-} ?(.5)*\dir{<};
 \endxy \;\; = \;\;
 \sum_{g_1+g_2=n}^{}
   \xy 0;/r.18pc/:
  (-12,8)*{n};
  (0,0)*{\bbe{}};(2,-8)*{\scs};
  (-12,-2)*{\ccbub{(-n-1)+g_2}};
  (0,6)*{\bullet}+(3,-1)*{\scs g_1};
 \endxy
\end{equation}
\begin{eqnarray}
 \vcenter{\xy 0;/r.18pc/:
  (-8,0)*{};
  (8,0)*{};
  (-4,10)*{}="t1";
  (4,10)*{}="t2";
  (-4,-10)*{}="b1";
  (4,-10)*{}="b2";(-6,-8)*{\scs };(6,-8)*{\scs };
  "t1";"b1" **\dir{-} ?(.5)*\dir{<};
  "t2";"b2" **\dir{-} ?(.5)*\dir{>};
  (10,2)*{n};
  (-10,2)*{n};
  \endxy}
&\quad = \quad&
 -\;\;
 \vcenter{   \xy 0;/r.18pc/:
    (-4,-4)*{};(4,4)*{} **\crv{(-4,-1) & (4,1)}?(1)*\dir{>};
    (4,-4)*{};(-4,4)*{} **\crv{(4,-1) & (-4,1)}?(1)*\dir{<};?(0)*\dir{<};
    (-4,4)*{};(4,12)*{} **\crv{(-4,7) & (4,9)};
    (4,4)*{};(-4,12)*{} **\crv{(4,7) & (-4,9)}?(1)*\dir{>};
  (8,8)*{n};(-6,-3)*{\scs };
     (6.5,-3)*{\scs };
 \endxy}
  \quad + \quad
   \sum_{ \xy  (0,3)*{\scs f_1+f_2+f_3}; (0,0)*{\scs =n-1};\endxy}
    \vcenter{\xy 0;/r.18pc/:
    (-10,10)*{n};
    (-8,0)*{};
  (8,0)*{};
  (-4,-15)*{}="b1";
  (4,-15)*{}="b2";
  "b2";"b1" **\crv{(5,-8) & (-5,-8)}; ?(.05)*\dir{<} ?(.93)*\dir{<}
  ?(.8)*\dir{}+(0,-.1)*{\bullet}+(-3,2)*{\scs f_3};
  (-4,15)*{}="t1";
  (4,15)*{}="t2";
  "t2";"t1" **\crv{(5,8) & (-5,8)}; ?(.15)*\dir{>} ?(.95)*\dir{>}
  ?(.4)*\dir{}+(0,-.2)*{\bullet}+(3,-2)*{\scs \; f_1};
  (0,0)*{\ccbub{\scs \quad (-n-1)+f_2}};
  \endxy} \nn
 \\  \; \nn \\
 \vcenter{\xy 0;/r.18pc/:
  (-8,0)*{};(-6,-8)*{\scs };(6,-8)*{\scs };
  (8,0)*{};
  (-4,10)*{}="t1";
  (4,10)*{}="t2";
  (-4,-10)*{}="b1";
  (4,-10)*{}="b2";
  "t1";"b1" **\dir{-} ?(.5)*\dir{>};
  "t2";"b2" **\dir{-} ?(.5)*\dir{<};
  (10,2)*{n};
  (-10,2)*{n};
  \endxy}
&\quad = \quad&
 -\;\;
   \vcenter{\xy 0;/r.18pc/:
    (-4,-4)*{};(4,4)*{} **\crv{(-4,-1) & (4,1)}?(1)*\dir{<};?(0)*\dir{<};
    (4,-4)*{};(-4,4)*{} **\crv{(4,-1) & (-4,1)}?(1)*\dir{>};
    (-4,4)*{};(4,12)*{} **\crv{(-4,7) & (4,9)}?(1)*\dir{>};
    (4,4)*{};(-4,12)*{} **\crv{(4,7) & (-4,9)};
  (8,8)*{n};(-6,-3)*{\scs };  (6,-3)*{\scs };
 \endxy}
  \quad + \quad
    \sum_{ \xy  (0,3)*{\scs g_1+g_2+g_3}; (0,0)*{\scs =-n-1};\endxy}
    \vcenter{\xy 0;/r.18pc/:
    (-8,0)*{};
  (8,0)*{};
  (-4,-15)*{}="b1";
  (4,-15)*{}="b2";
  "b2";"b1" **\crv{(5,-8) & (-5,-8)}; ?(.1)*\dir{>} ?(.95)*\dir{>}
  ?(.8)*\dir{}+(0,-.1)*{\bullet}+(-3,2)*{\scs g_3};
  (-4,15)*{}="t1";
  (4,15)*{}="t2";
  "t2";"t1" **\crv{(5,8) & (-5,8)}; ?(.15)*\dir{<} ?(.9)*\dir{<}
  ?(.4)*\dir{}+(0,-.2)*{\bullet}+(3,-2)*{\scs g_1};
  (0,0)*{\cbub{\scs \quad\; (n-1) + g_2}};
  (-10,10)*{n};
  \endxy} \label{eq_ident_decomp}
\end{eqnarray}
for all $n\in \Z$.  In equations \eqref{eq_reduction} and \eqref{eq_ident_decomp} whenever the summations are nonzero they utilize fake bubbles.
\end{enumerate}

\begin{example} Below we give several examples of the curl relation for different values of $n$.
  \begin{enumerate}[a)]
 \item If $n>0$,  then $\xy 0;/r.13pc/:
  (14,8)*{n};
  (-3,-8)*{};(3,8)*{} **\crv{(-3,-1) & (3,1)}?(1)*\dir{>};?(0)*\dir{>};
    (3,-8)*{};(-3,8)*{} **\crv{(3,-1) & (-3,1)}?(1)*\dir{>};
  (-3,-12)*{\bbsid};
  (-3,8)*{\bbsid};
  (3,8)*{}="t1";
  (9,8)*{}="t2";
  (3,-8)*{}="t1'";
  (9,-8)*{}="t2'";
   "t1";"t2" **\crv{(3,14) & (9, 14)};
   "t1'";"t2'" **\crv{(3,-14) & (9, -14)};
   "t2'";"t2" **\dir{-} ?(.5)*\dir{<};
   (9,0)*{};
 \endxy \quad = \quad 0.$
 \item If $n=0$, then $$\xy 0;/r.14pc/:
  (14,8)*{n};
  (-3,-8)*{};(3,8)*{} **\crv{(-3,-1) & (3,1)}?(1)*\dir{>};?(0)*\dir{>};
    (3,-8)*{};(-3,8)*{} **\crv{(3,-1) & (-3,1)}?(1)*\dir{>};
  (-3,-12)*{\bbsid};
  (-3,8)*{\bbsid};
  (3,8)*{}="t1";
  (9,8)*{}="t2";
  (3,-8)*{}="t1'";
  (9,-8)*{}="t2'";
   "t1";"t2" **\crv{(3,14) & (9, 14)};
   "t1'";"t2'" **\crv{(3,-14) & (9, -14)};
   "t2'";"t2" **\dir{-} ?(.5)*\dir{<};
   (9,0)*{};
 \endxy \quad = \quad -\;
 \xy 0;/r.16pc/:
  (8,6)*{0};
  (0,0)*{\bbe{}};
  (12,-2)*{\cbub{-1}{}};
 \endxy \quad  = \quad -\;
 \xy 0;/r.16pc/:
  (5,4)*{0};
  (0,0)*{\bbe{}};
 \endxy$$
 since the fake bubble appearing above has $\quad \deg\left( \xy 0;/r.16pc/:
  (8,6)*{0};
  (0,-2)*{\cbub{-1}{}};
 \endxy \right) = 2(1-0)-2 =0 \quad $ and we have defined degree zero bubbles to be multiplication by $1$.

 \item If $n=-1$, then the curl relation takes the form
$$\xy 0;/r.14pc/:
  (14,8)*{-1};
  (-3,-8)*{};(3,8)*{} **\crv{(-3,-1) & (3,1)}?(1)*\dir{>};?(0)*\dir{>};
    (3,-8)*{};(-3,8)*{} **\crv{(3,-1) & (-3,1)}?(1)*\dir{>};
  (-3,-12)*{\bbsid};
  (-3,8)*{\bbsid};
  (3,8)*{}="t1";
  (9,8)*{}="t2";
  (3,-8)*{}="t1'";
  (9,-8)*{}="t2'";
   "t1";"t2" **\crv{(3,14) & (9, 14)};
   "t1'";"t2'" **\crv{(3,-14) & (9, -14)};
   "t2'";"t2" **\dir{-} ?(.5)*\dir{<};
   (9,0)*{};
 \endxy \quad = \quad -\;
 \xy 0;/r.16pc/:
  (8,7)*{-1};
  (0,0)*{\bbe{}};
  (12,-2)*{\cbub{-2}{}};
  (0,6)*{\bullet};
 \endxy \;\; - \;\;
 \xy 0;/r.16pc/:
  (8,7)*{-1};
  (0,0)*{\bbe{}};
  (12,-2)*{\cbub{-1}{}};
 \endxy$$
Computing degrees we see that
$\deg\left( \xy 0;/r.16pc/:
  (8,6)*{-1};
  (0,-2)*{\cbub{-2}{}};
 \endxy \right)=0 $ and $ \deg\left( \xy 0;/r.16pc/:
  (8,6)*{-1};
  (0,-2)*{\cbub{-1}{}};
 \endxy\right) = 2$, so that using the infinite Grassmannian relations to rewrite the fake bubbles in terms of real bubbles gives
\begin{equation}
   \xy 0;/r.16pc/:
  (8,6)*{-1};
  (0,-2)*{\cbub{-2}{}};
 \endxy :=1\qquad  \qquad \xy 0;/r.16pc/:
  (8,6)*{-1};
  (0,-2)*{\cbub{-1}{}};
 \endxy = -\; \xy 0;/r.16pc/:
  (8,6)*{-1};
  (0,-2)*{\ccbub{1}{}};
 \endxy \nn
\end{equation}
so that the curl relation written in terms of real bubbles has the form
\begin{equation}
\xy 0;/r.14pc/:
  (18,8)*{-1};
  (-3,-8)*{};(3,8)*{} **\crv{(-3,-1) & (3,1)}?(1)*\dir{>};?(0)*\dir{>};
    (3,-8)*{};(-3,8)*{} **\crv{(3,-1) & (-3,1)}?(1)*\dir{>};
  (-3,-12)*{\bbsid};
  (-3,8)*{\bbsid};
  (3,8)*{}="t1";
  (9,8)*{}="t2";
  (3,-8)*{}="t1'";
  (9,-8)*{}="t2'";
   "t1";"t2" **\crv{(3,14) & (9, 14)};
   "t1'";"t2'" **\crv{(3,-14) & (9, -14)};
   "t2'";"t2" **\dir{-} ?(.5)*\dir{<};
   (9,0)*{};
 \endxy \quad = \quad -\;
 \xy 0;/r.16pc/:
  (8,6)*{-1};
  (0,0)*{\bbe{}};
  (0,6)*{\bullet};
 \endxy \;\; + \;\;
 \xy 0;/r.16pc/:
  (11,7)*{-1};
  (0,0)*{\bbe{}};
  (12,-2)*{\ccbub{1}{}};
 \endxy \nn
\end{equation}
for $n=-1$.

\end{enumerate}
\end{example}

\begin{example}[$n=0$, \quad  $\cal{E}\cal{F}\onenn{0}\cong \cal{F}\cal{E}\onenn{0}$] For $n=0$ the isomorphism $\cal{E}\cal{F}\onenn{0}\cong \cal{F}\cal{E}\onenn{0}$ takes the form
\[
 \xy
 (-20,0)*{\cal{F}\cal{E}\onenn{0}}="1"; (20,0)*{\cal{E}\cal{F}\onenn{0}}="2";
 {\ar@<1ex>^{\xy 0;/r.17pc/:
    (0,0)*{\xybox{
        (-4,-4)*{};(4,4)*{} **\crv{(-4,-1) & (4,1)}?(0)*\dir{<} ;
        (4,-4)*{};(-4,4)*{} **\crv{(4,-1) & (-4,1)}?(1)*\dir{>};
        (8,0)*{\scs 0};
        (-12,0)*{};(12,0)*{};   }};
  \endxy} "1";"2"};
 {\ar@<1ex>^{-\;\;\xy 0;/r.17pc/:
    (0,0)*{\xybox{
        (-4,-4)*{};(4,4)*{} **\crv{(-4,-1) & (4,1)}?(1)*\dir{>} ;
        (4,-4)*{};(-4,4)*{} **\crv{(4,-1) & (-4,1)}?(0)*\dir{<};
        (8,0)*{\scs 0};
        (12,0)*{};   }};
  \endxy}  "2";"1"};
 \endxy
\]
These maps are isomorphisms since for $n=0$ the relations
\[ -\;
   \vcenter{\xy 0;/r.17pc/:
    (-4,-4)*{};(4,4)*{} **\crv{(-4,-1) & (4,1)}?(1)*\dir{<};?(0)*\dir{<};
    (4,-4)*{};(-4,4)*{} **\crv{(4,-1) & (-4,1)}?(1)*\dir{>};
    (-4,4)*{};(4,12)*{} **\crv{(-4,7) & (4,9)}?(1)*\dir{>};
    (4,4)*{};(-4,12)*{} **\crv{(4,7) & (-4,9)};
  (8,8)*{0};(-6,-3)*{ };
     (6,-3)*{ };
 \endxy}
\quad = \quad
 \vcenter{\xy 0;/r.15pc/:
  (-8,0)*{};(-6,-8)*{ };(6,-8)*{ };
  (8,0)*{};
  (-4,10)*{}="t1";
  (4,10)*{}="t2";
  (-4,-10)*{}="b1";
  (4,-10)*{}="b2";
  "t1";"b1" **\dir{-} ?(.5)*\dir{>};
  "t2";"b2" **\dir{-} ?(.5)*\dir{<};
  (10,2)*{0};
  \endxy}
\qquad \quad
-\;\;
 \vcenter{   \xy 0;/r.19pc/:
    (-4,-4)*{};(4,4)*{} **\crv{(-4,-1) & (4,1)}?(1)*\dir{>};
    (4,-4)*{};(-4,4)*{} **\crv{(4,-1) & (-4,1)}?(1)*\dir{<};?(0)*\dir{<};
    (-4,4)*{};(4,12)*{} **\crv{(-4,7) & (4,9)};
    (4,4)*{};(-4,12)*{} **\crv{(4,7) & (-4,9)}?(1)*\dir{>};
  (8,8)*{0};(-6,-3)*{ };
     (6.5,-3)*{ };
 \endxy}
  \quad = \quad
   \vcenter{\xy 0;/r.16pc/:
  (-8,0)*{};
  (8,0)*{};
  (-4,10)*{}="t1";
  (4,10)*{}="t2";
  (-4,-10)*{}="b1";
  (4,-10)*{}="b2";(-6,-8)*{ };(6,-8)*{ };
  "t1";"b1" **\dir{-} ?(.5)*\dir{<};
  "t2";"b2" **\dir{-} ?(.5)*\dir{>};
  (10,2)*{0};
  \endxy}
\]
hold in $\Ucat$.
\end{example}

\begin{example}[$n=1\quad \cal{E}\cal{F} \onenn{1} \cong \cal{F}\cal{E} \onenn{1} \oplus \onenn{1}$ ]
Because $n$ is small there are no fake bubbles that appear in this example.
The isomorphism for $n=1$ takes the form
$$\xy
 (0,12)*+{\cal{E}\cal{F}\onenn{1}}="T";
 (0,-12)*+{\cal{E}\cal{F}\onenn{1}}="B";
 (-22,0)*+{\cal{F}\cal{E}\onenn{1}}="m1";
 (22,0)*+{\onenn{1}\{0\}}="m2";
  {\ar^{(\zeta_+)_1 =\xy 0;/r.17pc/:
    (0,0)*{\xybox{
        (-4,-4)*{};(4,4)*{} **\crv{(-4,-1) & (4,1)}?(0)*\dir{<} ;
        (4,-4)*{};(-4,4)*{} **\crv{(4,-1) & (-4,1)}?(1)*\dir{>};
        (8,0)*{\scs 1};
        (-12,0)*{};(12,0)*{};   }};
  \endxy} "m1";"T"};
    {\ar_{(\overline{\zeta_{+}})_2 =\xy 0;/r.17pc/:
        (0,0)*{\bbcfe{}};
        (8,4)*{\scs  1};
        (-12,0)*{};(12,0)*{}
    \endxy} "B";"m2"};
 {\ar^{(\overline{\zeta_{+}})_1=-\;  \xy 0;/r.17pc/:
    (0,0)*{\xybox{
        (-4,-4)*{};(4,4)*{} **\crv{(-4,-1) & (4,1)}?(1)*\dir{>} ;
        (4,-4)*{};(-4,4)*{} **\crv{(4,-1) & (-4,1)}?(0)*\dir{<};
        (8,0)*{\scs 1};
        (-8,0)*{};(12,0)*{};
     }}  \endxy} "B";"m1"};
{\ar_{(\zeta_+)_2= \xy 0;/r.17pc/:
    (0,0)*{\bbpfe{}};
    (6,-4)*{\scs 1};
    (-12,0)*{};(12,0)*{};
  \endxy} "m2";"T"};
 \endxy $$

Checking that these 2-morphisms define an isomorphism  $\cal{E}\cal{F} \onenn{1} \cong \cal{F}\cal{E} \onenn{1} \oplus \onenn{1}$ amounts to checking that
\[
 (\zeta_+)_1 \circ (\overline{\zeta_{+}})_1 + (\zeta_+)_2 \circ (\overline{\zeta_{+}})_2 =1_{\cal{E}\cal{F}\onenn{1}}, \quad
 (\overline{\zeta_{+}})_1 \circ (\zeta_+)_1 =1_{\cal{F}\cal{E}\onenn{1}},
 \quad
 (\overline{\zeta_{+}})_2 \circ (\zeta_+)_2 =1_{\onenn{1}},
\]
\[
(\overline{\zeta_{+}})_2 \circ (\zeta_+)_1 = 0, \qquad (\zeta_+)_2 \circ (\overline{\zeta_{+}})_1 =0.
\]
All of these equations follow from the relations in $\Ucat$:
$$
  (\overline{\zeta_{+}})_1 \circ (\zeta_+)_1 \;\; =\;\;
  -\;
   \vcenter{\xy 0;/r.17pc/:
    (-4,-4)*{};(4,4)*{} **\crv{(-4,-1) & (4,1)}?(1)*\dir{<};?(0)*\dir{<};
    (4,-4)*{};(-4,4)*{} **\crv{(4,-1) & (-4,1)}?(1)*\dir{>};
    (-4,4)*{};(4,12)*{} **\crv{(-4,7) & (4,9)}?(1)*\dir{>};
    (4,4)*{};(-4,12)*{} **\crv{(4,7) & (-4,9)};
  (8,8)*{1};(-6,-3)*{ };
     (6,-3)*{ };
 \endxy}
\quad = \quad
 \vcenter{\xy 0;/r.15pc/:
  (-8,0)*{};(-6,-8)*{ };(6,-8)*{ };
  (8,0)*{};
  (-4,10)*{}="t1";
  (4,10)*{}="t2";
  (-4,-10)*{}="b1";
  (4,-10)*{}="b2";
  "t1";"b1" **\dir{-} ?(.5)*\dir{>};
  "t2";"b2" **\dir{-} ?(.5)*\dir{<};
  (10,2)*{1};
  (-10,2)*{1};
  \endxy}\;\; =\;\;1_{\cal{F}\cal{E}\onenn{1}}$$
$$
    (\overline{\zeta_{+}})_2 \circ (\zeta_+)_2 \;\; =\;\;\xy 0;/r.17pc/:
        (0,0)*{\ncbub};
        (8,5)*{\scs  1};
        (-12,0)*{};(12,0)*{}
    \endxy =1  =1_{\onenn{1}}
$$
\[
(\overline{\zeta_{+}})_2 \circ (\zeta_+)_1  \;\; =\;\;\xy 0;/r.17pc/:
  (0,0)*{\xybox{
    (-4,-4)*{};(4,4)*{} **\crv{(-4,-1) & (4,1)}?(0)*\dir{<} ;
    (4,-4)*{};(-4,4)*{} **\crv{(4,-1) & (-4,1)}?(1)*\dir{>};
    (-4,4)*{}; (4,4)*{} **\crv{(-4,8) & (4,8)};
     (8,0)*{\scs 1};
     (-12,0)*{};(12,0)*{};
     }};
  \endxy \quad = \quad  0,
\]
\[
(\zeta_+)_1 \circ (\overline{\zeta_{+}})_1 + (\zeta_+)_2 \circ (\overline{\zeta_{+}})_2 \;=\;
-\;\;
 \vcenter{   \xy 0;/r.19pc/:
    (-4,-4)*{};(4,4)*{} **\crv{(-4,-1) & (4,1)}?(1)*\dir{>};
    (4,-4)*{};(-4,4)*{} **\crv{(4,-1) & (-4,1)}?(1)*\dir{<};?(0)*\dir{<};
    (-4,4)*{};(4,12)*{} **\crv{(-4,7) & (4,9)};
    (4,4)*{};(-4,12)*{} **\crv{(4,7) & (-4,9)}?(1)*\dir{>};
  (8,8)*{1};(-6,-3)*{ };
     (6.5,-3)*{ };
 \endxy}
  \quad + \quad
    \vcenter{\xy 0;/r.16pc/:
    (-10,4)*{1};
    (-8,0)*{};
  (8,0)*{};
  (-4,-10)*{}="b1";
  (4,-10)*{}="b2";
  "b2";"b1" **\crv{(5,-3) & (-5,-3)}; ?(.05)*\dir{<} ?(.93)*\dir{<};
  (-4,10)*{}="t1";
  (4,10)*{}="t2";
  "t2";"t1" **\crv{(5,3) & (-5,3)}; ?(.15)*\dir{>} ?(.95)*\dir{>}
  \endxy} \quad = \quad
   \vcenter{\xy 0;/r.16pc/:
  (-8,0)*{};
  (8,0)*{};
  (-4,10)*{}="t1";
  (4,10)*{}="t2";
  (-4,-10)*{}="b1";
  (4,-10)*{}="b2";(-6,-8)*{ };(6,-8)*{ };
  "t1";"b1" **\dir{-} ?(.5)*\dir{<};
  "t2";"b2" **\dir{-} ?(.5)*\dir{>};
  (10,2)*{1};
  (-10,2)*{1};
  \endxy} \;\; = \;\;1_{\cal{E}\cal{F}\onenn{1}}.
\]
\end{example}

%
\subsection{Rescalling 2-functors} \label{subsubsec_rescaling}
%

In this section we show that the 2-categories $\Ucatt$ are isomorphic to the 2-category $\Ucat$ for any choice of choice of parameters satisfying the conditions in Proposition~\ref{prop_coeff}.  Note that in $\Ucat$ the coefficients $\alpha_{\lambda}^{\ell}(n)$ are equal to the coefficients $\overline{\alpha_{\lambda}^{\ell}(n)}$ since $c_n^{\pm}=1$ in $\Ucat$.

\begin{thm}
Let $\chi$ be a set of parameters satisfying the conditions in Proposition~\ref{prop_coeff}.  There is an isomorphism of 2-categories
\begin{eqnarray}
 \cal{M} \maps \Ucat & \to & \Ucatt \\
  \cal{M}(n) &=& n \nn\\
  \cal{M}(\cal{E}_{\ep}\onen\{t\}) &=& \cal{E}_{\ep}\onen\{t\}\nn
\end{eqnarray}
\begin{equation}
 \cal{M}\left( \; \textcolor[rgb]{1.00,0.00,0.00}{\xy
 (0,7);(0,-7); **\dir{-} ?(.75)*\dir{>};
 (0,-2)*{\txt\large{$\bullet$}};
 (6,4)*{n}; (-8,4)*{n +2}; (-10,0)*{};(10,0)*{};
 \endxy} \; \right) \;\; = \;\; \xy
 (0,7);(0,-7); **\dir{-} ?(.75)*\dir{>};
 (0,-2)*{\txt\large{$\bullet$}};
 (6,4)*{n}; (-8,4)*{n +2}; (-10,0)*{};(10,0)*{};
 \endxy \qquad \quad
 \cal{M}\left(\; \textcolor[rgb]{1.00,0.00,0.00}{\xy
 (0,7);(0,-7); **\dir{-} ?(.75)*\dir{<};
 (0,-2)*{\txt\large{$\bullet$}};
 (-6,4)*{n}; (8,4)*{n+2}; (-10,0)*{};(10,9)*{};
 \endxy} \; \right) \;\; =\;\; \xy
 (0,7);(0,-7); **\dir{-} ?(.75)*\dir{<};
 (0,-2)*{\txt\large{$\bullet$}};
 (-6,4)*{n}; (8,4)*{n+2}; (-10,0)*{};(10,9)*{};
 \endxy \nn
\end{equation}

\begin{equation}
\cal{M}\left(\;\textcolor[rgb]{1.00,0.00,0.00}{\xy
  (0,0)*{\xybox{
    (-4,-4)*{};(4,4)*{} **\crv{(-4,-1) & (4,1)}?(1)*\dir{>} ;
    (4,-4)*{};(-4,4)*{} **\crv{(4,-1) & (-4,1)}?(1)*\dir{>};
     (8,1)*{n};     (-12,0)*{};(12,0)*{};     }};
  \endxy} \;\right) \;\; = \;\;
\xy
  (0,0)*{\xybox{
    (-4,-4)*{};(4,4)*{} **\crv{(-4,-1) & (4,1)}?(1)*\dir{>} ;
    (4,-4)*{};(-4,4)*{} **\crv{(4,-1) & (-4,1)}?(1)*\dir{>};
     (8,1)*{n};     (-12,0)*{};(12,0)*{};     }};
  \endxy
  \qquad \quad
\cal{M}\left(\;    \textcolor[rgb]{1.00,0.00,0.00}{\xy
  (0,0)*{\xybox{
    (-4,4)*{};(4,-4)*{} **\crv{(-4,1) & (4,-1)}?(1)*\dir{>} ;
    (4,4)*{};(-4,-4)*{} **\crv{(4,1) & (-4,-1)}?(1)*\dir{>};
     (8,1)*{ n};     (-12,0)*{};(12,0)*{};     }};
  \endxy}\;\right) \;=\;
     \xy
  (0,0)*{\xybox{
    (-4,4)*{};(4,-4)*{} **\crv{(-4,1) & (4,-1)}?(1)*\dir{>} ;
    (4,4)*{};(-4,-4)*{} **\crv{(4,1) & (-4,-1)}?(1)*\dir{>};
     (8,1)*{ n};     (-12,0)*{};(12,0)*{};     }};
  \endxy \nn
\end{equation}

\begin{eqnarray}
\cal{M}\left(\;
\textcolor[rgb]{1.00,0.00,0.00}{ \xy
    (0,0)*{\bbcfe{}};
    (8,5)*{n};
    \endxy}
\;\right) &=& \left\{
\begin{array}{ccl}
   \frac{1}{c_n^+}\;\xy
    (0,0)*{\bbcfe{}};
    (8,3)*{n};    (-12,6)*{};(8,0)*{};
    \endxy & \quad & \text{$n=2\ell$, or $n=2\ell+1$, for $\ell \in 2\Z_{\geq0}$} \\
  c_{n-2}^-  \xy
    (0,0)*{\bbcfe{}};
    (8,3)*{n};    (-12,6)*{};(8,0)*{};
    \endxy & \quad & \text{$n=-2\ell$, or $n=-(2\ell+3)$, for $\ell \in 2\Z_{\geq0}$} \\
   \xy
    (0,0)*{\bbcfe{}};
    (8,3)*{n};    (-12,6)*{};(8,0)*{};
    \endxy & \quad & \text{otherwise}
\end{array}
\right. \nn \\
\cal{M}\left(\;
\textcolor[rgb]{1.00,0.00,0.00}{ \xy
    (0,0)*{\bbcef{}};
    (8,5)*{n};
    \endxy}
\;\right) &=& \left\{
\begin{array}{ccl}
   c_{n+2}^+\;\xy
    (0,0)*{\bbcef{}};
    (8,3)*{n};    (-12,6)*{};(8,0)*{};
    \endxy & \quad & \text{$n=2\ell$, or $n=2\ell+1$, for $\ell \in 2\Z_{\geq0}$} \\
  \frac{1}{c_{n}^-}  \xy
    (0,0)*{\bbcef{}};
    (8,3)*{n};    (-12,6)*{};(8,0)*{};
    \endxy & \quad & \text{$n=-2\ell$, or $n=-(2\ell+3)$, for $\ell \in 2\Z_{\geq0}$} \\
   \xy
    (0,0)*{\bbcef{}};
    (8,3)*{n};    (-12,6)*{};(8,0)*{};
    \endxy & \quad & \text{otherwise}
\end{array}
\right. \nn
\end{eqnarray}
\begin{eqnarray}
  \cal{M}\left(\;
\textcolor[rgb]{1.00,0.00,0.00}{ \xy
    (0,0)*{\bbpfe{}};
    (8,5)*{n};
    \endxy}
\;\right) &=& \left\{
\begin{array}{ccl}
   \frac{1}{c_n^+}\;\xy
    (0,0)*{\bbpfe{}};
    (8,3)*{n};    (-12,6)*{};(8,0)*{};
    \endxy & \quad & \text{$n=2\ell+2$, or $n=2\ell+3$, for $\ell \in 2\Z_{\geq0}$} \\
  c_{n-2}^-  \xy
    (0,0)*{\bbpfe{}};
    (8,3)*{n};    (-12,6)*{};(8,0)*{};
    \endxy & \quad & \text{$n=-(2\ell+1)$, or $n=-(2\ell+2)$, for $\ell \in 2\Z_{\geq0}$} \\
   \xy
    (0,0)*{\bbpfe{}};
    (8,3)*{n};    (-12,6)*{};(8,0)*{};
    \endxy & \quad & \text{otherwise}
\end{array}
\right. \nn \\
  \cal{M}\left(\;
 \textcolor[rgb]{1.00,0.00,0.00}{\xy
    (0,0)*{\bbpef{}};
    (8,5)*{n};
    \endxy}
\;\right) &=& \left\{
\begin{array}{ccl}
   c_{n+2}^+\;\xy
    (0,0)*{\bbpef{}};
    (8,3)*{n};    (-12,6)*{};(8,0)*{};
    \endxy & \quad & \text{$n=2\ell+2$, or $n=2\ell+3$, for $\ell \in 2\Z_{\geq0}$} \\
  \frac{1}{c_n^-}  \xy
    (0,0)*{\bbpef{}};
    (8,3)*{n};    (-12,6)*{};(8,0)*{};
    \endxy & \quad & \text{$n=-(2\ell+1)$, or $n=-(2\ell+2)$, for $\ell \in 2\Z_{\geq0}$} \\
   \xy
    (0,0)*{\bbpef{}};
    (8,3)*{n};    (-12,6)*{};(8,0)*{};
    \endxy & \quad & \text{otherwise}
\end{array}
\right. \nn
\end{eqnarray}
\end{thm}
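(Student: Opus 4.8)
The plan is to check that the assignment $\cal{M}$ in the statement extends to a well-defined 2-functor, and then that the 2-functor $\cal{N}\maps\Ucatt\to\Ucat$ built from the reciprocal scalars is a two-sided inverse. Since both $\Ucat$ and $\Ucatt$ are presented by generators and relations (Definition~\ref{defn-Ucatt}), and $\cal{M}$ is declared to be the identity on objects and 1-morphisms, to fix the dot and crossing generators, and to rescale the two cup and two cap generators (in each weight $n$) by invertible scalars, it preserves the grading and extends by functoriality to all composites; well-definedness then reduces to the single requirement that $\cal{M}$ carry each defining relation of $\Ucat$ to a relation valid in $\Ucatt$. Granting this for both $\cal{M}$ and $\cal{N}$, the equalities $\cal{N}\circ\cal{M}=\mathrm{id}$ and $\cal{M}\circ\cal{N}=\mathrm{id}$ are immediate on generators (the scalars multiply to $1$), hence hold everywhere, so $\cal{M}$ is an isomorphism of 2-categories. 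Thus the entire proof is the relation check.

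I would organize the check by type of relation. The nilHecke relations \eqref{eq_nilHecke1}, \eqref{eq_nilHecke2} involve only dots and crossings, which $\cal{M}$ fixes, so they transfer verbatim. In the cyclicity relations \eqref{eq_cyclic_dot}, \eqref{eq_cyclic_cross-gen} the cups and caps occur with the same multiplicities and region labels on both sides, so their scalars cancel and these relations transfer as well. The biadjointness relations \eqref{eq_biadjointness1}, \eqref{eq_biadjointness2} require the first real computation: one must verify, using the identities of Proposition~\ref{prop_coeff} together with the explicit case-by-case scalars in the statement, that in each zig-zag composite the product of the cap scalar and the cup scalar is $1$. The intricate dependence of those scalars on the residue of $n$ is exactly what is forced here: the recursions $c_{n+2}^{+}=-\beta_n c_n^{+}$ and $c_{n-2}^{-}=-\beta_n c_n^{-}$, together with $-c_0^+c_0^-\beta_0=1$ and $c_{-1}^-=1/c_1^+$, chain the rescalings of cups and caps across neighbouring weights, and the case split is precisely what makes a globally consistent choice of scalars possible.

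Next I would treat the bubble relations. A dotted bubble in weight $n$ is a vertical composite of one cap and one cup (carrying dots, which $\cal{M}$ fixes), so $\cal{M}$ multiplies it by the product of the relevant cap and cup scalars; one checks from Proposition~\ref{prop_coeff} that on the degree-zero clockwise, resp.\ counterclockwise, bubble this product equals $c_n^{+}$, resp.\ $c_n^{-}$, so that the normalization of $\Ucat$ in which the degree-zero bubble is the identity is sent to the normalization of $\Ucatt$ by $c_n^{\pm}$, and negative-degree bubbles remain zero on both sides. One must further confirm that $\cal{M}$ is compatible with the fake bubbles: these are defined (Definition~\ref{def_fake}) as explicit linear combinations of real bubbles whose coefficients involve $\alpha_{\lambda}^{\ell}(n)$ and $\beta_n$, and \eqref{eq_alpha} records precisely how the $\alpha_{\lambda}^{\ell}(n)$ of $\Ucatt$ differ from those of $\Ucat$ by powers of $c_n^{\pm}$ --- exactly the powers produced by rescaling the real bubbles in the defining combination. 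The infinite Grassmannian equation \eqref{eq_infinite_Grass} gives a compact way to package this check for all degrees at once.

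The bulk of the work is in the reduction relations \eqref{eq_reduction}--\eqref{eq_ident_decomp} (equivalently \eqref{eq_reductionT}--\eqref{eq_ident_decompT}): these are the relations in which $\beta_n$ appears and in which curls, cups, caps, and (fake) bubbles all interact. Applying $\cal{M}$ to the $\Ucat$ form (where $c_n^{\pm}=1$ and $\beta_n=-1$), each term acquires a product of scalars, and one must verify that after clearing an overall factor the resulting identity is exactly the $\Ucatt$ form with general parameters $\beta_n, c_n^{\pm}$, matching coefficients term by term using Proposition~\ref{prop_coeff} and \eqref{eq_alpha}. This is the step I expect to be the main obstacle: the algebra at each term is elementary, but keeping the bookkeeping of scalars consistent across the several ranges of $n$, the curl reductions, and the fake-bubble substitutions is delicate, and it is exactly this bookkeeping that dictates the precise case-by-case form of the scalars recorded in the statement. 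With all defining relations of $\Ucat$ (and, by the symmetric argument, of $\Ucatt$) verified, $\cal{M}$ and $\cal{N}$ are mutually inverse 2-functors and the theorem follows.
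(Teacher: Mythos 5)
Your proposal takes essentially the same route as the paper: the paper's proof likewise consists of recording how the composite 2-morphisms rescale (the product of all cap and cup factors in a region $n$ is $-\beta_n$, real bubbles rescale by $1/c_n^{\pm}$, fake bubbles by $-\beta_n c_n^{\mp}$ via \eqref{eq_alpha}), observing that every defining relation is therefore preserved, and concluding invertibility from the invertibility of the scalars. One small correction to your bubble step: the product of the cap and cup scalars on a real clockwise (resp.\ counterclockwise) bubble is $\frac{1}{c_n^{+}}$ (resp.\ $\frac{1}{c_n^{-}}$), not $c_n^{\pm}$ --- this is exactly what carries the $\Ucat$ normalization (degree-zero bubble $=\Id$) to the $\Ucatt$ normalization (degree-zero bubble $=c_n^{\pm}\Id$), and it is the value recorded in \eqref{eq_Mreal}; with $c_n^{\pm}$ instead, the well-definedness check you describe would fail.
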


\begin{proof} For fixed $n$, it is easy to check using Proposition~\ref{prop_coeff} that the product of the rescaling factors of all caps and cups in the region $n$ is equal to $-\beta_n$.  Hence,
\begin{equation}
\cal{M} \left( \textcolor[rgb]{1.00,0.00,0.00}{\vcenter{   \xy 0;/r.19pc/:
    (-4,-4)*{};(4,4)*{} **\crv{(-4,-1) & (4,1)}?(1)*\dir{>};
    (4,-4)*{};(-4,4)*{} **\crv{(4,-1) & (-4,1)}?(1)*\dir{<};?(0)*\dir{<};
    (-4,4)*{};(4,12)*{} **\crv{(-4,7) & (4,9)};
    (4,4)*{};(-4,12)*{} **\crv{(4,7) & (-4,9)}?(1)*\dir{>};
  (8,8)*{n};(-6,-3)*{\scs };
     (6.5,-3)*{\scs };
 \endxy}}\right) \;\; = \;\; \cal{M}\left(  \textcolor[rgb]{1.00,0.00,0.00}{\xy   0;/r.16pc/:
    (-12,8)*{}="1";
    (-4,8)*{}="2";
    (8,8)*{}="3";
    (-12,-8);"1" **\dir{-};
    "1";"2" **\crv{(-12,16) & (-4,16)} ?(0)*\dir{<} ;
    "2";"3" **\crv{(-4,0) & (8,0)}?(1)*\dir{<};
    "3"; (8,16) **\dir{-};
    (-12,-8)*{}="1'";
    (-4,-8)*{}="2'";
    (8,-8)*{}="3'";
    "1'";"2'" **\crv{(-12,-16) & (-4,-16)} ?(0)*\dir{>} ;
    "2'";"3'" **\crv{(-4,0) & (8,0)};
    "3'"; (8,-16) **\dir{-};
    (14,0)*{n};
    (-15,0)*{ n};
    (0,16);(-6,0) **\crv{(0,8) & (-6,10)} ?(0)*\dir{<} ;;
    (0,-16);(-6,0) **\crv{(0,-8) & (-6,-10)};
    \endxy} \right) \;\; = \;\; -\beta_n
    \vcenter{   \xy 0;/r.19pc/:
    (-4,-4)*{};(4,4)*{} **\crv{(-4,-1) & (4,1)}?(1)*\dir{>};
    (4,-4)*{};(-4,4)*{} **\crv{(4,-1) & (-4,1)}?(1)*\dir{<};?(0)*\dir{<};
    (-4,4)*{};(4,12)*{} **\crv{(-4,7) & (4,9)};
    (4,4)*{};(-4,12)*{} **\crv{(4,7) & (-4,9)}?(1)*\dir{>};
  (8,8)*{n};(-6,-3)*{\scs };
     (6.5,-3)*{\scs };
 \endxy} \nn
\end{equation}
\begin{equation}
\cal{M} \left(    \textcolor[rgb]{1.00,0.00,0.00}{\vcenter{\xy 0;/r.18pc/:
    (-4,-4)*{};(4,4)*{} **\crv{(-4,-1) & (4,1)}?(1)*\dir{<};?(0)*\dir{<};
    (4,-4)*{};(-4,4)*{} **\crv{(4,-1) & (-4,1)}?(1)*\dir{>};
    (-4,4)*{};(4,12)*{} **\crv{(-4,7) & (4,9)}?(1)*\dir{>};
    (4,4)*{};(-4,12)*{} **\crv{(4,7) & (-4,9)};
  (8,8)*{n};(-6,-3)*{\scs };  (6,-3)*{\scs };
 \endxy}}\right) \;\; = \;\; \cal{M}\left( \textcolor[rgb]{1.00,0.00,0.00}{ \xy   0;/r.16pc/:
    (12,8)*{}="1";
    (4,8)*{}="2";
    (-8,8)*{}="3";
    (12,-8);"1" **\dir{-};
    "1";"2" **\crv{(12,16) & (4,16)} ?(0)*\dir{<} ;
    "2";"3" **\crv{(4,0) & (-8,0)}?(1)*\dir{<};
    "3"; (-8,16) **\dir{-};
    (12,-8)*{}="1'";
    (4,-8)*{}="2'";
    (-8,-8)*{}="3'";
    "1'";"2'" **\crv{(12,-16) & (4,-16)} ?(0)*\dir{>} ;
    "2'";"3'" **\crv{(4,0) & (-8,0)};
    "3'"; (-8,-16) **\dir{-};
    (-14,0)*{n};
    (15,0)*{ n};
    (0,16);(6,0) **\crv{(0,8) & (6,10)} ?(0)*\dir{<} ;;
    (0,-16);(6,0) **\crv{(0,-8) & (6,-10)};
    \endxy} \right) \;\; = \;\; -\beta_n
   \vcenter{\xy 0;/r.18pc/:
    (-4,-4)*{};(4,4)*{} **\crv{(-4,-1) & (4,1)}?(1)*\dir{<};?(0)*\dir{<};
    (4,-4)*{};(-4,4)*{} **\crv{(4,-1) & (-4,1)}?(1)*\dir{>};
    (-4,4)*{};(4,12)*{} **\crv{(-4,7) & (4,9)}?(1)*\dir{>};
    (4,4)*{};(-4,12)*{} **\crv{(4,7) & (-4,9)};
  (8,8)*{n};(-6,-3)*{\scs };  (6,-3)*{\scs };
 \endxy} \nn
\end{equation}
Furthermore, all real bubbles rescale as
\begin{equation} \label{eq_Mreal}
\cal{M}\left(
 \textcolor[rgb]{1.00,0.00,0.00}{ \vcenter{\xy 0;/r.18pc/:
    (2,-11)*{\cbub{n-1+j}{}};
  (12,-2)*{n};
 \endxy}}
\right) \;\;= \;\; \frac{1}{c_n^+}\;
  \vcenter{\xy 0;/r.18pc/:
    (2,-11)*{\cbub{n-1+j}{}};
  (12,-2)*{n};
 \endxy} \qquad \qquad
 \cal{M}\left(
  \textcolor[rgb]{1.00,0.00,0.00}{\vcenter{\xy 0;/r.18pc/:
    (2,-11)*{\ccbub{-n-1+j}{}};
  (12,-2)*{n};
 \endxy}}
\right) \;\;= \;\; \frac{1}{c_n^-}\;
  \vcenter{\xy 0;/r.18pc/:
    (2,-11)*{\ccbub{-n-1+j}{}};
  (12,-2)*{n};
 \endxy} .
\end{equation}
This implies that fake bubbles scale as follows: for $n<0$  and $0
\leq j < -n+1$
\begin{eqnarray}
\cal{M}\left(
\textcolor[rgb]{1.00,0.00,0.00}{  \vcenter{\xy 0;/r.18pc/:
    (2,-11)*{\cbub{n-1+j}{}};
  (12,-2)*{n};
 \endxy}}
\right) &:=& \xsum{\lambda: |\lambda|=j}{}
 \overline{\alpha_{\lambda,j}(n)}   \cal{M} \left( \textcolor[rgb]{1.00,0.00,0.00}{\vcenter{\xy 0;/r.18pc/:
    (2,-3)*{\ccbub{-n-1+\lambda_1}{}};
    (13,-2)*{\cdots};
    (28,-3)*{\ccbub{-n-1+\lambda_m}{}};
  (12,8)*{n};
 \endxy}} \right) \nn
 \\
 &\refequal{\eqref{eq_alpha}}& \xsum{\lambda: |\lambda|=j}{}
 \left( c_n^-\right)^{m+1}\alpha_{\lambda,j}(n)   \cal{M} \left( \textcolor[rgb]{1.00,0.00,0.00}{\vcenter{\xy 0;/r.18pc/:
    (2,-3)*{\ccbub{-n-1+\lambda_1}{}};
    (13,-2)*{\cdots};
    (28,-3)*{\ccbub{-n-1+\lambda_m}{}};
  (12,8)*{n};
 \endxy}} \right) \nn\\
  &\refequal{\eqref{eq_Mreal}}& c_{n}^-\xsum{\lambda: |\lambda|=j}{}
 \alpha_{\lambda,j}(n) \vcenter{\xy 0;/r.18pc/:
    (2,-3)*{\ccbub{-n-1+\lambda_1}{}};
    (13,-2)*{\cdots};
    (28,-3)*{\ccbub{-n-1+\lambda_m}{}};
  (12,8)*{n};
 \endxy} \nn \\
   &\refequal{\eqref{eq_fake_nleqz}}& - \beta_{n}c_{n}^-\;  \vcenter{\xy 0;/r.18pc/:
    (2,-11)*{\cbub{n-1+j}{}};
  (12,-2)*{n};
 \endxy} \nn
\end{eqnarray}
Similarly, for $n>0$ and $0 \leq j < n+1$
\begin{equation}
  \cal{M}\left(
\textcolor[rgb]{1.00,0.00,0.00}{  \vcenter{\xy 0;/r.18pc/:
    (2,-11)*{\ccbub{-n-1+j}{}};
  (12,-2)*{n};
 \endxy}}
  \right) \;\; = \;\;
   -\beta_nc_n^+\vcenter{\xy 0;/r.18pc/:
    (2,-11)*{\ccbub{-n-1+j}{}};
  (12,-2)*{n};
 \endxy} \nn
\end{equation}
Using these facts it is straightforward to verify that $\cal{M}$ is
a 2-functor.  Furthermore, since all of the coefficients used in its
definition are invertible, is also clear that $\cal{M}$ is a
2-isomorphism of 2-categories.
\end{proof}

%
\subsection{The Karoubi envelope and the 2-category $\UcatD$} \label{subsec_karoubi}
%

In order to categorify the $\Z[q,q^{-1}]$-module $\UA$ we must lift the divided powers $\cal{E}^{(a)}1_n=\frac{E^a}{[a]!}1_n$ and $F^{(b)}1_n=\frac{F^b}{[b]!}1_n$ and their products to the categorified setting.  In particular, we must introduce 1-morphisms $\cal{E}^{(a)}\onen$ and $\cal{F}^{(b)}\onen$ and give isomorphisms
\begin{equation}
  \cal{E}^a\onen \cong \bigoplus_{[a]!} \cal{E}^{(a)}\onen, \qquad \quad
  \cal{F}^b\onen \cong \bigoplus_{[b]!} \cal{F}^{(b)}\onen.
\end{equation}

The simplest example is the divided power $E^{(2)}1_n = \frac{E^2}{[2]}1_n$  which we can rewrite as $E^21_n =(q+q^{-1})E^{(2)}1_n$.  Lifting this equation to the categorical level requires isomorphisms
\[
\cal{E}^21_n \cong \cal{E}^{(2)}1_n\{1\} \oplus \cal{E}^{(2)}1_n\{-1\}.
\]
We will construct such a direct sum decomposition by finding an idempotent 2-morphism in $\Hom_{\Ucat}(\cal{E}\cal{E}\onen, \cal{E}\cal{E}\onen)$.

An idempotent $e \maps b\to b$ in a category $\cal{C}$ is a morphism such that
$e\circ e = e$.  The idempotent is said to split if there exist morphisms
\[
 \xymatrix{ b \ar[r]^g & b' \ar[r]^h &b}
\]
such that $e=h \circ g$ and $g\circ h = 1_{b'}$.  In an additive category we can write $b'=\im e$ so that the idempotent $e$ can be viewed as the projection onto a summand $b \cong \im e \oplus \im (1-e)$.

In the context of the 2-category $\Ucat$ we do not have splitting of idempotents in general. Splitting an idempotent would require us to take the image of an abstract diagram representing an idempotent 2-morphism in $\Ucat$.  Instead we enlarge $\Ucat$ to the 2-category $\UcatD$ by adding extra 1-morphisms so that all idempotent 2-morphisms split.

For a category $\cal{C}$ the Karoubi envelope $Kar(\cal{C})$ (also called the idempotent completion or Cauchy completion) is a minimal enlargement of the category $\cal{C}$ in which all idempotents split.  More precisely, the category $Kar(\cal{C})$ has
\begin{itemize}
  \item objects of $Kar(\cal{C})$:  pairs $(b,e)$
where $e \maps b \to b$ is an idempotent of $\cal{C}$.
\item morphisms: $(e,f,e') \maps (b,e) \to (b',e')$
where $f \maps b \to b'$ in $\cal{C}$ making the diagram
\begin{equation} \label{eq_Kar_morph}
 \xymatrix{
 b \ar[r]^f \ar[d]_e \ar[dr]^{f} & b' \ar[d]^{e'} \\ b \ar[r]_f & b'
 }
\end{equation}
commute, i.e. $ef=f=fe'$.
 \item identity 1-morphisms: $(e,e,e) \maps (b,e) \to (b,e)$.
\end{itemize}
When $\cal{C}$ is an additive category we write $(b,e)\in Kar(\cal{C})$ as $\im e$ and we have $b \cong \im e \oplus \im (1-e)$ in $Kar(\cal{C})$.  See \cite[Chapter 6.5]{Bor} for more discussion on the Karoubi envelope.

In order for all idempotent 2-morphisms to split in $\Ucat$ we must take the Karoubi envelope of all the additive categories $\Ucat(n,m)$ between objects $n,m\in \Z$. Define the 2-category $\UcatD$ to be the smallest 2-category which contains $\Ucat$ and has splitting of idempotent 2-morphisms.  More precisely, replace each Hom category in $\Ucat$ by its Karoubi envelope:
\[
 \UcatD(n,m) := Kar\big(\Ucat(n,m)\big)
\]
where $Kar\big(\Ucat(n,m)\big)$ is the usual Karoubi envelope of the additive category $\Ucat(n,m)$.  A 2-morphism in $\UcatD$ is a triple $(e,f,e')\maps
(\cal{E}_{\ep}\onen\{t\},e)
\to (\cal{E}_{\ep'}\onen\{t'\},e')$
where $e$ and $e'$ are idempotent 2-morphisms in $\Ucat$ and $f$ is
a degree $(t-t')$ 2-morphism in $\Ucat$ with the property that
\[
  \xy
 (0,0)*{\includegraphics[scale=0.4]{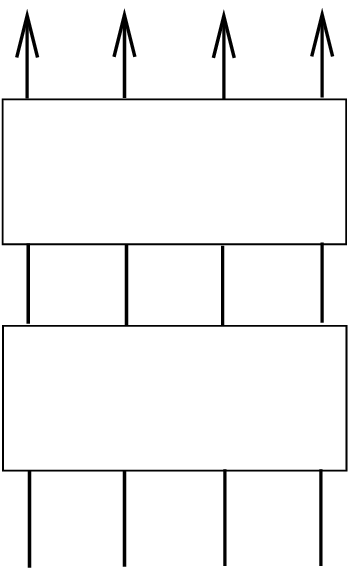}};
 (0,-4.5)*{e};(0,4.5)*{f};
  \endxy
 \quad
 =
 \quad
  \xy
 (0,0)*{\includegraphics[scale=0.4]{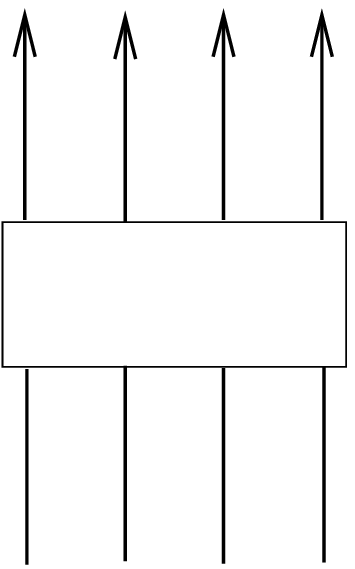}};
 (0,-0.5)*{f};
  \endxy
  \quad
 =
 \quad
   \xy
 (0,0)*{\includegraphics[scale=0.4]{c4-1.eps}};
 (0,-4.5)*{f};(0,4.5)*{e'};
  \endxy
\]

In the Karoubi envelope $\UcatD$ of $\Ucat$ any idempotent $e$ in $\Hom_{\Ucat}(\cal{E}\cal{E}\onen,\cal{E}\cal{E}\onen)$ gives a splitting
\[
\cal{E}^21_n \cong \cal{E}^21_n e \oplus \cal{E}^21_n(1-e).
\]
It is easy to check using the nilHecke relation \eqref{eq_nilHecke2} that the 2-morphism
\[
 e_2 \;\; = \;\;  \xy 0;/r.18pc/:
  (0,0)*{\xybox{
    (-4,-4)*{};(4,4)*{} **\crv{(-4,-1) & (4,1)}?(1)*\dir{>};
    (4,-4)*{};(-4,4)*{} **\crv{(4,-1) & (-4,1)}?(1)*\dir{>}?(.75)*{\bullet};
     (8,1)*{n};
     }};
  \endxy
\]
is an idempotent.  Indeed, precomposing the nilHecke relation with a crossing gives the identity
\[
  \vcenter{\xy 0;/r.16pc/:
    (-4,-4)*{};(4,4)*{} **\crv{(-4,-1) & (4,1)}?(1)*\dir{>};
    (4,-4)*{};(-4,4)*{} **\crv{(4,-1) & (-4,1)}?(1)*\dir{>};
    (8,1)*{n};
    (-.5,5.5)*{\textcolor[rgb]{0.00,0.00,1.00}{
    \xybox{(-4,5)*{\bullet};  (4,4)*{};(-4,12)*{} **\crv{(4,7) & (-4,9)}?(1)*\dir{>};
            (-4,4)*{};(4,12)*{} **\crv{(-4,7) & (4,9)}?(1)*\dir{>}}}};
 \endxy}
 \;\; - \;\;
  \vcenter{\xy 0;/r.16pc/:
    (-4,-4)*{};(4,4)*{} **\crv{(-4,-1) & (4,1)}?(1)*\dir{>};
    (4,-4)*{};(-4,4)*{} **\crv{(4,-1) & (-4,1)}?(1)*\dir{>};
    (8,1)*{n};
    (0,5.5)*{\textcolor[rgb]{0.00,0.00,1.00}{
    \xybox{(2,9)*{\bullet};  (4,4)*{};(-4,12)*{} **\crv{(4,7) & (-4,9)}?(1)*\dir{>};
            (-4,4)*{};(4,12)*{} **\crv{(-4,7) & (4,9)}?(1)*\dir{>}}}};
 \endxy} \;\; = \;\;
   \vcenter{\xy 0;/r.16pc/:
    (-4,-4)*{};(4,4)*{} **\crv{(-4,-1) & (4,1)}?(1)*\dir{>};
    (4,-4)*{};(-4,4)*{} **\crv{(4,-1) & (-4,1)}?(1)*\dir{>};
    (0,5.5)*{ \textcolor[rgb]{0.00,0.00,1.00}{\xybox{(-4,4)*{};(-4,12)*{} **\dir{-}?(1)*\dir{>};
    (4,4)*{};(4,12)*{} **\dir{-}?(1)*\dir{>};}}};
    (8,1)*{n};
 \endxy}
\qquad \To \qquad    \vcenter{\xy 0;/r.16pc/:
    (-4,-4)*{};(4,4)*{} **\crv{(-4,-1) & (4,1)}?(1)*\dir{>};
    (4,-4)*{};(-4,4)*{} **\crv{(4,-1) & (-4,1)};
    (-4,4)*{};(4,12)*{} **\crv{(-4,7) & (4,9)}?(1)*\dir{>};
    (4,4)*{};(-4,12)*{} **\crv{(4,7) & (-4,9)}?(1)*\dir{>};
    (8,1)*{n}; (-4,4)*{\bullet};
 \endxy} \;\; = \;\;
    \vcenter{\xy 0;/r.16pc/:
    (-4,-4)*{};(4,4)*{} **\crv{(-4,-1) & (4,1)}?(1)*\dir{>};
    (4,-4)*{};(-4,4)*{} **\crv{(4,-1) & (-4,1)}?(1)*\dir{>};
    (8,0)*{n};
 \endxy}
\]
which implies
\[
\qquad \qquad  e^2_2 \;\; = \;\;
  \vcenter{\xy 0;/r.16pc/:
    (-4,-4)*{};(4,4)*{} **\crv{(-4,-1) & (4,1)}?(1)*\dir{>};
    (4,-4)*{};(-4,4)*{} **\crv{(4,-1) & (-4,1)};
    (-4,4)*{};(4,12)*{} **\crv{(-4,7) & (4,9)}?(1)*\dir{>};
    (4,4)*{};(-4,12)*{} **\crv{(4,7) & (-4,9)}?(1)*\dir{>};
    (8,1)*{n}; (-3,2.2)*{\bullet}; (-2,9)*{\bullet};
 \endxy}
 \;\; = \;\;  \xy 0;/r.18pc/:
  (0,0)*{\xybox{
    (-4,-4)*{};(4,4)*{} **\crv{(-4,-1) & (4,1)}?(1)*\dir{>};
    (4,-4)*{};(-4,4)*{} **\crv{(4,-1) & (-4,1)}?(1)*\dir{>}?(.75)*{\bullet};
     (8,1)*{n};
     }};
  \endxy
 \;\; = \;\; e_2.
 \]
Using algebraic properties of the nilHecke algebra discussed below, one can show that $\im e_2 \cong \im (1-e_2)$ up to a shift, so that defining $\cal{E}^{(2)}1_n:=\im e_2\{1\}$ gives a categorification of the divided powers relation
\begin{equation}
  \cal{E}^21_n \cong \cal{E}^{(2)}1_n\{1\} \oplus \cal{E}^{(2)}1_n\{-1\}.
\end{equation}

The fact that the images of the idempotents $e_2$ and $(1-e_2)$ coincide are the same is no coincidence.  This is because the nilHecke algebra $\BNC_a$ is isomorphic to the algebra of $a!\times a!$ matrices with coefficients in the ring $\Lambda(x_1,\dots,x_a)$ of symmetric polynomials in the variables $x_1, \dots, x_a$, see \cite[Proposition 3.5]{Lau1} or more explicitly \cite{KLMS}.  This can be seen using the relationship between symmetric functions and Schubert polynomials, together with the well understood action of divided difference operators on the Schubert polynomials.

Recall that an idempotent idempotents $e'$, $e''$ are orthogonal when $e'e'' = e''e' =0$, and that an idempotent $e$ is said to be minimal (or primitive) if $e$ cannot be written as $e = e'+e''$ for some orthogonal idempotents $e'$, $e''$. The idempotent $e_2$ is the minimal idempotent for $\BNC_2$ that projects onto one of the matrix columns. Hence, it is clear that up to a grading shift the image of this idempotent will be isomorphic to the image of the idempotent $(1-e_2)$.  More generally, one can define a minimal idempotent $e_a \in \BNC_a$ as follows. Introduce a shorthand notation $D_a$ to denote the longest braid on $a$ strands, shown below for $a=4$
\[ 
    \xy
 (0,0)*{\includegraphics[scale=0.4]{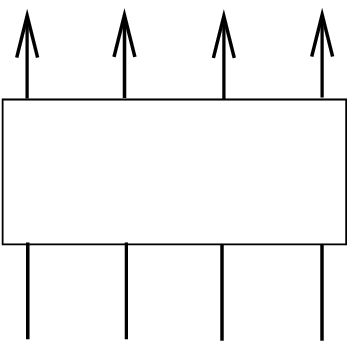}};
 (0,0)*{D_4};
  \endxy
 \quad := \;\;
   \xy
 (0,0)*{\includegraphics[scale=0.4]{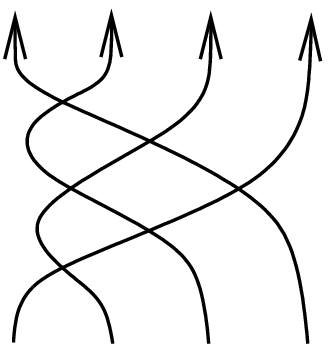}};
  \endxy  \nn
\]
This element is independent of the choice of reduced word by the nilHecke relations. We will also write
\[
\xy
 (0,0)*{\includegraphics[scale=0.4]{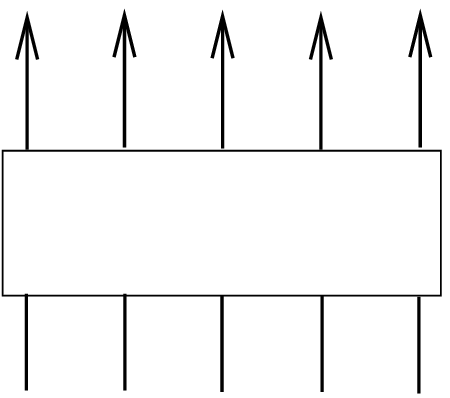}};
 (0,-1.5)*{\delta_{a}};
  \endxy
 \;\;:= \;\;
 \xy
 (-2.6,0)*{\includegraphics[scale=0.4]{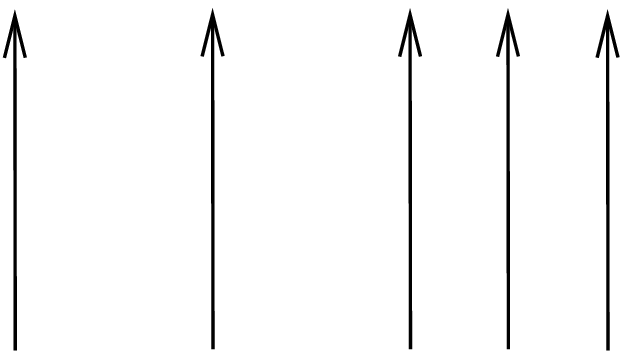}};
 (-15,0)*{\bullet}+(-4,1)*{\scs a-1};
 (-7,0)*{\bullet}+(-4,1)*{\scs a-2};
 (-3,-3)*{\cdots};
 (1,0)*{\bullet}+(-2,1)*{\scs 2};
 (5.3,0)*{\bullet};
  \endxy
\]
Introduce a 2-morphism $ e_a = \delta_a D_a$:
\begin{equation} \label{eq_def_ea}
  \xy
 (0,0)*{\includegraphics[scale=0.5]{c2-1.eps}};
 (0,-1.5)*{e_{a}};
  \endxy
\quad := \quad
 \xy
 (0,0)*{\includegraphics[scale=0.5]{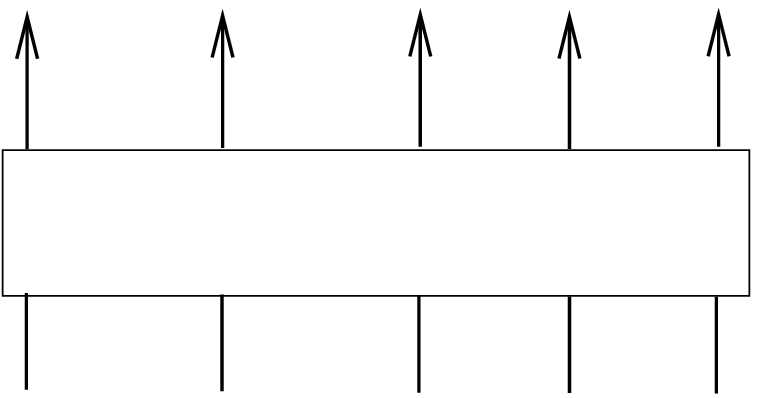}};
 (-17.7,5)*{\bullet}+(-4,1)*{\scs a-1};
 (-7.7,5)*{\bullet}+(-4,1)*{\scs a-2};
 (-3,4)*{\cdots};(-3,-7)*{\cdots};
 (2.3,5)*{\bullet}+(-2,1)*{\scs 2};
 (9.9,5)*{\bullet};(0,-1.5)*{D_a};
  \endxy
\end{equation}
One can show using properties of divided difference operators in $\BNC_a$ that $e_a$ is a minimal idempotent.  Define $\cal{E}^{(a)}\onen:=(\cal{E}^a\onen \{
\frac{-a(a-1)}{2}\}, e_a)= \im e_a\{ \frac{-a(a-1)}{2}\} $ in $\UcatD$, then one can show
\[
 \cal{E}^{a}\onen \cong \bigoplus_{[a]!}\cal{E}^{(a)}\onen.
\]

In passing to the Karoubi envelope $\UcatD$ we have lost the purely diagrammatic description of 2-morphisms since we have added new 1-morphisms corresponding to the images of idempotent 2-morphisms.  However, it is possible to enhance the graphical calculus by adding thickened strands.  This was used by the author and collaborators Khovanov, Mackaay, and Sto\v si\'c to give a diagrammatic description of the Karoubi envelope $\UcatD$ in \cite{KLMS}.

%
\subsection{The categorification theorem}
%

In the previous sections we have introduced a 2-category $\UcatD$, showed that the $\mathfrak{sl}_2$ relations hold in $\UcatD$, and identified 1-morphisms corresponding to divided powers and their products.  Let $(\ep)$ denote a divided powers signed sequence consisting of
\begin{equation}
  (\ep) = (\epsilon_1^{a_1}, \epsilon_2^{a_2}, \dots, \epsilon_k^{a_k}) \nn
\end{equation}
for $\ep_j \in \{ + , -\}$ and $a_j \in \N$.  Write
\begin{equation}
  \cal{E}_{(\ep)}\onen := \cal{E}_{\epsilon_1}^{(a_1)}\cal{E}_{\epsilon_2}^{(a_2)} \dots \cal{E}_{\epsilon_k}^{(a_k)} \onen. \nn
\end{equation}

Define a map
\begin{eqnarray}
  \gamma \maps \UA &\to&  K_0(\UcatD) \\
   E_{\ep} \onen &\mapsto& [\cal{E}_{\ep}\onen]. \nn
\end{eqnarray}
Let's look at what is left to be proven to show that this map is an isomorphism.
\begin{itemize}
 \item We need to know that the 2Homs in $\UcatD$ are not huge and unmanageable.   Using the higher relations of $\Ucat$ it can be shown that the space of 2Homs $\HOM_{\Ucat}(\cal{E}_{\ep}\onen,\cal{E}_{\ep'}\onen)$ have natural spanning sets given by diagrams with the following properties:
\begin{itemize}
 \item no strand intersects itself and no two strands intersect more than once,
 \item all dots are confined to a small interval on each strand,
 \item all closed diagrams are reduced to non-nested bubbles with the same orientation and are moved to the far right of each diagram.
\end{itemize}
Proving that such a set of diagrams spans the space of 2-morphisms amounts to showing that the relations in $\Ucat$ imply some analog of Reidemeister moves for all possible orientations and all possible placements of dots, so that any
complicated diagram can be reduced to a linear combination of diagrams of the above form.

\begin{example}
To find the spanning sets for  $\HOM_{\Ucat}(\cal{F}\cal{E}^2\onen,\cal{E}\onen)$ we chose a small interval on each strand of each possible diagram with no self intersections and no double intersections
\[
  \vcenter{\xy 0;/r.15pc/:
  (-4,-20)*{};(-6,10)*{} **\crv{(-4,-10) & (-6,5)}?(1)*\dir{>};
   ?(.55)*\dir{|}?(.7)*\dir{|};
   (4,-20)*{};(-12,-20)*{} **\crv{(4,-10) &
  (-12,-10)}?(1)*\dir{>};
  ?(.65)*\dir{|}?(.8)*\dir{|};
  (10,-5)*{n};(-6,13)*{\scs  \cal{E}};
  (-13,-23)*{\scs \cal{F}};(-4.5,-23)*{\scs  \cal{E}};(3.5,-23)*{\scs \cal{E}};
 \endxy}
 \qquad \qquad
  \vcenter{\xy 0;/r.15pc/:
  (4,-20)*{};(-6,10)*{} **\crv{(4,-10) & (-6,5)}?(1)*\dir{>};
   ?(.55)*\dir{|}?(.7)*\dir{|};
   (-4,-20)*{};(-12,-20)*{} **\crv{(-4,-10) &
  (-12,-10)}?(1)*\dir{>};
  ?(.65)*\dir{|}?(.8)*\dir{|};
  (10,-5)*{n};(-6,13)*{\scs  \cal{E}};
  (-13,-23)*{\scs \cal{F}};(-4.5,-23)*{\scs  \cal{E}};(3.5,-23)*{\scs \cal{E}};
 \endxy}
\]
The spanning sets are given by these diagrams together with arbitrary number of dots on these intervals and arbitrary products of nonnested dotted bubbles on the far right region.
\[
  \vcenter{\xy 0;/r.14pc/:
    (-4,-20)*{};(-6,10)*{} **\crv{(-4,-10) & (-6,5)}?(1)*\dir{>};
  ?(.65)*\dir{}+(0,0)*{\bullet}+(-4,1)*{\scs a_1};
   (4,-20)*{};(-12,-20)*{} **\crv{(4,-10) &
  (-12,-10)}?(1)*\dir{>};
   ?(.73)*\dir{}+(0,0)*{\bullet}+(-1,3)*{\scs a_2};;
 (-6,13)*{\scs  \cal{E}};
  (-13,-23)*{\scs \cal{F}};(-4.5,-23)*{\scs  \cal{E}};(3.5,-23)*{\scs \cal{E}};
 (40,0)*{\xybox{
 (14,-2)*{\cbub{n -1+\alpha_2}{}};
 (36,-2)*{\cbub{ \;\;\;n  -1+\alpha_4}{}};
 (14,13)*{\cbub{n -1+\alpha_1}{}};
 (37,13)*{\cbub{\;\;\;n  -1+\alpha_3}{}};
 (-2,0)*{n}}};
 \endxy}
 \qquad \qquad
   \vcenter{\xy 0;/r.14pc/:
     (4,-20)*{};(-6,10)*{} **\crv{(4,-10) & (-6,5)}?(1)*\dir{>};
       ?(.65)*\dir{}+(0,0)*{\bullet}+(-4,1)*{\scs a_1};
   (-4,-20)*{};(-12,-20)*{} **\crv{(-4,-10) &
  (-12,-10)}?(1)*\dir{>};
?(.73)*\dir{}+(0,0)*{\bullet}+(-3,2)*{\scs a_2};
(-6,13)*{\scs  \cal{E}};
  (-13,-23)*{\scs \cal{F}};(-4.5,-23)*{\scs  \cal{E}};(3.5,-23)*{\scs \cal{E}};
 (40,0)*{\xybox{
 (14,-2)*{\cbub{n -1+\alpha_2}{}};
 (14,13)*{\cbub{n -1+\alpha_1}{}};
 (37,13)*{\cbub{\;\;\;n  -1+\alpha_3}{}};
 (-2,0)*{n}}};
 \endxy}
\]
\end{example}

Showing that these spanning sets are in fact a basis is necessary to prove that our graded 2Homs lifts the semilinear form on $\U$.

\item In addition to showing that the 2Homs are not too large, we also need to check that the relations in our 2-category are not so strong that they actually force all the Hom spaces to be trivial. This can be accomplished by constructing an action of the 2-category $\Ucat$ on some other 2-category where we know the size of the space of 2Homs.  The action of $\Ucat$ on the categories $\cal{V}^N$ gives a nice computable example where this can be proven.

 \item Showing that we have isomorphisms lifting the $\mathfrak{sl}_2$ relations, and finding 1-morphisms corresponding to 1-morphisms in $\U$ shows that the map $\gamma$ is a homomorphism into the split Grothendieck group $K_0(\UcatD)$. We need to show that this homomorphism is in fact an isomorphism.  To prove this we show that the indecomposable 1-morphisms in $\UcatD$ coincide (up to grading shift) with elements in Lusztig's canonical basis.  This can be proven using properties of the semilinear form and the fact that we have lifted the semilinear form to the space of graded 2Homs in $\UcatD$.
\end{itemize}

Hence, we have the following theorem from \cite{Lau1}.

\begin{thm}[Categorification of $\UA$] \hfill
\begin{itemize}
  \item The map
\begin{eqnarray}
  \gamma \maps \UA &\to&  K_0(\UcatD) \\
   E_{\ep} \onen &\mapsto& [\cal{E}_{\ep}\onen] \nn
\end{eqnarray}
for any divided power sequence $\ep$ is an isomorphism of $\Z[q,q^{-1}]$-algebras.

\item The indecomposable 1-morphisms of $\UcatD$ taken up to grading shift give rise to a basis in $K_0(\UcatD)$ with structure constants in $\N[q,q^{-1}]$.  Under the isomorphism $\gamma$ this basis coincides with the Lusztig canonical basis of $\UA$.

\item The graded dimension of $\HOM_{\Ucat}(\onen,\onen)$ is generated by products of nonnested dotted bubbles of the same orientation, so that
\begin{equation}
  \gdim\HOM_{\Ucat}(\onen,\onen) = \pi:= \prod_{a=1}^{\infty} \frac{1}{1-q^{2a}}. \nn
\end{equation}
Furthermore, we have
\begin{equation}
  \gdim\HOM_{\Ucat}(\cal{E}_{\ep}\onen, \cal{E}_{\ep'}\onen) = \la E_{\ep}1_n, E_{\ep'} 1_n \ra \cdot \pi. \nn
\end{equation}
\end{itemize}
\end{thm}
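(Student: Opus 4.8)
The plan is to prove the three bullet points of the Categorification Theorem in sequence, treating the first (the isomorphism $\gamma$) as the main goal, with the other two as the technical heart that makes it work. I would organize the argument around showing that the homomorphism $\gamma \maps \UA \to K_0(\UcatD)$ is well-defined, surjective, and injective, and that under it the indecomposable 1-morphisms match the canonical basis $\B$.

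First I would establish that $\gamma$ is a well-defined homomorphism of $\Z[q,q^{-1}]$-algebras. Since we have already constructed the 2-category $\UcatD$, verified that the defining $\mathfrak{sl}_2$-relations $\cal{E}\cal{F}\onen \cong \cal{F}\cal{E}\onen \oplus \onen^{\oplus_{[n]}}$ (for $n\geq 0$) and its mirror (for $n\leq 0$) hold as isomorphisms of 1-morphisms, and identified 1-morphisms $\cal{E}^{(a)}\onen$ and $\cal{F}^{(b)}\onen$ categorifying divided powers via the splitting of the idempotents $e_a = \delta_a D_a$ in the nilHecke algebra, the assignment $E_{\ep}\onen \mapsto [\cal{E}_{\ep}\onen]$ respects all the relations of $\UA$ on the split Grothendieck group. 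Surjectivity is then immediate: every object of $\UcatD$ is, up to grading shift, a direct sum of images of idempotents built from the $\cal{E}_{\ep}\onen$, and the divided power 1-morphisms generate everything that appears as a summand, so $[\cal{E}_{\ep}\onen]$ for divided power sequences $\ep$ span $K_0(\UcatD)$.

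The technical heart is the third bullet: computing $\gdim \HOM_{\Ucat}(\cal{E}_{\ep}\onen,\cal{E}_{\ep'}\onen) = \la E_{\ep}1_n, E_{\ep'}1_n\ra \cdot \pi$. For this I would carry out two matching bounds. For the upper bound, I would use the relations of $\Ucat$ — biadjointness \eqref{eq_biadjointness1}--\eqref{eq_biadjointness2}, the nilHecke relations \eqref{eq_nilHecke1}--\eqref{eq_nilHecke2}, the curl and bubble-slide relations \eqref{eq_reduction}--\eqref{eq_ident_decomp}, and the vanishing of negative-degree bubbles — to reduce any string diagram to the normal form described in the theorem: no strand self-intersects, no two strands cross more than once, dots confined to small intervals, and all closed components reduced to non-nested dotted bubbles of a single orientation pushed to the far right. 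Counting these normal-form diagrams by degree shows the graded dimension is \emph{at most} $\la E_{\ep}1_n, E_{\ep'}1_n\ra \cdot \pi$, where the factor $\pi = \prod_{a\geq 1} (1-q^{2a})^{-1}$ comes precisely from arbitrary products of non-nested same-orientation bubbles (this is exactly the symmetric-function ring $\Lambda$ under the map $\phi^n$ of \eqref{eq_symm_iso}). For the lower bound — proving linear independence of the spanning set — I would use the action of $\Ucat$ on the 2-representation $\cal{V}^N$ (cohomology of partial flag varieties) from Section~\ref{subsec_flag}: for $N$ large, the induced maps on $\bigoplus_k H_k\pmod$ are faithful enough on each bounded degree to detect the spanning diagrams, using Remark~\ref{rem_bub_linear_ind} to get the bubble factors right. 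Combining the two bounds gives the graded dimension formula, and in particular $\gdim \HOM_{\Ucat}(\onen,\onen) = \pi$.

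Finally, for the second bullet (indecomposables $=$ canonical basis) and for injectivity of $\gamma$, I would run the standard Grothendieck-group argument: the graded 2Hom pairing, now known to categorify the semilinear form $\sla\ ,\ \sra$ on $\U$ (modulo $\pi$), is a non-degenerate pairing, so the classes $[\cal{E}_{\ep}\onen]$ are linearly independent over $\Z[q,q^{-1}]$ in $K_0$ exactly as $E_{\ep}1_n$ are in $\UA$; hence $\gamma$ is injective, completing the isomorphism. An indecomposable 1-morphism $b$ has $\END_{\UcatD}(b)$ local with one-dimensional degree-zero part (by the bubble computation, endomorphism algebras have no negative-degree part and a one-dimensional top degree zero), so the $[b]$ form a basis of $K_0(\UcatD)$ with $\N[q,q^{-1}]$ structure constants (by decomposing composites into indecomposables, as in Figure~\ref{fig:idea}); this basis is bar-invariant and "almost orthonormal" for the lifted semilinear form, and Lusztig's characterization of $\B$ by these two properties forces $\gamma$ to carry it to the canonical basis. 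The main obstacle is the lower bound in the third bullet — proving the normal-form diagrams are genuinely linearly independent rather than merely spanning — since this requires either an explicit faithful action (the flag variety 2-representation, with careful degree bookkeeping as $N\to\infty$) or a direct diagrammatic basis theorem in the style of \cite[Section 2.2]{KL3}; everything else is either already established in the excerpt or a formal consequence of the non-degenerate pairing.
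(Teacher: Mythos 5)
Your proposal follows essentially the same route the paper sketches before stating the theorem: an upper bound on the graded 2Homs via reduction of diagrams to the normal form (no self-intersections, at most one crossing per pair of strands, dots on small intervals, non-nested same-orientation bubbles pushed to the right), a matching lower bound via the faithful-enough action on the cohomology of partial flag varieties for large $N$, and the identification of indecomposables with the canonical basis via the lifted semilinear form and almost-orthonormality. The paper itself only outlines these three steps and defers the details to \cite{Lau1}, so your fleshed-out version is consistent with, and structured identically to, the intended argument.
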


In the definition of the 2-category $\Ucat$ we took $\Bbbk$-linear combinations of diagrams for $\Bbbk$ a field.  In \cite{KLMS} it was shown that the isomorphism $\gamma$ holds when taking $\Z$-linear combinations of diagrams in the definition of $\Ucat$.

In \cite{KL3} a 2-category $\UcatD(\mathfrak{g})$ was defined for any symmetrizable Kac-Moody algebra.  It was conjectured that this 2-category categorifies $\U(\mathfrak{g})$. Recent unpublished work of Webster~\cite{Web} gives a proof of this conjecture.

%
\section{The higher structure of categorified quantum groups} \label{sec_higher}
%

In this section we briefly review some recent work exploring the implications of the higher relations in $\UcatD$.

%
\subsection{Derived equivalences and geometric actions} \label{subsec_derived}
%

In their seminal work Chuang and Rouquier showed that the higher structure of natural transformations in categorical $\mathbf{U}(\mathfrak{sl}_2)$-actions played an important role in the construction of derived equivalences~\cite{CR}.  In their framework the affine Hecke algebra acts as natural transformations between functors $\cal{E}^a$.  They called the additional structure of these natural transformations a {\em strong} categorical $\mf{sl}_2$-action.  They showed that if abelian categories $\cal{V_{-N}}, \cal{V}_{-N+2}, \dots, \cal{V}_N$ admit such an action, with $E$ and $F$ acting by exact functors, then it is possible to categorify the reflection element to give derived equivalences $T \maps D(\cal{V}_{-n}) \to D(\cal{V}_n)$ between the derived categories corresponding to opposite weight spaces.
\[
 \xy (0,10)*{};
  (11,0)*+{\cal{V}_{n-2}}="2";
  (27,0)*+{\cal{V}_{n}}="3";
  (43,0)*+{\cal{V}_{n+2}}="4";
  (54,0)*{\cdots};
  (0,0)*{\cdots};
    {\ar@/^0.7pc/^{\cal{E}} "2";"3"};
    {\ar@/^0.7pc/^{\cal{E}} "3";"4"};
    {\ar@/^0.7pc/^{\cal{F}} "3";"2"};
    {\ar@/^0.7pc/^{\cal{F}} "4";"3"};
  (-11,0)*+{\cal{V}_{-n-2}}="2'";
  (-27,0)*+{\cal{V}_{-n}}="3'";
  (-43,0)*+{\cal{V}_{-n+2}}="4'";
  (-54,0)*{\cdots};
    {\ar@/^0.7pc/^{\cal{F}} "2'";"3'"};
    {\ar@/^0.7pc/^{\cal{F}} "3'";"4'"};
    {\ar@/^0.7pc/^{\cal{E}} "3'";"2'"};
    {\ar@/^0.7pc/^{\cal{E}} "4'";"3'"};
      (0,18)*{};
    \textcolor[rgb]{1.00,0.00,0.00}{{\ar@{<->}@/_3.6pc/_T "3"; "3'"}}
 \endxy
\]
They used the resulting derived equivalences in a highly nontrivial way to prove the abelian defect conjecture in the modular representation theory
of the symmetric group.

Passing from actions of the Lie algebra $\mathbf{U}(\mf{sl}_2)$ to actions of the quantum group $\U(\mf{sl}_2)$, we have seen that the nilHecke algebra plays a fundamental role, rather than the affine Hecke algebra.  Categorification of the reflection element in the quantum setting was studied by Cautis, Kamnitzer, and Licata.  They defined an analog of the derived equivalence introduced by Chuang and Rouquier in the context of $\U$~\cite{CKL3}.  In their setting it suffices for the weight categories $\cal{V}_n$ to be triangulated rather than abelian. They prove a rather striking result that in various geometric settings the nilHecke algebra action can be constructed from rather weak and simple to check geometric conditions~\cite{CKL2}.

The higher structure involving the nilHecke algebra turns out to be a key ingredient in constructing derived equivalences categorifying the reflection element.    Cautis, Kamnitzer, and Licata used this geometrically formulated categorification of the reflection element to give nontrivial derived equivalences between categories of coherent sheaves on cotangent bundles to Grassmannians
\begin{equation}
  DCoh(T^*Gr(k,N)) \to DCoh(T^*(Gr(N-k,N))). \nn
\end{equation}
The nilHecke algebra also appears in Rouquier's study of 2-representations of Kac-Moody algebras \cite{Rou2}.

%
\subsection{Diagrammatics for symmetric functions} \label{subsec_symbub}
%

The higher relations in the 2-category $\Ucat$ give rise to some surprising connections between the graded endomorphism algebra $\END_{\Ucat}(\onen)$ and the algebra of symmetric functions.  Part of this connection was explained in Section~\ref{subsec_symm}.

Recall that any 2-morphism in $\END_{\Ucat}(\onen)$ can be written as a linear combination of products of nonnested dotted bubbles with the same orientation.  Using the action of the 2-category $\Ucat$ on the cohomology of iterated flag varieties one can show that there are no relations among such products of dotted bubbles \cite[Proposition 8.2]{Lau1}. This implies that the map $\phi^n$ from \eqref{eq_symm_iso} is an algebra isomorphism. In particular, for $n>0$ we have
\begin{eqnarray}
 \phi^n \maps \Lambda(\underline{x})= \Z[e_1,e_2, \dots] &\to& \END_{\Ucat}(\onen),
  \nn \\ \label{eq_isom_h}
  e_r(\underline{x}) & \mapsto & \xy
 (0,0)*{\cbub{(n-1)+r}{}};
 (6,8)*{n};
 \endxy,
\end{eqnarray}
where $\underline{x}$ is an infinite set of variables.  This isomorphism can alternatively be described using fake bubbles
\begin{eqnarray}
 \phi^n \maps \Lambda(\underline{x}) &\to& \END_{\Ucat}(\onen),
 \nn \\ \label{eq_isom_e}
 h_r(\underline{x}) & \mapsto &  (-1)^r \;  \xy
 (0,0)*{\ccbub{(-n-1)+r}{}};
 (6,8)*{n};
 \endxy.
\end{eqnarray}

The interplay between closed diagrams in $\END_{\Ucat}(\onen)$ extends beyond this connection with elementary and complete symmetric functions.  As an abelian group the ring of  symmetric functions has a basis of Schur polynomials.  Given a partition $\lambda = (\lambda_1, \lambda_2 \dots, \lambda_a)$ with $\lambda_1 \geq \lambda_2 \geq \dots \geq \lambda_a$ define the Schur polynomial \begin{equation}
\pi_{\lambda}=\left|
\begin{array}{cccc}
h_{\lambda_1} & h_{\lambda_1+1} & \cdots & h_{\lambda_1+a-1} \\
h_{\lambda_2-1} & h_{\lambda_2} & \cdots & h_{\lambda_2+a-2} \\
\vdots & \vdots & \ddots & \vdots \\
h_{\lambda_a-(a-1)} & h_{\lambda_a-(a-2)} & \cdots & h_{\lambda_a}
\end{array}
\right|= |h_{\lambda_i+j-i}|.
\end{equation}
The Schur polynomials can also be defined by the dual formula
\begin{equation} \label{eq_Schur_e}
\pi_{\lambda}=\left|
\begin{array}{cccc}
e_{\bar{\lambda}_1} & e_{\bar{\lambda}_1+1} & \cdots & e_{\bar{\lambda}_1+b-1} \\
e_{\bar{\lambda}_2-1} & e_{\bar{\lambda}_2} & \cdots & e_{\bar{\lambda}_2+b-2} \\
\vdots & \vdots & \ddots & \vdots \\
e_{\bar{\lambda}_b-(b-1)} & e_{\bar{\lambda}_b-(b-2)} & \cdots & e_{\bar{\lambda}_b} \\
\end{array}\right|=
|e_{\bar{\lambda}_i+j-i}|.
\end{equation}
where $\bar{\lambda}$ is the conjugate partition.    These formulas are called the Jacobi-Trudi or Giambelli formulas. For more details about symmetric functions see~\cite{Mac}.

In \cite{KLMS} natural closed diagrams were identified in $\END_{\Ucat}(\onen)$ that when simplified reconstruct these formulas.  The diagrams are quite natural in the diagrammatic framework, they have the form:
\begin{equation}
\pi_{\overline{\lambda}}
\quad =  \quad \xy
 (0,0)*{\includegraphics[scale=0.45]{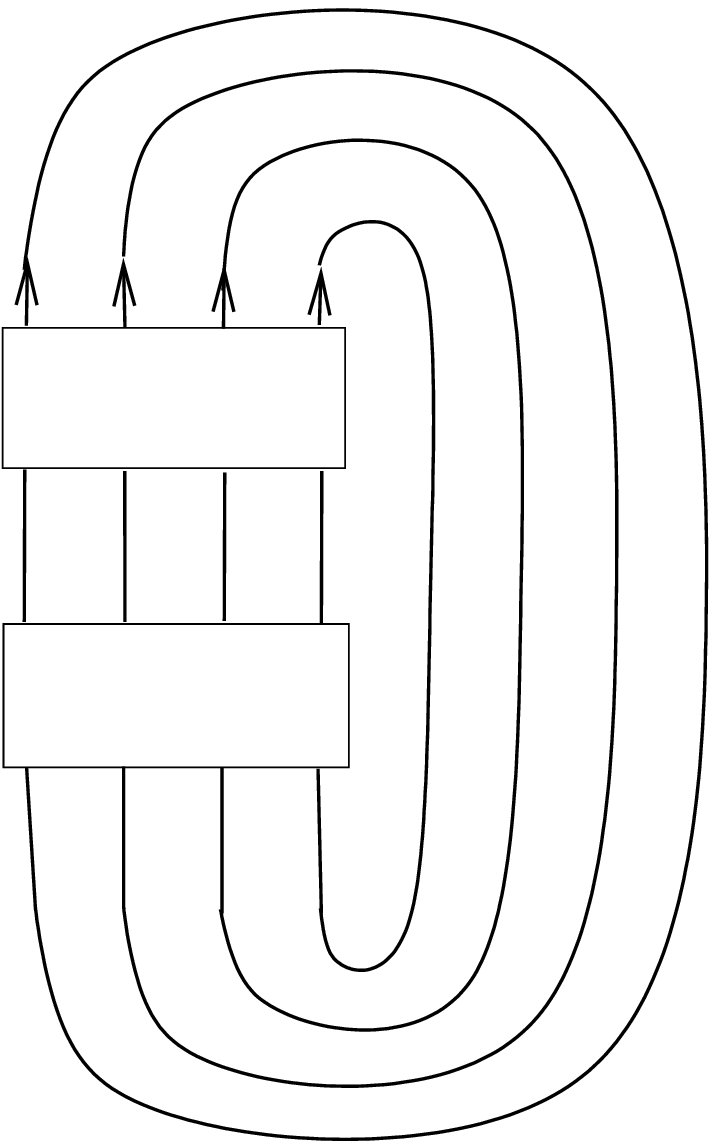}};
 (-8,-6)*{e_a};(-8,8)*{x^{\lambda+(n-a)}}; \nn
 \endxy
\end{equation}
where $x^{\lambda}+(n-a)=x_1^{\lambda_1+(n-a)}x_2^{\lambda_2+(n-a)}\dots x_a^{\lambda_a+(n-a)}$ denotes the diagram with $\lambda_1+(n-a)$ dots on the first strand, $\lambda_2+(n-a)$ dots on the second strand and so on.  The $(n-a)$ is a normalization, much like the $(n-1)$ needed to read off the degree of a clockwise oriented dotted bubble.  Reducing these diagrams into nonnested dotted bubbles with the same orientation using the higher relations in $\Ucat$ gives the same determinant formula for decomposing a Schur polynomial into complete or elementary symmetric functions.

\begin{example}
Using the higher relations in $\Ucat$ and a calculation analogous to the one in Remark~\ref{rem_other_inv} we have for $n>0$
\begin{align} \nn
\vcenter{\xy 0;/r.22pc/:
  (24,8)*{n};
  (-4,-4)*{}="lb";(4,4)*{}="rt" **\crv{(-4,-1) & (4,1)}?(1)*\dir{>};?(0)*\dir{>};
    (4,-4)*{}="rb";(-4,4)*{}="lt" **\crv{(4,-1) & (-4,1)}?(1)*\dir{>};
   (12,4)*{}="t2";(12,-4)*{}="t2'";
   "rt";"t2" **\crv{(4,8) & (12, 8)};
   "rb";"t2'" **\crv{(4,-8) & (12, -8)}?(.5)*\dir{}+(0,0)*{\bullet}
   +(-1,-3)*{\scs \textcolor[rgb]{1.00,0.00,0.00}{\lambda_1}+(n-1)};
   "lt";(20,4) **\crv{(-4,18) & (20, 18)};
   "lb";(20,-4) **\crv{(-4,-18) & (20, -18)}?(.5)*\dir{}+(0,0)*{\bullet}
   +(-1,-3)*{\scs  \textcolor[rgb]{1.00,0.00,0.00}{\lambda_2}+(n-2)};
   "t2'";"t2" **\dir{-} ;
   (20,4);(20,-4) **\dir{-};
   (9,0)*{};
 \endxy  }
\;\; &= \;\;
\xy 0;/r.18pc/: (0,0)*{\cbub{n-1+\lambda_1}{}};(5,7)*{n}; \endxy\;
\xy 0;/r.18pc/: (0,0)*{\cbub{n-1+\lambda_1}{}};(5,7)*{n}; \endxy \;\; - \;\;
\xy 0;/r.18pc/: (0,0)*{\cbub{n-1+\lambda_1+1}{}};(5,7)*{n}; \endxy\;
\xy 0;/r.18pc/: (0,0)*{\cbub{n-1+\lambda_1+1}{}};(5,7)*{n}; \endxy
\;\; = \;\;
\left|
\begin{array}{ccccc}
  \xy 0;/r.18pc/: (0,0)*{\cbub{n-1+\lambda_1}{}};(5,7)*{n}; \endxy &
  \xy 0;/r.18pc/: (0,0)*{\cbub{n-1+\lambda_1+1}{}};(5,7)*{n}; \endxy &
 \\ \\
 \xy 0;/r.18pc/: (0,0)*{\cbub{n-1+\lambda_2-1}{}};(5,7)*{n}; \endxy &
  \xy 0;/r.18pc/: (0,0)*{\cbub{n-1+\lambda_2}{}};(5,7)*{n}; \endxy &
\end{array}
\right|
\nn \\
\;\;& = \;\;
\left|
\begin{array}{ccccc}
  \phi^n(e_{\lambda_1}) & \phi^n(e_{\lambda_1+1}) \\ \\
  \phi^n(e_{\lambda_2-1})& \phi^n(e_{\lambda_2})
\end{array}
\right|
\;\; = \;\;
\phi^n(
s_{\textcolor[rgb]{1.00,0.00,0.00}{\overline{\lambda}_1},
\textcolor[rgb]{1.00,0.00,0.00}{\overline{\lambda}_2}}).
\nn
\end{align}
\end{example}

%
\subsection{Categorification of the Casimir element} \label{subsec_casimir}
%

Seemingly simple situations like the commutativity of an element in $\U$ with other elements become complex and more sophisticated when categorified.  The quantum Casimir element for $\mathbf{U}_q(\mathfrak{sl}_2)$ has the form
\begin{equation} \label{eq_usual_casimir_INTRO}
c := EF +\frac{q^{-1}K+qK^{-1}}{(q-q^{-1})^2} = FE +\frac{qK+q^{-1}K^{-1}}{(q-q^{-1})^2}.
\end{equation}
This element is central so that it commutes with other elements in $\mathbf{U}_q(\mathfrak{sl}_2)$. The integral idempotented version of the Casimir element $\dot{C}$ for $\U$ has the form:
\begin{eqnarray}
 \dot{C} &=& \prod_{n\in\Z}C1_n, \\
  C1_n =1_nC &:=& (-q^2+2-q^{-2})EF1_n-(q^{n-1}+q^{1-n})1_n, \label{eq_casimir_INTRO}
\end{eqnarray}
This element belongs to the center of $\U$, meaning that the family of elements $C1_n$ satisfy commutativity relations of the form $xC1_n=C1_m x$ for any $x \in 1_m \U 1_n$.

The minus signs in the definition of the Casimir element $C1_n$ suggest that a categorification of this element will require complexes.  Thus, a categorification $\cal{C}\onen$ of the Casimir element $C1_n$ will require us to work in the 2-category $Com(\UcatD)$ of bounded complexes over the 2-category $\UcatD$. The objects of $Com(\UcatD)$ are integers $n \in \Z$, the 1-morphisms are bounded complexes of 1-morphisms in $\UcatD$, and 2-morphisms are chain maps up to homotopy.  When passing to the split Grothendieck ring $K_0(Com(\UcatD))$ elements in odd homological degree acquire minus signs.

By considering the degrees of the 2-morphisms in $\Ucat$ there is a natural candidate for a complex lifting the Casimir element $C1_n$.  Namely, the complex
\begin{equation} \label{eq_casimir_EF_INTRO}
 \cal{C}\onen :=\xy
  (-55,15)*+{\cal{E}\cal{F} \onen \{2\}}="1";
  (-55,-15)*+{\onen \{1-n\}}="2";
  (0,15)*+{\underline{\cal{E}\cal{F} \onen }}="3";
  (0,-15)*+{\underline{\cal{E}\cal{F}\onen}}="4";
  (55,-15)*+{\onen \{n-1\}}="5";
  (55,15)*+{\cal{E}\cal{F} \onen \{-2\}}="6";
   {\ar^{\xUupdot\xUdown} "1";"3"};
   {\ar^{} "1";"4"};
   "1"+(18,-6)*{\xUup\xUdowndot};
   {\ar_(.35){\xUcupl} "2";"3"}; 
   {\ar_-{\xUcupl} "2";"4"};     
   {\ar^-<<<<<<<{\xUcapr} "3";"5"};
   "3"+(20,4)*{-\;\xUup\xUdowndot};
   {\ar "3";"6"};
   {\ar_{-\;\;\xUcapr} "4";"5"};
   {\ar_{} "4";"6"};
   "4"+(24,8)*{\xUupdot\xUdown};
   (-57,0)*{\bigoplus};(0,0)*{\bigoplus};(57,0)*{\bigoplus};
 \endxy
\end{equation}
centered in homological degree zero.  We call the above complex the Casimir complex.  It was shown in \cite{BKL} that the image of this complex in the Grothendieck ring of the category $Com(\UcatD)$ is the component $C1_n$ of the Casimir element $\dot{C}$.

In this categorified setting, the Casimir complex no longer commutes on the nose with other complexes in $Com(\UcatD)$. Rather, the complexes $\cal{C}\onen$ commute up to explicit chain homotopies.  Here the explicit form of the 2-morphisms and the higher relations between them are required to construct chain homotopies proving that the Casimir complex commutes with other complexes.  Hence, the notion of commutativity becomes much more interesting in the categorified context.

%
%
%
%

%
\section{Categorifying irreducible representations} \label{sec_irreps}
%

We have already seen that the cohomology rings of iterated flag varieties give a categorification of the irreducible $N+1$-dimensional representation $V^N$ of $\U$.  In this section we look at a different categorification of these representations that arise from the new structure introduced in the 2-category $\Ucat$.

%
\subsection{Cyclotomic quotients}
%

Recall the nilHecke algebra $\BNC_a$ defined by generators $\xi_i$ for $1 \leq i \leq a$ and $\partial_j$ for $1 \leq j \leq a-1$ and relations
\[
 \begin{array}{ll}
 \xi_i \xi_j =   \xi_j \xi_i , &  \\
   \partial_i \xi_j = \xi_j\partial_i \quad \text{if $|i-j|>1$}, &
   \partial_i\partial_j = \partial_j\partial_i \quad \text{if $|i-j|>1$}, \\
  \partial_i^2 = 0,  &
   \partial_i\partial_{i+1}\partial_i = \partial_{i+1}\partial_i\partial_{i+1},  \\
   \xi_i \partial_i - \partial_i \xi_{i+1}=1,  &   \partial_i \xi_i - \xi_{i+1} \partial_i =1.
  \end{array}
\]
This algebra can be viewed as the algebra of $a$ upward oriented strands in $\Ucat$ where we do not allow any bubbles and ignore the labels on regions.  For any positive number $N$, the algebra $\BNC_a$ admits a quotient called a {\em cyclotomic quotient}.  This quotient is defined by
\begin{equation}
  \BNC_a^N := \BNC_a / (\BNC_a x_1^N \BNC_a).
\end{equation}

In terms of diagrams $\BNC_a^N$ is defined by quotienting by the ideal generated by diagrams of the form
\begin{equation}
    \xy
  (16,4);(16,-4) **\dir{-}?(0)*\dir{<}+(2.3,0)*{};
  (10,0)*{\cdots};
  (4,4);(4,-4) **\dir{-}?(0)*\dir{<}+(2.3,0)*{} ;
  (-4,4);(-4,-4) **\dir{-}?(0)*\dir{<}+(2.3,0)*{}?(.55)*{\bullet}+(-4,1)*{N};
 \endxy
\end{equation}
so that anytime there are $N$ dots on the leftmost strand of a diagram it is zero.
Cyclotomic quotients can be defined in full generality given any highest weight $\Lambda$ of a symmetrizable quantum Kac-Moody algebra $\U(\mf{g})$.  In this generality it was conjectured in \cite{KL} that these cyclotomic quotients categorify irreducible highest weight representations for $\U(\mf{g})$.  In the case of $\mathfrak{sl}_2$ this conjecture states the following:

\paragraph{{\bf Cyclotomic Quotient Conjecture}}
\begin{enumerate}[(1)]
 \item There is an isomorphism of $\Q(q)$-vector spaces
\begin{equation}
  \bigoplus_{a} K_0(\BNC_a^N\pmod) \otimes_{\Z[q,q^{-1}]} \Q(q) \cong V^N,
\end{equation}
where $V^N$ is the irreducible highest weight representation of $\U$.

 \item The category $\oplus_a\BNC_a^N\pmod$ admits an action of the 2-category $\UcatD$.  In particular, there are functors between these module categories lifting the action of $E1_n$ and $F1_n$ so that on the split Grothendieck group we get an action of $\U$.
\end{enumerate}
\medskip

In the general setting of a symmetrizable Kac-Moody algebra part (1) of this conjecture was proven in \cite{LV,KRam2}.  The fact that these quotients admit functors lifting the $\U$ action appears in \cite{Web,SK}.

Here we give a new proof of (1) above by proving that there is an isomorphism
\begin{equation}
  \BNC_a^N \cong \Mat(a!, H_a),
\end{equation}
where, as in Section~\ref{subsec_flag}, $H_a:= H^*(Gr(a,N))$.   This would immediately imply that the split Grothendieck groups are 1-dimensional since the split Grothendieck groups of Morita equivalent rings are identical, and we have already explained that Grothendieck rings of cohomology rings of Grassmannians are 1-dimensional in Section~\ref{subsec_flag}.

%
\subsection{The polynomial algebra as a module over symmetric functions}
%

We write $\cal{P}_a=\Q[x_1,\dots,x_a]$ for the graded polynomial algebra with $\deg(x_i)=2$. For $\underline{x}=x_1, \cdots , x_a$ we will write
\begin{equation}
 \symm_a := \symm(\underline{x}) =\Q[x_1,x_2, \dots, x_a]^{S_a} \nn
\end{equation}
to emphasize the dependence on $a$.  As explained in Section~\ref{subsec_symbub} this algebra can be identified with the polynomial algebra $\Q[e_1,e_2, \dots, e_a]$. The algebra $\symm_a$ is graded with $\deg(e_j)=2j$.

The abelian subgroup $\cal{H}_a$ of $\cal{P}_a$ generated by monomials
$x_1^{\alpha_1} x_2^{\alpha_2} \cdots x_a^{\alpha_a}$, with
$\alpha_i \leq a-i$ for all $1\leq i \leq a$, has rank $a!$.  The multiplication in $\cal{P}_a$ induces a canonical isomorphism of
the tensor product $\cal{H}_a \otimes \symm_a$ with $\cal{P}_a$ as
graded $\symm_a$-modules.  In particular, any polynomial in variables $x_1, x_2, \dots, x_a$ can be expressed as a linear combination of monomials in $\cal{H}_a$ with coefficients in the ring of symmetric functions  $\symm_a$.  Hence, the  subgroup $\cal{H}_a$ is a basis for the polynomial ring $\cal{P}_a$ as a free graded module over $\symm_a$ of rank $a!$.  For details, see for example \cite[Chapter 2]{Man} and the references
therein.

\begin{example} Several examples are given below expressing an arbitrary polynomial in $\cal{P}_a$ in the basis $\cal{H}_a$ over the ring $\symm_a$.
\begin{itemize}
  \item $x_a=1\cdot e_1-x_1e_0-x_2e_0-\dots - x_{a-1}e_0$ with $e_0=1$ as in Section~\ref{subsec_symbub}.
  \item $x_1^{a} = \sum_{j=1}^{a}(-1)^{j+1} x_{1}^{a-j} e_{j}$.
\end{itemize}
\end{example}

%
\subsubsection{Remarks on gradings}
%
The graded rank of $\cal{P}_a$ as a free graded module over $\symm_a$ is the graded rank of $\cal{H}_a$. This is easily computed to be the nonsymmetric quantum factorial
\begin{equation}
(a)_{q^2}^{!} = (a)_{q^2} (a-1)_{q^2}\dots (2)_{q^2}(1)_{q^2}
\end{equation}
where
\begin{equation}
(a)_{q^2} := \frac{1-q^{2a}}{1-q^2} = 1+q^2+\cdots+q^{2(a-1)}. \nn
\end{equation}
Hence,
\begin{equation}
  \Q[x_1, \dots, x_a] \cong \bigoplus_{(a)_{q^2}^!} \symm_a \nn
\end{equation}
as $\symm_a$ modules. Furthermore, it follows that
$\Hom_{\symm_a}(\cal{P}_a,\cal{P}_a)$ is isomorphic as a graded ring to the ring $\Mat((a)_{q^2}^!; \symm_a )$ of $(a)_{q^2}^!\times (a)_{q^2}^!$-matrices with coefficients in $\symm_a$.  In what follows we suppress the grading and simplify notation by writing  $M_{a!}(\symm_a)$ for $\Mat((a)_{q^2}^!; \symm_a )$.

%
\subsection{NilHecke action on polynomials}
%

The nilHecke algebra acts as endomorphisms of $\cal{P}_a$ with $\partial_i \maps \cal{P}_a \to \cal{P}_a$ acting by divided difference operators
\begin{equation}
 \partial_i := \frac{1-s_i}{x_i-x_{i+1}}, \nn
\end{equation}
and $\xi_i$ acting by multiplication by $x_i$. From this definition it is clear that both the image and kernel of
the operator $\partial_i$ consists of polynomials which
are symmetric in $x_i$ and $x_{i+1}$.  It follows from the formula
\begin{equation}
\partial_i(fg) = (\partial_if) g + (s_i f) (\partial_ig)
\end{equation}
that $\partial_i$ acts as a $\symm_a$-module endomorphism of $\cal{P}_a$.

The action of $\BNC_a$ on $\cal{P}_a \cong \oplus_{(a)_{q^2}^!} \symm_a$ gives rise to a homomorphism
\begin{equation}
  \xymatrix{ \BNC_a \ar[rr]^-{\theta} && M_{a!}(\symm_a) \cong \End_{\symm_a}(\cal{P}_a,\cal{P}_a) }. \nn
\end{equation}
It was shown in \cite[Proposition 3.5]{Lau1} that this homomorphism is an isomorphism of graded algebras.  This isomorphism was given explicitly in \cite[Section 2.5]{KLMS} where diagrams for the elementary matrices $E_{i,j}$ in $M_{a!}(\symm_a)$ were identified.

%
\subsection{The action of $\xi_1$}
%

Given $\alpha_1$ and $\alpha=(\alpha_2, \alpha_3, \dots, \alpha_a)$ with $\alpha_i \leq a-i$ for all $1\leq i \leq a$  write $x_1^{\alpha_1}\underline{x}^{\alpha}=x_1^{\alpha_1}x_2^{\alpha_2}\dots x_{a}^{\alpha_a}$.  The set of sequences $\alpha$ partition the subgroup $\cal{H}_a$ into $(a-1)!$ ordered subsets
\begin{equation}
  B_{\alpha} = \left\{ x_1^{a-1}\underline{x}^{\alpha}, x_1^{a-2}\underline{x}^{\alpha} , \dots,
  x_1\underline{x}^{\alpha}, \underline{x}^{\alpha} \right\}
\end{equation}
with $a$ elements.  We can extend this order on $B_{\alpha}$ to a total order on $\cal{H}_a$ using lexicographic ordering on the sequences $\alpha$.

\begin{prop}
In the ordered basis defined above the matrix for $\theta(\xi_1)$ is block diagonal with $(a-1)!$ identical blocks
\begin{equation}
\theta(\xi_1)_{\alpha} = \left(
  \begin{array}{ccccc}
    e_1 & 1 & 0 &  & 0 \\
    -e_2 & 0 & 1 &  & \vdots \\
    e_3 & 0 & \ddots & \ddots & 0 \\
    \vdots & \vdots &  & 0 & 1 \\
    (-1)^{a+1}e_{a} & 0 &  & 0 & 0 \\
  \end{array}
\right)
\end{equation}
of size $a$.
\end{prop}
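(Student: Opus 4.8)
The plan is to compute directly the matrix of multiplication by $x_1$ on $\cal{P}_a$ with respect to the $\symm_a$-basis $\cal{H}_a$, organized according to the partition $\cal{H}_a = \bigsqcup_{\alpha} B_{\alpha}$ described above. The key observation is that multiplication by $x_1$ commutes with multiplication by any element of $\symm_a$, and it sends the monomial $x_1^{\alpha_1}\underline{x}^{\alpha}$ to $x_1^{\alpha_1+1}\underline{x}^{\alpha}$; crucially, the exponent $\alpha_1$ is the only one that changes, while the tail exponents $\alpha = (\alpha_2,\dots,\alpha_a)$ are unaffected. Hence $\theta(\xi_1)$ preserves each $\symm_a$-span of $B_{\alpha}$, so the matrix is block diagonal with one block for each of the $(a-1)!$ choices of $\alpha$, and all blocks are literally the same since the calculation of $x_1 \cdot (x_1^j \underline{x}^{\alpha})$ in terms of the basis does not see $\alpha$.

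First I would fix a single $\alpha$ and order $B_{\alpha}$ as $(x_1^{a-1}\underline{x}^{\alpha}, x_1^{a-2}\underline{x}^{\alpha}, \dots, x_1\underline{x}^{\alpha}, \underline{x}^{\alpha})$ exactly as in the statement. For the basis elements $x_1^{j}\underline{x}^{\alpha}$ with $1 \leq j \leq a-1$, multiplication by $x_1$ gives $x_1^{j+1}\underline{x}^{\alpha}$, which is again a basis element of $B_{\alpha}$ (namely the one sitting one slot up), contributing the superdiagonal of $1$'s. The only nontrivial column comes from the top basis element $x_1^{a-1}\underline{x}^{\alpha}$, where $x_1 \cdot x_1^{a-1}\underline{x}^{\alpha} = x_1^{a}\underline{x}^{\alpha}$ has $x_1$-degree $a$ and must be rewritten. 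Here I would invoke the identity already recorded in the excerpt, $x_1^{a} = \sum_{j=1}^{a}(-1)^{j+1} x_1^{a-j} e_j$, multiply through by $\underline{x}^{\alpha}$, and read off that this expresses $x_1^a\underline{x}^{\alpha}$ in the basis $B_{\alpha}$ with coefficients $e_1, -e_2, e_3, \dots, (-1)^{a+1}e_a$ against $x_1^{a-1}\underline{x}^{\alpha}, x_1^{a-2}\underline{x}^{\alpha}, \dots, \underline{x}^{\alpha}$ respectively — exactly the first column of the displayed matrix $\theta(\xi_1)_{\alpha}$.

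Assembling these, the block for each $\alpha$ is the companion-type matrix shown, and since there are $(a-1)!$ values of $\alpha$ and all blocks coincide, the full matrix $\theta(\xi_1)$ is block diagonal with $(a-1)!$ identical $a \times a$ blocks, which is the claim. The last point to check is that the total order on $\cal{H}_a$ — lexicographic on the $\alpha$'s, refined by the given order within each $B_{\alpha}$ — really does produce a block-diagonal (not merely block-triangular) shape; this follows because, as noted, $\theta(\xi_1)$ maps each $\symm_a$-span of $B_{\alpha}$ into itself, so there are no off-block entries at all regardless of how the blocks are ordered relative to one another.

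I do not anticipate a serious obstacle here: the argument is essentially the companion-matrix computation for the monic relation $x_1^a = \sum_{j=1}^a (-1)^{j+1} x_1^{a-j} e_j$ satisfied by $x_1$ over $\symm_a$, packaged basis-block by basis-block. The only place requiring a little care is making the bookkeeping of signs and the placement of the superdiagonal $1$'s match the displayed matrix precisely, and confirming that the stated identity $x_1^a = \sum_{j=1}^a (-1)^{j+1} x_1^{a-j} e_j$ is indeed the expansion in the basis $\cal{H}_a$ (it is, since each $x_1^{a-j}$ with $0 \le a-j \le a-1$ lies in $\cal{H}_a$ after tensoring with $\underline{x}^{\alpha}$). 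This is the main, and mild, technical point.
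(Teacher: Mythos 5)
Your proposal is correct and follows essentially the same route as the paper: multiplication by $x_1$ fixes the tail exponents $\alpha$, shifts $x_1^j\underline{x}^\alpha$ up by one within $B_\alpha$ to give the superdiagonal of $1$'s, and the single nontrivial column comes from rewriting $x_1^a\underline{x}^\alpha$ via $x_1^a=\sum_{j=1}^a(-1)^{j+1}x_1^{a-j}e_j$, yielding the companion-type block repeated $(a-1)!$ times. Your extra remark that each $\symm_a$-span of $B_\alpha$ is preserved (so the matrix is genuinely block diagonal, not merely block triangular) is a worthwhile clarification that the paper leaves implicit.
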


\begin{proof}
The action of $\xi_1$ on $\Q[x_1, x_2, \dots, x_a]$ is given by multiplication by $x_1$.  Hence, on each of the subset $B_{\alpha}$ the action of $\xi_1$  is given by
\begin{align}
  \xi_1 \;\maps\; x_1^{j}\underline{x}^{\alpha} \to
\left\{
\begin{array}{cl}
  x_1^{j+1}\underline{x}^{\alpha}  & \text{for $1 \leq j < a-1$,} \\
  x_1^{a}\underline{x}^{\alpha}  = \sum_{\ell=1}^{a}(-1)^{\ell+1} x_{1}^{a-\ell} e_{\ell}\underline{x}^{\alpha} ,  & \text{if $j=a-1$.}
\end{array}
\right.
\end{align}
Thus, $\xi_1$ acts by the same matrix on each of the $(a-1)!$ subset $B_{\alpha}$.
\end{proof}

\begin{prop} \label{prop_block}
The isomorphism $\theta \maps \BNC_a \to \Mat_{a!}(\symm_a)$ restricts to an isomorphism
\begin{equation}
 \theta^{N} \maps \BNC_a^{N} \to \Mat_{a!}(H_a)
\end{equation}
where $H_a=H^*(Gr(a,N))$ is the cohomology ring of the Grassmannian of $a$-planes in $N$-dimensional space.
\end{prop}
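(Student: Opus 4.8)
The plan is to exploit the graded algebra isomorphism $\theta \maps \BNC_a \to \Mat_{a!}(\symm_a)$ recalled above from \cite{Lau1}. Since the cyclotomic quotient $\BNC_a^N = \BNC_a/(\BNC_a x_1^N \BNC_a)$ is the quotient of $\BNC_a$ by the two-sided ideal generated by $\xi_1^N$, and $\theta$ is an algebra isomorphism, $\theta$ induces an isomorphism
\[
 \BNC_a^N \;\cong\; \Mat_{a!}(\symm_a)\big/\big\langle \theta(\xi_1)^N\big\rangle ,
\]
where $\langle\, \cdot\,\rangle$ denotes the two-sided ideal generated by a matrix. Over a commutative ring $R$ the two-sided ideals of $\Mat_{a!}(R)$ are precisely the $\Mat_{a!}(J)$ for ideals $J$ of $R$, and the two-sided ideal generated by a single matrix $M$ is $\Mat_{a!}(J_M)$ with $J_M$ the ideal of $R$ generated by the entries of $M$; this is immediate from $E_{ij} M E_{kl} = M_{jk}\, E_{il}$. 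Hence the proposition reduces to the purely commutative-algebra claim that the ideal $J_N \subseteq \symm_a$ generated by the entries of $\theta(\xi_1)^N$ equals the Grassmannian ideal $I_{a,N}$, so that $\symm_a/J_N \cong H_a$; granting this, $\theta$ descends to the desired isomorphism $\theta^N \maps \BNC_a^N \to \Mat_{a!}(H_a)$.

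To compute $J_N$, I would invoke the preceding proposition: after passing to the ordered basis described there, $\theta(\xi_1)$ is block diagonal with $(a-1)!$ identical blocks, each equal to the companion-type matrix appearing in \eqref{eq_grass_rels} with the Chern classes $c_i$ replaced by the elementary symmetric functions $e_i$. Consequently $\theta(\xi_1)^N$ is block diagonal with the $N$-th powers of that matrix, and $J_N$ is the ideal of $\symm_a$ generated by the entries of this single $a \times a$ matrix power. I would then identify this matrix with multiplication by $t$ on the free rank-$a$ $\symm_a$-module $\symm_a[t]/(p(t))$, where $p(t)=\prod_{i=1}^a(t-x_i)=\sum_{j=0}^a(-1)^j e_j\, t^{a-j}$, taken with basis $t^{a-1}, t^{a-2}, \dots, 1$; then the columns of its $N$-th power are the coordinate vectors of $t^{N+a-1}, t^{N+a-2}, \dots, t^N$ reduced modulo $p(t)$.

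From here a generating-function computation finishes the job. In $\big(\symm_a[t]/(p(t))\big)[[s]]$ one has $\sum_{m\ge 0}\big(t^m \bmod p(t)\big)\, s^m = (1-ts)^{-1} = g(s)\cdot\big(\sum_{\ell\ge 0} h_\ell\, s^\ell\big)$, where $g(s) = \sum_{r=0}^{a-1}\big(\sum_{i=0}^r(-1)^i e_i\, t^{r-i}\big)s^r$ is the polynomial quotient of $\prod_i(1-x_i s)$ by $1-ts$; this exhibits each $t^m \bmod p(t)$ as an explicit $\symm_a[t]$-combination of $h_{m-a+1}, \dots, h_m$, whose leading coefficient (in $t$) is $h_{m-a+1}$. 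Reading off the coefficients of the powers of $t$ and using this triangular shape, the entries of the $N$-th matrix power generate $(h_{N-a+1}, h_{N-a+2}, \dots, h_N)$, where the Newton recursion $\sum_{i=0}^a(-1)^i e_i h_{j-i}=0$ is used to reabsorb the $h_j$ with $j>N$. Finally $(h_{N-a+1},\dots,h_N)=I_{a,N}$: solving \eqref{eq_chern_rel} for the variables $\bar c_j$ in terms of $c_i=e_i$ gives $\bar c_j=(-1)^j h_j$, turning the surviving relations into $h_{N-a+1}=\dots=h_N=0$; equivalently, the first column of the $(N-a+1)$-st power of the same companion matrix — which is $I_{a,N}$ by the definition in \eqref{eq_grass_rels} — generates the same ideal $(h_{N-a+1},\dots,h_N)$ by the identical computation. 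Thus $\symm_a/J_N\cong H_a$ and the proof concludes.

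The main obstacle is the middle computation: verifying that the ideal generated by the entries of the $N$-th power of the companion matrix is exactly $(h_{N-a+1},\dots,h_N)$ — neither larger (which needs the Newton recursion to control the $h_j$ for $j>N$) nor smaller (which needs the triangularity of the reduction $t^m \bmod p(t)$), and also checking that replacing the full matrix power by a single column does not change the ideal. One must additionally be careful to match, with the correct indexing and signs, the two presentations of $H^*(Gr(a,N))$ in play — the one via \eqref{eq_chern_rel}/\eqref{eq_grass_rels} and the one via complete homogeneous symmetric functions — but this is routine once the identity $\bar c_j=(-1)^j h_j$ and relation \eqref{eq_eh_rel} are in hand.
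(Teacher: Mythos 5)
Your proposal is correct, and at the level of strategy it matches the paper: both reduce the claim, via the isomorphism $\theta$ and the block-diagonal form of $\theta(\xi_1)$, to showing that the ideal of $\symm_a$ generated by the entries of the $N$-th power of the single $a\times a$ companion block equals the Grassmannian ideal $I_{a,N}$. Where you diverge is in how that identification is carried out. The paper factors $\theta(\xi_1)_\alpha^{N}=\theta(\xi_1)_\alpha^{N-a+1}\cdot\theta(\xi_1)_\alpha^{a-1}$ and argues by column-shifting: the last column of $\theta(\xi_1)_\alpha^{N}$ is the first column of $\theta(\xi_1)_\alpha^{N-a+1}$, which generates $I_{a,N}$ by \eqref{eq_grass_rels}, and the remaining columns contribute nothing new since they factor through that column. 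You instead realize the block as multiplication by $t$ on $\symm_a[t]/\bigl(\prod_i(t-x_i)\bigr)$ and run a generating-function computation to show the entries generate $(h_{N-a+1},\dots,h_N)$, then match this with $I_{a,N}$ via $\bar c_j=(-1)^jh_j$ (or, as you note, by applying the same triangularity argument to the first column of the $(N-a+1)$-st power). Your route is longer but makes explicit and verifiable the two steps the paper dispatches with ``it is easy to see'' — that the other columns add no relations, and that the surviving ideal is exactly $I_{a,N}$ — and as a bonus it produces the standard presentation $H_a\cong\symm_a/(h_{N-a+1},\dots,h_N)$, which connects cleanly to the complete-symmetric-function interpretation of fake bubbles in Section~\ref{subsubsec_fake_symm}. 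The only places needing care are exactly the ones you flag: the unitriangularity of the reduction $t^m \bmod p(t)$ in the $h$'s, and the use of \eqref{eq_eh_rel} to absorb $h_j$ for $j>N$; both go through as you describe.
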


\begin{proof}
Let $c_j=e_{j}$ so that $\symm_a \cong \Q[c_1, c_2, \dots, c_a]$.
Then by Proposition~\ref{prop_block} the matrix $\theta(\xi_1)$ describing the action of $\xi_1$ on $\cal{P}_a$ decomposes into $(a-1)!$ identical blocks of  the form
\begin{equation}
    \theta(\xi_1)_{\alpha} \quad = \quad
\left(\begin{array}{ccccc}
    c_{1} & 1 & 0 & 0 & { 0} \\
    -c_{2} & 0 & 1 & \ddots & 0 \\
    c_{3} & 0 & \ddots & \ddots & 0 \\
    \vdots & \vdots & \ddots&  & 1 \\
   (-1)^{a+1}c_{a} & 0 &  &  &  0\\
  \end{array}
\right).
\end{equation}
Hence, the image of the cyclotomic ideal generated by $\xi_1^{N}$ in $\Mat_{a!}(\symm_a)$ is determined by the matrix equation $\theta(\xi_1)_{\alpha}^{N}=0$.  Notice that \begin{equation}
\theta(\xi_1)_{\alpha}^{N} =
\left(\begin{array}{ccccc}
    c_{1} & 1 & 0 & 0 & { 0} \\
    -c_{2} & 0 & 1 & \ddots & 0 \\
    c_{3} & 0 & \ddots & \ddots & 0 \\
    \vdots & \vdots & \ddots&  & 1 \\
   (-1)^{a+1}c_{a} & 0 &  &  &  0\\
  \end{array}\right)^{N-a+1}
  \cdot
\left(\begin{array}{ccccc}
    c_{1} & 1 & 0 & 0 & { 0} \\
    -c_{2} & 0 & 1 & \ddots & 0 \\
    c_{3} & 0 & \ddots & \ddots & 0 \\
    \vdots & \vdots & \ddots&  & 1 \\
   (-1)^{a+1}c_{a} & 0 &  &  &  0\\
  \end{array}\right)^{a-1} .
\end{equation}
But recall from \eqref{eq_grass_rels} that $H_a$ is the quotient of the polynomial ring $\Q[c_1,\dots,c_a]$ by the ideal generated by the terms in the first column of the matrix $\theta(\xi_1)_{\alpha}^{N-a+1}$.  It is easy to see that multiplying by $\theta(\xi_1)_{\alpha}^{a-1}$ has the effect of moving this first column to the last column.  Setting the resulting matrix equal to the zero matrix requires that all the elements of $\symm_a$ appearing in the last column are in the image of the cyclotomic ideal.  Furthermore, there are no other relations in the image of the cyclotomic ideal since the entries appearing in the other columns of the matrix $\theta(\xi_1)_{\alpha}^{N}$ factor into terms appearing in the $a$th column.  Hence, the quotient of $M_{a!}(\symm_a)$ by the image of the cyclotomic ideal is determined by taking the quotient of the coefficient ring $\symm_a$ by the Grassmannian ideal $I_{a,N}$.
\end{proof}

\begin{cor}
There is an isomorphism of $\Q(q)$-vector spaces
\begin{equation}
  \bigoplus_{a} K_0(\BNC_a^N\pmod) \otimes_{\Z[q,q^{-1}]} \Q(q) \cong V^N.
\end{equation}
\end{cor}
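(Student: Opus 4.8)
The plan is to deduce the corollary directly from Proposition~\ref{prop_block} together with the facts about Grothendieck groups of Grassmannian cohomology rings already established in Section~\ref{subsec_flag}. First I would recall that $\BNC_a^N \cong \Mat_{a!}(H_a)$ by Proposition~\ref{prop_block}, so $\BNC_a^N$ is Morita equivalent to $H_a = H^*(Gr(a,N))$; in particular the split Grothendieck groups agree, $K_0(\BNC_a^N\pmod) \cong K_0(H_a\pmod)$. Here one must note that the isomorphism $\theta^N$ is an isomorphism of \emph{graded} algebras, so the Morita equivalence respects the grading and hence is an isomorphism of $\Z[q,q^{-1}]$-modules, not just of abelian groups.

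Next I would invoke the observation from Section~\ref{subsec_flag} that $H_a$ is a graded local ring (its degree zero part is $\Q$ and everything in positive degree is nilpotent), so that $K_0(H_a\pmod)$ is the free $\Z[q,q^{-1}]$-module of rank one generated by the class of the unique indecomposable projective $H_a$ itself. Therefore $K_0(\BNC_a^N\pmod) \cong \Z[q,q^{-1}]$ for $0 \le a \le N$, and one must also check that $\BNC_a^N = 0$ for $a > N$: when $a > N$ the cyclotomic relation $x_1^N = 0$ combined with the symmetry properties of the nilHecke action forces $Gr(a,N) = \emptyset$, equivalently $H_a = 0$, so the corresponding summand vanishes. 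Then
\[
\bigoplus_{a=0}^{N} K_0(\BNC_a^N\pmod) \otimes_{\Z[q,q^{-1}]} \Q(q)
\cong \bigoplus_{a=0}^{N} \Q(q),
\]
which is an $(N+1)$-dimensional $\Q(q)$-vector space with a grading/weight-space decomposition matching that of $V^N$ (the summand indexed by $a$ sitting in weight $n = 2a - N$, exactly as for the categorification $\cal{V}^N$ built from the $H_k\pmod$ in Section~\ref{subsec_flag}).

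Finally I would identify this with $V^N$ as $\Q(q)$-vector spaces: since $V^N$ is the irreducible $(N+1)$-dimensional representation of $\U$ with one-dimensional weight spaces $V^N_{2a-N}$ for $0 \le a \le N$, the degreewise isomorphism above gives the claimed isomorphism $\bigoplus_a K_0(\BNC_a^N\pmod)\otimes_{\Z[q,q^{-1}]}\Q(q) \cong V^N$. The only genuinely substantive input is Proposition~\ref{prop_block} (the matrix-ring identification), which the corollary is an immediate consequence of; the remaining work is bookkeeping about gradings and the local-ring property of $H_a$. The main obstacle, such as it is, is making sure the Morita equivalence is tracked at the level of graded categories so that the $\Z[q,q^{-1}]$-module structure on $K_0$ is preserved — but this is already implicit in the statement that $\theta^N$ is an isomorphism of graded algebras.
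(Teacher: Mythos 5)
Your proposal is correct and follows essentially the same route as the paper: the corollary is deduced directly from Proposition~\ref{prop_block} via Morita equivalence of $\BNC_a^N$ with $H_a$, the fact that $H_a$ is a graded local ring with one-dimensional split Grothendieck group, and the resulting weight-space-by-weight-space match with $V^N$. The extra bookkeeping you include (grading preservation under the Morita equivalence, vanishing for $a>N$) is consistent with what the paper leaves implicit.
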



%

%

\vspace{0.1in}

\noindent A.L.:  { \sl \small Department of Mathematics, Columbia University, New
York, NY 10027} \newline \noindent
  {\tt \small email: lauda@math.columbia.edu}

%
\end{document}